\DeclareMathOperator{\supp}{supp}
\newcommand{\intd}{\,\mathrm{d}}
\newcommand{\BimodKMS}{\mathrm{(Bimod_{KMS})}}
\newcommand{\eps}{\varepsilon}
\newcommand{\heps}{\hat{\varepsilon}}
\newcommand{\hbeps}{{_{\hat{B}}\hat{\varepsilon}}}
\newcommand{\hepsc}{\hat{\varepsilon}_{\hat{C}}}
\newcommand{\hceps}{{_{\hat{C}}\hat{\varepsilon}}}
\newcommand{\hepsb}{\hat{\varepsilon}_{\hat{B}}}
\newcommand{\rtintb}{(\dbA)^{\Delta_{B}}}
\newcommand{\ltintb}{{^{\Delta_{B}}(\dcA)}}
\newcommand{\rtintc}{(\dAb)^{\Delta_{C}}}
\newcommand{\ltintc}{{^{\Delta_{C}}(\dAc)}}
\newcommand{\bphi}{{_{B}\phi}}
\newcommand{\cphi}{{_{C}\phi}}
\newcommand{\bpsi}{{_{B}\psi}}
\newcommand{\cpsi}{{_{C}\psi}}
\newcommand{\phib}{{\phi_{B}}}
\newcommand{\psib}{{\psi_{B}}}
\newcommand{\phic}{{\phi_{C}}}
\newcommand{\psic}{{\psi_{C}}}
\newcommand{\hbpsib}{{_{\hat{B}}\hat{\psi}}_{\hat{B}}}
\newcommand{\hbpsi}{{_{\hat{B}}\hat{\psi}}}
\newcommand{\hpsib}{\hat{\psi}_{\hat{B}}}
\newcommand{\hcpsi}{{_{\hat{C}}\hat{\psi}}}
\newcommand{\hpsic}{\hat{\psi}_{\hat{C}}}
\newcommand{\hhat}[1]{\hat{\hat{#1}}}
\newcommand{\dual}[1]{#1^{\vee}}
\newcommand{\dA}{\dual{A}}
\newcommand{\dB}{\dual{B}}
\newcommand{\dC}{\dual{C}}
\newcommand{\dmAm}{{_{\mu}A^{\vee}_{\mu}}}
\newcommand{\hdmAm}{{_{\hat \mu}\hat{ A}^{\vee}_{\hat\mu}}}
\newcommand{\dbA}{\dual{\bA}}
\newcommand{\dcA}{\dual{\cA}}
\newcommand{\dAb}{\dual{\Ab}}
\newcommand{\dAc}{\dual{\Ac}}
\newcommand{\dbAb}{\dual{\bA_{B}}}
\newcommand{\dcAc}{\dual{\cA_{C}}}
\newcommand{\comega}{{_{C}\omega}}
\newcommand{\bomega}{{_{B}\omega}}
\newcommand{\omegac}{\omega_{C}}
\newcommand{\omegab}{\omega_{B}}
\newcommand{\cupsilon}{{_{C}\upsilon}}
\newcommand{\bupsilon}{{_{B}\upsilon}}
\newcommand{\upsilonc}{\upsilon_{C}}
\newcommand{\upsilonb}{\upsilon_{B}}
\newcommand{\cphic}{\cphi{}_{C}}
\newcommand{\bpsib}{\bpsi{}_{B}}
\newcommand{\GstG}{G {_{s}\times_{t}} G}
\newcommand{\GssG}{G {_{s}\times_{s}} G}
\newcommand{\GttG}{G {_{t}\times_{t}} G}
\numberwithin{equation}{section}
\theoremstyle{definition} 
\newtheorem{remark}{Remark}[subsection]
\newtheorem{example}[remark]{Example}
\newtheorem*{notation*}{Notation}
\theoremstyle{plain}
\newtheorem{definition}[remark]{Definition}
\newtheorem{theorem}[remark]{Theorem}
\newtheorem{proposition}[remark]{Proposition}
\newtheorem{corollary}[remark]{Corollary}
\newtheorem{lemma}[remark]{Lemma}
\newtheorem*{assumption*}{Assumption}
\newcommand{\C}{\mathds{C}}
\newcommand{\id}{\iota}
\DeclareMathOperator{\Hom}{Hom}
\DeclareMathOperator{\End}{End}
\newcommand{\bB}{{_{B}B}}
\newcommand{\Bb}{B_{B}}
\newcommand{\cC}{{_{C}C}}
\newcommand{\Cc}{C_{C}}
\newcommand{\hAltkA}{\hat{A} {_{\hat{B}}\times } \hat{A}} 
\newcommand{\hArtkA}{\hat{A} \times_{\hat{ C}} \hat{A}} 
\newcommand{\AltkA}{A {_{B}\times } A} 
\newcommand{\ArtkA}{A \times_{C} A} 
\newcommand{\Lreg}{L_{\mathrm{reg}}} 
\newcommand{\Rreg}{R_{\mathrm{reg}}} 
\newcommand{\oo}{\otimes}
\newcommand{\osub}[1]{\underset{#1}{\otimes}}
\newcommand{\osup}[1]{\stackrel{#1}{\otimes}}
\newcommand{\oos}{\osub{s}}
\newcommand{\ool}{\osub{l}}
\newcommand{\oor}{\osub{r}}
\newcommand{\oot}{\osub{t}}
\newcommand{\ooS}{\osup{s}}
\newcommand{\ooT}{\osup{t}}
\newcommand{\As}{A_{s}}
\newcommand{\At}{A_{t}}
\newcommand{\sA}{{_{s}A}}
\newcommand{\tA}{{_{t}A}}
\newcommand{\AsA}{A \oos A}
\newcommand{\AlA}{A \ool A}
\newcommand{\AtA}{A \oot A}
\newcommand{\ArA}{A \oor A}
\newcommand{\ATA}{A \ooT A}
\newcommand{\ASA}{A \ooS A}
\newcommand{\ATAsA}{A \ooT A \oos A}
\newcommand{\ATAlA}{A \ooT A \ool A}
\newcommand{\AlAsA}{A \ool A \oos A}
\newcommand{\AlAlA}{A \ool A \ool A}
\newcommand{\AsAsA}{A \oos A \oos A}
\newcommand{\AsoAlsA}{\As \oo {_{(1 \oo s)}(\AlA)}}
\newcommand{\sAsAotA}{{_{(s\oo 1)}(\AsA)} \oo \tA}
\newcommand{\bA}{{_{B}A}}
\newcommand{\Ab}{A_{B}}
\newcommand{\cA}{{_{C}A}}
\newcommand{\Ac}{A_{C}}
\newcommand{\sbA}{{_{S(B)}A}}
\newcommand{\hbA}{{_{\hat B}\hat A}}
\newcommand{\hAb}{\hat A_{\hat B}}
\newcommand{\hcA}{{_{\hat C}\hat A}}
\newcommand{\hAc}{\hat A_{\hat C}}
\newcommand{\oob}{\osub{B}}
\newcommand{\ooB}{\osup{B}}
\newcommand{\ooc}{\osub{C}}
\newcommand{\ooC}{\osup{C}}
\newcommand{\oomb}{\osub{M(B)}}
\newcommand{\oomB}{\osup{M(B)}}
\newcommand{\oomc}{\osub{M(C)}}
\newcommand{\oomC}{\osup{M(C)}}
\newcommand{\oomhb}{\osub{M(\hat B)}}
\newcommand{\oomhc}{\osub{M(\hat C)}}
\newcommand{\AbA}{A \oob A}
\newcommand{\ABA}{A \ooB A}
\newcommand{\AcA}{A \ooc A}
\newcommand{\ACA}{A \ooC A}
\newcommand{\AcAcA}{A \ooc A \ooc A}
\newcommand{\AcAbA}{A \ooc A \oob A}
\newcommand{\AcAlA}{A \ooc A \ool A}
\newcommand{\ArAbA}{A \oor A \oob A}
\newcommand{\ArAlA}{A \oor A \ool A}
\newcommand{\ACABA}{A \ooC A \ooB A}
\newcommand{\AlABA}{A \ool A \ooB A}
\newcommand{\ACArA}{A \ooC A \oor A}
\newcommand{\AlArA}{A \ool A \oor A}
\newcommand{\AcArA}{A \ooc A \oor A}
\newcommand{\oohc}{\underset{\hat{C}}{\otimes}}
\newcommand{\oohb}{\underset{\hat{B}}{\otimes}}
\newcommand{\oohl}{\underset{\hat{l}}{\otimes}}
\newcommand{\oohr}{\underset{\hat{r}}{\otimes}}
\newcommand{\oohC}{\stackrel{\hat{C}}{\otimes}}
\newcommand{\oohB}{\stackrel{\hat{B}}{\otimes}}
\newcommand{\hAcA}{ \hat{A} \oohc \hat{A}}
\newcommand{\hAbA}{\hat{ A} \oohb \hat{A}}
\newcommand{\hAlA}{\hat{ A} \oohl \hat{ A}}
\newcommand{\hArA}{\hat{ A} \oohr \hat{ A}}
\newcommand{\op}{\mathrm{op}}
\newcommand{\co}{\mathrm{co}}
\newcommand{\lmultl}{v_{\lambda}}
\newcommand{\lmultr}{v_{\rho}}
\newcommand{\lmult}{v}
\newcommand{\rmultl}{{_{\lambda}v}}
\newcommand{\rmultr}{{_{\rho}v}}
\newcommand{\rmult}{v}
\newcommand{\Tl}{T_{\lambda}}
\newcommand{\Tr}{T_{\rho}}
\newcommand{\lT}{{_{\lambda}T}}
\newcommand{\rT}{{_{\rho}T}}
\newcommand{\hTl}{\hat{T}_{\lambda}}
\newcommand{\hTr}{\hat{T}_{\rho}}
\newcommand{\hlT}{{_{\lambda}\hat{T}}}
\newcommand{\hrT}{{_{\rho}\hat{T}}}
\newcommand{\beps}{{_{B}\varepsilon}}
\newcommand{\epsc}{\varepsilon_{C}}
\newcommand{\ceps}{{_{C}\varepsilon}}
\newcommand{\epsb}{\varepsilon_{B}}
\newcommand{\mb}{\mu_{B}}
\newcommand{\mc}{\mu_{C}}
\newcommand{\mult}{m}
\newcommand{\actb}{\triangleleft}
\newcommand{\actc}{\triangleright}
\title[Regular multiplier Hopf
  algebroids II. Integration and duality]{Regular multiplier Hopf
  algebroids II. \\ Integration on and duality of algebraic quantum groupoids}
\author{Thomas Timmermann} 
 \address{FB Mathematik und Informatik, University of Muenster \\ Einsteinstr.\ 62, 48149
   Muenster, Germany}
 \email{timmermt@math.uni-muenster.de}
 \thanks{Supported by the SFB 878
     ``Groups, geometry and actions'' funded by the DFG}
\date{\today}
 \subjclass[2010]{16T05}
 \keywords{bialgebroids, Hopf algebroids, weak Hopf algebras, quantum
   groupoids, Pontrjagin duality, integrals}
\begin{document}

\begin{abstract}
  A fundamental feature of quantum groups is that many come in pairs
  of mutually dual objects, like finite-dimensional Hopf algebras and
  their duals, or quantisations of function algebras and of universal
  enveloping algebras of Poisson-Lie groups.

  The same phenomenon was studied for quantum groupoids in various
  settings. In the purely algebraic setup, the construction of a dual
  object was given by Schauenburg and by Kadison and Szlach\'anyi, but
  required the quantum groupoid to be finite with respect to the
  base. A sophisticated duality for measured quantum groupoids was
  developed by Enock, Lesieur and Vallin in the setting of von Neumann
  algebras.

  We propose a purely algebraic duality theory without any finiteness
  assumptions, generalising Van Daele's duality theory of multiplier
  Hopf algebras and borrowing ideas from the theory of measured
  quantum groupoids.  Our approach is based on the multiplier Hopf
  algebroids recently introduced by Van Daele and the author, and on a
  new approach to integration on algebraic quantum groupoids. The main
  concept are left and right integrals on regular multiplier Hopf
  algebroids that are adapted to quasi-invariant weights on the basis.
  Given such integrals, we show that they are unique up to rescaling, admit
  modular automorphisms, and that left and right ones are related by
  modular elements. Then, we construct,  without  any finiteness or
  Frobenius assumption, a dual multiplier Hopf
  algebroid with integrals  and prove biduality.
\end{abstract}
\maketitle

\tableofcontents

\section{Introduction}

Quantum groupoids generalize quantum groups in a way that, briefly,
amounts to replacing the ground field with a pair of anti-isomorphic
base algebras which may be non-commutative.  They appeared in a
variety of situations, for example, as generalized Galois symmetries
for depth 2 inclusions of factors or algebras \cite{boehm:extensions},
\cite{enock:inclusions3}, \cite{enock:actions},
\cite{kadison:extensions}, \cite{kadison:inclusions},
\cite{nikshych:inclusions}, as dynamical quantum groups \cite{donin},
\cite{etingof:qdybe}, \cite{koelink:su2}, or as Tannaka-Krein duals of
certain tensor categories of bimodules \cite{hai:tannaka},
\cite{mccurdy:tannaka}, \cite{pfeiffer:tannaka}, and were studied by
methods of pure algebra \cite{boehm:hopf}, \cite{boehm:weak1},
\cite{boehm:bijective}, \cite{lu:hopf,xu}, \cite{daele:weakmult0},
\cite{daele:weakmult}, \cite{schauenburg:comparison}, \cite{vainer},
category theory \cite{bruguieres:hopf-monads},
\cite{chikhladze:quantum-categories}, \cite{chikhladze:hopf-comonads},
\cite{day:quantum} and operator algebras \cite{enock:inclusions3},
\cite{enock:action}, \cite{lesieur}.

A fundamental feature of quantum groups is that many come in pairs of
mutually dual objects, like finite-dimensional Hopf algebras and their
dual spaces, function and convolution algebras of groups, or
quantisations of function algebras and of universal enveloping
algebras of Lie-Poisson groups \cite{chari}, \cite{korogodski}.  The
same phenomenon was observed for quantum groupoids, where examples
include Hopf algebroids that are fiber-wise finite-dimensional
\cite{kadison:inclusions}, the Galois symmetries for depth 2
inclusions mentioned above, and the function and the convolution
algebras of groupoids.

In the finite-dimensional case, this duality can be explained as
follows.  A (finite-dimensional) quantum group or Hopf algebra is a
vector space $A$ with a multiplication $m\colon A \otimes A \to A$ and
a comultiplication $\Delta \colon A \to A\otimes A$ satisfying a few
conditions, and the dual space $\dA$ turns out to be a Hopf algebra
again with respect to the dual maps $\dual{\Delta}$ and
$\dual{m}$. The key is that one can identify $\dual{A}\otimes
\dual{A}$ with the dual of $A\otimes A$. Iterating this construction,
one obtains the bidual $\dual{(\dual{A})}$ which is naturally
isomorphic to $A$.  A simple example is given by the group algebra
$A=\C G$ of a finite group $G$, where the comultiplication $\Delta$
extends the diagonal map $G \to G\times G$, and the dual $\dual{A}$ is
the function algebra $C(G)$ with the comultiplication
$\dual{\Delta}\colon \delta_{x} \mapsto \sum_{x=yz}\delta_{y}\oo
\delta_{z}$.

Replacing the Hopf algebra with a weak Hopf algebra, one obtains the
corresponding duality for finite-dimensional quantum groupoids. More
generally, Schauenburg \cite{schauenburg} and Kadison and Szlach\'anyi
\cite{kadison:inclusions} extended this duality to Hopf algebroids
under the assumption that the underlying algebra $A$ is projective and
finite as a module over the base algebras, see also \cite{boehm:integrals}.

In the infinite-dimensional case, the situation is more delicate. One
regards two Hopf algebras $A$ and $D$ as mutually dual if one has a
non-degenerate bilinear form on $A\times D$ such that the embeddings
$A \hookrightarrow \dual{D}$ and $D\hookrightarrow \dual{A}$ make the
following diagrams commute,
\begin{align*}
\xymatrix@R=10pt{
  A \ar[r]^(0.4){\Delta_{A}} \ar@{^(->}[d] & A \otimes A \ar@{^(->}[d] \\
  \dual{D} \ar[r]_(0.4){\dual{m_{D}}} & \dual{(D \otimes D)},} &&
\xymatrix@R=10pt{
  D \ar[r]^(0.4){\Delta_{D}} \ar@{^(->}[d] & D \otimes D \ar@{^(->}[d] \\
  \dual{A} \ar[r]_(0.4){\dual{m_{A}}} & \dual{(A \otimes A)},}
\end{align*}
and this definition carries over to Hopf algebroids. But given
a Hopf algebra or Hopf algebroid $A$, the existence of a dual object
$D$ is not known without further assumptions. For example, if $A=\C G$
is the group algebra of an infinite group $G$, the natural choice for
$D$ is the algebra $C_{c}(G)$ of finitely supported functions on $G$
with pointwise multiplication, but the comultiplication $\delta_{x}
\mapsto \sum_{x=yz} \delta_{y} \oo \delta_{z}$ does not take values in
$D \oo D$.

\medskip

For quantum groups, an elegant algebraic and a powerful
operator-algebraic approach to duality were developed by Van Daele
\cite{daele} and Kustermans and Vaes \cite{vaes:1,kustermans:12}. Both
approaches are closely related \cite{kustermans:analytic-1} and share
the main assumption, which is the existence of left- and
right-invariant integrals $\phi$ and $\psi$ on the quantum
group. These integrals correspond to integration of functions on a
group with respect to the Haar measure, are essentially unique,
possess  modular automorphisms and are related by a modular element.
In the purely algebraic theory of Van Daele \cite{daele}, the second
main idea is to allow the algebra $A$ to be non-unital and the
comultiplication $\Delta$ to take values in a multiplier algebra such
that the maps
\begin{align*}
  T_{1} &\colon a\otimes b \mapsto \Delta(a)(1 \otimes b) &&\text{and}
  &
  T_{2} &\colon a \otimes b \mapsto (a \otimes 1)\Delta(b)
\end{align*}
define bijections of $A \otimes A$. For example, the function
algebra $C_{c}(G)$ of an infinite group $G$ fits into this framework.
Then, the subspace $\hat{A} \subseteq \dual{A}$ spanned by all
functionals on $A$ of the form $a\cdot \phi \colon a' \mapsto
\phi(a'a)$ carries the structure of a multiplier Hopf algebra such
that the associated bijections $\hat{T}_{1}$ and $\hat{T}_{2}$ make
the following diagrams commute,
\begin{align*}
  \xymatrix@R=10pt{
    \hat{A} \otimes \hat{A} \ar@{^(->}[d]  \ar[r]^{\hat{T}_{1}} & \hat
    A \otimes \hat A \ar@{^(->}[d] \\
    \dual{(A \otimes A)} \ar[r]_{\dual{(T_{2})}} & \dual{(A\otimes
      A)},  } &&
  \xymatrix@R=10pt{
    \hat{A} \otimes \hat{A} \ar@{^(->}[d]  \ar[r]^{\hat{T}_{2}} & \hat
    A \otimes \hat A \ar@{^(->}[d] \\
    \dual{(A \otimes A)} \ar[r]_{\dual{(T_{1})}} & \dual{(A\otimes
      A)},  }
\end{align*}
and this dual multiplier Hopf algebra $\hat{A}$ has left and right
integrals $\hat \phi$ and $\hat \psi$.  Finally, the bidual $\hat{\hat{A}}$
is naturally isomorphic to $A$ \cite{daele}.

\medskip

In this paper, we extend the algebraic approach of Van Daele to
quantum groupoids and obtain an algebraic duality theory without any
finiteness restrictions, but at the cost of assuming the existence of
integrals.  Let us note that the operator-algebraic approach of
Kustermans and Vaes has been generalised to quantum groupoids by
Enock, Lesieur and Vallin \cite{enock:inclusions3},
\cite{enock:action}, \cite{lesieur} using a formidable array of
sophisticated von Neumann algebra techniques like Tomita-Takesaki
theory.

\medskip

Let us outline our approach, the main results and the organisation of
this article in some detail.

Among the different algebraic approaches to quantum groupoids, we
choose the framework of multiplier Hopf algebroids developed in
\cite{timmermann:regular}. The latter generalize Hopf algebroids in a
similar way like multiplier Hopf algebras generalize Hopf algebras,
and weak multiplier Hopf algebras in a similar way like Hopf
algebroids generalize weak Hopf algebras; see
\cite{daele:comparison}. 

The main definitions and results of \cite{timmermann:regular} are
summarised in \emph{Section \ref{section:multiplier-bialgebroids}}. Roughly,
a \emph{regular multiplier Hopf algebroid} consists of an algebra $A$,
possibly without unit, two anti-isomorphic subalgebras $B,C \subseteq
M(A)$, and a left- and a right-handed comultiplication $\Delta_{B}$
and $\Delta_{C}$ such that the maps
\begin{align} \label{eq:intro-canonical}
  \begin{aligned}
    \Tl &\colon a \otimes b \mapsto \Delta_{B}(b)(a \otimes 1), &
    \Tr&\colon a \otimes b\mapsto \Delta_{B}(a)(1 \otimes b), \\
    \lT &\colon a\otimes b\mapsto (a \otimes 1)\Delta_{C}(b), & \rT
    &\colon a\otimes b \mapsto (1 \otimes b)\Delta_{C}(a)
  \end{aligned}
\end{align}
induce bijections between various tensor products of $A$, where $A$ is
regarded as a module over $B$ or $C$, respectively. This bijectivity
condition is equivalent to the existence of a left- and a right-handed
counit and an antipode.

\medskip

The first step towards the desired duality of such multiplier Hopf
algebroids, taken in \emph{Section \ref{section:integration}}, is to identify
a good notion of left and right integrals on them and to prove that,
similar as in the theory of quantum groups \cite{vaes:1},
\cite{kustermans:12}, \cite{daele:1} and of measured quantum groupoids
\cite{enock:actions}, \cite{lesieur}, such integrals are unique up to
rescaling, admit modular automorphisms, and are related by modular
elements.

The main property of integrals, which is {left-} or
{right-invariance} with respect to the comultiplications
$\Delta_{B}$ and $\Delta_{C}$, can at first sight only be formulated
for $C$- or $B$-bilinear maps $\cphic \colon A \to C$ or $\bpsib
\colon A \to B$, respectively, and amounts to the relations
\begin{align*}
  (\id \otimes \cphic)((a \otimes 1)\Delta_{C}(b)) &= a\cphic(b)
  &&\text{or} &
  (\bpsib \otimes \id)(\Delta_{B}(a)(1 \otimes b)) &= \bpsib(a)b
\end{align*}
for all $a,b\in A$, respectively. These invariance conditions are
studied in \emph{Subsection \ref{subsection:invariant-elements}}.

To construct a dual multiplier Hopf algebroid, we need to complement
such invariant, faithful maps $\cphic\colon A \to C$ and $\bpsib
\colon A \to B$ with faithful functionals $\mu_{B}$ and $\mu_{C}$ on
$B$ and $C$, respectively, which are \emph{quasi-invariant} with respect to
$\cphic$ and $\bpsib$ in the sense that the compositions
$\phi:=\mu_{C} \circ \cphic$ and $\psi :=\mu_{B} \circ \bpsib$ satisfy
\begin{align}\label{eq:intro-ha}
  \phi \cdot A = A \cdot \phi = \psi \cdot A = A \cdot \psi 
\end{align}
as subsets of the dual space $\dual{A}$. Here, $\phi \cdot A$ denotes
the space of all functionals of the form $b \mapsto \phi(ab)$ with $a
\in A$, and the other spaces are defined similarly.  If  $\phi$ and
$\psi$ are faithful, then equation
\eqref{eq:intro-ha} implies the existence of modular automorphisms for
$\phi$ and $\psi$ and of a modular element $\delta=\intd\psi/\intd\phi
\in M(A)$ satisfying $\phi(-\delta)=\psi$. 

In the setting of multiplier Hopf algebras, Van Daele showed that
equation \eqref{eq:intro-ha} is automatically satisfied as soon as
$\phi$ and $\psi$ are non-zero.  In the present context, the situation
is more delicate. Equation \eqref{eq:intro-ha} implies that the
functionals $\phi$ and $\psi$ can be written in the form
\begin{align} \label{eq:factorise}
  \phi &= \mu_{C} \circ \cphic = \mu_{B} \circ \bphi = \mu_{B}
\circ \phib,  & \psi = \mu_{B} \circ \bpsib = \mu_{C} \circ
\cpsi = \mu_{C} \circ \psic
\end{align}
with a left-invariant map $\cphic \colon A \to C$ and a right-invariant
map $\bpsib \colon A \to B$, respectively, and   uniquely determined maps of left or right $B$- or $C$-modules
\begin{align} \label{eq:intro-factorisations-2}
    \bphi &\colon \bA \to \bB, & \phib &\colon \Ab \to \Bb, &
 \cpsi &\colon \cA \to \cC, & \psic &\colon \Ac\to \Cc.
\end{align}

Instead of assuming \eqref{eq:intro-ha}, we reverse the approach and
first fix faithful functionals $\mu_{B}$ and $\mu_{C}$ on the base
algebras $B$ and $C$, respectively. In setting of weak (multiplier)
Hopf algebras, such functionals are given canonically, but in the
present context, there is potentially a large choice, similar as in
the case of classical groupoids. They key assumption that we impose on
$\mu_{B}$ and $\mu_{C}$ is the left or the right counit $\beps \colon
A \to B$ and $\epsc \to A \to C$, respectively, yield the same
functional $\eps:=\mu_{B} \circ \beps = \mu_{C} \circ \epsc$ on
$A$. This assumption also implies that $S^{-2}$ and $S^{2}$ form
modular automorphisms for $\mu_{B}$ and $\mu_{C}$, respectively.
Then, we take the existence of factorisations as in
\eqref{eq:factorise} as the definition of a \emph{left integral
  $\phi$} and a \emph{right integral $\psi$} on $A$. This is done in
\emph{Subsection \ref{subsection:base-weights}}.

The main result of \emph{Section \ref{section:integration}} is that equation
\eqref{eq:intro-ha} holds automatically if the integrals $\phi$ and
$\psi$ are faithful in the ordinary sense and \emph{full} in the sense
that the associated maps in \eqref{eq:intro-factorisations-2} are
surjective. The first step is taken in \emph{Subsection
\ref{subsection:uniqueness}}, where we also show that integrals are
unique up to rescaling, or, more precisely, that the maps
\begin{align*}
 M(B) &\to \dual{A}, \  x \mapsto x\cdot \phi = \phi(-x), &
 M(C) &\to \dual{A}, \ y\mapsto y\cdot \psi  =\psi(-y)
\end{align*}
define bijections between $M(B)$ or $M(C)$ on one side and the space
of left or right integrals on $A$, respectively, on the other
side. The second step is taken in \emph{Subsection
\ref{subsection:modular-automorphism}} and involves the study of
natural convolution operators associated to elements of the space
\eqref{eq:intro-ha}. 

Given a full and faithful left integral $\phi$, the compositions $\psi
= \phi \circ S^{-1}$ and $\psi^{\dag}:=\phi \circ S$ are right
integrals, and equation \eqref{eq:intro-ha} implies that there exist
modular elements $\delta,\delta^{\dag} \in M(A)$ such that $\psi =
\phi \cdot \delta$ and $\psi^{\dag} = \delta^{\dag } \cdot \phi$.   In
\emph{Subsection \ref{subsection:modular-element}}, we study the interplay of
these modular elements with the comultiplication, antipode and counit,
and establish all the formulas that one would expect from the theory
of multiplier Hopf algebras.

Finally, we show that a full integral is automatically faithful if $A$ is
projective as a module over the base algebras $B$ and $C$. This result
generalizes the corresponding result of Van Daele for multiplier Hopf
algebras and is proved in \emph{Subsection \ref{subsection:faithful}}.

\medskip

The main result of this article, which is the duality of measured regular
multiplier Hopf algebroids, is established in \emph{Section
\ref{section:duality}}. 

We define a \emph{measured regular multiplier Hopf algebroid} to be a
regular multiplier Hopf algebroid equipped with faithful functionals
$\mu_{B},\mu_{C}$ as above and full and faithful left and right
integrals $\phi$ and $\psi$.  Given such a measured regular multiplier
Hopf algebroid, we equip the subspace $\hat A:=\phi \cdot A =A\cdot
\phi = \psi \cdot A = A\cdot \psi$ of $\dual{A}$ with a convolution
product $(\upsilon,\omega) \mapsto \upsilon \ast \omega$ such that,
roughly,
\begin{align*}
  (\upsilon \oo \omega) \circ \Delta_{B}= \upsilon \ast \omega =
  (\upsilon \oo \omega) \circ \Delta_{C},
\end{align*}
and with natural embeddings of $\hat C:=B$ and $\hat B:=C$ into
$M(\hat A)$; see \emph{Subsection \ref{subsection:dual-graphs}}.  Note that
the compositions on the left and on the right hand side do not make
sense as they stand because $\Delta_{B}$ and $\Delta_{C}$ take values
in multiplier algebras of certain balanced tensor products of $A$. The
main difficulty, however, is to show that the two compositions
coincide. This problem is solved in \emph{Subsection
\ref{subsection:modular-automorphism}} by a careful analysis of the
left and right convolution operators on $A$ given by the compositions
\begin{align*}
(\upsilon \oo \id) \circ \Delta_{B}, &&
 (\upsilon \oo \id) \circ \Delta_{C}, &&
 (\id \oo \omega) \circ \Delta_{B}, &&
 (\id \oo \omega) \circ \Delta_{C},
\end{align*}
and here the assumption $\mu_{B} \circ \beps = \mu_{C} \circ \epsc$
becomes crucial.

 We then equip $\hat A \subseteq \dual{A}$ with comultiplications
$\hat\Delta_{\hat B}$ and $\hat\Delta_{\hat C}$ such that the
 maps
\begin{align*}
  \hTl &\colon \upsilon \oo \omega \mapsto
  \hat\Delta_{\hat B}(\omega)(\upsilon \otimes 1), & \hTr &\colon
  \upsilon \oo \omega \mapsto \hat\Delta_{\hat B}(\upsilon)(1 \otimes
  \omega), \\
  \hlT &\colon \upsilon \oo \omega \mapsto (\upsilon \oo
  1)\hat\Delta_{\hat C}(\omega), &
  \hrT &\colon \upsilon \oo \omega \mapsto (1 \oo
  \omega)\hat\Delta_{\hat C}(\upsilon)
\end{align*}
dualise the maps in \eqref{eq:intro-canonical}. More precisely, we
first show that the transposes of the maps in
\eqref{eq:intro-canonical} restrict to bijections between certain
balanced tensor products of $\hat A$ with itself, and then that these
restrictions form multiplicative pairs in the sense of
\cite{timmermann:regular} so that, by results in
\cite{timmermann:regular}, they correspond to comultiplications
$\hat\Delta_{\hat B}$ and $\hat\Delta_{\hat C}$ as stated above. 
This is the contents of \emph{Subsection \ref{subsection:dual-pairs}}.

To obtain the full duality result, we  show \emph{in Subsection \ref{subsection:dual-integrals}} that, as in the case of multiplier Hopf algebras
\cite{daele:1}, the maps
\begin{align*}
  \hat\psi &\colon a \cdot \phi \mapsto \eps(a) &&\text{and} &
  \hat\phi &\colon \psi \cdot a \mapsto \eps(a)
\end{align*}
form a right and a left integral on $\hat A$ so that one obtains a
measured regular multiplier Hopf algebroid again, and 
that the natural map $A \to \dual{(\hat A)}$ identifies $A$ with the
bidual $\hat{\hat A}$ as a measured regular multiplier Hopf
algebroid. Finally, we study the duality between the multiplication
and the comultiplication on $A$ and $\hat A$ on the level of the
multiplier algebras $M(A)$ and $M(\hat A)$; see \emph{Subsection \ref{subsection:multipliers}}.

\medskip

Two examples of mutually dual measured multiplier Hopf algebroids are
discussed in the final \emph{Section \ref{section:examples}} --- the function
algebra and the convolution algebra of an \'etale, locally compact,
Hausdorff groupoid, and a two-sided crossed product which  appeared in
\cite{connes:rankin}.

\medskip

The material developed in this article raises several questions that
will have to be addressed separately.

Given the theory of integrals developed here, one should be able to
construct (operator-algebraic) measured quantum groupoids out of
suitable (algebraic) measured multiplier Hopf $*$-algebroids,
similarly as it was done for multiplier Hopf algebras by Kustermans
and Van Daele in  \cite{kustermans:analytic-2} and \cite{kustermans:algebraic}, and for dynamical quantum groups in \cite{timmermann:dynamical}.
An important assumption in the theory of measured quantum groupoids is
that the modular automorphism groups of the left and of the right
integral commute, and it would be desirable to study the implications
of this assumption in the present context.

An important question concerns the implications of existence of
integrals on the theory of corepresentations. It seems natural to
expect that under suitable assumptions on the base algebras $B$ and
$C$, many results on the corepresentation theory of compact
quantum groups carry over to measured regular Hopf $*$-algebroids.

One should also clarify the relation to the duality obtained by
Schauenburg \cite{schauenburg}, Kadison and Szlach\'anyi
\cite{kadison:inclusions} and Böhm \cite{boehm:integrals} in the case
where $A$ is unital and finitely generated projective as a module over
$B$ and $C$.

\subsection*{Preliminaries}

We shall use the following conventions and terminology.

All algebras and modules will be complex vector spaces and all
homomorphisms will be linear maps, but much of the theory
developed in this article applies in wider generality.

The identity map on a set $X$ will be denoted by $\iota_{X}$ or simply
$\iota$.

Let $B$ be an algebra, not necessarily unital. We denote by $B^{\op}$
the \emph{opposite algebra}, which has the same underlying vector
space as $B$, but the reversed multiplication.

Given a right module $M$ over $B$, we write $M_{B}$ if we want to
emphasize that $M$ is regarded as a right $B$-module. We call $M_{B}$
\emph{faithful} if for each non-zero $b\in B$ there exists an $m\in M$
such that $mb$ is non-zero, \emph{non-degenerate} if for each non-zero
$m\in M$ there exists a $b \in B$ such that $mb$ is non-zero,
\emph{idempotent} if $MB=M$, and we say that $M_{B}$ \emph{has local
  units in $B$} if for every finite subset $F\subset M$ there exists a
$b\in B$ with $mb=m$ for all $m\in F$. Note that the last property
implies the preceding two. We denote by $\dual{M_{B}}
:=\Hom(M_{B},B_{B})$ the dual module, and by $\dual{f} \colon
\dual{N_{B}} \to \dual{M_{B}}$ the dual of a morphism $f\colon M\to N$
of right $B$-modules, given by $\dual{f}(\chi)= \chi \circ f$.  We use
the same notation for duals of vector spaces and of linear maps. We
furthermore denote by $L(M_{B}) := \Hom(\Bb,M_{B})$ the space of
\emph{left multipliers} of the module $M_{B}$.

For left modules, we obtain the corresponding notation and terminology
by identifying left $B$-modules with right $B^{\op}$-modules.  We
denote by $R(_{B}M):=\Hom(\bB,{_{B}M})$ the space of \emph{right
  multipliers} of a left $B$-module $_{B}M$.

We write $B_{B}$ or ${_{B}B}$ when we regard $B$ as a right or left
module over itself with respect to right or left multiplication. We
say that the algebra $B$ is \emph{non-degenerate}, \emph{idempotent},
or \emph{has local units} if the modules ${_{B}B}$ and $B_{B}$ both
are non-degenerate, idempotent or both have local units in $B$,
respectively. Note that the last property again implies the preceding
two.

We denote by $L(B)=\End(B_{B})$ and
$R(B)=\End({_{B}B})^{\op}$ the algebras of left or right
multipliers of $B$, respectively, where the multiplication in the
latter algebra is given by $(fg)(b):=g(f(b))$. Note that $B_{B}$ or
${_{B}B}$ is non-degenerate if and only if the natural map from $B$ to
$L(B)$ or $R(B)$, respectively, is injective. If $B_{B}$ is
non-degenerate, we define the multiplier algebra of $B$ to be the
subalgebra $M(B) :=\{ t\in L(B) : Bt\subseteq B\} \subseteq L(B)$,
where we identify $B$ with its image in $L(B)$. Likewise we could
define $M(B) =\{ t\in R(B) : tB \subseteq B\}$ if ${_{B}B}$ is
non-degenerate.  If both definitions make sense, that is, if $B$ is
non-degenerate, then they evidently coincide up to a natural
identification, and a multiplier is given by a pair of maps
$t_{R},t_{L}\colon B\to B$  satisfying $t_{R}(a)b=at_{L}(b)$ for all
$a,b\in B$.

Given a left or right $B$-module $M$ and a space $N$, we regard the
space of linear maps from $M$ to $N$ as a right or left $B$-module, where
$(f \cdot b)(m)=f(bm)$ or $(b\cdot f)(m)=f(mb)$ for all maps $f$ and
all elements $b\in B$ and $m\in M$, respectively. 

In particular, we regard the dual space $\dual{B}$ of a
non-degenerate, idempotent algebra $B$ as a bimodule over $M(B)$,
where $(a \cdot \omega \cdot b)(c)=\omega(bca)$, and call a functional
$\omega \in \dual{B}$ \emph{faithful} if the maps $B \to \dual{B}$
given by $d \mapsto d\cdot\omega$ and $d \mapsto\omega \cdot d$ are
injective, that is, $\omega(dB)\neq 0$ and $\omega(Bd) \neq 0$
whenever $d\neq 0$.

We say that a functional $\omega \in \dual{B}$ \emph{admits a modular
  automorphism} if there exists an automorphism $\sigma$ of $B$ such
that $\omega(ab)=\omega(b\sigma(a))$ for all $a,b\in B$. One easily
verifies that this condition holds if and only if $B\cdot \omega =
\omega \cdot B$, and that then $\sigma$ is characterised by the
relation $\sigma(b) \cdot \omega = \omega \cdot b$ for all $b\in B$.

Assume that $B$ is a $*$-algebra. We call a functional $\omega \in
\dual{B}$ \emph{self-adjoint} if it coincides with $\omega^{*}=\ast
\circ \omega \circ \ast$, that is, $\omega(a^{*})=\omega(a)^{*}$ for
all $a\in B$, and \emph{positive} if additionally $\omega(a^{*}a)\geq
0$ for all $a\in A$.

\section{Regular multiplier Hopf algebroids}
\label{section:multiplier-bialgebroids}

This section summarises the definition and main properties of regular
multiplier Hopf algebroids. The latter were introduced in
\cite{timmermann:regular} as non-unital generalizations of Hopf
algebroids, and are special cases of multiplier bialgebroids. As such,
they consist of a left and a right multiplier bialgebroid, which in
turn consist of a left or a right quantum graph with a left or right
comultiplication that are compatible in a certain sense.

\subsection{Left multiplier bialgebroids}

A \emph{left quantum graph} is a tuple $\mathcal{A}_{B}=(B,A,s,t)$,
where (1) $B$ and $A$ are algebras and $A_{A}$ is non-degenerate as a
right $A$-module, (2) $s\colon B \to M(A)$ is a homomorphism and
$t\colon B\to M(A)$ is an anti-homomorphism such that their images
commute, (3) the $B$-modules $\sA$ and $\tA$ are faithful,
non-degenerate and idempotent, and (4a) the $A$-module $A_{A}$ has
local units in $A$ or (4b) the $B$-modules $\sA$ and $\tA$ are flat.
Here, we write $\sA$, $\At$, $\tA$ or $\As$ when we regard $A$ as a
left or right $B$-module via $x \cdot a :=s(x)a$, $y \cdot a:=at(y)$,
$a \cdot x :=as(x)$ or $ a\cdot y:=t(y)a$, respectively.  By
assumption on $\sA$ and $\tA$, the maps $s$ and $t$ extend to maps
from $M(B)$ to $L(A)$.

Given a left quantum graph $\mathcal{A}_{B}=(B,A,s,t)$, we denote by $\AlA:=\sA \oo \tA$ the
tensor product of $B$-modules, or rather $B^{\op}$-modules, regard this as  a right $M(A) \oo M(A)$-module in
the canonical way, and denote by $\Lreg(\AltkA) \subseteq \End(\AlA)$
the subalgebra consisting of all maps $v$ such that
$\lmultl(a)(1 \oo b) =  \lmult(a\oo b) = \lmultr(b)(a \oo 1)$
for some well-defined maps $\lmultl,\lmultr \colon A\to \AlA$ and all $a,b
\in A$. A \emph{left comultiplication}
for the left quantum graph  $\mathcal{A}_{B}$ is a homomorphism
$\Delta_{B} \colon A \to \Lreg(\AltkA)$ satisfying 
\begin{align}
  \label{eq:left-comult-module}
  \Delta_{B}(s(x)t(y)as(x')t(y')) &= (t(y) \otimes
  s(x))\Delta_{B}(a)(t(y') \otimes s(x')), \\ \label{eq:left-comult-coass}
((\Delta_{B} \otimes \id)(\Delta_{B}(b)(1 \otimes c)))(a
    \otimes 1 \otimes 1) &= ((\id \otimes \Delta_{B})(\Delta_{B}(b)(a
    \otimes 1)))(1 \otimes 1 \otimes c) 
\end{align}
for all $ a,b,c\in A$ and $x,x',y,y' \in B$.  

A \emph{left multiplier bialgebroid} is a left quantum graph
$\mathcal{A}_{B}$ with a left comultiplication $\Delta_{B}$ as above.
Associated to such a left comultiplication are the
\emph{canonical maps}
\begin{align} \label{eq:left-galois-maps}
    \Tl \colon \ATA &\to \AlA,  &
    \Tr \colon \AsA &\to \AlA,  \\
 \label{eq:left-galois-definition}
 \Tl(a \oo b)&= \Delta_{B}(b)(a \oo 1), & \Tr(a \oo b) &=
 \Delta_{B}(a) (1\oo b),
\end{align}
where $\ATA:=\tA \oo \At$ and $\AsA:=\As\oo\sA$. These maps satisfy
\begin{align} \label{eq:left-galois-module}
  \begin{aligned}
    \Tr(s(x)t(y)at(y') \oo t(z)b) &= (t(y) \oo s(x))\Tr(a\oo
    b)(t(y')s(z) \oo 1), \\ 
    \Tl(s(z)a \oo t(y)s(x)bs(x')) &= (t(y) \oo s(x))\Tl(a \oo b)(1\oo
    t(z)s(x'))
  \end{aligned}
\end{align}
for all $x,x',y,y',z\in B$, $a,b\in A$ and make the following diagrams
commute:
\begin{gather} \label{dg:left-galois-1}
\xymatrix@R=15pt{
      \ATAsA \ar[r]^{\id \oo \Tr} \ar[d]_{\Tl \oo \id}
      & \ATAlA \ar[d]^{\mult\Sigma \oo \id} \\
      \AlAsA \ar[r]^(0.55){\id \oo \mult} & \AlA,
    }
\qquad
  \xymatrix@R=15pt{
    \ATAsA \ar[r]^{\id \oo \Tr} \ar[d]_{\Tl \oo \id}
    &
    \ATAlA  \ar[d]^{\Tl \oo \id} \\
    \AlAsA  \ar[r]^{\id \oo \Tr}  & \AlAlA,
} \\ 
 \label{dg:left-galois-2}
  \xymatrix@C=15pt@R=15pt{
    \AsAsA \ar[r]^(0.55){\mult \oo \id}  \ar[d]_{\id \oo
      \Tr} & \AsA \ar[r]^{\Tr} & \AlA, \\
    \AsoAlsA \ar[rr]^{(\Tr)_{13}}
&&    \sAsAotA \ar[u]_{\mult \oo \id} }
\end{gather}
where $(\Tr)_{13}=(\Sigma \oo \id)(\id \oo \Tr)(\Sigma \oo \id)$.
Conversely, each pair of maps $(\Tl,\Tr)$ as in
\eqref{eq:left-galois-maps} that satisfy \eqref{eq:left-galois-module}
and make the diagrams \eqref{dg:left-galois-1} and
\eqref{dg:left-galois-2} commute define a left comultiplication
$\Delta_{B}$ via \eqref{eq:left-galois-definition}.

A \emph{left counit}  for a left multiplier bialgebroid
as above is
map
  $\beps \in \Hom(\sA,\bB) \cap
  \Hom(\tA,\Bb)$
  that makes the following diagrams commute,
\begin{gather} \label{dg:left-counit}
      \xymatrix@C=10pt@R=12pt{ \AsA \ar[r]^{\Tr}
        \ar[d]_{\mult} & \AlA
        \ar[d]^{\beps \oo \id}  \\
        A & \bB \otimes \tA \ar[l],} \qquad
      \xymatrix@C=10pt@R=12pt{ \ATA \ar[d]_{\mult\Sigma}
        \ar[r]^{\Tl} & \AlA
        \ar[d]^{\id \oo \beps} \\
        A & \sA \otimes \Bb \ar[l],} \qquad
      \xymatrix@R=12pt@C=15pt{ \AsA \ar[r]^{\mult}
        \ar[d]_{\id \oo \beps} & A \ar[d]^{\beps} &
        \AtA \ar[l]_{\mult} \ar[d]^{\iota
          \oo \beps} \\
 A
        \ar[r]^{\beps} & B, & A \ar[l]_{\beps}  }
    \end{gather}
    where the lower horizontal maps from $\bB\oo \tA$, $\sA \oo \Bb$,
    $\At \oo \Bb$ and $\At \oo \Bb$ to $A$ are given by $y\oo a \mapsto
  t(y)a$, $a \oo x \mapsto s(x)a$, $a \oo x \mapsto as(x)$ and $a \oo
  y \mapsto at(y)$, respectively, and $m$ denotes the multiplication
  map $a \oo b \mapsto ab$.

  \subsection{Right multiplier bialgebroids} The notion of a right
  multiplier bialgebroid is opposite to the notion of a left
  multiplier bialgebroid in the sense that in all assumptions, left
  and right multiplication are reversed.

  Accordingly, a \emph{right quantum graph} is a tuple
  $\mathcal{A}_{C}=(C,A,s,t)$, where (1) $C$ and $A$ are algebras and
  ${_{A}A}$ is non-degenerate as a left $A$-module, (2) $s\colon C \to
  M(A)$ is a homomorphism and $t\colon C\to M(A)$ is an
  anti-homomorphism such that their images commute, (3) the
  $C$-modules $\As$ and $\At$ are faithful, non-degenerate and
  idempotent, and (4a) the $A$-module ${_{A}A}$ has local units in $A$
  or (4b) the $C$-modules $\As$ and $\At$ are flat. As before, we write
  $\sA$, $\At$, $\tA$ or $\As$ when we regard $A$ as a left or right
  $C$-module via $x \cdot a :=s(x)a$, $y \cdot a:=at(y)$, $a \cdot x
  :=as(x)$ or $ a\cdot y:=t(y)a$, respectively.
  
  Given such a right quantum graph $\mathcal{A}_{C}=(C,A,s,t)$, we
  denote by $\ArA:=\At \oo \As$ the tensor product of $B$-modules, regard
  this as a left $M(A)\oo M(A)$-module, and denote by $\Rreg(\ArtkA)
  \subseteq \End(\ArA)^{\op}$ the subalgebra consisting of all maps
  $\rmult$ such that
  $(1 \oo b)  \rmultl(a) = \rmult(a \oo b) = (a \oo 1)\rmultr(b)$
for some well-defined maps $\rmultl,\rmultr\colon A\to \ArA$ and all $a,b
\in A$. A \emph{right comultiplication} for the right quantum graph
$\mathcal{A}_{C}$  is a homomorphism $\Delta_{C}\colon A
  \to \Rreg(\ArtkA)$ satisfying
  \begin{align}
    \label{eq:right-comult-module}
    \Delta_{C}(t(x)s(y)at(x')s(y')) &= (s(y) \otimes
    t(x))\Delta_{C}(a)(s(y') \otimes t(x'))
    \\ \label{eq:right-comult-coass}
(a \otimes 1 \otimes 1) ((\Delta_{C} \otimes \id)((1
    \otimes c)\Delta_{C}(b))) &= (1 \otimes 1 \otimes c)((\id
    \otimes \Delta_{C})((a \otimes 1)\Delta_{C}(b)))
  \end{align}
  for all $a,b,c \in A$ and $x,x', y,y'\in C$.  A \emph{right
    multiplier bialgebroid} is a right quantum graph with a right
  comultiplication.  Associated to such a right comultiplication are the
 \emph{canonical maps}
  \begin{align} \label{eq:right-galois-maps}
    \lT \colon \AtA &\to \ArA, &
    \rT \colon \ASA &\to \ArA, \\
\label{eq:right-galois-definition}
  \lT(a\oo b) &= (a \oo 1)\Delta_{C}(b), &
  \rT(a \oo b) &= (1 \oo b)\Delta_{C}(a),
  \end{align}
  where $\AtA = \At \oo \tA$ and $\ASA = \sA \oo \As$.  These maps
  satisfy analogues of the relations \eqref{eq:left-galois-module} and
  make similar diagrams to those in \eqref{dg:left-galois-1} and
  \eqref{dg:left-galois-2} commute. Conversely, each pair of maps
  $(\lT,\rT)$ as in \eqref{eq:right-galois-maps} that satisfy the
  analogues of \eqref{eq:left-galois-module}, \eqref{dg:left-galois-1}
  and \eqref{dg:left-galois-2} define a right comultiplication
  $\Delta_{C}$ via \eqref{eq:right-galois-definition}.

A \emph{right counit} for a right multiplier bialgebroid
$(\mathcal{A}_{C},\Delta_{C})$ is a 
map
  $\epsc \in \Hom(\As,\Cc) \cap
  \Hom(\At,\cC)$
  that makes the following diagrams commute,
\begin{gather} \label{eq:right-counit}
    \xymatrix@C=15pt@R=15pt{ \ATA \ar[r]^{\rT} \ar[d]_{\mult \Sigma} & \ArA
      \ar[d]^{\epsc \oo \id}  \\
      A & \cC \otimes \As \ar[l],} \quad
    \xymatrix@C=15pt@R=15pt{ \AsA \ar[d]_{\mult} \ar[r]^{\lT} &
      \ArA
      \ar[d]^{\id \oo \epsc} \\
      A & \At \otimes \Cc \ar[l],} \quad
      \xymatrix@R=18pt@C=10pt{ \AsA \ar[r]^{\mult}
        \ar[d]_{\epsc \oo \id} & A \ar[d]^{\epsc} &
        \AtA \ar[l]_{\mult} \ar[d]^{\epsc \oo \id} \\
 A
        \ar[r]^{\epsc} & C, & A \ar[l]_{\epsc}  }
    \end{gather}
  where the lower horizontal maps to $A$ are the natural ones.

\subsection{Regular multiplier Hopf algebroids}
\label{subsection:hopf-algebroids}
A left and a right quantum graph $\mathcal{A}_{B}=(B,A,s_{B},t_{B})$
and $\mathcal{A}_{C}=(C,A,s_{C},t_{C})$ are \emph{compatible} if the
underlying algebra $A$ is the same and the relations
$t_{B}(B)=s_{C}(C)$ and $t_{C}(C)=s_{B}(B)$ hold. In that case, we
identify $B$ and $C$ with their images $s_{B}(B)$ and $s_{C}(C)$ in
$M(A)$, so that the maps $t_{B}$ and $t_{C}$ get identified with the
anti-isomorphisms $S_{B}:=t_{B} \circ s_{B}^{-1} \colon B\to C$ and
$S_{C}:=t_{C} \circ s_{C}^{-1} \colon C \to B$, respectively.
Thus, compatible left and right quantum graphs are given by a
non-degenerate algebra $A$ and subalgebras $B,C \subseteq M(A)$ such
that $A$ is  idempotent (and automatically non-degenerate and faithful) as
a left and as a right module over $B$ and over $C$, and such that the
assumptions (4a) or (4b) above concerning local units or flatness hold.

A \emph{multiplier bialgebroid} consists of a left multiplier
bialgebroid $(\mathcal{A}_{B},\Delta_{B})$ and a right multiplier
bialgebroid $(\mathcal{A}_{C},\Delta_{C})$, where 
$\mathcal{A}_{B}$ and $\mathcal{A}_{C}$ are compatible and 
\begin{align} \label{eq:compatible} 
     \begin{aligned}
       ((\Delta_{B} \oo \id)((1 \oo c)\Delta_{C}(b)))(a \oo 1 \oo 1)
       &= (1 \oo 1 \oo c)((\id \oo
       \Delta_{C})(\Delta_{B}(b)(a \oo 1))), \\
       (a \oo 1 \oo 1)((\Delta_{C} \oo \id)(\Delta_{B}(b)(1 \oo c)))
       &= ((\id \oo \Delta_{B})((a \oo 1)\Delta_{C}(b)))(1 \oo 1 \oo
       c)
     \end{aligned}
\end{align}
 for all $a,b,c \in A$ or, equivalently,  the following diagrams commute:
\begin{align} \label{dg:compatible}
  \xymatrix@R=15pt{ \AcAbA \ar[r]^{\id \oo \Tr} \ar[d]_{\lT \oo
      \id} & \AcAlA \ar[d]^{\lT \oo \id}
&&    \ACABA \ar[r]^{\id \oo \rT} \ar[d]_{\Tl \oo \id} &
    \ar[d]^{\Tl \oo \id} \ACArA
    \\
    \ArAbA \ar[r]^{\id \oo \Tr} & \ArAlA,  && 
    \AlABA \ar[r]^{\id \oo \rT} & \AlArA.  }
\end{align}
We call
$\mathcal{A}:=((\mathcal{A}_{B},\Delta_{B}),(\mathcal{A}_{C},\Delta_{C}))$
a \emph{regular multiplier Hopf algebroid} if the canonical maps
$\Tl,\Tr,\lT,\rT$ are bijective. The main result of
\cite{timmermann:regular} says that this condition is equivalent to
existence of an invertible antipode, which is an anti-homomorphism $S
\colon A\to A$ satisfying the following conditions:
  \begin{enumerate}
  \item $S(xyax'y')=S_{C}(y')S_{B}(x')S(a)S_{C}(y)S_{B}(x)$ for all
    $x,x'\in B, y,y' \in C, a \in A$;
  \item there exist a left counit $\beps$ for
    $(\mathcal{A}_{B},\Delta_{B})$ and a right counit $\epsc$ for
    $(\mathcal{A}_{C},\Delta_{C})$ such that the following diagrams commute,
\begin{align} \label{dg:antipode}
        \xymatrix@C=10pt@R=12pt{ \AbA \ar[rr]^{\Tr} \ar[d]_{S_{C}
            \circ \epsc
            \oo \id} && \AlA
          \ar[d]^{S \oo \id}  \\
          \Bb \oo \bA \ar[r] & A & \ar[l] M(A)
          \ooc A} \qquad \xymatrix@C=10pt@R=12pt{\AcA
          \ar[rr]^{\lT} \ar[d]_{\id \oo
            S_{B}\circ \beps} && \ArA \ar[d]^{\id \oo S} \\
          \Ac \oo \cC \ar[r] & A& \ar[l] A
          \oob M(A)}
    \end{align}
  where the lower horizontal maps are given by multiplication. 
\end{enumerate}
The counits $\beps$ and $\epsc$ then are uniquely determined.

The canonical maps $\Tl,\Tr,\lT,\rT$ and the antipode $S$ of a regular
multiplier Hopf algebroid $\mathcal{A}$ as above are related by the
following commutative diagrams,
\begin{gather} \label{dg:galois-inverse}
        \xymatrix@R=10pt{\ABA \ar[d]_{\rT} \ar[r]^{\id \oo S} & \AlA,
          \\
          \ArA \ar[r]_{\id \oo S} & \AbA \ar[u]_{\Tr} } \qquad
        \xymatrix@R=10pt{\ACA \ar[d]_{\Tl} \ar[r]^{S \oo \id} & \ArA,
          \\
          \AlA \ar[r]_{S \oo \id} & \AcA \ar[u]_{\lT} } \\
\label{dg:galois-aux}
      \xymatrix@C=20pt@R=5pt{ \AcA \ar[dd]_{\lT} \ar[rd]_{\Tl \circ\Sigma}
        \ar[rr]^{(S \oo \id) \circ\Sigma} &&
        \ArA  \\
        & \AlA  &\\
        \ArA \ar[rr]_{(S \oo \id) \circ\Sigma} && \ABA \ar[lu]_{\Tr \circ\Sigma}
        \ar[uu]_{\rT} } \qquad \xymatrix@C=20pt@R=5pt{ \AbA \ar[dd]_{\Tr}
        \ar[rr]^{(\id \oo S) \circ\Sigma}
        \ar[rd]_{\rT \circ\Sigma} &  & \AlA \\
        & \ArA  & \\
        \AlA \ar[rr]_{(\id \oo S) \circ\Sigma} && \ACA 
        \ar[uu]_{\Tl} \ar[lu]_{\lT \circ\Sigma} } \\
 \label{dg:galois-aux2}
    \xymatrix@C=30pt@R=10pt{
      \ACA \ar[d]_{\Tl} \ar[r]^{\Sigma \circ(S \oo S)} &       \ABA
      \ar[d]^{\rT} \\
      \AlA \ar[r]_{\Sigma \circ (S \oo S)} & \ArA, }
  \qquad
    \xymatrix@C=30pt@R=10pt{
      \AcA \ar[d]_{\lT} \ar[r]^{\Sigma \circ(S \oo S)} &       \AbA
      \ar[d]^{\Tr} \\
      \ArA \ar[r]_{\Sigma \circ (S \oo S)} & \AlA,  } 
  \end{gather}
see Theorem 6.8, Proposition 6.11 and Proposition 6.12 in \cite{timmermann:regular}.


Let $\mathcal{A}_{B}=(B,A,\id,S_{B})$ and
$\mathcal{A}_{C}=(C,A,\id,S_{C})$ be compatible left and right quantum
graphs.  An involution $*$ on $A$ is \emph{admissible with respect to
  $\mathcal{A}_{B}$ and $\mathcal{A}_{C}$} if (1) $B$ and $C$ are
$^{*}$-subalgebras of $M(A)$ and (2) $S_{B}\circ \ast \circ S_{C}
\circ \ast =\id_{C}$ and $S_{C}\circ \ast \circ S_{B} \circ \ast
=\id_{B}$.  A \emph{multiplier $^{*}$-bialgebroid/multiplier Hopf
  $^{*}$-algebroid} is a multiplier bialgebroid/regular multiplier
Hopf algebroid
$\mathcal{A}=((\mathcal{A}_{B},\Delta_{B}),(\mathcal{A}_{C},\Delta_{C}))$
with an admissible involution on the underlying algebra $A$ satisfying
$(*\oo *)\circ \Delta_{B}\circ *=\Delta_{C}$.  In that case, the left
and right counits $\beps,\epsc$ and the antipode $S$ satisfy
\begin{align}
  \label{eq:involutions}
 \epsc\circ * &= *\circ S_{B}\circ \beps, & \beps\circ *&=*\circ
S_{C}\circ \epsc, & S\circ *\circ S \circ *&=\id_{A}.
\end{align}

We end this review with a few observations and definitions not
contained in \cite{timmermann:regular}.
\begin{lemma} \label{lemma:counits-antipode}
  Let $\mathcal{A}$ be a regular multiplier Hopf algebroid with
  antipode $S$ and left and right counits $\beps$ and $\epsc$,
  respectively. Then  $\epsc \circ 
  S =S \circ \beps$ and $\beps \circ S = S \circ \epsc$.
\end{lemma}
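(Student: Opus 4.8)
The plan is to read the claimed identities $\epsc \circ S = S \circ \beps$ and $\beps \circ S = S \circ \epsc$ off the defining relations \eqref{dg:antipode} for the antipode together with the counit diagrams \eqref{dg:left-counit} and \eqref{eq:right-counit}, using the bijectivity of the canonical maps. First I would establish $\epsc \circ S = S \circ \beps$. The right-hand diagram in \eqref{dg:antipode} says that on $\AcA$ one has $(\id \oo S) \circ \lT = m \circ (\id \oo S_{B}\circ\beps)$, i.e.\ $(a \oo 1)\Delta_{C}(b) \mapsto a\,S(b)$ under $\id\oo S$ while $a \oo b \mapsto a\,S_{B}(\beps(b))$ under the other route; I would instead apply $S$ on the \emph{left} leg. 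Concretely, combining the left half of \eqref{dg:antipode} (which computes $(S\oo\id)\circ\Tr$) with the middle/right counit triangles in \eqref{dg:left-counit} and \eqref{eq:right-counit}, one gets two expressions for a map like $A \oo A \to A$ that differ exactly by $\epsc\circ S$ versus $S\circ\beps$ applied in the appropriate slot; cancelling using non-degeneracy of $A$ as a module over itself yields the identity on generators and hence everywhere.

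More explicitly, the cleanest route is: apply the left counit $\beps$ to the first diagram in \eqref{dg:antipode}. That diagram gives $(S\oo\id)(\Tr(a\oo b)) = (S\oo\id)(\Delta_{B}(a)(1\oo b))$, and the left counit property \eqref{dg:left-counit} applied to the first leg (after using the module relation \eqref{eq:left-comult-module} to move things across) collapses $\Delta_{B}(a)$ against $\beps$ on the left to recover $a$. Running the symmetric computation with $\epsc$ and the second diagram in \eqref{dg:antipode}, and comparing, forces $\beps(S(a)) = \epsc(S(a))$ suitably interpreted — but since $\beps$ lands in $B$ and $\epsc$ in $C$ and these are identified via the anti-isomorphisms $S_{B}\colon B\to C$, $S_{C}\colon C\to B$, the correct statement is precisely $\epsc\circ S = S\circ\beps$ (note $S$ restricted to $B$ is $S_{B}$ by condition (1) on the antipode, and likewise $S|_{C}=S_{C}$, so the two sides genuinely match as maps $A\to B$). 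The second identity $\beps\circ S = S\circ\epsc$ then follows either by the same argument with the roles of the left and right structure interchanged, or more quickly by applying $S$ to $\epsc\circ S = S\circ\beps$ on both sides and using $S^{2}$-invariance-type bookkeeping together with $S_{B}\circ S_{C}=\id$ on $C$ and $S_{C}\circ S_{B}=\id$ on $B$ (which hold since $S_{B}=t_{B}\circ s_{B}^{-1}$ and $S_{C}=t_{C}\circ s_{C}^{-1}$ and the $t$'s, $s$'s are mutually inverse under the identifications of Subsection~\ref{subsection:hopf-algebroids}).

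The main obstacle I anticipate is purely bookkeeping: keeping straight which module structure ($\sA$, $\At$, $\tA$, $\As$) each counit respects, and in which slot of the balanced tensor product the counit diagrams may legitimately be applied, since $\beps\in\Hom(\sA,\bB)\cap\Hom(\tA,\Bb)$ and $\epsc\in\Hom(\As,\Cc)\cap\Hom(\At,\cC)$ act on the two legs asymmetrically. One must feed the canonical maps elements of the correct tensor product ($\ATA$, $\AsA$, $\AtA$, $\AcA$, etc.) so that both the antipode diagram and the counit diagram are simultaneously applicable, and then use the bijectivity of $\Tr$ (or $\lT$) to pass from the identity "on the image of the canonical map" to an identity on all of $A$. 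Once the right tensor-product bookkeeping is fixed, the verification is a short diagram chase with no analytic content.
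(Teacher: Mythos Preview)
Your plan does not constitute a proof, and it contains a concrete error. You claim that $S_{B}\circ S_{C}=\id$ on $C$ and $S_{C}\circ S_{B}=\id$ on $B$, justifying this by saying the $t$'s and $s$'s are ``mutually inverse under the identifications.'' This is false in general: the compatibility conditions $t_{B}(B)=s_{C}(C)$ and $t_{C}(C)=s_{B}(B)$ equate the \emph{images}, not the maps. In fact $S_{B}^{-1}\circ S_{C}^{-1}=\sigma^{\mu}_{B}=S^{-2}|_{B}$ is the (typically nontrivial) modular automorphism of $\mu_{B}$, see Proposition~\ref{proposition:counit-kms}. So your route to the second identity $\beps\circ S=S\circ\epsc$ from the first by ``applying $S$ and using $S_{B}\circ S_{C}=\id$'' collapses.

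More seriously, the plan for the first identity never produces an actual computation. You propose to apply $\beps$ or $\epsc$ to the antipode diagrams \eqref{dg:antipode}, but if you try this---say, apply $\epsc$ to the relation $\sum ab_{(1)}S(b_{(2)})=aS_{B}(\beps(b))$---you get $\sum\epsc(ab_{(1)})\epsc(S(b_{(2)}))=\epsc(a)S_{B}(\beps(b))$, and to simplify the left side you already need to know how $\epsc$ interacts with $S$, which is what you are trying to prove. The diagrams \eqref{dg:antipode} encode the antipode axiom $\sum S(a_{(1)})a_{(2)}=S_{C}(\epsc(a))$, not the fact that $S$ reverses the comultiplication; the latter is the missing ingredient.

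The paper's proof uses precisely that ingredient: diagram \eqref{dg:galois-aux2}, which says $\Sigma\circ(S\oo S)$ intertwines $\Tl$ with $\rT$. One then assembles a single diagram with five cells---the outer rectangle (anti-multiplicativity of $S$), the top cell (\eqref{dg:galois-aux2}), the left triangle ($\beps$-counit property via $\Tl$, from \eqref{dg:left-counit}), the right triangle ($\epsc$-counit property via $\rT$, from \eqref{eq:right-counit})---forcing the bottom cell to commute, which reads off as $\epsc\circ S=S\circ\beps$. The second identity follows by the mirror-image argument, not by composing with $S_{B},S_{C}$.
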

\begin{proof}
  Essentially, this follows from the fact that $S$ is an
  anti-isomorphism of $\mathcal{A}$ with its bi-opposite. More
  explicitly, the outer, upper, left and right cells of the
  diagram
  \begin{align*}
    \xymatrix@C=10pt@R=10pt{ \ACA \ar[dd]_{m \circ\Sigma} \ar[rd]^{\Tl}
      \ar[rrrr]^{\Sigma\circ (S \oo S)} &&&& \ABA
      \ar[ld]_{\rT} \ar[dd]^{m\circ \Sigma}  \\
      &  \AlA \ar[rr]^{\Sigma \circ (S\oo S)} \ar[ld]^{\id \oo \epsb} &\quad&
      \ArA  \ar[rd]_{ \epsc \oo \id} & \\
      A \ar[rrrr]^{S} &&&& A }
  \end{align*}
commute,  whence the lower cell commutes as well, whence $\epsc \circ S = S
  \circ \beps$, and the second relation follows similarly.
\end{proof}

In our study of integration, we will make use of the following conditions.
\begin{definition}
  We call a regular multiplier Hopf algebroid $\mathcal{A}$
  \emph{projective} or \emph{flat} if all of the modules
  $\bA,\Ab,\cA,\Ac$ are projective or flat, respectively.
\end{definition}
Using the antipode, one easily verifies that the module
$\bA$ or $\Ab$ is projective/flat if and only if $\Ac$ or $\cA$ is
projective/flat, respectively.

We next show that the counits of a regular multiplier Hopf algebroid
can be assumed to be surjective without much loss of
generality. Denote by $\dbA,\dAb,\dcA,\dAc$ the dual modules of
$\bA,\Ab,\cA,\Ac$, respectively; for example, $ \dbA = \Hom(\bA,\bB)$
and $\dAb =\Hom(\Ab,\Bb)$.
\begin{proposition} \label{proposition:counits-full}
  Let $\mathcal{A}$ be a flat regular multiplier Hopf algebroid. Then
  \begin{enumerate}
  \item $B_{0}:=\beps(A) \subseteq B$ and $C_{0}:=\epsc(A) \subseteq
    C$ are two-sided ideals and satisfy
    \begin{align*}
\sum_{\omega \in \dAb} \omega(A)=      B_{0} &= \sum_{\omega \in \dbA}
\omega(A), & \sum_{\omega \in \dAc} \omega(A) = C_{0}
      &=\sum_{\omega \in \dAc} \omega(A).
    \end{align*}
  \item $S_{B}$ and $S_{C}$ restrict to anti-isomorphisms $S_{B_{0}}
    \colon B_{0} \to C_{0}$
    and $S_{C_{0}} \colon C_{0} \to B_{0}$;
  \item 
    $\mathcal{A}_{B_{0}}:=(A,B_{0},\iota_{B_{0}},S_{B_{0}})$ and
    $\mathcal{A}_{C_{0}}:=(A,C_{0},\iota_{C_{0}},S_{C_{0}})$ are 
    compatible left and right quantum graphs;
  \item the natural maps ${_{B_{0}}A} \oo {_{S(B_{0})}A} \to \AlA$ and
    ${A_{S(C_{0})}} \oo A_{C_{0}} \to \ArA$ are isomorphisms so that
    $\Delta_{B}$ and $\Delta_{C}$ can be regarded as a left and a right
    comultiplication for  $\mathcal{A}_{B_{0}}$ and
    $\mathcal{A}_{C_{0}}$, respectively;
  \item
    $\mathcal{A}_{0}:=((\mathcal{A}_{B_{0}},\Delta_{B}),(\mathcal{A}_{C_{0}},\Delta_{C}))$
    is a flat regular multiplier Hopf algebroid.
  \end{enumerate}
\end{proposition}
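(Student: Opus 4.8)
The plan is to run through the five items in order. The only genuinely substantive step is the identity $A=B_0A=AB_0$ (and its $C_0$-analogue) hidden inside (1); everything afterwards is bookkeeping, and the flatness hypothesis is precisely what allows the quantum-graph, bialgebroid and antipode axioms to be transferred along the non-unital inclusions $B_0\hookrightarrow B$ and $C_0\hookrightarrow C$. Throughout I would use, from \cite{timmermann:regular}, that $A$ is idempotent as an algebra and that the antipode $S$ extends to $M(A)$ with $S|_B=S_B$ and $S|_C=S_C$, together with Lemma~\ref{lemma:counits-antipode}. For (1): that $B_0=\beps(A)$ is a two-sided ideal is immediate from $\beps$ being a morphism both of left and of right $B$-modules and from $BA=A=AB$ (one even gets $BB_0=B_0B=B_0$); likewise for $C_0$. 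For the identity $A=B_0A$ I would read it off the second diagram in \eqref{dg:left-counit}: it says that if $(\id\otimes\beps)(\Delta_B(b)(a\otimes1))=\sum_i a_i\otimes x_i$ then $\sum_i x_ia_i=ba$, and since each $x_i$ lies in $\beps(A)=B_0$ this exhibits $ba\in B_0A$; as $a,b\in A$ are arbitrary and $A=A^2$, we get $A=B_0A$. Applying the anti-automorphism $S$ and the corresponding right-counit diagram in \eqref{eq:right-counit} then yields $A=AB_0=C_0A=AC_0$. The two trace-ideal identities now follow in two lines each: $\beps$ lies in both $\dbA$ and $\dAb$, so $B_0\subseteq\sum_{\omega}\omega(A)$ on either side; conversely, for $\omega\in\dbA$ and $a\in A$ write $a=\sum_i x_ia_i$ with $x_i\in B_0$ (using $A=B_0A$), whence $\omega(a)=\sum_i x_i\omega(a_i)\in B_0B\subseteq B_0$ by left $B$-linearity, and symmetrically for $\omega\in\dAb$ using $A=AB_0$; the $C_0$-side is the same with the right counit.

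Item (2) is short: $\epsc\circ S=S\circ\beps$ (Lemma~\ref{lemma:counits-antipode}) together with bijectivity of $S$ gives $C_0=\epsc(S(A))=S(\beps(A))=S_B(B_0)$, and similarly $B_0=S_C(C_0)$, so $S_B$ and $S_C$ restrict to mutually inverse anti-isomorphisms $S_{B_0}\colon B_0\to C_0$ and $S_{C_0}\colon C_0\to B_0$. For (3) I would check the left-quantum-graph axioms for $\mathcal A_{B_0}$ with base $B_0$ and structure maps $\iota_{B_0},S_{B_0}$, and symmetrically the right ones for $\mathcal A_{C_0}$: conditions (1) and (2) are clear ($B_0$ is an idempotent algebra, $A_A$ is unchanged, and $B_0\subseteq B$, $C_0\subseteq C$ still have commuting images in $M(A)$); idempotency of ${_{B_0}A}$ and $A_{S(B_0)}$ over $B_0$ is exactly the content of (1); and faithfulness and non-degeneracy over $B_0$, as well as condition (4a)/(4b), follow from the corresponding properties over $B$ together with the identities of (1) and flatness of $\bA$ — flatness making the natural map $B_0\otimes_BA\to A$ injective, hence an isomorphism onto $B_0A=A$.

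For (4), this isomorphism (together with $A\cong A\otimes_BB_0$ and the $C_0$-analogues) shows that the natural surjections ${_{B_0}A}\otimes_{B_0}{_{S(B_0)}A}\twoheadrightarrow\AlA$ and $A_{S(C_0)}\otimes_{C_0}A_{C_0}\twoheadrightarrow\ArA$, and likewise the analogous maps on the double and triple tensor products appearing in the axioms, are bijective; under these identifications $\Lreg(\AltkA)$ and $\Rreg(\ArtkA)$ are unchanged, the covariance relations \eqref{eq:left-comult-module}, \eqref{eq:right-comult-module} restrict from $B,C$ to $B_0,C_0$, and coassociativity and compatibility \eqref{eq:left-comult-coass}, \eqref{eq:right-comult-coass}, \eqref{eq:compatible} — identities that do not see the base — hold verbatim, so $\Delta_B,\Delta_C$ become a left and a right comultiplication for $\mathcal A_{B_0},\mathcal A_{C_0}$. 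Finally, for (5): $\mathcal A_0$ is a multiplier bialgebroid (the graphs are compatible since $S_{B_0}(B_0)=C_0$ and $S_{C_0}(C_0)=B_0$); its canonical maps $\Tl,\Tr,\lT,\rT$ agree, under the above identifications, with those of $\mathcal A$ and hence remain bijective, so $\mathcal A_0$ is a regular multiplier Hopf algebroid with now-surjective counits $\beps,\epsc$ and antipode $S$, whose axioms from Subsection~\ref{subsection:hopf-algebroids} hold either by restriction or because they encode base-independent identities in $A$; and $\mathcal A_0$ is flat, since ${_{B_0}A},A_{B_0},{_{C_0}A},A_{C_0}$ are flat again by the isomorphisms $A\cong B_0\otimes_BA$ and their variants.

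I expect the main obstacle to be twofold. The one point with real content is the identity $A=B_0A=AB_0$ in (1): the trace-ideal description, idempotency of $A$ over $B_0$, and all the tensor-product identifications downstream rest on it, although once one looks at the right counit diagram its proof is short. The step that demands genuine care — and the only place the flatness of $\mathcal A$ is really exploited — is the restriction of scalars along the non-unital ideals $B_0\subseteq B$ and $C_0\subseteq C$ in (3)--(5): one must check that balanced tensor products, faithfulness, non-degeneracy, flatness, and bijectivity of the canonical maps all survive this restriction.
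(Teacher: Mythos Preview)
Your outline matches the paper's proof closely: the paper also derives everything from the identities $A=B_0A=AB_0$ (and their $C_0$-analogues), invokes Lemma~\ref{lemma:counits-antipode} for (2), and for (3)--(5) argues exactly as you do that all the quantum-graph and bialgebroid data survive restriction along the ideal $B_0\subseteq B$. Where you spell out the argument for $A=B_0A$ from the second counit diagram in \eqref{dg:left-counit}, the paper simply cites Lemma~4.4 of \cite{timmermann:regular}; your version is a welcome unpacking of that citation.

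There is one slip you should fix. You claim that $\beps$ lies in both $\dbA$ and $\dAb$, and use this both for the two-sided ideal property of $B_0$ and for the inclusion $B_0\subseteq\sum_{\omega\in\dAb}\omega(A)$. But $\beps$ is \emph{not} right $B$-linear in general: by definition $\beps\in\Hom(\sA,\bB)\cap\Hom(\tA,\Bb)$, and the second condition reads $\beps(aS_B(x))=\beps(a)x$, i.e.\ a twisted right $C$-linearity, not $\beps(ax)=\beps(a)x$. This does not damage the ideal argument --- from $\beps(aS_B(x))=\beps(a)x$ one still gets $B_0B=\beps(A)S_B^{-1}(C)=\beps(AC)=B_0$ --- but it does break your inclusion $B_0\subseteq\sum_{\omega\in\dAb}\omega(A)$. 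The easiest repair is the one the paper uses: observe that $\dAb=\{\omega'\circ S:\omega'\in\Hom(\sbA,\Bb)\}$, or equivalently note (using Lemma~\ref{lemma:counits-antipode}) that $\beps\circ S=S_C\circ\epsc$ does lie in $\dAb$ and has image $S_C(C_0)=B_0$. With this correction your argument goes through.
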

\begin{proof}
  (1) By Lemma 4.4 in \cite{timmermann:regular}, $B_{0}\subseteq B$ is
  a two-sided ideal and equal to $\sum \omega(A)$, where the sum is
  taken over all $\omega \in \dbA$ or over all $\omega \in
  \Hom(\sbA,\Bb)$.  Since $\Hom(\sbA,\Bb) = \{ \omega \circ S^{-1} :
  \omega \in \dAb\}$, the equations involving $B_{0}$ follow. The
  assertions concerning $C_{0}$ follow similarly.

  (2) Immediate from  Lemma \ref{lemma:counits-antipode}.

  (3) By Lemma 4.4 in \cite{timmermann:regular},
  $A=B_{0}A=S_{B}(B_{0})A=C_{0}A$. Using the antipode, one finds that
  $A=AB_{0}=AC_{0}$. Since $A$ is non-degenerate as an algebra, we can
  conclude from these relations that it is also non-degenerate as a
  left module and as a right module over both $B_{0}$ and $C_{0}$,
  respectively. Since every idempotent $B_{0}$-module also is a
  $B$-module and $\bA$ is flat as a $B$-module, $_{B_{0}}A$ must be
    flat as a $B_{0}$-module. Likewise, the modules $_{C_{0}}A$,
    $A_{B_{0}}$ and $A_{C_{0}}$ are flat.

    (4) This follows easily from the relation $B_{0}A=A=S_{B}(B_{0})A$
    and the fact that $B_{0} \subseteq B$ is a two-sided ideal.

    (5) Immediate from (1)--(4).
\end{proof}

\section{Integration}
\label{section:integration}

This section introduces left and right integrals on multiplier
bialgebroids and establishes the key properties of such integrals.

As outlined in the introduction, integration on a multiplier
bialgebroid involves maps from $A$ to $B$ and $C$ that are left- or
right-invariant and correspond to fiberwise integration, and
functionals on $B$ and $C$ which then yield ``total'' integrals on $A$ by
composition.

We first formulate the appropriate left- and right-invariance for maps
from $A$ to $B$ and $C$ (Subsection
\ref{subsection:invariant-elements}). In the setting of Hopf
algebroids, such invariant maps were studied already in
\cite{boehm:integrals}.  We then fix functionals on $B$ and $C$ and
introduce several compatibility conditions on these functionals and
the invariant maps on $A$ that seem necessary to obtain a good
integration theory (Subsection \ref{subsection:base-weights}).  Using
these conditions and following the work of Van Daele on multiplier
Hopf algebras \cite{daele:1}, we establish uniqueness of integrals up
to scaling (Subsection \ref{subsection:uniqueness}) and existence of
modular elements (Subsection \ref{subsection:modular-element}). As a
tool, we use natural convolution operators which in Section
\ref{section:duality} reappear in the definition of the dual algebra
of the dual multiplier Hopf algebroid.

\subsection{Invariant elements of dual modules}
\label{subsection:invariant-elements}

Let
$\mathcal{A}=((\mathcal{A}_{B},\Delta_{B}),(\mathcal{A}_{C},\Delta_{C}))$
be a multiplier bialgebroid. We use the notation introduced in Section
\ref{section:multiplier-bialgebroids}. Regarding $A$ as a module over
$B$ and $C$, one obtains four associated dual modules, and we
abbreviate
\begin{align*}
  \dbA &:= \Hom(\bA,\bB), & \dAb &:= \Hom(\Ab,\Bb),  &
\dbAb &:= \dbA \cap \dAb, &
\\
  \dcA &:= \Hom(\cA,\Cc), & \dAc &:=\Hom(\Ac,\Cc), &\dcAc &:= \dcA \cap \dAc.
\end{align*}

 \begin{definition}\label{definition:invariant-elements}
   Let $\mathcal{A}$ be a multiplier bialgebroid. We call
  \begin{enumerate}
  \item $\cphi \in \dcA$ \emph{left-invariant (w.r.t.\
      $\Delta_{B}$)} if $ (\iota \otimes
    S_{B}^{-1}\circ\cphi)(\Delta_{B}(b)(a\otimes 1))=\cphi(b)a$,
  \item $\phic \in \dAc$ \emph{left-invariant (w.r.t.\
      $\Delta_{C}$)} if $(\iota \otimes \phic)((a\otimes
    1)\Delta_{C}(b)) = a\phic(b)$,
  \item $\bpsi \in \dbA$ \emph{right-invariant (w.r.t.\
      $\Delta_{B}$)} if $(\bpsi \otimes \iota)(\Delta_{B}(a)(1 \otimes
    b)) = \bpsi(a)b$,
  \item $\psib \in \dAb$ \emph{right-invariant (w.r.t.\
      $\Delta_{C}$)} if $(S_{C}^{-1}\circ\psib \otimes \iota)((1 \otimes
    b)\Delta_{C}(a)) = b\psib(a)$
  \end{enumerate}
  for all $a,b\in A$.  We denote by $\ltintb$, $\ltintc$, $\rtintb$
  and $\rtintc$ the spaces consisting of all maps as in (1)--(4),
  respectively.
\end{definition}
The purpose of the lower subscripts on $\phi$ and $\psi$ will become
clearer in the next subsection.

Regard $\dcAc$ as an $M(B)$-bimodule and $\dbAb$ as an
$M(C)$-bimodule, where $(x \cdot \cphic \cdot x')(a) = \cphic(x' a x)$
and $(y \cdot \bpsib \cdot y') (a) = \bpsib(y' a y)$.
\begin{proposition} \label{proposition:invariant-elements-bimodule}
  Let $\mathcal{A}$ be
  a multiplier bialgebroid.  Then $\ltintb$ and $\ltintc$ are
  $M(B)$-sub-bimodules of $\dcAc$, and $\rtintb$ and $\rtintc$ are
  $M(C)$-sub-bimodules of $\dbAb$.
\end{proposition}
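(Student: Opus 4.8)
The plan is to verify the three required properties for each of the four spaces: (i) that $\ltintb,\ltintc$ are contained in $\dcAc$ and $\rtintb,\rtintc$ are contained in $\dbAb$; (ii) that they are closed under the relevant left and right $M(B)$- or $M(C)$-action; and (iii) that the actions are well-defined, i.e.\ that the conditions in Definition \ref{definition:invariant-elements} are genuinely preserved and the module structure respects bimodule axioms. The last point is automatic once (ii) is checked, since $\dcAc$ and $\dbAb$ are already $M(B)$- and $M(C)$-bimodules by construction, so the real content is (i) and (ii).

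For containment, take $\cphi \in \ltintb$, so $\cphi \in \dcA = \Hom(\cA,\cC)$ and $(\iota \oo S_B^{-1}\circ \cphi)(\Delta_B(b)(a\oo 1)) = \cphi(b)a$ for all $a,b$. I would show $\cphi \in \dAc$ as well, i.e.\ that $\cphi$ is also right $C$-linear. This should follow by combining the left-invariance identity with the module-covariance relations for $\Delta_B$ in \eqref{eq:left-comult-module} together with the compatibility relations \eqref{eq:compatible} between $\Delta_B$ and $\Delta_C$, and the identifications $t_B(B) = s_C(C)$, $t_C(C) = s_B(B)$ from Subsection \ref{subsection:hopf-algebroids}. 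The point is that right $C$-multiplication on $A$ is $a\cdot y = t_C(y) a = s_B(S_B^{-1}\circ \text{(something)})a$, which interacts with $\Delta_B$ through a factor on the "output" leg that $\cphi$ then absorbs via left $C$-linearity. Concretely, applying $(\iota\oo S_B^{-1}\circ\cphi)$ to $\Delta_B(b)(ay'\oo 1)$ for $y'\in C$ and using \eqref{eq:left-comult-module} to move the $t_C(y')$ through $\Delta_B$, one rewrites the result as $\cphi(b y'')\,a$ or $(\cphi(b)a)\cdot y'$ for the appropriate element, yielding the desired $C$-bilinearity. The analogous arguments handle $\phic$, $\bpsi$, $\psib$; in each case the invariance property forces the "missing" one of the two module structures.

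For closure under the bimodule action, fix $\cphi \in \ltintb$ and $x,x'\in M(B)$; I claim $x\cdot\cphi\cdot x' \in \ltintb$, where $(x\cdot\cphi\cdot x')(a) = \cphi(x' a x)$. Substitute $x'ax$ for the relevant argument in the defining identity: one must show $(\iota \oo S_B^{-1}\circ(x\cdot\cphi\cdot x'))(\Delta_B(b)(a\oo 1)) = (x\cdot\cphi\cdot x')(b)\,a$. Using \eqref{eq:left-comult-module} to pull $s_B(x)$ and $t_B$-type factors through $\Delta_B$, the left side becomes $(\iota\oo S_B^{-1}\circ\cphi)(\Delta_B(x'b x)(a\oo 1))$ up to multiplier bookkeeping on the first leg, which by left-invariance of $\cphi$ equals $\cphi(x'bx)\,a = (x\cdot\cphi\cdot x')(b)\,a$. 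The same mechanism — covariance of the comultiplication under the source/target maps, as recorded in \eqref{eq:left-comult-module}, \eqref{eq:right-comult-module} — gives the corresponding statements for $\ltintc \subseteq \dcAc$ under $M(B)$ and for $\rtintb,\rtintc \subseteq \dbAb$ under $M(C)$. Note the actions must pass to $M(B)$ and $M(C)$, not just $B$ and $C$; this uses that $s_B,t_B$ extend to $M(B)$ and act on $A$ by multipliers, as noted after the definition of a left quantum graph, so the substitution $a\mapsto x'ax$ makes sense and the covariance relations extend.

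The main obstacle I expect is the careful bookkeeping of multipliers and of which leg of $\Delta_B(b)$ or $\Delta_C(b)$ the source/target factors land on when one moves them across the comultiplication using \eqref{eq:left-comult-module}: the relation puts $t_B(y)$ on the first leg and $s_B(x)$ on the second, with a swap, so one has to track precisely which factor gets absorbed by $\cphi$ (acting on the second, "output" leg) and which survives on the first leg to produce the right-hand side $\cphi(\cdots)a$. A secondary subtlety is that $\cphi$ is a priori only a map of left $C$-modules, so before proving the $M(B)$-action is well-defined one genuinely needs the containment $\ltintb \subseteq \dAc$ established in the first step — the two parts of the proof are intertwined and should be carried out in that order. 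Everything else is a routine transcription of these arguments under the left-right and $B$-$C$ symmetries of the setup.
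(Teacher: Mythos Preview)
Your proposal is correct and follows essentially the same approach as the paper: both the containment $\ltintb \subseteq \dcAc$ and the closure under the $M(B)$-action come directly from the covariance relations \eqref{eq:left-comult-module} and \eqref{eq:right-comult-module}. Two simplifications worth noting: the compatibility relations \eqref{eq:compatible} are not actually needed---for containment one just uses that $\Delta_B(by)(a\otimes 1)=\Delta_B(b)(ya\otimes 1)$ for $y\in C$ (since $C=t_B(B)$), whence $\cphi(by)a=\cphi(b)ya$; and your order-of-steps worry is unfounded, since the $M(B)$-action $(x\cdot\cphi\cdot x')(a)=\cphi(x'ax)$ is already well-defined on $\dcA$ before one knows $\cphi\in\dAc$.
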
 
\begin{proof}
  In Sweedler notation, the conditions in (1)--(4) in Definition
  \ref{definition:invariant-elements} take the form
  \begin{linenomath*}
    \begin{align} \label{eq:invariant-elements-sweedler-left} \sum
      S_{B}^{-1}(\cphi(b_{(2)}))b_{(1)} a &= \cphi(b)a, & \sum ab_{(1)}
      S_{C}(\phic(b_{(2)})) &= a\phic(b),
      \\ \label{eq:invariant-elements-sweedler-right} \sum
      ba_{(2)}S_{C}^{-1}(\psib(a_{(1)})) &= b\psib(a), & \sum
      S_{B}(\bpsi(a_{(1)}))a_{(2)}b &= \bpsi(a)b.  \end{align}
  \end{linenomath*}
  The assertions then follow easily from the relations
  \eqref{eq:left-comult-module} and   \eqref{eq:right-comult-module}, for example,
  \begin{align*}
    \cphi(by)a &= \sum S_{B}^{-1}(\cphi(b_{(2)}))b_{(1)}ya =
    \cphi(b)ya, \\
    \cphi(x' b x)a &= \sum S_{B}^{-1}(\cphi(x' b_{(2)}x))b_{(1)}a
  \end{align*}
  for all $a,b\in A$, $y\in M(C)$, $x,x' \in M(B)$ and $\cphi \in \ltintb$.
\end{proof}
To stress that left-invariant maps $A\to C$ and right-invariant maps
$A \to B$ are morphisms of bimodules, we shall use double subscripts
from now on
and write $\cphic$ or $\bpsib$ instead of $\cphi,\phic,\bpsi$ or
$\psib$, respectively.

If $\mathcal{A}$ is a regular multiplier Hopf algebroid, then left or
right invariance with respect to $\Delta_{B}$ is equivalent to left or
right invariance with respect to $\Delta_{C}$ and to certain strong
invariance relations, just as in the case of Hopf algebroids; see
Scholium 2.10 in \cite{boehm:integrals}.  We include the proof, which
uses the following observation. In terms of the canonical maps of the
multiplier bialgebroid $\mathcal{A}$, the invariance conditions
(1)--(4) in Definition \ref{definition:invariant-elements} amount to
commutativity of the diagrams
\begin{align} \label{eq:invariance-canonical-left} \xymatrix@R=0pt{
    \ACA \ar[rd]_{\iota \otimes \cphic} \ar[rr]^{\Tl} && \AlA
    \ar[ld]^{\iota\otimes S_{B}^{-1}\circ \cphic} \\ & A, &} &&
  \xymatrix@R=0pt{ \AcA \ar[rd]_{\iota \otimes \cphic} \ar[rr]^{\lT}
    && \ArA \ar[ld]^{\iota\otimes \cphic}
    \\  & A, &} \\ \label{eq:invariance-canonical-right}
  \xymatrix@R=0pt{\AbA \ar[rd]_{\bpsib \otimes \iota} \ar[rr]^{\Tr} &&
    \AlA
    \ar[ld]^{\bpsib \otimes \iota}  \\
    & A, &} && \xymatrix@R=0pt{ \ABA \ar[rd]_{\bpsib \otimes \iota}
    \ar[rr]^{\Tr} && \ArA
    \ar[ld]^{S_{C}^{-1} \circ \bpsib \otimes \iota}  \\
    & A, &}
\end{align}
where $\cphic \in \dcAc$ and $\bpsib \in \dbAb$, respectively.
\begin{proposition} \label{proposition:invariant-elements-hopf} Let
  $\mathcal{A}$ be a regular multiplier Hopf algebroid with antipode
  $S$. Then
  \begin{enumerate}
  \item $\ltintb=\ltintc = \{ \cphic \in \dcAc \mid \text{ diagram
      \eqref{dg:strong-invariance-left} commutes}\}$;
    \begin{linenomath*}
      \begin{align} \label{dg:strong-invariance-left}
        \xymatrix@R=12pt@C=45 pt{  \AbA \ar[r]^{\Tr}
          \ar[d]_(0.55){\rT\Sigma} & \AlA \ar[d]^(0.55){S\circ (\iota
            \otimes S_{B}^{-1}\circ\cphic)}\\
      \ArA \ar[r]_(0.55){\iota\otimes \cphic} & A; }
      \end{align}
    \end{linenomath*}
  \item $\rtintb=\rtintc=\{\bpsib \in \dbAb \mid \text{ diagram
      \eqref{dg:strong-invariance-right} commutes}\}$;
    \begin{linenomath*}
      \begin{align} \label{dg:strong-invariance-right}
        \xymatrix@R=12pt@C=45 pt{ \AcA \ar[r]^{\lT} \ar[d]_{\Tl\Sigma}
          & \ArA
          \ar[d]^{S\circ (S_{C}^{-1}\circ\bpsib \otimes\iota)}\\
          \AlA \ar[r]_(0.55){\bpsib\otimes \iota} &
          A. }
      \end{align}
    \end{linenomath*}
  \item The maps $\cphic \mapsto S^{-1}\circ \cphic \circ S$
    and $\bpsib \mapsto S^{-1} \circ \bpsib \circ S$ are
    bijections between the sets in (1) and (2).
  \end{enumerate}
\end{proposition}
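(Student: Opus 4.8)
The plan is to reduce all four invariance conditions to the triangle identities \eqref{eq:invariance-canonical-left} and \eqref{eq:invariance-canonical-right} recorded just above, and then to paste those triangles against the commutative diagrams \eqref{dg:galois-inverse}, \eqref{dg:galois-aux} and \eqref{dg:galois-aux2}, which relate the four canonical maps $\Tl,\Tr,\lT,\rT$ through the antipode $S$. Since $\mathcal{A}$ is a regular multiplier Hopf algebroid, all of $\Tl,\Tr,\lT,\rT$ are bijective, so whenever such a canonical map occurs as a common left or right factor in an identity of maps it may be cancelled; this cancellation is the device that moves one between the various triangles. Throughout I use that the antipode restricts on $B$ and $C$ to the anti-isomorphisms $S_{B}\colon B\to C$ and $S_{C}\colon C\to B$, so that $S|_{B}=S_{B}$, $S|_{C}=S_{C}$ and $S^{-1}|_{C}=S_{B}^{-1}$, which follows from axiom~(1) of the antipode.

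For (1), I would fix $\cphic\in\dcAc$ -- this costs nothing, since $\ltintb$ and $\ltintc$ both lie in $\dcAc$ by Proposition~\ref{proposition:invariant-elements-bimodule} -- and then show separately that ``$\cphic\in\ltintb$'' and ``$\cphic\in\ltintc$'' are each equivalent to commutativity of \eqref{dg:strong-invariance-left}. Each equivalence is a diagram chase: one pastes the relevant triangle of \eqref{eq:invariance-canonical-left} against the cell of \eqref{dg:galois-inverse} and \eqref{dg:galois-aux} that rewrites $\rT\circ\Sigma$, respectively the node carrying $\lT$, in terms of $\Tr$, $\Tl$ and $S$; cancelling the bijective canonical map that now appears as a common factor turns the triangle into \eqref{dg:strong-invariance-left} in one direction, and reversing the paste gives the converse. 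Part (2) is proved verbatim after interchanging the roles of $(B,\Delta_{B})$ and $(C,\Delta_{C})$, of $\Tr,\rT$ and $\Tl,\lT$, and of \eqref{dg:strong-invariance-left} and \eqref{dg:strong-invariance-right}; equivalently it follows from (1) applied to the bi-opposite multiplier Hopf algebroid $\mathcal{A}^{\bi}$, whose antipode is $S^{-1}$ and which interchanges left- and right-invariance.

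For (3), put $\bpsib:=S^{-1}\circ\cphic\circ S$. A short check -- using only that $S$ is an anti-homomorphism with the above restrictions to $B$ and $C$ and that $\cphic$ is a morphism of $C$-bimodules -- shows that $\bpsib$ is a well-defined morphism of $B$-bimodules $A\to B$ with $S_{B}(\bpsib(a))=\cphic(S(a))$. Now assume $\cphic$ lies in the set in (1); by (1) it then lies in $\ltintc$, so $\sum c\,b_{(1)}S_{C}(\cphic(b_{(2)}))=c\,\cphic(b)$ in Sweedler notation for $\Delta_{C}$. Substituting $b=S(a)$ and using the relation $\Delta_{C}\circ S=\Sigma\circ(S\otimes S)\circ\Delta_{B}$, a reformulation of \eqref{dg:galois-aux2}, turns this into $\sum c\,S(a_{(2)})\,S_{C}(\cphic(S(a_{(1)})))=c\,\cphic(S(a))$ in Sweedler notation for $\Delta_{B}$. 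On the other hand, applying the anti-homomorphism $S$ to the right-invariance identity $\sum S_{B}(\bpsib(a_{(1)}))a_{(2)}b=\bpsib(a)b$ that one wants for $\bpsib$, and using $S_{B}(\bpsib(a))=\cphic(S(a))$ together with $S|_{B}=S_{B}$ and $S|_{C}=S_{C}$, converts it into exactly the displayed identity with $c=S(b)$. As $S$ is bijective this establishes $\bpsib\in\rtintb$, and the converse the same way; hence $\chi\mapsto S^{-1}\circ\chi\circ S$ carries the set in (1) into the set in (2). The identical computation run with $S^{-1}$ in place of $S$ (equivalently, for $\mathcal{A}^{\bi}$) shows that $\chi\mapsto S^{-1}\circ\chi\circ S$ carries the set in (2) into the set in (1) as well, and likewise that $\chi\mapsto S\circ\chi\circ S^{-1}$ carries the set in (1) into the set in (2) and the set in (2) into the set in (1). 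Since $\chi\mapsto S\circ\chi\circ S^{-1}$ and $\chi\mapsto S^{-1}\circ\chi\circ S$ are mutually inverse, both maps in (3) are bijections onto their stated targets, injectivity being automatic from bijectivity of $S$.

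The step I expect to be the main obstacle is the bookkeeping, not anything structural: making the diagram pastes in (1) and (2) type-correct, that is, keeping track of which one-sided $B$- or $C$-module structure each leg of the balanced tensor products $\ACA,\AcA,\AbA,\ABA,\AlA,\ArA$ carries, and verifying that the antipode factors $S,S^{-1},S_{B}^{\pm1},S_{C}^{\pm1}$ produced along the way are precisely those occurring in \eqref{dg:strong-invariance-left} and \eqref{dg:strong-invariance-right}. Once the module conventions are pinned down this is routine, and beyond (1) and (2) part (3) needs only the compatibility $\Delta_{C}\circ S=\Sigma\circ(S\otimes S)\circ\Delta_{B}$ and the symmetry under passing to $\mathcal{A}^{\bi}$.
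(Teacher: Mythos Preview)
Your approach is essentially the same as the paper's: both prove (1)--(2) by pasting the invariance triangles \eqref{eq:invariance-canonical-left}--\eqref{eq:invariance-canonical-right} against the antipode diagrams \eqref{dg:galois-inverse} and \eqref{dg:galois-aux} and cancelling bijective canonical maps, and both prove (3) via the compatibility encoded in \eqref{dg:galois-aux2}. The only cosmetic difference is that for (3) the paper stays diagrammatic---pasting the $\Tr$-triangle for $\bpsib$ against the square in \eqref{dg:galois-aux2} to obtain the $\lT$-triangle for $S^{-1}\circ\bpsib\circ S$ directly---whereas you unpack the same paste in Sweedler notation through $\Delta_{C}\circ S=\Sigma\circ(S\otimes S)\circ\Delta_{B}$; either presentation is fine once the module bookkeeping you flag is settled.
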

\begin{proof}
  We first prove (2), and assertion (1) follows similarly.  Let
  $\bpsib \in \dbAb$ and consider following diagram:
  \begin{linenomath*}
    \begin{align*}
        \xymatrix@R=10pt@C=45pt{\ABA \ar[dd]_{\rT} \ar[rrr]^{\id
            \oo S}
          \ar[rd]^(0.65){S_{C}^{-1}\circ\bpsib \otimes  \iota}
          &&& \AlA           \ar[ld]_(0.65){\bpsib \otimes  \iota}
          \\ &  A \ar[r]^{S} & A &  \\
          \ArA \ar[rrr]_{\id \oo S}
          \ar[ru]_(0.65){S_{C}^{-1} \circ \bpsib \otimes  \iota}
 && &\AbA \ar[uu]_{\Tr} \ar[lu]^(0.65){\bpsib \otimes \iota}}      
\end{align*}
\end{linenomath*}
The large rectangle commutes by \eqref{dg:galois-inverse}, and the upper and lower
cells commute by inspection. Since all horizontal and
vertical arrows are bijections, the triangle on the left hand side
commutes if and only if the triangle on the right hand side does.

Next, consider the following diagram:
 \begin{linenomath*}
   \begin{align*}
     \xymatrix@R=15pt@C=45 pt{ \ACA \ar[rr]^{\Tl} \ar[d]_{\lT\Sigma}
       && \AlA
       \ar[d]^{\bpsib\otimes \iota} \\
       \ArA \ar[r]^{\iota \otimes S} \ar[rd]_{S_{C}^{-1} \circ \bpsib
         \otimes\iota}       & \AbA \ar[r]^{\bpsib \oo \id} \ar[ur]^{\Tr} & A \\
 & A \ar[ru]_{S} } 
   \end{align*}
 \end{linenomath*}
 The upper left cell commutes by \eqref{dg:galois-aux}, and the lower
 cell commutes by inspection. Since $\Tr,\Tl,\lT$ and $S$ are
 isomorphisms, the outer cell commutes if and only if the triangle on
 the right commutes.

 We next prove the assertion on the second map in (3); the case of the
 first map is treated similarly.  Let $\bpsib \in \dbAb$ again and consider
 following diagram:
  \begin{linenomath*}
    \begin{align*}
        \xymatrix@R=15pt@C=55pt{ \AlA \ar[rd]^(0.65){\bpsib \otimes  \iota}
 &&& \ArA \ar[lll]_{\Sigma (S \otimes S)}    \ar[ld]_(0.65){\id \otimes
  S_{C}^{-1}\circ \bpsib\circ S\quad}
          \\ &  A & A \ar[l]_{S} & \\
\AbA \ar[uu]_{\Tr} \ar[ru]_(0.65){\bpsib \otimes \iota} &&&
\AcA \ar[uu]_{\lT} \ar[lll]^{\Sigma(S\otimes S)} \ar[lu]^(0.65){\id \otimes
  S_{C}^{-1} \circ \bpsib \circ S\quad}}      
\end{align*}
\end{linenomath*}
The large rectangle commutes by \eqref{dg:galois-aux2}, and the upper
and lower cell by inspection. Therefore, the triangle on the left hand
side commutes if and only if the triangle on the right hand side does.
\end{proof}
\begin{remark}
  In Sweedler notation, commutativity of the diagrams
  \eqref{dg:strong-invariance-left} and
  \eqref{dg:strong-invariance-right} amounts to the relations
  \begin{linenomath*}
    \begin{align*}
      \sum S(a_{(1)})\cphic(a_{(2)}b) &=
      b_{(1)}S_{C}(\cphic(ab_{(2)})), \\
      \sum \bpsib(ba_{(1)})S(a_{(2)}) &=
      S_{B}(\bpsib(b_{(1)}a))b_{(2)}.
    \end{align*}
  \end{linenomath*}
\end{remark}

\subsection{Base weights and adapted integrals}

\label{subsection:base-weights}

Let $\mathcal{A}$ be regular multiplier Hopf algebroid. To obtain from
left- or right-invariant maps $\cphic\colon A\to C$ or $\bpsib \colon
A\to B$ scalar-valued integrals $\phi$ or $\psi$ on $A$, we compose
the former with suitable functionals $\mu_{B}$ and $\mu_{C}$ on the
base algebras $B$ and $C$, respectively. We first formulate our
assumptions on the functionals $\mu_{B}$ and $\mu_{C}$, and then
define the notion of a left and of a right integral.

Throughout this section, let 
  $\mathcal{A}=((A_{B},\Delta_{B}), (\mathcal{A}_{C},\Delta_{C}))$
  be a  regular multiplier Hopf algebroid.
  \begin{definition} \label{definition:base-weight} A \emph{base
      weight} for a regular multiplier Hopf algebroid $\mathcal{A}$ is
    a pair of faithful functionals $\mu_{B} \in \dB$ and $\mu_{C}\in
    \dC$ satisfying $\mu_{B} \circ S_{C} = \mu_{C}$, $\mu_{C} \circ
    S_{B} = \mu_{B}$ and $\mu_{B} \circ \beps = \mu_{C} \circ \epsc$,
    where $\beps$ and $\epsc$ denote the left and the right counit of
    $\mathcal{A}$, respectively.
\end{definition}
\begin{remark} \label{remark:base-weight-positive}
  If $\mathcal{A}$ is a multiplier Hopf $*$-algebroid, we shall
  usually assume that $\mu_{B}$ and $\mu_{C}$ are positive  in the
  sense that they coincide with $\mu_{B}^{*}:=\ast \circ \mu_{B} \circ
  \ast$ and $\mu_{C}^{*}:=\ast \circ \mu_{C} \circ \ast$,
  respectively, and satisfy $\mu_{B}(x^{*}x) \geq
  0$ and $\mu_{C}(y^{*}y) \geq 0$ for all $x\in B$, $y\in C$.
\end{remark}
 The functionals comprising a base weight automatically have
modular automorphisms, given by the square of the antipode or its
inverse. 
\begin{proposition} \label{proposition:counit-kms} Let $\mathcal{A}$
  be a regular multiplier Hopf algebroid with base weight $\mu=(\mu_{B},\mu_{C})$. Then
  $\mu_{B}$ and $\mu_{C}$ have modular automorphisms, given by
 $\sigma^{\mu}_{B}:=S^{-1}_{B}\circ S^{-1}_{C}$ and
 $\sigma^{\mu}_{C}:=S_{B} \circ S_{C}$, respectively.
\end{proposition}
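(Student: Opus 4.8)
My plan is to reduce the whole statement to a short computation inside the base algebras $B$ and $C$, where all three defining relations of a base weight enter — the two ``antipodal'' relations $\mu_{B}\circ S_{C}=\mu_{C}$, $\mu_{C}\circ S_{B}=\mu_{B}$ in a soft way, and the relation $\mu_{B}\circ\beps=\mu_{C}\circ\epsc$ as the essential glue. I first dispose of the easy half: since $\sigma^{\mu}_{B}=S_{B}^{-1}\circ S_{C}^{-1}$ and $\sigma^{\mu}_{C}=S_{B}\circ S_{C}$ are composites of two anti-isomorphisms they are algebra automorphisms of $B$ and $C$, and substituting $\mu_{C}=\mu_{B}\circ S_{C}$ into $\mu_{B}=\mu_{C}\circ S_{B}$ gives $\mu_{B}=\mu_{B}\circ(\sigma^{\mu}_{B})^{-1}$, hence $\mu_{B}\circ\sigma^{\mu}_{B}=\mu_{B}$, and likewise $\mu_{C}\circ\sigma^{\mu}_{C}=\mu_{C}$. (Under the identifications $S|_{B}=S_{B}$, $S|_{C}=S_{C}$ of Subsection~\ref{subsection:hopf-algebroids} these automorphisms are $S^{-2}$ on $B$ and $S^{2}$ on $C$, but this is not needed.) By the criterion recalled in the Preliminaries ($\omega$ has a modular automorphism iff $B\cdot\omega=\omega\cdot B$, the automorphism then being determined by $\sigma(b)\cdot\omega=\omega\cdot b$), and given faithfulness of $\mu_{B},\mu_{C}$, it remains to prove $\mu_{B}(xy)=\mu_{B}\bigl(y\,\sigma^{\mu}_{B}(x)\bigr)$ for $x,y\in B$, together with the analogous identity for $\mu_{C}$.

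For the $B$-identity I argue directly. Write $\eps:=\mu_{B}\circ\beps=\mu_{C}\circ\epsc\in\dual{A}$, let $\overline{\beps},\overline{\epsc}$ and $\overline{\eps}$ denote the canonical extensions to $M(A)$, and recall the standard counit facts $\overline{\epsc}|_{B}=S_{C}^{-1}$ (from $\epsc\circ t_{C}=\iota$), $\overline{\beps}|_{C}=S_{B}^{-1}$ (from $\beps\circ t_{B}=\iota$), that $\epsc$ intertwines right multiplication by $c\in C$ with right multiplication in $C$, and that $\beps$ intertwines left multiplication by $x\in B$ with left multiplication in $B$. Then, for $x,y\in B$,
\begin{align*}
  \mu_{B}\bigl(y\,\sigma^{\mu}_{B}(x)\bigr)
  &= \mu_{C}\bigl(S_{C}^{-1}(x)\,S_{B}(y)\bigr)
  = \mu_{C}\bigl(\overline{\epsc}\bigl(x\,S_{B}(y)\bigr)\bigr) \\
  &= \overline{\eps}\bigl(x\,S_{B}(y)\bigr)
  = \mu_{B}\bigl(\overline{\beps}\bigl(x\,S_{B}(y)\bigr)\bigr)
  = \mu_{B}(xy).
\end{align*}
The first equality uses $\mu_{B}=\mu_{C}\circ S_{B}$, the anti-multiplicativity of $S_{B}$, and $S_{B}\circ\sigma^{\mu}_{B}=S_{C}^{-1}$; the second uses $\overline{\epsc}(x)=S_{C}^{-1}(x)$ and pulls the element $S_{B}(y)\in C$ out to the right; the third equality is precisely the base-weight relation $\mu_{C}\circ\epsc=\mu_{B}\circ\beps$, passed to the multiplier extensions; the fourth pulls $x\in B$ out to the left and uses $\overline{\beps}\bigl(S_{B}(y)\bigr)=\overline{\beps}\bigl(t_{B}(y)\bigr)=y$. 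Together with $\mu_{B}\circ\sigma^{\mu}_{B}=\mu_{B}$ this shows $\mu_{B}\cdot x=\sigma^{\mu}_{B}(x)\cdot\mu_{B}$ for all $x\in B$, identifying $\sigma^{\mu}_{B}$ as the modular automorphism of $\mu_{B}$.

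The statement for $\mu_{C}$ then follows by transport of structure along the anti-isomorphism $S_{B}\colon B\to C$: since $\mu_{C}=\mu_{B}\circ S_{B}^{-1}$, and since $\omega\circ\varphi^{-1}$ has modular automorphism $\varphi\circ\sigma^{-1}\circ\varphi^{-1}$ whenever $\omega$ has modular automorphism $\sigma$ and $\varphi$ is an anti-isomorphism, the modular automorphism of $\mu_{C}$ is $S_{B}\circ(\sigma^{\mu}_{B})^{-1}\circ S_{B}^{-1}=S_{B}\circ S_{C}\circ S_{B}\circ S_{B}^{-1}=S_{B}\circ S_{C}=\sigma^{\mu}_{C}$.

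I expect the main obstacle to be purely bookkeeping: keeping straight which of the four module structures each counit is linear for and in which direction, putting the needed relations into the correct twisted-off form, and checking that they all survive passage to the multiplier extensions $\overline{\beps},\overline{\epsc},\overline{\eps}$ (this is where non-degeneracy of $A$ over $B$ and $C$ is used). The only conceptually delicate point is to recognise that both factorisations of $\eps$ are genuinely needed and that it is precisely the hypothesis $\mu_{B}\circ\beps=\mu_{C}\circ\epsc$ that links the $C$-side of the computation back to the $B$-side; the other two base-weight relations by themselves yield only the weaker invariance $\mu_{B}\circ\sigma^{\mu}_{B}=\mu_{B}$ and are consistent with $\mu_{B}$ possessing no modular automorphism at all.
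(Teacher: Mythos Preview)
Your approach is essentially the same as the paper's. The paper runs the identical chain, but evaluated on an element $a\in A$ rather than on a multiplier: for $x\in B$ and $a\in A$,
\[
\mu_{B}(x\,\beps(a))=\mu_{B}(\beps(xa))=\mu_{C}(\epsc(xa))=\mu_{C}(\epsc(S_{C}^{-1}(x)a))=\mu_{B}(\beps(S_{C}^{-1}(x)a))=\mu_{B}(\beps(a)\,S_{B}^{-1}S_{C}^{-1}(x)),
\]
which is exactly your computation with $\beps(a)$ playing the role of your $y$. The paper then concludes since $\beps(a)$ ranges over (an ideal of) $B$.

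The one point where your write-up is a little looser than the paper's is the middle step, where you invoke ``the base-weight relation passed to the multiplier extensions''. A scalar functional such as $\eps=\mu_{B}\circ\beps=\mu_{C}\circ\epsc$ does not extend to $M(A)$ for free, and your extensions $\overline{\beps}$ and $\overline{\epsc}$ are computed from \emph{different} sides (the left linearities of $\beps$ versus the right linearity of $\epsc$ together with $\overline{\epsc}|_{B}=S_{C}^{-1}$); checking that $\mu_{B}\circ\overline{\beps}$ and $\mu_{C}\circ\overline{\epsc}$ agree on $BC\subseteq M(A)$ is not quite automatic. The cleanest way to justify it is to multiply through by an $a\in A$ and use non-degeneracy --- and then one is doing exactly the paper's computation. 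So your proof is correct, and after this small unpacking the two arguments coincide. Your transport of the $\mu_{C}$-statement along $S_{B}$ is a nice touch; the paper simply says the second case is analogous.
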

\begin{proof}
  We only prove the assertion concerning $\sigma^{\mu}_{B}$. For all
  $x \in B$, $a\in A$,
  \begin{align*}
    \mu_{B}(x\beps(a)) = \mu_{B}(\beps(xa)) &= \mu_{C}(\epsc(xa)) \\ &=
    \mu_{C}(\epsc(S_{C}^{-1}(x)a)) \\ &= \mu_{B}(\beps(S_{C}^{-1}(x)a)) =
    \mu_{B}(\beps(a)S_{B}^{-1}(S_{C}^{-1}(x))).  \qedhere
  \end{align*}
\end{proof}
  \begin{remark}
  In the setting of weak multiplier Hopf algebras, the base algebras
  $B$ and $C$ carry canonical functionals $\mu_{B}$ and $\mu_{C}$
  which satisfy the assumptions in Definition
  \ref{definition:base-weight}, see Proposition 4.8 in
  \cite{daele:separability}.
\end{remark}
\begin{remark}
  The preceding result fits with the theory of measured quantum
  groupoids, where the square of the antipode generates the scaling
  group and the latter restricts to the modular automorphism groups on
  the base algebras, that is, in the notation of \cite{enock:action},
  $S^{2} = \tau_{i}$ and $\tau_{t} \circ \alpha = \alpha \circ
  \sigma^{\nu}_{t}$, $\tau_{t} \circ \beta = \beta \circ
  \sigma^{\nu}$.
\end{remark}

Let $\mathcal{A}$ be a regular multiplier Hopf algebroid with base
weight $\mu=(\mu_{B},\mu_{C})$. Assume that a functional $\omega \in
\dA$ and maps $\bomega,\omegab \colon A \to B$ and
$\comega,\omegac\colon A \to C$ are given. Then
\begin{align} \label{eq:bomega}
  \omega &= \mu_{B} \circ \bomega, & \bomega &\in \dbA &
\ &\Leftrightarrow \ &  \omega(xa) &= \mu_{B}(x\bomega(a))
 && \text{for all
  } x\in B, a\in A, \\  \label{eq:omegab}
  \omega &= \mu_{B} \circ \omegab, & \omegab &\in \dAb & \
  &\Leftrightarrow \ & \omega(ax) &= \mu_{B}(\omegab(a)x) && \text{for
    all
  } x\in B, a\in A, \\  \label{eq:comega}
  \omega &= \mu_{C} \circ \comega, & \comega &\in \dcA &\
  &\Leftrightarrow \ & \omega(ya) &= \mu_{C}(y\comega(a)) &&\text{for
    all
  } y\in C, a\in A, \\  \label{eq:omegac}
  \omega &= \mu_{C} \circ \omegac, & \omegac &\in \dAc& \
  &\Leftrightarrow \ & \omega(ay) &= \mu_{C}(\omegac(a)y)  && \text{for
    all } y\in C, a\in A.
\end{align}
The verification is straightforward and uses the relations
$A=BA=AB=CA=AC$ and the fact that $\mu_{B}$ and $\mu_{C}$ are
faithful.  Since $\mu_{B}$ and $\mu_{C}$ are faithful, maps
$\bomega,\omegab,\comega$ or $\omegac$ satisfying the equivalent
conditions in \eqref{eq:bomega},
\eqref{eq:omegab}, \eqref{eq:comega} or \eqref{eq:omegac},
respectively, are uniquely determined by $\omega$ if they exist.
\begin{definition}
  Let $\mathcal{A}$ be a regular multiplier Hopf algebroid 
  with base weight $\mu=(\mu_{B},\mu_{C})$. A functional $\omega \in
  \dA$ is \emph{adapted to $\mu$} if there exist maps
  $\bomega,\omegab\colon A \to B$ and $\comega,\omegac\colon A \to C$
  such that the equivalent conditions in
  \eqref{eq:bomega}--\eqref{eq:omegac} hold. We denote by $\dmAm
  \subseteq \dA$ the subspace of all functionals adapted to $\mu$.
\end{definition}
\begin{remark}
  In the setting of weak multiplier Hopf algebras, every functional on
  the total algebra is adapted to the canonical functionals $\mu_{B}$
  and $\mu_{C}$ on the base algebras in the sense above.  This follows
  from Proposition 3.3 in \cite{boehm:comodules} and the comments
  thereafter.
\end{remark}

The definition above can be rephrased as follows.  Composing maps from
$A$ to $B$ or $C$ with
$\mu_{B}$ or $\mu_{C}$, respectively, we obtain push-forward maps
\begin{align} \label{eq:base-pushforwards} (\mu_{B})_{*} &\colon
  \dbA,\dAb \to \dA & &\text{and} & (\mu_{C})_{*} &\colon \dcA, \dAc
  \to \dA,
\end{align}
and $\dmAm$ is the intersection of the images of these four maps.

By assumption, the left and right counits of $\mathcal{A}$ yield a
counit functional $\eps \in \dmAm$: 

\begin{example} \label{example:base-counit} Let $\mathcal{A}$ be a
  regular multiplier Hopf algebroid with base weight
  $\mu=(\mu_{B},\mu_{C})$ and left and right counits $\beps$ and
  $\epsc$, respectively.  Then the functional $\eps:=\mu_{B} \circ
  \beps = \mu_{C} \circ \epsc$ lies in $\dmAm$ and the associated maps
  $\epsb$ and $\ceps$ are given by $\epsb = S_{C} \circ \epsc$ and
  $\ceps = S_{B} \circ \beps$, respectively, because
    \begin{align*}
      \eps(ax) &=\mu_{C}(\epsc(ax))=\mu_{C}(S_{C}^{-1}(x)\epsc(a)) =
      \mu_{B}(S_{C}(\epsc(a))x), \\
      \eps(ya) &= \mu_{B}(\beps(ya)) = \mu_{B}(\beps(a)S_{B}^{-1}(y))
      = \mu_{C}(y S_{B}(\beps(a)))
    \end{align*}  
    for all $x\in B$, $y\in C$, $a\in A$.   Moreover, $\eps \circ S=
    \eps$ by Lemma \ref{lemma:counits-antipode}.
\end{example}

We can now define left and right integrals adapted to a base weight.
Let $\mathcal{A}$ be a regular multiplier Hopf algebroid with base
weight $\mu$ as before, and let $\phi,\psi \in \dmAm$. By
Proposition \ref{proposition:invariant-elements-hopf}, the following
implications hold:
\begin{align}
  \label{eq:base-phi-invariance} 
  \cphi  &\text{ is left-invariant} & \
  &\Leftrightarrow \ & \phic &\text{ is left-invariant}
 & \ &\Rightarrow \ & \phic=\cphi \in \dcAc \\
  \bpsi &\text{ is right-invariant} & \
  &\Leftrightarrow \ & \psib &\text{ is right-invariant} & \
  &\Rightarrow & \ \bpsi = \psib \in \dbAb.
  \label{eq:base-psi-invariance} 
\end{align}
For example, if $\cphi$ is left-invariant, then $\cphi \in \dAc$ and
hence $\cphi=\phic$.

\begin{definition}
  A \emph{left (right) integral} on a regular multiplier Hopf
  algebroid $\mathcal{A}$ with base weight $\mu$ is a $\phi \in \dmAm$
  (or $\psi \in \dmAm$) satisfying the equivalent conditions in
  \eqref{eq:base-phi-invariance} (or \eqref{eq:base-psi-invariance},
  respectively).
\end{definition}

Like invariant elements, left and right integrals form bimodules over
$B$ or $C$, respectively, and stand in a bijective correspondence via
the antipode. The proof uses the following result.
\begin{lemma} \label{lemma:base}
 Let $\mathcal{A}$ be a regular multiplier Hopf algebroid with base
 weight $\mu$ and let $\omega \in \dmAm$.
 \begin{enumerate}
 \item Let $x,x' \in M(B)$, $y,y' \in M(C)$ and $\omega':= xy \cdot
   \omega \cdot x'y'$. Then $\omega' \in \dmAm$ and
   \begin{align*}
     \bomega'(a) &=\bomega(y'axy)\sigma^{\mu}_{B}(x'), & \omegab'(a)
     &=
     (\sigma^{\mu}_{B})^{-1}(x)\omegab(x'y'ay), \\
     \comega'(a) &= \comega(x'axy)\sigma^{\mu}_{C}(y'), & \omegac'(a)
     &= (\sigma^{\mu}_{C})^{-1}(y) \omegac(x'y'ax) 
   \end{align*}
for all $a\in A$.
 \item The compositions $\omega  \circ S$ and $\omega\circ S^{-1}$ lie in
   $\dmAm$, 
  \begin{align*}
    _{B}(\omega \circ S) &= S_{B}^{-1} \circ \omegac \circ
    S, & (\omega \circ S)_{B}&= S_{B}^{-1} \circ \comega \circ S, \\
    _{C}(\omega \circ S) &= S_{C}^{-1} \circ \omegab \circ
    S, & (\omega \circ S)_{C} &= S_{C}^{-1} \circ \bomega \circ S,
  \end{align*}
  and the relations still hold if $S,S^{-1}_{B},S^{-1}_{C}$ are
  replaced by $S^{-1},S_{C},S_{B}$, respectively.
\item Assume that $\mathcal{A}$ is a multiplier Hopf $*$-algebroid and
  that $\mu_{B}$ and $\mu_{C}$ are self-adjoint. Then the composition
  $\omega^{*}:= \ast \circ \omega \circ \ast$ lies in $\dmAm$ and
  \begin{align*}
    _{B}(\omega^{*}) &= \ast \circ \omegab \circ \ast, & (\omega^{*})_{B} &=
    \ast \circ \bomega \circ \ast, \\  _{C}(\omega^{*}) &= \ast \circ
    \omegac \circ \ast, &(\omega^{*})_{C} &=
    \ast \circ \comega \circ \ast.
  \end{align*}
 \end{enumerate}
\end{lemma}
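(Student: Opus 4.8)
The plan is to prove all three parts by direct computation: in each case one guesses the four component maps of the new functional and checks the defining equivalences \eqref{eq:bomega}--\eqref{eq:omegac}, after which membership in $\dmAm$ is automatic. The tools needed are: the equivalences \eqref{eq:bomega}--\eqref{eq:omegac}; the fact that $\bomega,\comega$ are left and $\omegab,\omegac$ right module maps over $B$, resp.\ $C$; the idempotency relations $A=BA=AB=CA=AC$; the commutativity of $M(B)$ and $M(C)$ inside $M(A)$; the modular automorphisms $\sigma^{\mu}_{B}=S^{-1}_{B}\circ S^{-1}_{C}$ and $\sigma^{\mu}_{C}=S_{B}\circ S_{C}$ of $\mu_{B},\mu_{C}$ from Proposition~\ref{proposition:counit-kms}; and, for part (2), the antipode identity $S(xyax'y')=S_{C}(y')S_{B}(x')S(a)S_{C}(y)S_{B}(x)$ together with $\mu_{B}\circ S_{C}=\mu_{C}$ and $\mu_{C}\circ S_{B}=\mu_{B}$. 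A preliminary observation I would isolate is that \eqref{eq:bomega}--\eqref{eq:omegac} extend from $B,C$ to $M(B),M(C)$: writing $w=\sum_{i}w_{i}b_{i}$ with $b_{i}\in B$ and using right $B$-linearity of $\omegab$ gives $\omega(wu)=\mu_{B}(\omegab(w)u)$ for all $u\in M(B)$, and symmetrically for the other three.

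Part (1) is then pure bookkeeping. For $\bomega'$, noting $\omega'(c)=\omega(x'y'cxy)$ from the bimodule action, compute for $z\in B$
\[
\omega'(za)=\omega(x'y'\,za\,xy)=\omega\bigl(x'z\,(y'axy)\bigr)=\mu_{B}\bigl(x'z\,\bomega(y'axy)\bigr)=\mu_{B}\bigl(z\cdot\bomega(y'axy)\,\sigma^{\mu}_{B}(x')\bigr),
\]
first commuting $y'\in M(C)$ past $z\in M(B)$, then applying the extended left-$B$ relation, then transporting $x'$ to the far right across $\mu_{B}$; this yields $\bomega'(a)=\bomega(y'axy)\,\sigma^{\mu}_{B}(x')$. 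The formulas for $\omegab',\comega',\omegac'$ follow by the same three moves, carried out on the appropriate side and with $\mu_{C},\sigma^{\mu}_{C}$ in place of $\mu_{B},\sigma^{\mu}_{B}$ for the $C$-components.

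For part (2), compute for $x\in B$
\[
(\omega\circ S)(xa)=\omega\bigl(S(a)\,S_{B}(x)\bigr)=\mu_{C}\bigl(\omegac(S(a))\,S_{B}(x)\bigr)=\mu_{C}\bigl(S_{B}(x)\,\sigma^{\mu}_{C}(\omegac(S(a)))\bigr),
\]
using $S(xa)=S(a)S_{B}(x)$, then the right-$C$ relation, then the modular automorphism of $\mu_{C}$; writing $\sigma^{\mu}_{C}(\omegac(S(a)))=S_{B}(b)$, anti-multiplicativity of $S_{B}$ and $\mu_{C}\circ S_{B}=\mu_{B}$ turn this into $\mu_{B}(bx)=\mu_{B}(x\,\sigma^{\mu}_{B}(b))$, so $_{B}(\omega\circ S)=\sigma^{\mu}_{B}\circ S^{-1}_{B}\circ\sigma^{\mu}_{C}\circ\omegac\circ S=S^{-1}_{B}\circ\omegac\circ S$, the last step being the identity $\sigma^{\mu}_{B}\circ S^{-1}_{B}\circ\sigma^{\mu}_{C}=S^{-1}_{B}$ read off from the formulas for $\sigma^{\mu}_{B},\sigma^{\mu}_{C}$. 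The remaining three components follow the same scheme (for $(\omega\circ S)_{B}$ and ${}_{C}(\omega\circ S)$ the base factor lands directly on the correct side, so no modular automorphism appears), and the $S^{-1}$-version is obtained by re-running the argument with $S^{-1}$ in place of $S$, using that $S^{-1}$ restricts to $S^{-1}_{C}$ on $B$ and $S^{-1}_{B}$ on $C$.

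Finally, part (3) is immediate: for $x\in B$ one has $(xa)^{*}=a^{*}x^{*}$ with $x^{*}\in B$, so $\omega^{*}(xa)=\overline{\omega(a^{*}x^{*})}=\overline{\mu_{B}(\omegab(a^{*})x^{*})}=\mu_{B}(x\,\omegab(a^{*})^{*})$ by self-adjointness of $\mu_{B}$, whence ${}_{B}(\omega^{*})=\ast\circ\omegab\circ\ast$, and similarly for the remaining three. The only genuine obstacle is bookkeeping --- keeping straight on which side each base-algebra factor ends up, and chasing the modular automorphisms and the restrictions of $S^{\pm1}$ through the compositions --- but nothing conceptual is at stake.
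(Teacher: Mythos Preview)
Your proof is correct and follows the same direct-verification approach as the paper, which likewise just plugs into the defining relations \eqref{eq:bomega}--\eqref{eq:omegac} and computes. The only minor difference is in part~(2): the paper passes in one step from $\mu_{C}(\omegac(S(a))S_{B}(x''))$ to $\mu_{B}(x''S_{B}^{-1}(\omegac(S(a))))$ via $\mu_{C}\circ S_{B}=\mu_{B}$ and anti-multiplicativity of $S_{B}$, whereas you detour through $\sigma^{\mu}_{C}$ and then collapse $\sigma^{\mu}_{B}\circ S_{B}^{-1}\circ\sigma^{\mu}_{C}=S_{B}^{-1}$ --- correct, but longer than necessary.
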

\begin{proof}
  The verification consists of straightforward calculations, for
  example,
  \begin{align*}
    (xy \cdot \omega \cdot x'y')(x''a) &= \mu_{B}(\bomega(x'y'x''axy))
    = \mu_{B}(x''\bomega(y'axy)\sigma^{\mu}_{B}(x')), \\
    (\omega \circ S)(x''a)&=\omega(S(a)S_{B}(x'')) =
    \mu_{C}(\omegac(S(a))S_{B}(x'')) =
    \mu_{B}(x''S_{B}^{-1}(\omegac(S(a))))
  \end{align*}
for all $a,x,y,x',y'$ as in (1) and $x''\in B$.
\end{proof}
\begin{proposition} \label{proposition:integrals-bimodule} All left
  (right) integrals on a regular multiplier Hopf algebroid
  $\mathcal{A}$ with base weight $\mu$ form an $M(B)$-sub-bimodule
  ($M(C)$-sub-bimodule) of $\dmAm$.
\end{proposition}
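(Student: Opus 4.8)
The plan is to reduce the statement to two facts already established: the transformation formulas of Lemma~\ref{lemma:base}(1), and Proposition~\ref{proposition:invariant-elements-bimodule}, which says that $\ltintb$ and $\rtintb$ are $M(B)$- and $M(C)$-sub-bimodules of $\dcAc$ and $\dbAb$, respectively. First I would observe that the set of left integrals is automatically a linear subspace of $\dmAm$: by the equivalences \eqref{eq:base-phi-invariance} it equals $\{\phi \in \dmAm \mid \cphi \text{ is left-invariant}\}$, the subspace $\dmAm \subseteq \dA$ is linear, the assignment $\omega \mapsto \comega$ is linear by faithfulness of $\mu_{C}$, and left-invariance is a linear condition by Definition~\ref{definition:invariant-elements}(1). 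So the only thing left to check is closure under the $M(B)$-bimodule action, i.e.\ that $\phi' := x \cdot \phi \cdot x'$ is again a left integral whenever $\phi$ is one and $x,x' \in M(B)$.

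To do this I would apply Lemma~\ref{lemma:base}(1) with $y = y' = 1$. It yields $\phi' \in \dmAm$, and since the automorphism $\sigma^{\mu}_{C}$ extends to $M(C)$ and fixes $1$, the formula for the associated map ${}_{C}\phi'$ given there simplifies to ${}_{C}\phi'(a) = \cphi(x'ax)$ for all $a \in A$. In other words, ${}_{C}\phi'$ is precisely the element $x \cdot \cphi \cdot x'$ of the $M(B)$-bimodule $\dcAc$ introduced just before Proposition~\ref{proposition:invariant-elements-bimodule}. Since $\phi$ is a left integral, $\cphi \in \ltintb$, and since $\ltintb$ is an $M(B)$-sub-bimodule of $\dcAc$ we get ${}_{C}\phi' = x \cdot \cphi \cdot x' \in \ltintb$; by \eqref{eq:base-phi-invariance} this exactly says that $\phi'$ is a left integral. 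The right-integral case is entirely symmetric, using the remaining transformation formulas of Lemma~\ref{lemma:base}(1), the $M(C)$-bimodule structure on $\dbAb$, the equivalences \eqref{eq:base-psi-invariance}, and the fact that $\rtintb$ is an $M(C)$-sub-bimodule of $\dbAb$.

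I do not expect any real obstacle: once the bookkeeping is set up, the argument is just matching the transformation formulas of Lemma~\ref{lemma:base}(1) against the bimodule action of Proposition~\ref{proposition:invariant-elements-bimodule}. The only step that genuinely uses the structure theory --- namely that the a priori one-sided module map ${}_{C}\phi' \in \dcA$ is in fact a $B$-bimodule map lying in $\dcAc$, which rests on $B$ and $C$ commuting inside $M(A)$ --- has already been dealt with inside Proposition~\ref{proposition:invariant-elements-bimodule}, so there is nothing further to prove.
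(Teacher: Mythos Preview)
Your proof is correct and follows exactly the same approach as the paper's: the paper's proof simply reads ``Immediate from Lemma~\ref{lemma:base} and Proposition~\ref{proposition:invariant-elements-bimodule}.'' You have merely spelled out the details of how those two ingredients combine, and done so accurately.
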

\begin{proof}
  Immediate from Lemma \ref{lemma:base} and Proposition \ref{proposition:invariant-elements-bimodule}.
\end{proof}
\begin{proposition} \label{proposition:integrals-antipode} Let
  $\mathcal{A}$ be a regular multiplier Hopf algebroid with base
  weight $\mu$. Then the maps $\phi \mapsto \phi \circ S$ and $\psi
  \mapsto \psi\circ S$ form bijections between all left and all right
  integrals on $\mathcal{A}$ adapted  to $\mu$.
\end{proposition}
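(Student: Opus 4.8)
The plan is to deduce the proposition formally from two results established above: Lemma~\ref{lemma:base}(2), which shows that $\omega\mapsto\omega\circ S^{\pm1}$ maps $\dmAm$ into itself and records the associated maps $A\to B$ and $A\to C$ of $\omega\circ S^{\pm1}$ in terms of those of $\omega$; and Proposition~\ref{proposition:invariant-elements-hopf}(3), which shows that conjugating by $S^{\pm1}$ yields bijections between the space of left-invariant maps $A\to C$ and the space of right-invariant maps $A\to B$.

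First I would show that $\phi\mapsto\phi\circ S$ sends left integrals to right integrals. Let $\phi$ be a left integral, with associated left-invariant map $\cphic\colon A\to C$, so that $\cphic=\phic=\cphi$ by \eqref{eq:base-phi-invariance}. By Lemma~\ref{lemma:base}(2), $\phi\circ S\in\dmAm$, and its associated map ${}_{B}(\phi\circ S)\colon A\to B$ equals $S_{B}^{-1}\circ\cphic\circ S$, which coincides with $S^{-1}\circ\cphic\circ S$ since $S^{-1}$ restricts on $C$ to $S_{B}^{-1}$. By Proposition~\ref{proposition:invariant-elements-hopf}(3) this map is right-invariant, hence $\phi\circ S$ is a right integral by \eqref{eq:base-psi-invariance}. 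The same computation with $S$ replaced by $S^{-1}$, using the $S^{-1}$-half of Lemma~\ref{lemma:base}(2) together with the fact that the inverses of the conjugation bijections of Proposition~\ref{proposition:invariant-elements-hopf}(3) --- which one identifies as $\bpsib\mapsto S_{B}\circ\bpsib\circ S^{-1}$ and $\cphic\mapsto S_{C}\circ\cphic\circ S^{-1}$ --- are again bijections between left- and right-invariant maps, shows that $\phi\mapsto\phi\circ S^{-1}$ also carries left integrals to right integrals. By the symmetric argument, now reading off ${}_{C}(\psi\circ S^{\pm1})$ from Lemma~\ref{lemma:base}(2) and invoking the half of Proposition~\ref{proposition:invariant-elements-hopf}(3) that carries right-invariant maps to left-invariant ones, the maps $\psi\mapsto\psi\circ S$ and $\psi\mapsto\psi\circ S^{-1}$ carry right integrals to left integrals.

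Granting these four inclusions, the bijectivity assertion is formal: $\omega\mapsto\omega\circ S$ and $\omega\mapsto\omega\circ S^{-1}$ are mutually inverse on all of $\dA$, hence restrict to mutually inverse bijections between the space of left integrals and the space of right integrals; in particular $\phi\mapsto\phi\circ S$ is a bijection from left integrals onto right integrals and $\psi\mapsto\psi\circ S$ is a bijection from right integrals onto left integrals, as claimed. The only point requiring care is the bookkeeping: one must keep track of the two half-antipodes $S_{B}\colon B\to C$ and $S_{C}\colon C\to B$, together with the restrictions of $S^{\pm1}$ to $B$ and $C$, carefully enough to match the transformation formulas of Lemma~\ref{lemma:base}(2) with the conjugation maps of Proposition~\ref{proposition:invariant-elements-hopf}(3) and their inverses; once the indices are lined up, no further computation is needed.
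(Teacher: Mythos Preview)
Your proposal is correct and follows exactly the approach the paper takes: the paper's proof is the single line ``Immediate from Lemma~\ref{lemma:base} and Proposition~\ref{proposition:invariant-elements-hopf},'' and you have simply unpacked this, correctly matching the formulas of Lemma~\ref{lemma:base}(2) with the conjugation bijections of Proposition~\ref{proposition:invariant-elements-hopf}(3) via the identification $S^{-1}|_{C}=S_{B}^{-1}$.
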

\begin{proof}
Immediate from Lemma \ref{lemma:base} and Proposition
  \ref{proposition:invariant-elements-hopf}.
\end{proof}

\subsection{Uniqueness of integrals}

\label{subsection:uniqueness}

Left and right integrals on quantum groups are, like the Haar measure
of a locally compact group, unique up to scaling. Proposition
\ref{proposition:integrals-bimodule} shows that in the present
context, we can only expect uniqueness up to scaling by multipliers of
the base algebras $B$ or $C$, respectively, and only under a suitable
non-degeneracy assumption on the integrals. 

The proof involves the following relative tensor products of
functionals adapted to base weights. Let $\mathcal{A}$ be a regular
multiplier Hopf algebroid with base weight $\mu=(\mu_{B},\mu_{C})$ and
let $\upsilon,\omega \in \dmAm$. Then
  \begin{align*}
    \upsilon\circ(\id \otimes \bomega) &= \mu_{B} \circ (\upsilonb
    \otimes \bomega) =
    \omega \circ (\upsilonb \otimes \id) &&    \text{in } \dual{(\AbA)}, \\
\upsilon \circ (\id \otimes \omegab) &= \mu_{B} \circ
    (\bupsilon \otimes \omegab) = \omega \circ (\bupsilon
    \otimes \id)  &&  \text{in } \dual{(\ABA)}, \\
  \upsilon \circ (\id \otimes S_{B}^{-1}\circ\comega) &=
    \mu_{B} \circ (\bupsilon \otimes S_{B}^{-1} \circ \comega) =
 \omega \circ
    (\bupsilon \otimes \id) & &  \text{in } \dual{(\AlA)}
  \end{align*}
We denote these compositions  by
\begin{align} \label{eq:base-oo-b}
  &\upsilon \oob \omega \in \dual{(\AbA)}, & & \upsilon \ooB \omega
  \in \dual{(\ABA)}, && \upsilon \ool \omega \in
  \dual{(\AlA)},
\end{align}
respectively, and similarly define
\begin{align} \label{eq:base-oo-c}
  &\upsilon \ooc \omega \in \dual{(\AcA)}, & & \upsilon \ooC \omega
  \in \dual{(\ACA)}, &&  \upsilon \oor \omega \in
  \dual{(\ArA)}.
\end{align}
The construction of these tensor products can be regarded as  a
special case of a general bi-categorical construction that is
summarised in Appendix \ref{section:factorisation}.

\begin{example} \label{example:base-counit-2} The left and the right
  counit of $\mathcal{A}$ satisfy $\beps(ab)=\beps(a\beps(b))$ and
  $\epsc(ab)=\epsc(\epsc(a)b)$  by
  \eqref{dg:left-counit} and \eqref{eq:right-counit}. Hence, the
  functional $\eps:=\mu_{B} \circ \beps = \mu_{C} \circ \epsc$
  satisfies
  \begin{align} \label{eq:base-counit-mult}
    \eps \circ m_{B} &= \eps \oob \eps, & \eps \circ m_{C} &= \eps
    \ooc \eps,
  \end{align}
  where $m_{B} \colon \AbA \to A$ and $m_{C} \colon \AcA \to A$ denote
  the multiplication maps.
\end{example}

If $\phi$ is a left integral on $\mathcal{A}$ adapted to $\mu$, then
commutativity of the diagrams \eqref{eq:invariance-canonical-left} implies that
    \begin{align} \label{eq:integrals-oo-left} (\upsilon \ool \phi)
      \circ \Tl &= \upsilon \ooC \phi, & (\upsilon' \oor
      \phi) \circ \lT &= \upsilon' \ooc \phi  
  \end{align}
 for all $\upsilon \in
    (\mu_{C})_{*}(\dcA)$, $\upsilon' \in (\mu_{C})_{*}(\dAc)$. Likewise, if $\psi$ is a right integral, then
    \begin{align} \label{eq:integrals-oo-right} (\psi \ool \omega)
      \circ \Tr &= \psi \oob \omega, & (\psi \oor \omega')
      \circ \rT &= \psi \ooB \omega'
\end{align}
 for all
 $\omega \in (\mu_{B})_{*}(\dbA)$, $\omega' \in
    (\mu_{B})_{*}(\dAb)$ by commutativity of \eqref{eq:invariance-canonical-right}.

The first step towards the proof of uniqueness is the following
result.
\begin{proposition} \label{proposition:uniqueness-phi-psi} Let $\phi$
  be a left and $\psi$ a right integral on a regular
  multiplier Hopf algebroid $\mathcal{A}$ with base weight $\mu$. Then
  $(A \bphi(A)) \cdot \psi = (A \cpsi(A)) \cdot \phi$ and $\psi \cdot
  (\phib(A)A) = \phi \cdot (\psic(A)A)$ as subsets of $\dA$.
\end{proposition}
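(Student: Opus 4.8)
The plan is to unfold both equalities by means of the factorisations of $\phi$ and $\psi$ through the base functionals, thereby reducing each to a Fubini-type identity for the associated partial integrals, and then to prove that identity from the invariance of the partial integrals together with the bijectivity of the canonical maps.

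For the first equality I would argue as follows. Since $\phi$ and $\psi$ are integrals adapted to $\mu$, they factor as $\phi = \mu_{B}\circ\bphi = \mu_{B}\circ\phib = \mu_{C}\circ\cphic$ and $\psi = \mu_{B}\circ\bpsi = \mu_{B}\circ\psib = \mu_{C}\circ\cpsi = \mu_{C}\circ\psic$, where by \eqref{eq:base-phi-invariance} and \eqref{eq:base-psi-invariance} the two $C$-valued components of $\phi$ and the two $B$-valued components of $\psi$ coincide with the invariant maps $\cphic\in\dcAc$ and $\bpsib\in\dbAb$. For $a,a'\in A$, the functional $(a\,\bphi(a'))\cdot\psi\in\dA$ sends $d$ to $\psi(da\,\bphi(a'))$, and applying first $\psi=\mu_{B}\circ\bpsib$ (using $\bpsib\in\dAb$) and then $\phi=\mu_{B}\circ\bphi$ (using $\bphi\in\dbA$) gives $\psi(da\,\bphi(a')) = \mu_{B}\bigl(\bpsib(da)\,\bphi(a')\bigr) = \phi\bigl(\bpsib(da)\,a'\bigr)$. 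Hence $(A\bphi(A))\cdot\psi$ is the linear span of the functionals $d\mapsto\phi(\bpsib(da)\,a')$, and by the symmetric computation $(A\cpsi(A))\cdot\phi$ is the span of $d\mapsto\psi(\cphic(db)\,b')$. So it remains to show that these two spans of functionals on $A$ coincide; this is a Fubini-type statement, both sides being the result of first applying one partial integral and then the other, with a free $A$-factor retained.

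To prove this identity I would rewrite the partial integrals through the canonical maps. By Definition \ref{definition:invariant-elements}, equivalently by the triangles \eqref{eq:invariance-canonical-right}, right-invariance of $\psi$ expresses $(\bpsib\otimes\iota)\circ\Tr$ (and $(S_{C}^{-1}\circ\bpsib\otimes\iota)\circ\rT$) as an explicit map into $A$, and dually by \eqref{eq:invariance-canonical-left} left-invariance of $\phi$ expresses $(\iota\otimes\cphic)\circ\lT$ and $(\iota\otimes S_{B}^{-1}\circ\cphic)\circ\Tl$ as such. After inserting auxiliary factors, using the idempotency relations $A=BA=AB=CA=AC$ to bring the products $\bpsib(da)\,a'$ and $\cphic(db)\,b'$ into the form of a slice of one of the canonical maps, one passes between the two families using the commuting diagrams \eqref{dg:galois-inverse}, \eqref{dg:galois-aux} and \eqref{dg:galois-aux2}, which express each of $\Tl,\Tr,\lT,\rT$ through another one composed with the antipode $S$ and a flip; alternatively the whole computation can be phrased in terms of the relative tensor products $\psi\ool\phi$, $\psi\oor\phi$ and the identities \eqref{eq:integrals-oo-left}, \eqref{eq:integrals-oo-right}. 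Because all four canonical maps are bijective, the two families obtained this way span the same subspace of $\dA$, not merely one containing the other — this is precisely where the regular multiplier Hopf algebroid hypothesis (as opposed to a mere bialgebroid) enters. The second equality is the right-handed counterpart: by the same manipulation $v\cdot\psi$ and $\phi\cdot v'$ unfold to the spans of $d\mapsto\phi(a\,\bpsi(a'd))$ and $d\mapsto\psi(b\,\cphic(b'd))$, and it follows in the same way, or by applying $S$ to the first equality.

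The step I expect to be the main obstacle is organisational rather than conceptual: one must keep track of the four left/right $B$- and $C$-module structures on $A$, of the balanced tensor products $\AbA,\AcA,\AlA,\ArA,\ABA,\ACA$ involved, and of the precise form of each slicing map (that $\bpsib\otimes\iota$, $\iota\otimes\cphic$, etc., are well defined on the relevant balanced tensor product and land where claimed), while ensuring that no manipulation leaves $A$ for a multiplier algebra — it is exactly the auxiliary $A$-factors that prevent this. Once the identifications of Section \ref{section:multiplier-bialgebroids} are fixed, what remains is a routine chase through the diagrams recalled there.
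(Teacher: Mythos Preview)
Your proposal is correct, and in fact the ``alternative'' you mention at the end of your second paragraph --- phrasing everything through the relative tensor product $\psi\ool\phi$ and the identities \eqref{eq:integrals-oo-left}, \eqref{eq:integrals-oo-right} --- is precisely the paper's proof. The paper simply takes an arbitrary element of $\AlA$, writes it once as $\Tr(d\otimes e)$ and once as $\Tl(g\otimes f)$ (using bijectivity of both maps), and evaluates $(\psi\ool\phi)(\Delta_B(a)(\,\cdot\,))$ on it in the two corresponding ways; one side lands in $(A\,\bphi(A))\cdot\psi$, the other in $(A\,\cpsi(A))\cdot\phi$, and bijectivity gives the equality of spans immediately. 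Your preliminary Fubini unfolding $\psi(da\,\bphi(a'))=\phi(\bpsib(da)\,a')$ is correct but unnecessary, and the subsequent diagram chase through \eqref{dg:galois-inverse}--\eqref{dg:galois-aux2} is a detour: no antipode enters the paper's argument for the first equality at all.

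One caution: your closing remark that the second equality follows ``by applying $S$ to the first'' is not quite right as stated, since $\psi\circ S$ is a left integral but need not equal $\phi$, and likewise $\phi\circ S$ need not equal $\psi$; composing with $S$ produces the analogous statement for a \emph{different} pair of integrals. Your first alternative --- that the second equality follows by the symmetric argument with $\Delta_C$, $\lT$, $\rT$ and $\psi\oor\phi$ --- is the correct one, and is what the paper intends by ``we only prove the first equation''.
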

\begin{proof}
  We only prove the first equation.  Let $a\in A$ and $b\otimes c\in
  \AlA$ and write
  \begin{align*}
    b\otimes c=\sum_{i} \Delta_{B}(d_{i})(1 \otimes e_{i}) = \sum_{j}
    \Delta_{B}(f_{j}) (g_{j} \otimes 1)
  \end{align*}
  with $d_{i},e_{i},f_{j},g_{j} \in A$.  Then  $(\psi \ool \phi)(\Delta_{B}(a)(b\otimes c))$ is equal to
  \begin{align*}
    \sum_{i} (\psi \ool \phi)(\Delta_{B}(ad_{i})(1 \otimes e_{i})))
    &= \sum_{i} (\psi \oob \phi)(ad_{i} \otimes e_{i}) =
\sum_{i}    \psi(ad_{i}\bphi(e_{i}))
  \end{align*}
  by \eqref{eq:integrals-oo-left}, and to
  \begin{align*}
    \sum_{j} (\psi \ool \phi)(\Delta_{B}(af_{j})(g_{j}\otimes 1))) &=
    \sum_{j} (\psi \ooC \phi)(g_{j} \oo af_{j}) =
\sum_{j}    \phi(af_{j}\cpsi(g_{j}))
  \end{align*}
by \eqref{eq:integrals-oo-right}. Since the maps $\Tl$ and $\Tr$ are
bijective, we can conclude $(A \bphi(A)) \cdot \psi = (A
  \cpsi(A)) \cdot \phi$.
\end{proof}

\begin{corollary} \label{corollary:uniqueness-phi-phi}
Let $\phi$ and $\phi'$ be left integrals on a regular multiplier Hopf
algebroid $\mathcal{A}$ with base
  weight $\mu$.   If $A\bphi'(A) \subseteq A\bphi(A)$ or $\phib'(A)A
  \subseteq \phib(A)A$, then
  \begin{align*}
    \sum_{\psi} A\cpsi(A) \cdot \phi' & \subseteq 
    \sum_{\psi} A\cpsi(A) \cdot \phi &&  \text{or} &
    \sum_{\psi} \phi' \cdot \psic(A)A & \subseteq 
    \sum_{\psi} \phi \cdot \psic(A)A,
  \end{align*}
  respectively, where the sums are taken over all right integrals
  $\psi$ on $\mathcal{A}$ adapted to $\mu$.
\end{corollary}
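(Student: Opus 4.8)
The plan is to reduce everything to Proposition~\ref{proposition:uniqueness-phi-psi}, applied to one right integral at a time, and then to sum over all right integrals adapted to $\mu$.

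First I would fix an arbitrary right integral $\psi$ on $\mathcal{A}$ adapted to $\mu$. Since $\phi$ and $\phi'$ are both left integrals adapted to $\mu$, Proposition~\ref{proposition:uniqueness-phi-psi} applies to each of the pairs $(\phi,\psi)$ and $(\phi',\psi)$ and gives
\[
(A\cpsi(A)) \cdot \phi = (A\bphi(A)) \cdot \psi
\qquad\text{and}\qquad
(A\cpsi(A)) \cdot \phi' = (A\bphi'(A)) \cdot \psi
\]
as subsets of $\dA$. Because $x \mapsto x \cdot \psi$ is a linear map, hence monotone with respect to inclusion of subsets of $A$, the hypothesis $A\bphi'(A) \subseteq A\bphi(A)$ yields $(A\bphi'(A)) \cdot \psi \subseteq (A\bphi(A)) \cdot \psi$; combining this with the two displayed identities gives $(A\cpsi(A)) \cdot \phi' \subseteq (A\cpsi(A)) \cdot \phi$. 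Summing over all right integrals $\psi$ adapted to $\mu$ then yields $\sum_{\psi} A\cpsi(A) \cdot \phi' \subseteq \sum_{\psi} A\cpsi(A) \cdot \phi$, which is the first assertion.

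For the second assertion I would argue in exactly the same way, but using the second identity in Proposition~\ref{proposition:uniqueness-phi-psi}: for each right integral $\psi$ adapted to $\mu$ one has $\phi \cdot (\psic(A)A) = \psi \cdot (\phib(A)A)$ and $\phi' \cdot (\psic(A)A) = \psi \cdot (\phib'(A)A)$, so the hypothesis $\phib'(A)A \subseteq \phib(A)A$ forces $\phi' \cdot (\psic(A)A) \subseteq \phi \cdot (\psic(A)A)$, and summing over $\psi$ finishes the proof. Since the whole argument is merely a rearrangement of the equalities established in Proposition~\ref{proposition:uniqueness-phi-psi}, I do not expect any genuine obstacle; the only point to keep in mind is that Proposition~\ref{proposition:uniqueness-phi-psi} must be invoked for the pair $(\phi',\psi)$ as well, which is legitimate precisely because $\phi'$ is assumed to be a left integral adapted to $\mu$.
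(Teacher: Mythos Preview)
Your proof is correct and follows essentially the same approach as the paper: both apply Proposition~\ref{proposition:uniqueness-phi-psi} to each pair $(\phi,\psi)$ and $(\phi',\psi)$ to obtain the termwise inclusion $A\cpsi(A)\cdot\phi' \subseteq A\cpsi(A)\cdot\phi$, and then sum over all right integrals $\psi$. The paper's version is merely more terse, compressing your chain of inclusions into a single displayed line.
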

\begin{proof}
We only consider the first inclusion.
If  $A\bphi'(A) \subseteq A\bphi(A)$, then  
\begin{align*}
  A\cpsi(A) \cdot \phi' &= (A\bphi'(A)) \cdot \psi \subseteq
  (A\bphi(A)) \cdot \psi = A\cpsi(A) \cdot \phi
\end{align*}
for every right integral $\psi$ by Proposition \ref{proposition:uniqueness-phi-psi}. 
\end{proof}
The preceding result suggests the following non-degeneracy
condition for integrals.
\begin{definition} \label{definition:full}
  Let $\mathcal{A}$ be a regular
  multiplier Hopf algebroid $\mathcal{A}$ with base weight $\mu$.
We call  a left integral $\phi$ (right integral $\psi$) on
$\mathcal{A}$ adapted to $\mu$ 
  \emph{full} if $\bphi$ and $\phib$ ($\cpsi$ and $\psic$,
  respectively) are surjective.  We say $(\mathcal{A},\mu)$ \emph{has
    full and faithful integrals} if there exists a left or right
  integral on $\mathcal{A}$ that is full and faithful.
\end{definition}
\begin{remark} \label{remark:full}
  \begin{enumerate}
  \item If $\omega$ is a full left or right integral, then the right or left
    integrals $\omega \circ S$ and $\omega \circ S^{-1}$ are full
    again. This follows immediately from Lemma \ref{lemma:base} (2).
  \item Proposition \ref{proposition:uniqueness-phi-psi} suggests to
    relax the condition and define a left integral $\phi$ to be full
    if $A\bphi(A)=A=\phib(A)A$. But if the latter condition holds,
    then $\phib(A)=\phib(A\beps(A)) =\phib(A)\beps(A) =
    \beps(\phib(A)A)= \beps(A)$ and likewise $\bphi(A)=\epsb(A) =
    S(\ceps(A))$, and by Proposition \ref{proposition:counits-full},
    we can assume $\beps(A)=B=S(\ceps(A))$ without much loss of
    generality.  
  \end{enumerate}
\end{remark}

The results obtained so far can be summarised as follows.
\begin{theorem} \label{theorem:uniqueness-full}
  Let $\omega$ and $\omega'$ be   left or right integrals 
on a regular multiplier Hopf algebroid $\mathcal{A}$ with base weight
  $\mu$. If $\omega$ is full, then $A\cdot \omega' \subseteq A\cdot \omega$
  and $\omega' \cdot A \subseteq \omega \cdot A$.
\end{theorem}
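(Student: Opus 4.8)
The plan is to deduce the theorem from Proposition~\ref{proposition:uniqueness-phi-psi}, Corollary~\ref{corollary:uniqueness-phi-phi} and Proposition~\ref{proposition:integrals-antipode}, with the antipode doing the bookkeeping. First I would reduce to the case in which $\omega$ is a left integral. Since $S$ is an anti-homomorphism of $A$ onto itself, one checks that $d\cdot(\eta\circ S)=(\eta\cdot S(d))\circ S$ and $(\eta\circ S)\cdot d=(S(d)\cdot\eta)\circ S$ for all $\eta\in\dA$ and $d\in A$, whence $A\cdot(\eta\circ S)=(\eta\cdot A)\circ S$ and $(\eta\circ S)\cdot A=(A\cdot\eta)\circ S$. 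As $f\mapsto f\circ S$ is a bijection of $\dA$, the two inclusions $A\cdot\omega'\subseteq A\cdot\omega$ and $\omega'\cdot A\subseteq\omega\cdot A$ hold for $(\omega,\omega')$ if and only if the corresponding pair holds for $(\omega\circ S,\omega'\circ S)$; by Proposition~\ref{proposition:integrals-antipode} the latter again consists of integrals, now of the opposite handedness, and $\omega\circ S$ is still full by Remark~\ref{remark:full}(1). So I may assume $\omega$ is a left integral, and then $\omega'$ is either a left or a right integral.

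If $\omega'$ is a right integral, I would apply Proposition~\ref{proposition:uniqueness-phi-psi} with $\phi:=\omega$ and $\psi:=\omega'$. Fullness of $\omega$ gives $\bomega(A)=B=\omega_{B}(A)$, and since $A$ is idempotent over $B$ the products $A\bomega(A)$ and $\omega_{B}(A)\,A$ both equal $A$; hence the proposition reads $A\cdot\omega'=(A\,{}_{C}\omega'(A))\cdot\omega$ and $\omega'\cdot A=\omega\cdot(\omega'_{C}(A)\,A)$. Since the maps ${}_{C}\omega'$ and $\omega'_{C}$ take values in $C\subseteq M(A)$, we get $A\,{}_{C}\omega'(A)\subseteq A$ and $\omega'_{C}(A)\,A\subseteq A$, and the two required inclusions follow at once.

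If instead $\omega'$ is a left integral, I would use Corollary~\ref{corollary:uniqueness-phi-phi}. Fullness of $\omega$ makes both of its hypotheses automatic, as $A\,{}_{B}\omega'(A)\subseteq A=A\bomega(A)$ and $\omega'_{B}(A)\,A\subseteq A=\omega_{B}(A)\,A$, so we obtain $\sum_{\psi}A\cpsi(A)\cdot\omega'\subseteq\sum_{\psi}A\cpsi(A)\cdot\omega$ and $\sum_{\psi}\omega'\cdot\psic(A)\,A\subseteq\sum_{\psi}\omega\cdot\psic(A)\,A$, the sums running over all right integrals $\psi$ on $\mathcal{A}$ adapted to $\mu$. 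To finish, observe that $\psi_{0}:=\omega\circ S$ is a full right integral by Proposition~\ref{proposition:integrals-antipode} and Remark~\ref{remark:full}(1), so ${}_{C}\psi_{0}(A)=C=(\psi_{0})_{C}(A)$ and hence $A\,{}_{C}\psi_{0}(A)=A=(\psi_{0})_{C}(A)\,A$; since every summand $A\cpsi(A)$ and $\psic(A)\,A$ lies in $A$, the two sums collapse to $A$, and by linearity of $d\mapsto d\cdot\omega'$ and $d\mapsto\omega'\cdot d$ the inclusions above become exactly $A\cdot\omega'\subseteq A\cdot\omega$ and $\omega'\cdot A\subseteq\omega\cdot A$. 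The main point — and the only step that is not pure bookkeeping — is this last one: producing a full right integral so that the auxiliary integrals appearing in Corollary~\ref{corollary:uniqueness-phi-phi} can be eliminated, which is possible here precisely because $S$ carries the given full left integral to a full right one.
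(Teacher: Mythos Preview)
Your proof is correct and follows essentially the same route as the paper's: reduce via the antipode so that $\omega$ is a left integral, handle the mixed case directly with Proposition~\ref{proposition:uniqueness-phi-psi}, and in the left--left case invoke Corollary~\ref{corollary:uniqueness-phi-phi} together with the full right integral $\omega\circ S$ to collapse the sums. The paper compresses the last step into a single sentence, whereas you spell out explicitly why the auxiliary sums over all $\psi$ reduce to $A$; otherwise the arguments coincide.
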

\begin{proof}
  If $\omega$ is a left and $\omega'$ is a right integral, then the
  assertion follows from Proposition
  \ref{proposition:uniqueness-phi-psi}. If both are left integrals and
  $\omega$ is full, then $\omega \circ S$ is a full right integral by
  Proposition \ref{proposition:integrals-antipode} and Remark
  \ref{remark:full}, and then the assertion follows from Corollary
  \ref{corollary:uniqueness-phi-phi}. The remaining cases are similar.
\end{proof}
\begin{corollary} \label{corollary:full}
For every full left integral  $\phi$ and every full right integral
$\psi$ on a regular multiplier Hopf algebroid $\mathcal{A}$ with base
weight $\mu$, the maps $\cphic$ and $\bpsib$ are surjective.
\end{corollary}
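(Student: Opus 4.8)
The plan is to prove that $\cphic$ is surjective; surjectivity of $\bpsib$ then follows, either by the mirror-image argument or directly from it. Indeed, by Proposition~\ref{proposition:integrals-antipode} and Remark~\ref{remark:full}(1) the composition $\phi^{\dag}:=\psi\circ S^{-1}$ is a full left integral, and Lemma~\ref{lemma:base}(2), in the version with $S,S_{B}^{-1},S_{C}^{-1}$ replaced by $S^{-1},S_{C},S_{B}$, gives ${}_{C}\phi^{\dag}=S_{B}\circ\psib\circ S^{-1}$; since $\psi$ is a right integral, $\psib=\bpsib$ by \eqref{eq:base-psi-invariance}, so the left-invariant map $A\to C$ of $\phi^{\dag}$ has image $S_{B}(\bpsib(S^{-1}(A)))=S_{B}(\bpsib(A))$. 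As $S_{B}\colon B\to C$ and $S$ are bijective, that map is surjective if and only if $\bpsib$ is. So it suffices to show that a full left integral $\phi$ has $\cphic$ surjective.

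Before the main argument I would record two reductions. First, $\cphic\in\dcAc$ is a morphism of $C$-bimodules and $A=CAC$, so $I:=\cphic(A)=C\,\cphic(A)\,C$ is a two-sided ideal of $C$, contained in $C_{0}:=\epsc(A)$ because $C_{0}$ equals the sum of the images $\omega(A)$ over $\omega\in\dcA$ (see \cite{timmermann:regular}). Second, fullness of $\phi$ makes the counits surjective: $\bphi(A)=\phib(A)=B$ by hypothesis, while $\bphi(A),\phib(A)\subseteq B_{0}:=\beps(A)$ by the same result, so $B_{0}=B$ and then $C_{0}=S_{B}(B_{0})=C$ by Lemma~\ref{lemma:counits-antipode}. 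Thus the containment $I\subseteq C_{0}$ does not by itself obstruct surjectivity, and it remains to prove $I=C_{0}=C$.

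For this I would fix $c\in C$, write $c=S_{B}(x)$ with $x\in B$, pick $a_{0}\in A$ with $\bphi(a_{0})=x$, and manufacture from $a_{0}$ an element $a\in A$ with $\cphic(a)=c$. The tools are the invariance of $\cphic$ with respect to $\Delta_{B}$ and $\Delta_{C}$ (including the strong-invariance diagram of Proposition~\ref{proposition:invariant-elements-hopf} and the identities \eqref{eq:integrals-oo-left}), the bijectivity of the canonical maps $\Tl,\Tr,\lT,\rT$, the right counit $\epsc$ (via its interaction with $\Delta_{C}$ and Example~\ref{example:base-counit-2}), and the base-weight compatibilities $\mu_{B}=\mu_{C}\circ S_{B}$ and $\mu_{B}\circ\beps=\mu_{C}\circ\epsc$. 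I expect this construction to be the main obstacle. The reason is that the relationship between $\cphic$ and the ``full'' data $\bphi,\phib$ is not the naive $\cphic=S_{B}\circ\bphi$ --- this already fails for the function algebra of a groupoid --- but is twisted by the modular automorphism $\sigma^{\mu}_{B}=S_{B}^{-1}\circ S_{C}^{-1}$ of $\mu_{B}$ from Proposition~\ref{proposition:counit-kms}; threading this twist consistently through the canonical-map and counit relations, rather than any conceptual difficulty, is where the real work lies.
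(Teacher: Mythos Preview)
Your reductions are fine, but the heart of the proof --- manufacturing $a$ with $\cphic(a)=c$ from $a_{0}$ with $\bphi(a_{0})=x$ --- is not carried out, and I do not think the route you sketch can be made to work at this point in the paper. The twist by $\sigma^{\mu}_{B}$ from Proposition~\ref{proposition:counit-kms} is a property of $\mu_{B}$ alone; it does not by itself give a pointwise relation between $\cphic$ and $\bphi$. Such a relation would require the modular automorphism $\sigma^{\phi}$ of $\phi$ (compare the later Lemma~\ref{lemma:bphi-phib}), but that is established only in Theorem~\ref{theorem:modular-automorphism}, \emph{after} the present corollary. So the ``threading'' you anticipate is not merely tedious bookkeeping: the needed formula is simply unavailable here.

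The paper bypasses any direct comparison of $\cphic$ with $\bphi,\phib$ by invoking Theorem~\ref{theorem:uniqueness-full}, which has just been proved. Set $\psi':=\phi\circ S$; this is a full right integral (Proposition~\ref{proposition:integrals-antipode}, Remark~\ref{remark:full}), so by definition $\psi'_{C}$ is surjective. Theorem~\ref{theorem:uniqueness-full}, applied in both directions, gives $\phi\cdot A=\psi'\cdot A$. Now whenever $\phi\cdot a=\psi'\cdot a'$ one has, for all $b\in A$ and $y\in C$,
\[
\mu_{C}(\cphic(ab)y)=\phi(aby)=\psi'(a'by)=\mu_{C}(\psi'_{C}(a'b)y),
\]
so $\cphic(ab)=\psi'_{C}(a'b)$ by faithfulness of $\mu_{C}$. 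Idempotence of $A$ then gives $\cphic(A)=\cphic(AA)=\psi'_{C}(AA)=\psi'_{C}(A)=C$. The key idea you are missing is to compare $\phi$ not with its own $B$-components but with the auxiliary right integral $\phi\circ S$, whose $C$-component is surjective for free.
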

\begin{proof}
   Let $\mathcal{A},\mu$ and $\phi$ be as above. Then $\psi':=\phi
   \circ S$ is a full right integral and  $A \phi = A \psi'$ by Theorem
   \ref{theorem:uniqueness-full}. This relation and idempotence of $A$ imply
   $\cphic(A)=\psi'_{C}(A) = C$. A similar argument shows that
   $\bpsib(A)=B$ for every right integral $\psi$.
\end{proof}

We now can show that
 integrals are unique up to scaling by elements of
$M(B)$ or $M(C)$, respectively:
\begin{theorem} \label{theorem:integrals-uniqueness} For every full
  and faithful left integral $\phi$ and every full and faithful right
  integral $\psi$ on a flat regular multiplier Hopf algebroid
  $\mathcal{A}$ with base weight $\mu$, 
    \begin{gather*}
      M(B) \cdot \phi = \{ \text{left integrals } \phi' \text{ for }
      (\mathcal{A},\mu) \} = \phi \cdot M(B), \\
      M(C) \cdot \psi = \{ \text{right integrals } \psi' \text{ for }
      (\mathcal{A},\mu) \} = \psi \cdot M(C).
    \end{gather*}
\end{theorem}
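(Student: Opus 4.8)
The plan is to prove the three‑term identity for left integrals; the statement for right integrals then follows by running the mirror‑image argument on the right‑handed data, or by transporting everything through the antipode using Proposition~\ref{proposition:integrals-antipode} and Remark~\ref{remark:full}(1). By Proposition~\ref{proposition:integrals-bimodule}, both $M(B)\cdot\phi$ and $\phi\cdot M(B)$ consist of left integrals adapted to $\mu$, so it remains only to show that an arbitrary left integral $\phi'$ for $(\mathcal{A},\mu)$ lies in $M(B)\cdot\phi$ and in $\phi\cdot M(B)$.

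First I would produce an $M(A)$‑valued scaling factor. By Theorem~\ref{theorem:uniqueness-full}, applied with the full integral $\phi$, we have $A\cdot\phi'\subseteq A\cdot\phi$ and $\phi'\cdot A\subseteq\phi\cdot A$. Faithfulness of $\phi$ makes the maps $a\mapsto a\cdot\phi$ and $a\mapsto\phi\cdot a$ from $A$ to $\dA$ injective, so the first inclusion defines a unique linear map $T\colon A\to A$ with $\phi'(ba)=\phi(bT(a))$ for all $a,b\in A$; comparing $\phi'(b(ca))$ in two ways and using faithfulness gives $T(ca)=cT(a)$. Since $A$ is idempotent and, by regularity, has local units in $A$ (or is flat over $B$ and $C$), $T$ is right multiplication by an element $x\in M(A)$, and idempotence then yields $\phi'(a)=\phi(ax)=(x\cdot\phi)(a)$ for all $a$. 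Symmetrically, the inclusion $\phi'\cdot A\subseteq\phi\cdot A$ produces $x'\in M(A)$ with $\phi'=\phi\cdot x'$.

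The crux --- and the step I expect to be the real obstacle --- is to show that $x$ and $x'$ actually lie in $M(B)$, i.e.\ that the a priori $M(A)$‑valued scaling factor descends to the base. Here one uses that $\phi'$ is adapted to $\mu$. Evaluating on $s\in B$ and $a\in A$, one has $\phi'(sa)=\phi\bigl(s(ax)\bigr)=\mu_{B}\bigl(s\,\bphi(ax)\bigr)$ while $\phi'(sa)=\mu_{B}\bigl(s\,\bphi'(a)\bigr)$, so faithfulness of $\mu_{B}$ forces $\bphi'(a)=\bphi(ax)$, and likewise $\phib'(a)=\phib(x'a)$. Now one feeds in the two remaining hypotheses on $\phi$: fullness, i.e.\ surjectivity of $\bphi$ and $\phib$, together with faithfulness of $\mu_{B}$ once more, and the factorisation conditions \eqref{eq:bomega} and \eqref{eq:omegab} expressing adaptedness of $\phi'$. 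Comparing these relations should show that $x$ maps $B$ into $B$ from both sides and is determined by its action there, hence defines an element of $M(B)$ (and likewise $x'$); the base‑weight condition $\mu_{B}\circ\beps=\mu_{C}\circ\epsc$, via Proposition~\ref{proposition:counit-kms}, enters to control the interaction of $x$ with the modular automorphism $\sigma^{\mu}_{B}$, and the strong‑invariance relations of Proposition~\ref{proposition:invariant-elements-hopf} may be needed to handle compatibility of $x$ with the $C$‑action. This is precisely where \emph{both} the full and the faithful hypotheses on $\phi$ are genuinely used; dropping either one breaks the descent.

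Once $x,x'\in M(B)$ we conclude $\phi'=x\cdot\phi\in M(B)\cdot\phi$ and $\phi'=\phi\cdot x'\in\phi\cdot M(B)$, which combined with the first paragraph gives $M(B)\cdot\phi=\{\text{left integrals}\}=\phi\cdot M(B)$. The identity $M(C)\cdot\psi=\{\text{right integrals}\}=\psi\cdot M(C)$ follows by the symmetric argument, or by composing with $S$ as above and noting that $S^{-1}$ carries $M(B)$ onto $M(C)$.
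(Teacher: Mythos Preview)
Your outline matches the paper's proof up to the point where you produce one-sided multipliers $x,x'\in M(A)$ with $\phi'=x\cdot\phi=\phi\cdot x'$. The genuine gap is in your proposed descent to $M(B)$. From $\bphi'(a)=\bphi(ax)$ and surjectivity of $\bphi$ you cannot conclude that $x$ preserves $B$: what you would need is a relation of the form $\bphi(a)\,x=\bphi(ax)$, and this is simply false in general, since $\bphi$ is not a module map for the right $A$-action. Fullness of $\bphi$ tells you only that $\bphi(Ax)\subseteq B$, which is automatic, not that $Bx\subseteq B$. So the ``comparing these relations'' step does not go through as you sketch it, and neither the strong-invariance relations nor the modular automorphism of $\mu_{B}$ will bridge this gap by themselves.

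The paper's argument for this step is structurally different and uses an idea you do not mention. Writing $\beta$ for your $x'$, one first observes that $\cphic'=\cphic\cdot\beta$ and $\cphic$ are \emph{both} left-invariant; equating the invariance identities and using faithfulness of $\phi$ together with flatness of $\bA$ yields
\[
(\iota\otimes\beta)\bigl(\Delta_{B}(b)(1\otimes c)\bigr)=\Delta_{B}(\beta b)(1\otimes c)\quad\text{for all }b,c\in A,
\]
i.e.\ $\beta$ passes through the comultiplication in the second leg. One then slices this identity with a \emph{right} integral $\bpsib$ (for instance $\psi=\phi\circ S$) in the first leg, obtaining $\beta\,\bpsib(b)=\bpsib(\beta b)$ and hence $\beta B\subseteq B$. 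The remaining inclusion $\alpha B\subseteq B$ (where $\alpha$ is your $x$) is then deduced by a modular computation combining $\phi(ax\alpha)=\phi(\beta ax)$ with $\sigma^{\mu}_{B}$ and fullness of $\phib$. So the missing ingredient is this two-step trick: use invariance of both integrals to intertwine $\beta$ with $\Delta_{B}$, then contract against the right-invariant map $\bpsib$ rather than $\bphi$.
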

\begin{proof}
  We only prove the assertion concerning left integrals.

  Let $\mathcal{A}$, $\mu$ and $\phi$ be as above.  Then every element of
  $M(B) \cdot \phi$ and of $\phi \cdot M(B)$ is a left integral by
  Proposition \ref{proposition:integrals-bimodule}.

  Conversely, assume that $\phi'$ is a left integral on
  $\mathcal{A}$. Since $\phi$ is faithful by assumption and $A
  \cdot \phi' \subseteq A\cdot \phi$ and $\phi' \cdot A \subseteq \phi
  \cdot A$ by Theorem \ref{theorem:uniqueness-full}, there exist
  unique linear maps $\alpha,\beta \colon A \to A$ such that $a \cdot
  \phi' = \alpha(a) \cdot \phi$ and $\phi' \cdot a = \phi \cdot
  \beta(a)$ for all $a \in A$. Clearly, $\alpha(ab)=a\alpha(b)$ and
  $\beta(ab)=\beta(a)b$ for all $a,b\in A$, so that we can regard
  $\alpha$ as a right and $\beta$ as a left multiplier of $A$,
  respectively, and write $\alpha \cdot \phi = \phi' = \phi \cdot
  \beta$. Furthermore,
  \begin{align*}
    \alpha \cdot \cphic  = {_{C}(\alpha \cdot \phi)} = \cphic' =   (\phi
    \cdot \beta)_{C} = \cphic \cdot \beta,
  \end{align*}
  in particular, $\alpha$ and $\beta$ commute with $C$.

  Choose a full left integral $\psi$, for example, $\phi \circ S$.  We
will show that for all $a,b\in A$,
  \begin{align} \label{eq:integrals-uniqueness-2}
a \bpsib(b)\alpha &= a\bpsib(b\alpha), &    \beta \bpsib(b) a &= \bpsib(\beta b)a.
  \end{align}
  These equations imply $B\alpha \subseteq B$ and $\beta B \subseteq
  B$. We then conclude
  \begin{align*}
    \mu_{B}(x\alpha\sigma^{\mu}_{B}(\phib(a))) =
    \mu_{B}(\phib(a)x\alpha) &= \phi(ax\alpha) \\ &= \phi(\beta ax) =
    \mu_{B}(\phib(\beta a)x) = \mu_{B}(x\sigma^{\mu}_{B}(\phi_{B}(\beta a)))
  \end{align*}
 for all $a\in A$, $x\in B$. Since $\mu_{B}$ is faithful and $\phi$ is
 full, this relation implies $\alpha B \subseteq B$, that is, $\alpha
 \subseteq M(B)$. A similar argument shows that also $\beta \in M(B)$.

 Therefore, we only need to prove
 \eqref{eq:integrals-uniqueness-2}. We focus on the second equation
 because it is nicer to work with  left multipliers; the first equation follows
 similarly.  Let $a,b\in A$.  Since $\cphic' = \cphic \cdot \beta$ and
 $\cphic$ are left-invariant,
  \begin{align*}
    (\id \otimes S_{B}^{-1} \circ \cphic \circ \beta)(\Delta_{B}(b)(a \otimes 1)) & =
    \cphic(\beta b)a = (\id \otimes
S_{B}^{-1} \circ    \cphic)(\Delta_{C}(\beta b)(a \otimes 1)).
  \end{align*}
  Since $\Tl$ is surjective, we can conclude
  \begin{align*}
    (\id \otimes S_{B}^{-1} \circ \cphic \circ \beta)(\Delta_{B}(b)(a\otimes cd)) & =
    (\id \otimes S_{B}^{-1} \circ \cphic)(\Delta_{B}(\beta b)(a \otimes cd))
  \end{align*}
  for all $a,b,c,d \in A$. Since $\phi$ is faithful and   $\bA$ is flat, maps of the form
  $d \cdot \cphic$ separate the points of $A$ and slice maps of the form $\id \otimes
S_{B}^{-1} \circ  (d\cdot \cphic)$ separate the points of
  $\AlA$. Consequently,
  \begin{align*}
    (\iota \otimes \beta)(\Delta_{B}(b)(1 \otimes c)) =
    \Delta_{B}(\beta b) (1\otimes c) \quad \text{ for all }a,b,d\in A.
  \end{align*}
We  apply $\bpsib \otimes \id$, use right-invariance of $\bpsib$ and
the fact that $\beta$ commutes with $C$, and find $\beta\bpsib(b)c =
\bpsib(\beta b)c$ for all $b,c\in A$.
\end{proof}
Note that the maps $M(A) \to \dA$ given by $a \mapsto a\cdot \omega$
and $a\mapsto \omega \cdot a$ are injective for every faithful $\omega
\in \dA$. Given full and faithful left and right integrals, we thus
obtain bijections between $M(B)$ and the space of left integrals, and
between $M(C)$ and the space of right integrals.  In particular, we
obtain bijections between invertible multipliers and full and faithful
left or right integrals.

\subsection{Convolution operators and the modular automorphism}

\label{subsection:modular-automorphism}

We next show that $A\omega = \omega A$ for every full left and every
full right integral $\omega$, and deduce that every full and faithful
integral has a modular automorphism.  As a tool, we use the following
left and right convolution operators associated to maps $\bupsilon \in
\dbA$, $\comega \in \dcA$, $\upsilonb' \in \dAb$ and $\omegac' \in
\dAc$, respectively:
\begin{align*}
  \begin{aligned}
    \lambda(\bupsilon) &\colon A \to L(A), &
    \lambda(\bupsilon)(a)b &:= (\bupsilon \oo \id)(\Delta_{B}(a)(1
    \oo
    b)),  \\
    \rho(\comega) &\colon A \to L(A),&
    \rho(\comega)(a)b &:= (\id \oo S_{B}^{-1}\circ
    \comega)(\Delta_{B}(a)(b \oo 1)), \\
    \lambda(\upsilonb') &\colon A \to R(A), &
    b\lambda(\upsilonb')(a) &:= (S_{C}^{-1}\circ \upsilonb' \oo
    \id)((1
    \oo b)\Delta_{C}(a)), \\
    \rho(\omegac') &\colon A \to R(A), &
    b\rho(\omegac')(a) &:= (\id \oo \omegac')((b \oo
    1)\Delta_{C}(a)).
  \end{aligned}
 \end{align*}

 This notation is somewhat ambiguous for elements $\bupsilon_{B}\in
 \dbAb$ or $\comega_{C} \in \dcAc$, and we shall always write
 $\rho(\bupsilon)$, $\rho(\upsilonb)$, $\lambda(\comega)$ or
 $\lambda(\omegac)$ to indicate which convolution operator is meant.
This ambiguity will be resolved in Lemma \ref{lemma:convolution} (4)
below.

 Let us collect a few easy observations.  For all
 $\bupsilon,\upsilonb',\comega,\omegac'$ as above and $a,c\in A$,
\begin{align*}
  \lambda(c \cdot \bupsilon)(a) &= (\bupsilon \oo
  \id)(\Delta_{B}(a)(c \oo 1)) \in A, \\
  \rho(c \cdot \comega)(a) &= (\id \oo S_{B}^{-1} \circ
  \comega)(\Delta_{B}(a)(1 \oo c)) \in A
\end{align*}
and likewise $\lambda(\upsilonb' \cdot c)(a), \rho(\omegac'
\cdot c)(a) \in A$. Next,
\begin{align} \label{eq:convolution-right-invariance}
  \lambda(\bupsilon) &= \bupsilon \Leftrightarrow \bupsilon \text{ is
    right-invariant}, & \rho(\comega) &= \comega \Leftrightarrow
  \comega \text{ is left-invariant},
  \\ \label{eq:convolution-left-invariance} \lambda(\upsilonb') &=
  \upsilonb' \Leftrightarrow \upsilonb' \text{ is right-invariant}, &
  \rho(\omegac') &= \omegac' \Leftrightarrow \omegac' \text{ is
    left-invariant},
\end{align}
where we regard $B$ and $C$ as elements of $L(A)$ or $R(A)$, and
finally,
\begin{align} \label{eq:convolution-counit}
  \lambda(\beps) &= \rho(\ceps) =  \lambda(\epsb) =
\rho(\epsc) = \id_{A};
\end{align}
see diagrams \eqref{dg:left-counit}, \eqref{eq:right-counit} and Example
\ref{example:base-counit}.
\begin{lemma} \label{lemma:convolution} Let $\bupsilon \in A\cdot
  \dbA$, $\upsilonb' \in \dAb \cdot A$, $\comega \in A \cdot \dcA$,
  $\omegac' \in \dAc \cdot A$ and 

  $\upsilon:=\mu_{B} \circ \bupsilon$, $\omega:=\mu_{C}\circ \comega$,
  $\upsilon':=\mu_{B}\circ \upsilon'_{B}$, $\omega':=\mu_{C} \circ
  \omega'_{C}$. Then
  \begin{enumerate}
  \item  $\eps \circ \lambda(\bupsilon) =   \upsilon$,
    $\eps \circ \rho(\comega) =  \omega$, $\eps \circ
    \lambda(\upsilonb') = \upsilon'$, 
    $\eps \circ
    \rho(\omegac')  = \omega'$;
  \item $\lambda(\bupsilon)$ and $\lambda(\upsilonb')$ commute
    with both $\rho(\comega)$ and $\rho(\omegac')$;
  \item $\upsilon \circ \rho(\comega) = \omega \circ
    \lambda(\bupsilon)$,  $\upsilon \circ \rho(\omegac') =
    \omega' \circ \lambda(\bupsilon)$, $\upsilon' \circ
    \rho(\comega) = \omega \circ \lambda(\upsilonb')$ and
    $\upsilon' \circ \rho(\omegac') = \omega' \circ
    \lambda(\upsilonb')$;
  \item if $\upsilon=\upsilon'$ and elements of $\dmAm$
        separate the points of $A$, then
    $\lambda(\bupsilon)=\lambda(\upsilonb')$; if  $\omega=\omega'$ and
    elements of $\dmAm$ separate the points of $A$, then
    $\rho(\comega)=\rho(\omegac')$.
  \end{enumerate}
\end{lemma}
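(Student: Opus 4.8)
The plan is to establish (1) and (2) by direct computations in Sweedler notation, to deduce (3) formally from (1) and (2), and to obtain (4) from (3) by a separation argument. Throughout I would use that, by the hypotheses $\bupsilon\in A\cdot\dbA$, $\comega\in A\cdot\dcA$, $\upsilonb'\in\dAb\cdot A$, $\omegac'\in\dAc\cdot A$ and the observations recorded just before the lemma, each of the operators $\lambda(\bupsilon),\rho(\comega),\lambda(\upsilonb'),\rho(\omegac')$ takes values in $A$ rather than merely in $L(A)$ or $R(A)$, so that all compositions appearing in (1)--(4) are genuine linear maps or functionals on $A$. For (1) I would write $\bupsilon=c\cdot\bupsilon_{0}$ with $\bupsilon_{0}\in\dbA$ and $c\in A$, so that $\lambda(\bupsilon)(a)=(\bupsilon_{0}\oo\id)(\Delta_{B}(a)(c\oo 1))$; applying $\eps$, using the identity $\eps(ya)=\mu_{C}(y\,\ceps(a))$ together with $\ceps=S_{B}\circ\beps$ and $\mu_{C}\circ S_{B}=\mu_{B}$ from Example \ref{example:base-counit} and Definition \ref{definition:base-weight}, and then the left counit relation of \eqref{dg:left-counit}, one gets $\eps(\lambda(\bupsilon)(a))=\mu_{B}(\bupsilon_{0}(ac))=\upsilon(a)$. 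The identity for $\rho(\comega)$ follows analogously from the counit relations in \eqref{dg:left-counit} and \eqref{eq:right-counit} and the base-weight identities (including $\epsb=S_{C}\circ\epsc$), and the identities for $\lambda(\upsilonb')$ and $\rho(\omegac')$ are the mirror images of the first two.

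For (2) I would expand, again in Sweedler notation, $\lambda(\bupsilon)\bigl(\rho(\comega)(a)\bigr)$ and $\rho(\comega)\bigl(\lambda(\bupsilon)(a)\bigr)$: since both operators are built from $\Delta_{B}$ acting on the two different tensor legs, coassociativity of $\Delta_{B}$ in the form \eqref{eq:left-comult-coass} should show that both reduce to the same expression, so that $\lambda(\bupsilon)$ and $\rho(\comega)$ commute. In the same way $\lambda(\upsilonb')$ and $\rho(\omegac')$, both built from $\Delta_{C}$, commute by coassociativity of $\Delta_{C}$, \eqref{eq:right-comult-coass}. Finally, $\lambda(\bupsilon)$ commutes with $\rho(\omegac')$ and $\lambda(\upsilonb')$ commutes with $\rho(\comega)$: here one operator is built from $\Delta_{B}$ and the other from $\Delta_{C}$, and the required identities follow in the same fashion from the compatibility relations \eqref{eq:compatible}, equivalently from the commuting squares \eqref{dg:compatible}. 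In all of these computations one has to move the base-algebra elements produced by $\bupsilon,\comega,\upsilonb',\omegac'$ past the remaining tensor leg, which is done using the module relations \eqref{eq:left-comult-module} and \eqref{eq:right-comult-module}.

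Part (3) is then formal: by (1) we have $\upsilon=\eps\circ\lambda(\bupsilon)$ and $\omega=\eps\circ\rho(\comega)$, and by (2) the maps $\lambda(\bupsilon)$ and $\rho(\comega)$ commute as linear endomorphisms of $A$, whence $\upsilon\circ\rho(\comega)=\eps\circ\lambda(\bupsilon)\circ\rho(\comega)=\eps\circ\rho(\comega)\circ\lambda(\bupsilon)=\omega\circ\lambda(\bupsilon)$, and the remaining three identities follow by the same manipulation applied to the corresponding commuting pair. For (4), assume $\upsilon=\upsilon'$. By the first and third identities of (3), for every $\comega\in A\cdot\dcA$ with $\omega=\mu_{C}\circ\comega$ we get $\omega\circ\lambda(\bupsilon)=\upsilon\circ\rho(\comega)=\upsilon'\circ\rho(\comega)=\omega\circ\lambda(\upsilonb')$. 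Now functionals of the form $\mu_{C}\circ\comega$ with $\comega\in A\cdot\dcA$ separate the points of $A$ as soon as the elements of $\dmAm$ do: given $0\neq z\in A$, non-degeneracy of $A$ provides $a\in A$ with $za\neq 0$, a separating $\chi\in\dmAm$ admits a factorisation $\chi=\mu_{C}\circ\chi_{0}$ with $\chi_{0}\in\dcA$, and then $\comega:=a\cdot\chi_{0}\in A\cdot\dcA$ satisfies $(\mu_{C}\circ\comega)(z)=\chi(za)\neq 0$. Since $\lambda(\bupsilon)(a)$ and $\lambda(\upsilonb')(a)$ lie in $A$, we conclude $\lambda(\bupsilon)=\lambda(\upsilonb')$; the second assertion of (4) follows symmetrically, using the first and second identities of (3) and the fact that the functionals $\mu_{B}\circ\bupsilon$ with $\bupsilon\in A\cdot\dbA$ separate the points of $A$.

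The step that will require the most care is (2). Because $\Delta_{B}$ and $\Delta_{C}$ take values in multiplier algebras of balanced tensor products of $A$, the Sweedler manipulations and in particular the passage of base-algebra elements across tensor legs are legitimate only because the hypotheses on $\bupsilon,\comega,\upsilonb',\omegac'$ guarantee that all intermediate terms are honest elements of $A$; and one has to be careful to invoke coassociativity of $\Delta_{B}$ (resp.\ $\Delta_{C}$) for a pair of operators built from the same comultiplication and the compatibility relations \eqref{eq:compatible} for a pair built from different ones. The remaining parts are then routine once (1) and (2) are in place.
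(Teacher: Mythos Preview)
Your proposal is correct and follows essentially the same route as the paper. The paper's argument for (1) is slightly more streamlined: it swaps $\eps\circ(\bupsilon\oo\id)$ directly for $\upsilon\circ(\id\oo\beps)$ on $\AlA$ via $\eps=\mu_{B}\circ\beps$, then applies the counit diagram from \eqref{dg:left-counit}; your detour through $\ceps=S_{B}\circ\beps$ and $\mu_{C}$ is unnecessary but harmless (and note that for $\rho(\comega)$ only \eqref{dg:left-counit} is needed, since this operator is built from $\Delta_{B}$). For (2) the paper writes out only one of the mixed cases (\,$\lambda(b\cdot\bupsilon)$ against $\rho(\omegac\cdot c)$ via \eqref{eq:compatible}\,) and declares the rest similar, whereas you correctly sort the four cases according to whether coassociativity of $\Delta_{B}$, coassociativity of $\Delta_{C}$, or the mixed relations \eqref{eq:compatible} are needed. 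Parts (3) and (4) are handled identically; your explicit separation argument in (4) is exactly what the paper compresses into one sentence.
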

\begin{proof}
  (1) We only prove the first equation; the others follow
  similarly. Let $a,c\in A$ and $\bupsilon \in \dbA$. Then the counit
  property implies
  \begin{align*}
    (\eps \circ \lambda(c \cdot \bupsilon))(a) &=
    \eps(\bupsilon \oo \id)(\Delta_{B}(a)(c \oo 1))  \\
    &=\upsilon(\id \oo  \beps)(\Delta_{B}(a)(c \oo 1)) =\upsilon(ac) =
    (c \cdot \upsilon)(a).
  \end{align*}

  (2)  Let $a,b,c\in A$ and $\bupsilon \in \dbA$, $\omega \in \dAc$. We
  show that $\lambda(b\cdot \bupsilon)$ commutes with
  $\rho(\omegac \cdot c)$, and the other commutation relations
  follow similarly. The coassociativity condition relating $\Delta_{B}$ and
$\Delta_{C}$ implies that for all $a\in A$,
  \begin{align*}
    (\lambda(b\cdot \bupsilon)\circ(\rho(\omegac\cdot c))(a) &= (\bupsilon \otimes \id
    \otimes \omegac)((\Delta_{B} \otimes \id)((1 \otimes
    c)\Delta_{C}(a))(b \otimes 1
    \otimes 1)) \\
    &=(\bupsilon \otimes \id \otimes \omegac)((1 \otimes 1 \otimes c)( \id
    \otimes \Delta_{C})(\Delta_{B}(a)(b\otimes 1))) \\ &=
    (\rho(\omegac\cdot c) \circ \lambda(b\cdot \bupsilon))(a).
  \end{align*}

  (3)    Again, we only prove the first equation. By (1) and (2),
  \begin{align*}
    \upsilon \circ \rho(\comega) &= \eps \circ
    \lambda(\bupsilon) \circ \rho(\comega) = \eps \circ
    \rho(\comega) \circ \lambda(\bupsilon) = \omega \circ
    \lambda(\bupsilon).  
  \end{align*}

  (4) Assume that $\upsilon=\upsilon'$ and that elements of $\dmAm$
  separate the points of $A$. Since $A$ is non-degenerate, then also
  functionals of the form like $\omega$ separate the points of $A$,
  and by (3), $\omega \circ \lambda(\bupsilon) = \upsilon \circ
  \rho(\comega) = \upsilon' \circ \rho(\comega) =  \omega \circ
  \lambda(\upsilonb')$, whence $\lambda(\bupsilon) =
  \lambda(\upsilonb')$. A similar argument proves the assertion
  concerning $\rho(\comega)$ and $\rho(\omegac')$.
\end{proof}
We come back to the study of convolution operators associated to
general elements of $\dbA,\dAb,\dcA$ and $\dAc$, respectively, when we
have proved the existence of a modular automorphism for full and
faithful integrals. The next step towards this aim is to rewrite the
strong invariance relations  in diagrams
\eqref{dg:strong-invariance-left} and
\eqref{dg:strong-invariance-right} as follows.
 \begin{corollary} \label{corollary:dual-strong-invariance}
  Let $\phi$ be a left and $\psi$ a right integral on a regular
  multiplier Hopf algebroid $\mathcal{A}$ with base weight $\mu$. Then
  \begin{align*}
    \rho(\phic \cdot a)(b) &=    S(\rho(b\cdot \cphi)(a)), &
    \lambda(a\cdot \bpsi)(b) &= S(\lambda(\psib \cdot b)(a)) &&\text{for
    all } a,b\in A.
  \end{align*}
\end{corollary}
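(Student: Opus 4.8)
The plan is to recognise both asserted identities as the strong invariance relations of Proposition~\ref{proposition:invariant-elements-hopf}, read off through the definition of the convolution operators. The starting point is that, since $\phi$ is a left integral, the implications in \eqref{eq:base-phi-invariance} identify $\cphi=\phic=\cphic\in\dcAc$ with a left-invariant map, so that the diagram \eqref{dg:strong-invariance-left} for $\cphic$ commutes by Proposition~\ref{proposition:invariant-elements-hopf}~(1); likewise, by \eqref{eq:base-psi-invariance}, $\bpsi=\psib=\bpsib\in\dbAb$ is right-invariant and the diagram \eqref{dg:strong-invariance-right} for $\bpsib$ commutes by part~(2).

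The second step is to unwind the four convolution operators in the statement. Note that $\rho(b\cdot\cphi)$ is the operator attached to $\dcA$ (built from $\Delta_{B}$ and $S_{B}^{-1}$), $\rho(\phic\cdot a)$ the one attached to $\dAc$ (built from $\Delta_{C}$), $\lambda(a\cdot\bpsi)$ the one attached to $\dbA$ (built from $\Delta_{B}$), and $\lambda(\psib\cdot b)$ the one attached to $\dAb$ (built from $\Delta_{C}$ and $S_{C}^{-1}$). Using the ``easy observations'' collected just before Lemma~\ref{lemma:convolution} together with the definitions \eqref{eq:left-galois-definition} and \eqref{eq:right-galois-definition} of the canonical maps, I would rewrite, for all $a,b\in A$,
\begin{align*}
  \rho(b\cdot\cphi)(a) &= (\iota\oo S_{B}^{-1}\circ\cphic)(\Tr(a\oo b)), &
  \rho(\phic\cdot a)(b) &= (\iota\oo\cphic)(\rT\Sigma(a\oo b)), \\
  \lambda(a\cdot\bpsi)(b) &= (\bpsib\oo\iota)(\Tl\Sigma(b\oo a)), &
  \lambda(\psib\cdot b)(a) &= (S_{C}^{-1}\circ\bpsib\oo\iota)(\lT(b\oo a)),
\end{align*}
all four right-hand sides lying in $A$ by the easy observations, so that the antipode $S$ may be applied to them on the nose, without passing to $L(A)$ or $R(A)$.

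With these formulas in place, the first asserted equality $\rho(\phic\cdot a)(b)=S(\rho(b\cdot\cphi)(a))$ says exactly that the two composites around the diagram \eqref{dg:strong-invariance-left}, evaluated at $a\oo b\in\AbA$, coincide, and the second equality is the corresponding statement for \eqref{dg:strong-invariance-right}, evaluated at $b\oo a\in\AcA$. Since both diagrams commute by the first step, the corollary follows.

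The main difficulty I expect is purely one of bookkeeping rather than of substance: one must keep straight which of the two $\rho$'s (resp.\ $\lambda$'s) each symbol denotes and on which side the smearing element is inserted, track the flip $\Sigma$ when matching the corners of the two diagrams, apply the easy observations in the correct form so as to see that the smeared expressions genuinely lie in $A$, and place the base-algebra multipliers on the correct side according to whether they enter through $s$ or through $t$. Once this dictionary between the convolution operators and the corners of \eqref{dg:strong-invariance-left} and \eqref{dg:strong-invariance-right} is set up carefully, nothing further is needed.
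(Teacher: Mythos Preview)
Your proposal is correct and follows exactly the approach indicated by the paper's one-line proof ``Combine Proposition~\ref{proposition:invariant-elements-hopf} and Lemma~\ref{lemma:convolution}'': you unwind the four convolution operators via the easy observations preceding Lemma~\ref{lemma:convolution} and then read the two identities directly off the strong-invariance diagrams \eqref{dg:strong-invariance-left} and \eqref{dg:strong-invariance-right}. Your dictionary between the smeared $\rho$'s and $\lambda$'s and the corners of those diagrams is accurate, so nothing further is needed.
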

\begin{proof}
  Combine Proposition \ref{proposition:invariant-elements-hopf} and
Lemma \ref{lemma:convolution}.
\end{proof}
Iterated applications of the relations above and Theorem
\ref{theorem:uniqueness-full} yield the following result.
\begin{theorem} \label{theorem:modular}
  Let $\phi$ be a full left and $\psi$ a full right integral on a regular
  multiplier Hopf algebroid $\mathcal{A}$ with base weight
  $\mu$. Then $A\cdot \phi = \phi \cdot A$ and $A \cdot \psi = \psi
  \cdot A$.
\end{theorem}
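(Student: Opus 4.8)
The plan is to imitate Van Daele's proof for multiplier Hopf algebras: first reduce the four asserted equalities to a single inclusion by exploiting the antipode together with the uniqueness result Theorem \ref{theorem:uniqueness-full}, and then extract that inclusion from the strong-invariance identities of Corollary \ref{corollary:dual-strong-invariance} combined with the calculus of convolution operators in Lemma \ref{lemma:convolution}.

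First I would carry out the reduction. Since $\phi$ is full, so are the right integrals $\phi\circ S$ and $\phi\circ S^{-1}$ by Proposition \ref{proposition:integrals-antipode} and Remark \ref{remark:full}(1), so Theorem \ref{theorem:uniqueness-full} gives $A\cdot\phi = A\cdot(\phi\circ S) = A\cdot(\phi\circ S^{-1}) = A\cdot\psi$ and likewise $\phi\cdot A = (\phi\circ S)\cdot A = (\phi\circ S^{-1})\cdot A = \psi\cdot A$; hence it suffices to prove the single equation $A\cdot\phi = \phi\cdot A$. Moreover, since $S$ is an anti-automorphism, one checks directly that $(a\cdot\phi)\circ S = (\phi\circ S)\cdot S^{-1}(a)$ and $(\phi\cdot a)\circ S = S^{-1}(a)\cdot(\phi\circ S)$ for all $a\in A$, whence $(A\cdot\phi)\circ S = (\phi\circ S)\cdot A = \phi\cdot A$ and $(\phi\cdot A)\circ S = A\cdot(\phi\circ S) = A\cdot\phi$; as $\,\cdot\circ S$ is a bijection of $\dA$, it is therefore enough to prove the single inclusion $A\cdot\phi\subseteq\phi\cdot A$.

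To establish this inclusion I would work with the full right integral $\psi:=\phi\circ S^{-1}$ (so that $A\cdot\psi = A\cdot\phi$ and $\psi\cdot A = \phi\cdot A$), with the left-invariant map $\cphic$ attached to $\phi$ and the right-invariant map $\bpsib$ attached to $\psi$, both surjective by Corollary \ref{corollary:full}. The heart of the matter is the pair of strong-invariance identities $\rho(\phic\cdot a)(b) = S(\rho(b\cdot\cphi)(a))$ and $\lambda(a\cdot\bpsi)(b) = S(\lambda(\psib\cdot b)(a))$ from Corollary \ref{corollary:dual-strong-invariance}. Evaluating one of these against $\phi$ (resp.\ $\psi$) and using that $\phi$ is left-invariant (resp.\ $\psi$ right-invariant) — via the triangles \eqref{eq:invariance-canonical-left}--\eqref{eq:invariance-canonical-right} and parts (1)--(3) of Lemma \ref{lemma:convolution} — collapses one of the two convolution operators, while passing through $S$ converts $\phi\circ S^{\pm1}$ back into $\psi$ (resp.\ $\phi$). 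Iterating the two identities in this fashion should produce, for each fixed $a\in A$, an element $v\in A$ — assembled from $a$ through the maps $\cphic,\bpsib,S,S_B,S_C$, and using bijectivity of $\Tl,\Tr,\lT,\rT$ to bring the auxiliary tensors into the form $\Delta_B(\cdot)(\cdot\otimes\cdot)$ — such that $\phi(xa) = \phi(vx)$ for all $x\in A$, that is, $a\cdot\phi = \phi\cdot v\in\phi\cdot A$. Surjectivity of $\cphic$ and $\bpsib$ is precisely what lets the base-algebra factors run through all of $B$ and $C$, and Theorem \ref{theorem:uniqueness-full} is what forces the expression for $v$ obtained from the left-handed strong invariance to agree with the one obtained from the right-handed version, so that the construction is consistent.

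I expect the main difficulty to be organisational rather than conceptual. There are four distinct convolution operators ($\lambda$ and $\rho$, for maps into $B$ built from $\Delta_B$ and for maps into $C$ built from $\Delta_C$), each interacting with the left and right module structures over $B$ and $C$ and with the anti-isomorphisms $S_B,S_C$; keeping track of which argument sits on which side and applying Lemma \ref{lemma:convolution}(1)--(3) and the invariance triangles in the correct slots is where the real work lies, exactly as this bookkeeping is heavier here than in Van Daele's unital, base-field setting. A secondary point requiring care is that the element $v$ genuinely lies in $A$: this is automatic as long as $v$ is built only from the expressions $\rho(\comega\cdot c)(a)$, $\lambda(\bupsilon\cdot c)(a)$ and their analogues, which land in $A$ by construction, but if at some stage a multiplier appears one closes up using idempotency $A = AB = BA = AC = CA$, just as in the proof of Theorem \ref{theorem:integrals-uniqueness}.
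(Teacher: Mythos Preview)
Your proposal is correct and follows the same route as the paper: reduce via the antipode and Theorem~\ref{theorem:uniqueness-full} to a single inclusion, then iterate the strong-invariance identities of Corollary~\ref{corollary:dual-strong-invariance} together with the swap rule $\upsilon\circ\rho(\omega)=\omega\circ\lambda(\upsilon)$ from Lemma~\ref{lemma:convolution}(3). One refinement worth noting: the role of Theorem~\ref{theorem:uniqueness-full} \emph{inside} the iteration is not to reconcile two candidate expressions for $v$, but rather to convert at each intermediate stage a functional such as $S(b)\cdot\phi$ into some $b'\cdot\psi$ (and later $S^{-1}(c)\cdot\psi$ into some $c'\cdot\phi$), which is precisely what allows the strong-invariance flip to be applied a second and third time; the paper carries this out in three steps and then closes up by observing that the elements $\lambda(\psib\cdot b)(c)$ span $A$ by surjectivity of $\lT$ and of $\psib$.
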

\begin{proof}
  By Theorem \ref{theorem:uniqueness-full}, it suffices to prove the
  assertion for a full left integral $\phi$. Let $\psi:=\phi \circ
  S$. Then $\psi$ is a full right integral and $A \cdot \phi = A \cdot
  \psi$ by Theorem \ref{theorem:uniqueness-full}. We show that $\phi
  \cdot A \subseteq A \cdot \psi$, and a similar argument
  proves the reverse inclusion.

  Let $a,b,c \in A$.  Then
  \begin{align*}
    ((a \cdot \phi) \circ \lambda(\psib \cdot b))(c) &=
    ((\psi \cdot b) \circ \rho(a \cdot \cphi))(c)  =
    ((S(b) \cdot \phi) \circ S  \circ \rho(a \cdot \cphi))(c).
  \end{align*}
  Choose $b' \in A$ with $S(b) \cdot \phi = b' \cdot \psi$ and use Corollary \ref{corollary:dual-strong-invariance} to rewrite the expression above in the form
\begin{align*}
  ((b' \cdot \psi) \circ \rho(\phic \cdot c))(a) &=
  ((\phi \cdot c) \circ \lambda(b'\cdot \bpsi))(a) \\ &=
  ((\phi \cdot c) \circ S \circ \lambda(\psib \cdot a))(b') =
  ((S^{-1}(c) \cdot \psi) \circ \lambda(\psib \cdot a))(b').
\end{align*}
Again, choose $c' \in A$
with $S^{-1}(c) \cdot \psi = c' \cdot \phi$ and rewrite  the expression above
in the form
\begin{align*}
  ((c' \cdot \phi)  \circ \lambda(\psib \cdot a))(b') &=
  ((\psi \cdot a) \circ \rho(c' \cdot \cphi))(b').
\end{align*}
Summarising, 
\begin{align*}
  \phi(\lambda(\psib \cdot b)(c) a) &=
      ((a \cdot \phi) \circ \lambda(\bpsi \cdot b))(c) \\ &=
  ((\psi \cdot a) \circ \rho(c' \cdot \cphi))(b') =
  \psi(a \rho(c' \cdot \cphi)(b')).
\end{align*}
Here, $b$ and $c \in A$ were arbitrary, and the linear span of all
elements of the form
\begin{align*}
  \lambda(\psib \cdot b)(c) &= (S_{C}^{-1} \circ \psib \oo \id)((b
  \oo 1)\Delta_{C}(c))
\end{align*}
is equal to $AS_{C}^{-1}(\psib(A))=AC=A$ because $\lT$ and $\psib$ are
surjective. Thus,  $\phi \cdot A \subseteq A
\cdot \psi=A \cdot \phi$. 
\end{proof}
\begin{theorem} \label{theorem:modular-automorphism}
  Let $\phi$ be a full and faithful left integral and  let $\psi$ be a
  full and faithful right integral on a regular multiplier Hopf
  algebroid $\mathcal{A}$ with base weight $\mu$. Then $\phi$ and
  $\psi$ possess modular automorphisms $\sigma^{\phi}$ and
  $\sigma^{\psi}$, respectively, whose extensions to $M(A)$
  satisfy
  \begin{align*}
  \sigma^{\phi}(y) &=\sigma^{\mu}_{C}(y)=S^{2}(y) \text{ for all }y
  \in C, &
  \sigma^{\psi}(x) &= \sigma^{\mu}_{B}(x)=S^{-2}(x)   \text{ for all }
  x\in B.
  \end{align*}
  Furthermore,
  \begin{align*}
    \Delta_{B} \circ \sigma^{\phi} &= (S^{2} \otimes \sigma^{\phi})
    \circ \Delta_{B}, &
    \Delta_{C} \circ \sigma^{\phi} &= (S^{2} \otimes \sigma^{\phi})
    \circ \Delta_{C}, \\
    \Delta_{B} \circ \sigma^{\psi} &= (\sigma^{\psi} \otimes S^{-2})
    \circ \Delta_{B}, &
    \Delta_{C} \circ \sigma^{\psi} &= (\sigma^{\psi} \otimes S^{-2})
    \circ \Delta_{C}.    
  \end{align*}
  If $\mathcal{A}$ is flat, then
  $\sigma^{\phi}(M(B))=M(B)$ and $\sigma^{\psi}(M(C))=M(C)$.
\end{theorem}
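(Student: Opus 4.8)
The plan is as follows. The existence of $\sigma^{\phi}$ and $\sigma^{\psi}$ is immediate from Theorem~\ref{theorem:modular}: it gives $A\cdot\phi=\phi\cdot A$ and $A\cdot\psi=\psi\cdot A$, so by the criterion for modular automorphisms recalled in the Preliminaries, $\phi$ and $\psi$ have modular automorphisms, characterised by $\sigma^{\phi}(a)\cdot\phi=\phi\cdot a$ and $\sigma^{\psi}(a)\cdot\psi=\psi\cdot a$. Since $A$ is non-degenerate and idempotent, these extend uniquely to automorphisms of $M(A)$, and one checks in the standard way that $\phi\circ\sigma^{\phi}=\phi$, $\psi\circ\sigma^{\psi}=\psi$, and that the characterising relations extend to $\phi\cdot m=\sigma^{\phi}(m)\cdot\phi$ and $\psi\cdot m=\sigma^{\psi}(m)\cdot\psi$ for all $m\in M(A)$, that is, $\phi(ma)=\phi(a\sigma^{\phi}(m))$ and $\psi(am)=\psi(\sigma^{\psi}(m)a)$. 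For the value of $\sigma^{\phi}$ on $C$ I would use the factorisation $\phi=\mu_{C}\circ\cphic$ with $\cphic$ a $C$-bimodule map, the fact that $S$ restricts to $S_{B}$ on $B$ and to $S_{C}$ on $C$ (so that $\sigma^{\mu}_{C}=S_{B}\circ S_{C}=S^{2}|_{C}$ by Proposition~\ref{proposition:counit-kms}), and compute, for $y\in C$ and $a\in A$,
\[
  \phi(ya)=\mu_{C}\big(y\,\cphic(a)\big)=\mu_{C}\big(\cphic(a)\,\sigma^{\mu}_{C}(y)\big)=\mu_{C}\big(\cphic(a\,S^{2}(y))\big)=\phi(a\,S^{2}(y)).
\]
Comparing with $\phi(ya)=\phi(a\,\sigma^{\phi}(y))$ and using that $\phi$ is faithful (so $\phi(a\,m)=0$ for all $a\in A$ forces $m=0$ in $M(A)$) yields $\sigma^{\phi}(y)=S^{2}(y)$. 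The value $\sigma^{\psi}(x)=S^{-2}(x)$ for $x\in B$ follows symmetrically from $\psi=\mu_{B}\circ\bpsib$ and $\sigma^{\mu}_{B}=S^{-2}|_{B}$.

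For the commutation relations with the comultiplications I would prove the four identities by the same method, illustrating it on $\Delta_{B}\circ\sigma^{\phi}=(S^{2}\otimes\sigma^{\phi})\circ\Delta_{B}$; the $\Delta_{C}$-versions then use the left-invariance of $\cphic$ with respect to $\Delta_{C}$ (Proposition~\ref{proposition:invariant-elements-hopf}) and the $\sigma^{\psi}$-versions use the right-invariance of $\bpsib$, while transport through $S$ via \eqref{dg:galois-inverse}--\eqref{dg:galois-aux2} links the $\Delta_{B}$- and $\Delta_{C}$-halves. Since $\Delta_{B}$ takes values in the multiplier algebra of a balanced tensor product, the identity to be proven reads, on elements, $\Delta_{B}(\sigma^{\phi}(a))(b\otimes c)=(S^{2}\otimes\sigma^{\phi})\big(\Delta_{B}(a)(S^{-2}(b)\otimes(\sigma^{\phi})^{-1}(c))\big)$ for all $a,b,c\in A$. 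To prove it I would follow Van Daele's argument for multiplier Hopf algebras \cite{daele:1}: starting from the strong left-invariance of $\cphic$ in the form of Corollary~\ref{corollary:dual-strong-invariance}, and using $\phi\circ\sigma^{\phi}=\phi$ together with the modular relation for $\phi$, one transforms the slice $\big(\id\otimes S_{B}^{-1}\circ(d\cdot\cphic)\big)$ of the left-hand side into that of the right-hand side for an arbitrary $d\in A$; since $\phi$ is full and faithful and $\bA$ is flat, these slice maps separate the points of $\AlA$, exactly as in the proof of Theorem~\ref{theorem:integrals-uniqueness}, so the element-level identity follows. I expect this to be the main obstacle: the difficulty is entirely in the bookkeeping --- keeping track of which balanced tensor product each expression lives in, using the module relations \eqref{eq:left-comult-module}, \eqref{eq:right-comult-module} and the compatibility \eqref{eq:compatible} to commute factors of $B$ and $C$ past $\Delta_{B}$ and $\Delta_{C}$, and invoking the separation-of-points argument only at the very end.

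Finally, suppose $\mathcal{A}$ is flat. (The inclusions $\sigma^{\phi}(M(C))=S^{2}(M(C))=M(C)$ and $\sigma^{\psi}(M(B))=S^{-2}(M(B))=M(B)$ are already immediate from the base values above.) For $\sigma^{\phi}(M(B))=M(B)$ I would invoke Theorem~\ref{theorem:integrals-uniqueness}, which in the present flat, full, and faithful situation gives $M(B)\cdot\phi=\phi\cdot M(B)=\{\text{left integrals}\}$: given $m\in M(B)$, the functional $\phi\cdot m$ is again a left integral by Proposition~\ref{proposition:integrals-bimodule}, hence lies in $M(B)\cdot\phi$, so $\phi\cdot m=n\cdot\phi$ for a unique $n\in M(B)$; but $\phi\cdot m=\sigma^{\phi}(m)\cdot\phi$ by the extended characterising relation, whence $\sigma^{\phi}(m)=n\in M(B)$, i.e.\ $\sigma^{\phi}(M(B))\subseteq M(B)$. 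Running the same argument with the two descriptions of $M(B)\cdot\phi=\phi\cdot M(B)$ interchanged gives $M(B)\subseteq\sigma^{\phi}(M(B))$, hence equality; the relation $\sigma^{\psi}(M(C))=M(C)$ follows in the same way from $M(C)\cdot\psi=\psi\cdot M(C)$.
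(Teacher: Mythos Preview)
Your plan is correct and uses the same ingredients as the paper: Theorem~\ref{theorem:modular} for existence, the factorisation $\phi=\mu_C\circ\cphic$ together with Proposition~\ref{proposition:counit-kms} for $\sigma^\phi|_C$, Corollary~\ref{corollary:dual-strong-invariance} for the comultiplication identities, and Theorem~\ref{theorem:integrals-uniqueness} for $\sigma^\phi(M(B))=M(B)$.

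The one place where the paper is more streamlined is the comultiplication step you flag as the main obstacle. Instead of unfolding slices and handling the $\Delta_B$- and $\Delta_C$-cases separately, the paper stays at the level of the convolution operators and obtains both relations from a single chain: for all $a,b\in A$,
\[
  \rho(\phi\cdot a)(\sigma^{\phi}(b)) = S\bigl(\rho(\sigma^{\phi}(b)\cdot\phi)(a)\bigr) = S\bigl(\rho(\phi\cdot b)(a)\bigr) = S^{2}\bigl(\rho(a\cdot\phi)(b)\bigr),
\]
the outer equalities being Corollary~\ref{corollary:dual-strong-invariance} and the middle one the defining relation $\sigma^{\phi}(b)\cdot\phi=\phi\cdot b$. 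Since $a\cdot\phi=(\phi\cdot a)\circ\sigma^\phi$ and since $\rho(\omega)$ for $\omega\in\hat A$ is defined equivalently via $\Delta_B$ and via $\Delta_C$ (Proposition~\ref{proposition:convolution}), varying $a$ over $A$ yields both comultiplication identities at once; there is no need to treat them separately or to transport through $S$ via \eqref{dg:galois-inverse}--\eqref{dg:galois-aux2}. Your slice maps $\id\otimes S_B^{-1}\circ(d\cdot\cphic)$ are precisely the components of $\rho(d\cdot\phi)$, so your approach would unwind to the same computation; the convolution-operator packaging is exactly what removes the bookkeeping you anticipated.
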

\begin{proof}
  We prove the assertion for a full and faithful left integral $\phi$;
  the case of right integrals is similar.  

  Since $\phi$ is faithful and $A\cdot \phi = \phi \cdot A$ by
Theorem \ref{theorem:modular}, there exists a unique
  bijection $\sigma^{\phi}\colon A \to A$ such that $\phi \cdot a =
  \sigma^{\phi}(a) \cdot \phi$ for all $a \in A$, and this map is
  easily seen to be an algebra automorphism.

  For all $y\in C$, we have $\sigma^{\phi}(y)=\sigma^{\mu}_{C}(y)$
  because for all $a\in A$,
  \begin{align*}
    \phi(ya) = \mu_{C}(y\cphic(a)) =
    \mu_{C}(\cphic(a)\sigma^{\mu}_{C}(y)) = \phi(a\sigma^{\mu}_{C}(y)).
  \end{align*}

  The tensor product $S^{2} \otimes \sigma^{\phi}$ is well-defined on
  $\AlA$ and $\ArA$ because
  $\sigma^{\phi}(y)=\sigma^{\mu}_{C}(y)=S^{2}(y)$ for all $y\in C$.
  Two applications of Corollary \ref{corollary:dual-strong-invariance}
  show that for all $a,b \in A$,
  \begin{align*}
    \rho(\phi \cdot a)(\sigma^{\phi}(b)) = S(\rho(\sigma^{\phi}(b)
    \cdot \phi)(a)) &=S(\rho(\phi \cdot b)(a)) \\
    &=S^{2}(\rho(a \cdot
    \phi)(b)) = S^{2}(\rho((\phi \cdot a) \circ \sigma^{\phi})(b)).
\end{align*}
Since $a \in A$ was arbitrary, we can conclude   $\Delta_{B} \circ \sigma^{\phi} = (S^{2} \otimes \sigma^{\phi})
    \circ \Delta_{B}$ and $
    \Delta_{C} \circ \sigma^{\phi} = (S^{2} \otimes \sigma^{\phi})
    \circ \Delta_{C}$.

    Finally, the relation $\sigma^{\phi}(M(B))=M(B)$ follows
    immediately from the relation $M(B) \cdot \phi = \phi \cdot M(B)$
    obtained in Theorem \ref{theorem:integrals-uniqueness}.
\end{proof}
The following additional relations will be useful later.
\begin{lemma} \label{lemma:bphi-phib} Let $\phi$ be a full and
  faithful left integral on a flat regular multiplier Hopf algebroid
  $\mathcal{A}$ with base weight $\mu$. Then
  \begin{enumerate}
  \item $ \phib(xa) = (S^{2} \circ \sigma^{\phi})(x)\phib(a)$ and
    $\bphi(ax) = (\sigma^{\phi} \circ S^{2})^{-1}(x)\bphi(a)$ for all
    $x\in M(B)$ and $a\in A$;
  \item $\bphi(A)$ and $\phib(A)$ are two-sided ideals in $B$ and
    $\ker \bphi= \{ a\in A : \phi(BaB) = 0\} = \ker \phib$;
  \item the map $\theta \colon \omegab(A) \to \bomega(A),
    \omegab(a) \mapsto \bomega(a)$, is well-defined and 
    $\mu_{B} \circ \theta =\mu_{B}$;
  \item
    $\mu_{B}(x\sigma^{\phi}(x'))=\mu_{B}(x'\theta(x))=\mu_{B}(S^{2}(\theta(x))x')$
    for all $x\in B$ and $x' \in M(B)$.
  \end{enumerate}
\end{lemma}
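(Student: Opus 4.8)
The plan is to prove (1) by a direct pairing against the faithful functional $\mb$, to deduce the kernel identities of (2) from (1) together with an observation about essential ideals in $M(B)$, and finally to read off (3) and (4) almost immediately. For (1), I would check the first identity by pairing with $\mb(-\cdot x')$, $x'\in M(B)$: since $\phib$ is a right $B$-module map and $\phi=\mb\circ\phib$ by \eqref{eq:omegab}, the left side becomes $\mb(\phib(xax'))=\phi(xax')$; moving the twisting multiplier from the left of $\phib(a)$ to its right by the modular automorphism $\sigma^{\mu}_{B}$ of $\mb$ (Proposition~\ref{proposition:counit-kms}), using $\sigma^{\mu}_{B}=S^{-2}|_{B}$ (Theorem~\ref{theorem:modular-automorphism}) so that the factor $S^{2}$ is cancelled, the right side becomes $\mb(\phib(ax')\sigma^{\phi}(x))=\phi(ax'\sigma^{\phi}(x))$; the two agree by the defining relation $\phi\cdot c=\sigma^{\phi}(c)\cdot\phi$ from Theorem~\ref{theorem:modular-automorphism}, and faithfulness of $\mb$ concludes. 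The second identity is the mirror computation, now pairing with $\mb(x'\cdot-)$ and using that $\bphi$ is a left $B$-module map; after pushing everything through $\mb$ and $\sigma^{\mu}_{B}$ one arrives at an identity of elements of $\dA$, and faithfulness of $\phi$ — concretely, injectivity of $z\mapsto\phi\cdot z$ on $M(A)$, recorded after Theorem~\ref{theorem:integrals-uniqueness} — upgrades it to the stated identity in $M(B)$. Throughout one uses that $\sigma^{\phi}(M(B))=M(B)$ (Theorem~\ref{theorem:modular-automorphism}, where the flatness hypothesis enters) and that $S^{2}$ restricts to an automorphism of $B$ and of $M(B)$, since $S^{-2}|_{B}=\sigma^{\mu}_{B}$.

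For (2), fullness of $\phi$ gives $\bphi(A)=\phib(A)=B$, so the assertion about two-sided ideals is automatic. Faithfulness of $\mb$ identifies $\ker\bphi=\{a:\phi(Ba)=0\}$ and $\ker\phib=\{a:\phi(aB)=0\}$. Part (1) shows that $\bphi(a)=0$ forces $\bphi(ax)=0$ for all $x\in M(B)$ and $\phib(a)=0$ forces $\phib(xa)=0$ for all $x\in M(B)$, whence both kernels lie in $\{a:\phi(BaB)=0\}$. For the reverse inclusion, assume $\phi(BaB)=0$: faithfulness of $\mb$ first gives $\bphi(aB)=0$ and $\phib(Ba)=0$, and then part (1) gives $(\sigma^{\phi}\circ S^{2})^{-1}(B)\cdot\bphi(a)=0$ and $(S^{2}\circ\sigma^{\phi})(B)\cdot\phib(a)=0$. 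The crucial point is that $\sigma^{\phi}\circ S^{2}$ and its inverse are automorphisms of $M(B)$, hence carry the essential two-sided ideal $B$ of $M(B)$ onto two-sided ideals of $M(B)$; since moreover $(\sigma^{\phi}\circ S^{2})^{-1}(B)\cdot B=B$ and $(S^{2}\circ\sigma^{\phi})(B)\cdot B=B$ (again by part (1) together with fullness and $BA=A=AB$), any two-sided ideal $N\subseteq M(B)$ with $NB=B$ satisfies $B=NB\subseteq N$. Thus $(\sigma^{\phi}\circ S^{2})^{-1}(B)\supseteq B$ and $(S^{2}\circ\sigma^{\phi})(B)\supseteq B$, so $B\cdot\bphi(a)=0=B\cdot\phib(a)$ and non-degeneracy of ${}_{B}B$ forces $\bphi(a)=\phib(a)=0$. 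Hence all three sets coincide.

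For (3), well-definedness of $\theta$ is precisely the inclusion $\ker\phib\subseteq\ker\bphi$ just established, and surjectivity is fullness; since $\phi=\mb\circ\bphi=\mb\circ\phib$ by \eqref{eq:bomega} and \eqref{eq:omegab}, applying $\mb$ to $\theta(\phib(a))=\bphi(a)$ yields $\mb\circ\theta=\mb$. For (4), I would pick $a$ with $\phib(a)=x$ (possible by fullness), so that $\theta(x)=\bphi(a)$, and compute, for $x'\in B$, $\mb(x'\theta(x))=\mb(x'\bphi(a))=\phi(x'a)=\phi(a\sigma^{\phi}(x'))=\mb(\phib(a)\sigma^{\phi}(x'))=\mb(x\sigma^{\phi}(x'))$, which extends to $x'\in M(B)$ by non-degeneracy; the last equality $\mb(x'\theta(x))=\mb(S^{2}(\theta(x))x')$ then follows from the modular relation $\mb(\alpha\beta)=\mb(\beta\sigma^{\mu}_{B}(\alpha))$ and $\sigma^{\mu}_{B}\circ S^{2}=\iota$ on $M(B)$.

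The step I expect to cause the most trouble is the second twist formula in (1): one must be scrupulous about which side of $\bphi$, respectively $\phib$, each multiplier lands on, and check that the compositions $\sigma^{\mu}_{B}\circ S^{2}$ and $S^{2}\circ\sigma^{\mu}_{B}$ (both equal to $\iota$) cancel in precisely the right order, so that only the twist by $\sigma^{\phi}$ survives and faithfulness of $\phi$ can be applied. Everything else, and in particular (2)--(4), is then a matter of bookkeeping with $\mb$, its modular automorphism, and the fact — supplied by Theorem~\ref{theorem:modular-automorphism} under the flatness hypothesis — that $\sigma^{\phi}$ preserves $M(B)$.
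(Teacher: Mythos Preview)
Your approach is correct and essentially the same as the paper's. For (1) and (4) you reproduce the paper's computations verbatim; for (2) and (3) the paper only says ``it is easy to see that (1) implies (2), which in turn implies (3)'', and you supply the details --- including the useful observation that fullness makes $\bphi(A)=\phib(A)=B$ so the ideal claim is automatic, and the essential-ideal argument $(\sigma^{\phi}\circ S^{2})^{-1}(B)\supseteq B$ for the kernel equality, which is the right way to handle the fact that $\sigma^{\phi}$ need not preserve $B$ itself.

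One small correction: in your sketch of the second identity of (1) you invoke faithfulness of $\phi$ via $z\mapsto \phi\cdot z$, but the computation actually terminates the same way as the first identity --- one obtains $\mu_{B}(x'\bphi(ax))=\mu_{B}\bigl(x'\,\bphi(a)\,(\sigma^{\phi}\circ S^{2})^{-1}(x)\bigr)$ for all $x'\in B$ and concludes by faithfulness of $\mu_{B}$, not of $\phi$. This is exactly the ``mirror computation'' you announce, so nothing is lost; just be aware that the twist lands on the right of $\bphi(a)$ in the direct computation, which is harmless for everything downstream since your argument for (2) works regardless of which side the twist sits on.
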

\begin{proof}
(1)  We only prove the first relation; the second one follows
  similarly. Let $x,x'\in B$ and $a\in A$. By the corollary above,
  $\sigma^{\phi}(x) \in M(B)$, whence
  \begin{align*}
    \mu_{B}(\phib(xa)x') = \phi(xax') &= \phi(ax'\sigma^{\phi}(x)) \\ &=
    \mu_{B}(\phib(a)x'\sigma^{\phi}(x)) =
    \mu_{B}(S^{2}(\sigma^{\phi}(x))\phib(a)x').
  \end{align*}
  Here, we used the relation $(\sigma^{\mu}_{B})^{-1}(x)=S^{2}(x)$; see
  Proposition \ref{proposition:counit-kms}.  Since $x' \in B$ was
  arbitrary and $\mu_{B}$ is faithful, the first relation follows.

  (2), (3)  It is easy to see that  (1) implies (2), which in turn
  implies (3).

  (4) Let $a\in A$, $x' \in M(B)$ and $x=\phib(a)$. Then
  \begin{align*}
    \mu_{B}(x\sigma^{\phi}(x')) = \phi(a\sigma^{\phi}(x')) &=
    \phi(x'a) \\ &= \mu_{B}(x'\bphi(a)) = \mu_{B}(x'\theta(x))
    =\mu_{B}(S^{2}(\theta(x))x'). \qedhere
  \end{align*}
\end{proof}

\subsection{More on convolution operators, and the modular element}
\label{subsection:modular-element}
The results obtained so far imply that left and right integrals on
regular multiplier Hopf algebroids are related by modular elements,
very much like in the theory of multiplier Hopf algebras.
\begin{theorem} \label{theorem:modular-element} Let $\phi$ be a full
  and faithful left integral and let $\psi$ be a right integral on a
  regular multiplier Hopf algebroid $\mathcal{A}$ with base weight
  $\mu$. Then there exists a unique multiplier $\delta_{\psi} \in
  M(A)$ such that $\psi = \delta_{\psi} \cdot \phi$, and this
  multiplier is invertible if and only if $\psi$ is full and faithful.
\end{theorem}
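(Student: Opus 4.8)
\emph{Overall strategy.} The plan is to build $\delta_{\psi}$ by comparing $\psi$ with $\phi$ on each side, in the spirit of Van Daele's construction of the modular element of a multiplier Hopf algebra \cite{daele:1}, and to read off the invertibility criterion from the comparison in the opposite direction.

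\emph{Existence and uniqueness.} Since $\phi$ is full, Theorem \ref{theorem:uniqueness-full} gives $A\cdot\psi\subseteq A\cdot\phi$ and $\psi\cdot A\subseteq\phi\cdot A$, and Theorem \ref{theorem:modular} shows that these two right-hand sides coincide. Because $\phi$ is faithful, the maps $m\mapsto m\cdot\phi$ and $m\mapsto\phi\cdot m$ from $M(A)$ to $\dA$ are injective. Hence, for every $a\in A$ the relation $a\cdot\psi\in A\cdot\phi$ determines a unique element, which I denote $a\delta_{\psi}\in A$, with $a\cdot\psi=(a\delta_{\psi})\cdot\phi$, and one checks from the $M(A)$-module structure that $(ba)\delta_{\psi}=b(a\delta_{\psi})$, so that $a\mapsto a\delta_{\psi}$ is a right multiplier of $A$. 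Treating $\psi\cdot a\in\phi\cdot A$ symmetrically --- and bringing in the modular automorphism $\sigma^{\phi}$ of $\phi$ from Theorem \ref{theorem:modular-automorphism} to absorb the twist that appears there --- produces a left multiplier $a\mapsto\delta_{\psi}a$ of $A$. The step I expect to cost the most work is the verification that these two one-sided actions fit together, i.e.\ that $(a\delta_{\psi})b=a(\delta_{\psi}b)$ for all $a,b\in A$, so that they assemble into an honest element $\delta_{\psi}\in M(A)$ with $\psi=\delta_{\psi}\cdot\phi$; here I would again use the modular automorphism of $\phi$ together with the strong-invariance and convolution-operator identities of this subsection (Lemma \ref{lemma:convolution}, Corollary \ref{corollary:dual-strong-invariance}), exactly as in \cite{daele:1}. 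Uniqueness is then immediate: if $\delta_{\psi}\cdot\phi=\delta'\cdot\phi$, then $\phi(x(\delta_{\psi}-\delta'))=0$ for all $x\in A$, and applying faithfulness of $\phi$ to the elements $a(\delta_{\psi}-\delta')\in A$, $a\in A$, forces $\delta_{\psi}=\delta'$.

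\emph{Invertibility, the easy direction.} Suppose $\psi$ is full and faithful. Then the construction above applies with the roles of $\phi$ and $\psi$ exchanged (Theorems \ref{theorem:uniqueness-full}, \ref{theorem:modular} and \ref{theorem:modular-automorphism} all have the symmetric form needed), and yields a unique $\delta'\in M(A)$ with $\phi=\delta'\cdot\psi$. Substituting one relation into the other gives $\psi=\delta_{\psi}\cdot\phi=(\delta_{\psi}\delta')\cdot\psi$ and $\phi=\delta'\cdot\psi=(\delta'\delta_{\psi})\cdot\phi$, so that $\psi(x(\delta_{\psi}\delta'-1))=0$ and $\phi(x(\delta'\delta_{\psi}-1))=0$ for all $x\in A$; faithfulness of $\psi$ and of $\phi$ then forces $\delta_{\psi}\delta'=1=\delta'\delta_{\psi}$, so $\delta_{\psi}$ is invertible with inverse $\delta'$.

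\emph{Invertibility, the converse.} Suppose $\delta_{\psi}$ is invertible in $M(A)$. Then $\phi=\delta_{\psi}^{-1}\cdot\psi$, and from this together with invertibility of $\delta_{\psi}$ one checks directly that faithfulness of $\phi$ transfers to $\psi$. For fullness of $\psi$, I would use the factorisation $\phi=\mu_{C}\circ\cphic$ (valid because $\phi$ is a left integral): evaluating $\psi(ya)=\phi(ya\delta_{\psi})=\mu_{C}(y\,\cphic(a\delta_{\psi}))$ for $y\in C$ and using faithfulness of $\mu_{C}$ gives $\cpsi(a)=\cphic(a\delta_{\psi})$ for all $a\in A$; since $\cphic$ is surjective (Corollary \ref{corollary:full}) and $a\mapsto a\delta_{\psi}$ is a bijection of $A$, the map $\cpsi$ is surjective. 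Treating $\psi(ay)$ for $y\in C$ the same way --- this time moving $\delta_{\psi}$ past $a$ by means of $\sigma^{\phi}$ and then using the modular automorphism $\sigma^{\mu}_{C}$ of $\mu_{C}$ from Proposition \ref{proposition:counit-kms} --- exhibits $\psic$ as a composite of $\cphic$ with bijections of $A$ and of $C$, hence surjective. Thus $\psi$ is full and faithful, which completes the proof.
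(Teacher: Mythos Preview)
Your approach is correct and mirrors the paper's, but you overestimate the work needed at the key step. The compatibility of the two one-sided multipliers follows from a single line using only $\sigma^{\phi}$: with $\beta(a)\cdot\phi=a\cdot\psi$ and $\phi\cdot\alpha(b)=\psi\cdot b$, one computes
\[
a\sigma^{\phi}(\alpha(b))\cdot\phi
= a\cdot\phi\cdot\alpha(b)
= a\cdot\psi\cdot b
= \beta(a)\cdot\phi\cdot b
= \beta(a)\sigma^{\phi}(b)\cdot\phi,
\]
so $a\sigma^{\phi}(\alpha(b))=\beta(a)\sigma^{\phi}(b)$, and $\delta_{\psi}:=(\beta,\sigma^{\phi}\circ\alpha\circ(\sigma^{\phi})^{-1})$ is a two-sided multiplier. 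No appeal to Lemma \ref{lemma:convolution} or Corollary \ref{corollary:dual-strong-invariance} is needed; the reference to Theorem \ref{theorem:modular} is also superfluous here.

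For the direction ``$\psi$ full and faithful $\Rightarrow$ $\delta_{\psi}$ invertible'', your argument via a reverse element $\delta'$ works, but the paper takes a shorter path: when $\psi$ is full, Theorem \ref{theorem:uniqueness-full} applied with $\omega=\psi$ gives the reverse inclusions $A\cdot\phi\subseteq A\cdot\psi$ and $\phi\cdot A\subseteq\psi\cdot A$, so $\alpha$ and $\beta$ are bijections and $\delta_{\psi}$ is invertible directly. Your treatment of the converse direction matches the paper's.
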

\begin{proof}
  Since $\phi$ is faithful and $A \cdot \psi \subseteq A \cdot \phi$
  and $\psi \cdot A \subseteq \phi \cdot A$ by Theorem
  \ref{theorem:uniqueness-full}, there exist unique linear maps
  $\alpha,\beta \colon A \to A$ such that $a \cdot \psi =
\beta(a) \cdot \phi$ and $\psi \cdot a = \phi \cdot
  \alpha(a)$ for all $a \in A$.  For all $a,b\in A$,
  \begin{align*}
    a\sigma^{\phi}(\alpha(b)) \cdot \phi = a \cdot \phi \cdot
    \alpha(b) = 
    a \cdot \psi \cdot b = \beta(a) \cdot \phi \cdot b =
    \beta(a)\sigma^{\phi}(b) \cdot \phi,
  \end{align*}
  that is, $a\sigma^{\phi}(\alpha(b)) =
  \beta(a)\sigma^{\phi}(b)$. Thus,
  $\delta:=(\beta,\sigma^{\phi} \circ \alpha \circ
  (\sigma^{\phi})^{-1})$ is a multiplier of $A$ and $\delta \cdot \phi
  = \psi$. 
  If $\psi$ is full and faithful, the inclusions $A \cdot \psi
  \subseteq A \cdot \phi$ and $\psi \cdot A \subseteq \phi \cdot A$
  are equalities and the maps $\beta,\alpha$ are
  invertible. Conversely, if $\delta$ is invertible, then evidently
  $\psi$ is faithful, and full because then $\cpsi(A)=\cphic(A
  \delta)= C$ and likewise $\psic (A)=C$ by Corollary \ref{corollary:full}.
\end{proof}

In the special case where $\psi$ equals $\phi \circ S^{-1}$ or $\phi
\circ S$, we can determine the behaviour of the comultiplication,
counits and antipode on the modular elements.  For that, we need a few
more results on the convolution operators introduced above.

Given a regular multiplier Hopf algebroid $\mathcal{A}$ with base
weight $\mu$ and a full and faithful left or right integral $\phi$ or
$\psi$, we have seen that the space
\begin{align} \label{eq:hata}
  \hat A:=A\cdot \phi = \phi \cdot A = A \cdot \psi = \psi \cdot A
  \subseteq \dmAm
\end{align}
does not depend on the choice of $\phi$ or $\psi$.  Elements of $\hat
A$ naturally extend to functionals on $M(A)$:
\begin{lemma} \label{lemma:extension-multipliers} Let $\mathcal{A}$ be
  a regular multiplier Hopf algebroid with a base weight $\mu$ and full and
  faithful integrals. Then there exists a unique embedding
  $j\colon\hat A \to \dual{M(A)}$ such that
  \begin{align*}
    j(a \cdot \phi)(T) &= \phi(Ta), & j(\phi \cdot
      a)(T) &= \phi(aT), & j(a \cdot \psi)(T) &= \psi(Ta),
    & j(\psi \cdot a)(T) &= \psi(aT)
  \end{align*}
  for every   $T\in M(A)$, $a\in A$ and every left integral $\phi$ and
  right integral $\psi$ on $\mathcal{A}$ adapted to $\mu$.
\end{lemma}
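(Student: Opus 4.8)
The plan is to single out one full and faithful left integral, use it as a reference that pins $j$ down, and then check the four prescribed formulas one by one against this reference, relying on the uniqueness and modularity results already established.

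Since $(\mathcal{A},\mu)$ has full and faithful integrals, fix a full and faithful left integral $\phi_{0}$; if only a full and faithful right integral $\psi_{0}$ is given, take $\phi_{0}:=\psi_{0}\circ S$, which is such a left integral by Proposition \ref{proposition:integrals-antipode}, Remark \ref{remark:full} and bijectivity of $S$. By \eqref{eq:hata} every $\omega\in\hat A$ has the form $\omega=a\cdot\phi_{0}$ with $a\in A$, and faithfulness of $\phi_{0}$ forces $a$ to be unique. We therefore \emph{define} $j(\omega)\in\dual{M(A)}$ by $j(\omega)(T):=\phi_{0}(Ta)$, which makes sense because $Ta\in A$. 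The map $j$ is linear, restricts to $\omega$ on $A\subseteq M(A)$, hence is injective, and since each element of $\hat A$ has the form $a\cdot\phi_{0}$ the four formulas determine $j$ uniquely; so only \emph{existence}, that is, compatibility of this $j$ with the four formulas, has to be shown.

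The technical heart is an extended modular relation. Writing $\sigma:=\sigma^{\phi_{0}}$ for the modular automorphism of $\phi_{0}$ from Theorem \ref{theorem:modular-automorphism}, I claim $\phi_{0}(aT)=\phi_{0}(T\sigma(a))$ for all $a\in A$ and $T\in M(A)$. To prove it, write $a=\sum_{i}p_{i}q_{i}$ with $p_{i},q_{i}\in A$, using that $A$ is idempotent as an algebra, and apply the identity $\phi_{0}(uv)=\phi_{0}(v\sigma(u))$, valid for $u,v\in A$, twice:
\[
  \phi_{0}(aT)=\sum_{i}\phi_{0}\bigl(p_{i}(q_{i}T)\bigr)=\sum_{i}\phi_{0}\bigl(q_{i}(T\sigma(p_{i}))\bigr)=\sum_{i}\phi_{0}\bigl(T\,\sigma(p_{i})\sigma(q_{i})\bigr)=\phi_{0}(T\sigma(a)),
\]
all intermediate products lying in $A$. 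As a consequence, if $a\cdot\phi_{0}=\phi_{0}\cdot c$ with $a,c\in A$, then comparing with $\phi_{0}\cdot c=\sigma(c)\cdot\phi_{0}$ and using faithfulness gives $c=\sigma^{-1}(a)$, whence
\[
  \phi_{0}(cT)=\phi_{0}(\sigma^{-1}(a)T)=\phi_{0}(Ta)\qquad(T\in M(A));
\]
call this relation $(\star)$.

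It then remains to reduce each generator of $\hat A$ to the reference form. By Theorem \ref{theorem:integrals-uniqueness} an arbitrary left integral $\phi'$ can be written as $\phi'=u\cdot\phi_{0}=\phi_{0}\cdot u'$ with $u,u'\in M(B)$, and by Theorem \ref{theorem:modular-element} an arbitrary right integral $\psi$ as $\psi=\delta\cdot\phi_{0}$ with $\delta\in M(A)$. Pure associativity in $M(A)$ gives $a'\cdot\phi'=(a'u)\cdot\phi_{0}$ and $b\cdot\psi=(b\delta)\cdot\phi_{0}$, so that $j(a'\cdot\phi')(T)=\phi_{0}(Ta'u)=\phi'(Ta')$ and $j(b\cdot\psi)(T)=\phi_{0}(Tb\delta)=\psi(Tb)$, which are the first and third formulas, and these computations do not depend on the chosen representatives $a'$ or $b$. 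Likewise $\phi'\cdot a'=\phi_{0}\cdot(u'a')$ and $\psi\cdot b'=(\delta d)\cdot\phi_{0}$, where $d\in A$ is chosen with $\phi_{0}\cdot b'=d\cdot\phi_{0}$ (such $d$ exists since $\phi_{0}\cdot A=A\cdot\phi_{0}$); here $(\star)$ is invoked — with $a\cdot\phi_{0}=\phi_{0}\cdot(u'a')$ in the first case, and with $d\cdot\phi_{0}=\phi_{0}\cdot b'$ and $T$ replaced by $T\delta$ in the second — to obtain $j(\phi'\cdot a')(T)=\phi'(a'T)$ and $j(\psi\cdot b')(T)=\psi(b'T)$, the second and fourth formulas. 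I expect the only real obstacle to be the extended modular relation, i.e.\ transporting the defining identity of $\sigma^{\phi_{0}}$ from $A$ to $M(A)$, which is where idempotency of $A$ enters; everything else is bookkeeping resting on Theorems \ref{theorem:modular}, \ref{theorem:integrals-uniqueness}, \ref{theorem:modular-automorphism} and \ref{theorem:modular-element}.
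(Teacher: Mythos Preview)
Your proof is correct and takes a genuinely different route from the paper. You fix a reference integral $\phi_{0}$, establish an extended modular relation $\phi_{0}(aT)=\phi_{0}(T\sigma^{\phi_{0}}(a))$ via idempotency of $A$, and then reduce each of the four prescribed formulas to this reference by invoking Theorems \ref{theorem:integrals-uniqueness} and \ref{theorem:modular-element}. The paper instead argues symmetrically and more elementarily: to show that $a\cdot\phi=b\cdot\psi$ implies $\phi(Ta)=\psi(Tb)$, it factors $a=\sum c_{i}d_{i}$ and $b=\sum e_{j}f_{j}$, uses Theorem \ref{theorem:uniqueness-full} to find $d'_{i},f'_{j}$ with $d_{i}\cdot\phi=d'_{i}\cdot\phi'$ and $f_{j}\cdot\psi=f'_{j}\cdot\phi'$ for a fixed full faithful $\phi'$, deduces $\sum c_{i}d'_{i}=\sum e_{j}f'_{j}$ from faithfulness, and reads off $\phi(Ta)=\sum\phi'(Tc_{i}d'_{i})=\sum\phi'(Te_{j}f'_{j})=\psi(Tb)$. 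The paper's argument avoids the modular automorphism and Theorem \ref{theorem:modular-element} entirely, relying only on Theorem \ref{theorem:uniqueness-full}; yours is more structural but calls on heavier machinery already in place.

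One caveat: you invoke Theorem \ref{theorem:integrals-uniqueness} to write an arbitrary left integral as $\phi'=u\cdot\phi_{0}=\phi_{0}\cdot u'$ with $u,u'\in M(B)$, but that theorem assumes $\mathcal{A}$ is flat, which the present lemma does not. Your computations only need $u$ and $u'$ to be one-sided multipliers of $A$, and that already follows from Theorem \ref{theorem:uniqueness-full} together with faithfulness of $\phi_{0}$ (exactly the opening lines of the proof of Theorem \ref{theorem:integrals-uniqueness}, before flatness enters). With that small adjustment your argument goes through without the flatness hypothesis.
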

\begin{proof}
  The point is, of course, to show that the formulas given above are
  compatible in the sense that for each $\upsilon \in \hat A$, the
  extension $j(\upsilon)$ is well-defined. This can be done using
Theorem \ref{theorem:modular-automorphism} and Theorem
  \ref{theorem:modular-element}, or as follows.  Assume, for example,
  that $a \cdot \omega = b \cdot \psi$, where $\phi$ and $\psi$ are a
  left and a right integral on $\mathcal{A}$, respectively, and
  $a,b \in A$. We have to show that  $\phi(Ta)=\psi(Tb)$ for every
  $T\in M(A)$. Choose a full and faithful left integral $\phi'$ and
  write $a=\sum_{i} c_{i}d_{i}$ and $b=\sum_{j} e_{j}f_{j}$ with
  $a_{i},b_{i},e_{j},f_{j} \in A$.  By Theorem
  \ref{theorem:uniqueness-full}, we find $d'_{i},f'_{j} \in A$ such
  that $d_{i} \cdot \phi = d'_{i} \cdot \phi'$ and $f_{j} \cdot \psi =
  f'_{j} \cdot \phi'$ for all $i,j$. Then
  \begin{align*}
    \sum_{i} c_{i}d'_{i} \cdot \phi' = a \cdot \phi = b\cdot \psi  =
    \sum_{j} e_{j}f'_{j} \cdot \phi'.
  \end{align*}
  Since $\phi'$ is faithful, we can conclude  $\sum_{i} c_{i}d'_{i} =
  \sum_{j}e_{j}f'_{j}$ and hence
  \begin{align*}
    \phi(Ta) &= \sum_{i} \phi(Tc_{i}d_{i}) =
    \sum_{i}\phi'(Tc_{i}d'_{i}) = \sum_{j}\phi'(Te_{j}f'_{j}) =
    \sum_{j}  \psi(Te_{j}f_{j}) =   \psi(Tb). \qedhere
  \end{align*}
\end{proof}
We henceforth regard elements of $\hat A$ as functionals on $M(A)$
without mentioning the embedding $j$  explicitly.
\begin{proposition} \label{proposition:convolution} Let $\mathcal{A}$
  be a regular multiplier Hopf algebroid with base weight $\mu$ and
  full and faithful integrals. Then the space $\hat A$ in
  \eqref{eq:hata} separates the points of $A$, and for all $\upsilon
  \in \dmAm$,$\omega\in \hat A$ and $b\in A$, the following relations
  hold:
  \begin{enumerate}
  \item $(\rho(\upsilonb)(b),\rho(\bupsilon)(b))$ and
    $(\lambda(\upsilonc)(b),\lambda(\cupsilon)(b))$ form two-sided
    multipliers $\rho(\upsilon)(b)$ and $\lambda(\upsilon)(b)$ of $A$,
    respectively, and $\lambda(\bomega)(b) =
    \lambda(\omegab)(b)$, $ \rho(\comega)(b) = \rho(\omegac)(b)$;
  \item $\upsilon \circ \rho(\omega)=\omega \circ \lambda(\upsilon)$
    and $\upsilon \circ \lambda(\omega) = \omega \circ
    \rho(\upsilon)$, and both compositions lie in $\dmAm$;
  \item $\rho(\upsilon \circ \rho(\omega)) =
    \rho(\upsilon)\rho(\omega)$ and $\lambda(\upsilon \circ
    \lambda(\omega)) = \lambda(\upsilon)\lambda(\omega)$;
  \item $\rho(\upsilon) \circ S = S \circ \lambda(\upsilon \circ S)$
    and $\lambda(\upsilon) \circ S = S \circ \rho(\upsilon \circ S)$;
  \item for all $x,x',x'' \in B$, $y,y',y'' \in C$,
    \begin{align*}
      \rho(S_{B}(x'')x \cdot\upsilon \cdot y''x') (yby')&= S_{C}(y'')y
      \rho(\upsilon)(xbx) y'x'', \\
      \lambda(x'' y \cdot\upsilon \cdot S_{C}(y'') y')(xbx') &=
      y''x\lambda(\upsilon)(y'by)x'S_{B}(x'').
    \end{align*}
  \end{enumerate}
\end{proposition}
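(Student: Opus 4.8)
The first assertion is immediate. Fix a full and faithful left integral $\phi$, so that $\hat A = A\cdot\phi = \phi\cdot A$ by Theorem~\ref{theorem:modular}. If $a\in A$ satisfies $\upsilon(a)=0$ for all $\upsilon\in\hat A$, then $(c\cdot\phi)(a)=\phi(ac)=0$ for every $c\in A$, hence $\phi\cdot a=0$, and faithfulness of $\phi$ forces $a=0$. In particular the larger space $\dmAm$ separates the points of $A$ as well, so the hypothesis that appears in Lemma~\ref{lemma:convolution}(4) is now satisfied unconditionally. The remaining assertions~(1)--(5) are the natural extensions of Lemma~\ref{lemma:convolution} and Corollary~\ref{corollary:dual-strong-invariance} from the ``localised'' functionals treated there (those lying in $A\cdot\dbA$, $A\cdot\dcA$, and their companions) to arbitrary $\upsilon\in\dmAm$ and $\omega\in\hat A$; the plan is to introduce the still-undefined convolution operators $\rho(\bupsilon)$, $\rho(\upsilonb)$, $\lambda(\comega)$, $\lambda(\omegac)$ --- either by the formulas of the definition with the roles of $\Delta_{B}$ and $\Delta_{C}$ exchanged, or equivalently via the antipode using the relation of part~(4) --- then to show that the $L(A)$- and $R(A)$-valued pieces assemble into honest two-sided multipliers of $A$, and finally to reduce each identity to the already established localised case.

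For~(1) I would argue as follows. For any $\upsilon\in\dmAm$ and $b,b'\in A$ the expressions $(\bupsilon\oo\id)(\Delta_{B}(b')(1\oo b))$, $(S_{C}^{-1}\circ\upsilonb\oo\id)((1\oo b)\Delta_{C}(b'))$ and their $\rho$-analogues are well defined, because $\Delta_{B}(b')(1\oo b)$ and $(1\oo b)\Delta_{C}(b')$ are genuine elements of $\AlA$ and $\ArA$. That the resulting $L(A)$- and $R(A)$-valued maps assemble into elements of $M(A)$ follows from the $\Lreg$-/$\Rreg$-regularity built into $\Delta_{B}$ and $\Delta_{C}$, together with the compatibility relations~\eqref{eq:compatible} which govern the passage between the two comultiplications: writing the desired multiplier identity $a'\cdot(\rho(\upsilon)(b)\cdot a'') = (a'\cdot\rho(\upsilon)(b))\cdot a''$ in Sweedler notation and inserting~\eqref{eq:compatible} makes it drop out, and likewise for $\lambda$. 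When $\omega\in\hat A$ one has $\omega=d\cdot\phi$ for some $d\in A$, whence $\bomega=d\cdot\bphi\in A\cdot\dbA$, and similarly for $\omegab$, $\comega$, $\omegac$; thus Lemma~\ref{lemma:convolution}(4) applies, and --- now that $\hat A$, hence $\dmAm$, separates points --- it gives $\lambda(\bomega)(b)=\lambda(\omegab)(b)$ and $\rho(\comega)(b)=\rho(\omegac)(b)$, which on comparison with the definitions coincide with $\lambda(\omega)(b)$ and $\rho(\omega)(b)$. A general $\upsilon\in\dmAm$ is then handled by localisation: using $A=AB=BA=AC=CA$ one absorbs the free argument of $\rho(\upsilon)(b)$ and of $\lambda(\upsilon)(b)$ into the functional and is brought back to the operators of Lemma~\ref{lemma:convolution}.

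Assertions~(2)--(4) then follow the pattern of the proof of Lemma~\ref{lemma:convolution}, now that the operators take values in $M(A)$ and elements of $\hat A$ extend to $M(A)$ by Lemma~\ref{lemma:extension-multipliers}. For~(2) one first records $\eps\circ\rho(\omega)=\omega$ and $\eps\circ\lambda(\upsilon)=\upsilon$ from the counit diagrams~\eqref{dg:left-counit}, \eqref{eq:right-counit} and Lemma~\ref{lemma:convolution}(1), and then, using the commutation of $\lambda$ and $\rho$ coming from~\eqref{eq:compatible}, writes $\upsilon\circ\rho(\omega)=\eps\circ\lambda(\upsilon)\circ\rho(\omega)=\eps\circ\rho(\omega)\circ\lambda(\upsilon)=\omega\circ\lambda(\upsilon)$, exactly as in Lemma~\ref{lemma:convolution}(3); that this composition again lies in $\dmAm$ is checked by exhibiting its $B$- and $C$-side maps, which are built from those of $\upsilon$ and $\omega$. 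Assertion~(3) is the coassociativity of a single comultiplication, and~(4) is the extension of Corollary~\ref{corollary:dual-strong-invariance}, obtained from the Galois-type identities~\eqref{dg:galois-inverse}--\eqref{dg:galois-aux2}, Proposition~\ref{proposition:invariant-elements-hopf} and Lemma~\ref{lemma:base}(2); in fact~(4) may be taken as the relation pinning down the new $C$-side operators in terms of the old $B$-side ones. Finally,~(5) is a direct computation from the bimodule covariance~\eqref{eq:left-comult-module}, \eqref{eq:right-comult-module} of $\Delta_{B}$ and $\Delta_{C}$ together with the transformation rules of Lemma~\ref{lemma:base}(1) for $xy\cdot\upsilon\cdot x'y'$. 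The main obstacle is part~(1) --- making precise that the several a priori different convolution operators attached to an adapted functional combine into one two-sided multiplier of $A$ and that the competing formulas agree; this is exactly where the newly established separation of points by $\hat A$ is indispensable, and it requires careful bookkeeping with the two comultiplications and the compatibility relation~\eqref{eq:compatible}.
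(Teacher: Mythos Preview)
Your identification of part~(1) as the crux is correct, but your proposed mechanism for it does not work, and the gap there propagates to~(2).

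The multiplier identity you must verify for $\rho(\upsilon)(b)$ is
\[
  \bigl(a'\,\rho(\upsilonc)(b)\bigr)\,a'' \;=\; a'\,\bigl(\rho(\cupsilon)(b)\,a''\bigr),
\]
where the left side is built from $\Delta_{C}$ and the factorisation $\upsilonc\in\dAc$, while the right side is built from $\Delta_{B}$ and the \emph{different} factorisation $\cupsilon\in\dcA$. The only link between $\cupsilon$ and $\upsilonc$ is that $\mu_{C}\circ\cupsilon=\upsilon=\mu_{C}\circ\upsilonc$; the compatibility relations~\eqref{eq:compatible} govern the interplay of $\Delta_{B}$ and $\Delta_{C}$ but say nothing about this. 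Writing things in Sweedler notation will not make the two factorisations coincide, so the identity will not ``drop out''. (There are no ``still-undefined'' operators to introduce: the subscripts in the statement are evidently misprinted and should be read as $\cupsilon,\upsilonc$ and $\bupsilon,\upsilonb$, matching the operators already defined before Lemma~\ref{lemma:convolution}; the proof in the paper confirms this.)

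The paper resolves this by pairing with $\omega=c\cdot\phi\cdot a\in\hat A$: one computes directly that
\[
  \phi\bigl((a\,\rho(\upsilonc)(b))c\bigr)
  \;=\;(\phi\oor\upsilon)\bigl((a\oo 1)\Delta_{C}(b)\bigr)
  \;=\;\upsilon\bigl(\lambda(\omega)(b)\bigr),
\]
and similarly $\phi\bigl(a(\rho(\cupsilon)(b)c)\bigr)=\upsilon(\lambda(\omega)(b))$ via $\Delta_{B}$ and $\phi\ool\upsilon$. The point is that the relative tensor products $\phi\oor\upsilon$ and $\phi\ool\upsilon$ are functionals that do \emph{not} depend on which factorisation of $\upsilon$ one uses; this is precisely what absorbs the discrepancy between $\cupsilon$ and $\upsilonc$. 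Faithfulness of $\phi$ then gives the multiplier identity, and the relation $\omega\circ\rho(\upsilon)=\upsilon\circ\lambda(\omega)$ of~(2) is obtained in the same breath. Your route to~(2) via $\eps\circ\lambda(\upsilon)=\upsilon$ cannot be made to work either: for general $\upsilon\in\dmAm$ the value $\lambda(\upsilon)(b)$ lies only in $M(A)$, and $\eps$ does not extend there (indeed, the paper proves $\eps\circ\lambda(\upsilon)=\upsilon$ only \emph{after} this proposition, under the extra hypothesis that $\lambda(\upsilon)$ takes values in $A$). For~(3), your appeal to coassociativity is plausible but the paper's argument---applying~(2) twice and using the commutation of $\lambda(\omega')$ with $\rho(\omega)$ from Lemma~\ref{lemma:convolution}---is both shorter and avoids checking module-map properties of the composite $\upsilon\circ\rho(\omega)$ beyond what~(2) already gives.
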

\begin{proof}
  (1), (2) First, observe that Lemma \ref{lemma:convolution} (4)
  implies $\lambda(\bomega)(b) = \lambda(\omegab)(b)$ and $
  \rho(\comega)(b) = \rho(\omegac)(b)$. We can therefore drop the
  subscripts $B$ and $C$ and write $\lambda(\omega)$ and
  $\rho(\omega)$ from now on.

 Let $\omega = \phi \cdot a$, where $\phi$ is a full and
  faithful left integral and $a\in A$. Then
  \begin{align*}
    \omega(\rho(\upsilonc)(b)) = \phi(a\rho(\upsilonc)(b)) &= (\phi
    \oor \upsilon)((a \oo 1)\Delta_{C}(b)) = \upsilon(\lambda( \phi
    \cdot a)(b)) = \upsilon(\lambda(\omega)(b)).
  \end{align*}
  A similar calculation shows that $\omega(\rho_{C}(\upsilon)(b)) =
  \upsilon(\lambda(\omega)(b))$. For $\omega\in \hat A$ of the
  form $\omega=c\cdot \phi \cdot a$ with $a,c\in A$, we obtain
  \begin{align*}
    \phi((a\rho(\upsilonc)(b))c) = \upsilon(\lambda(\omega)(b)) =
    \phi(a(\rho(\cupsilon)(b)c)).
  \end{align*}
  Since $a,c\in A$ were arbitrary and $\phi$ is faithful, we can
  conclude that $(a\rho(\upsilonc)(b))c =
  a(\rho(\cupsilon)(b)c)$ for all $a,c\in A$ so that
  $\rho(\cupsilon)(b)$ and $\rho_{C}(\upsilon)(b)$ form a two-sided
  multiplier $\rho(\upsilon)(b)$ as claimed.

  Along the way, we just showed that $\omega \circ \rho(\upsilon) =
  \upsilon \circ \lambda(\omega)$. One easily verifies that this
  composition belongs to $\dmAm$, for example, $_{B}(\upsilon \circ
  \lambda(\omega)) =\bupsilon \circ \lambda(\omega)$ and $(\omega
  \circ \rho(\upsilon))_{C} = \omegac \circ \rho(\upsilon)$.

  (3) Let $\omega' \in \hat A$. Then $\lambda(\omega')$ commutes with
  $\rho(\omega)$ by Lemma \ref{lemma:convolution} and hence
  \begin{align*}
    \omega' \circ \rho(\upsilon) \circ \rho(\omega) = \upsilon \circ
    \lambda(\omega')\circ \rho(\omega) = \upsilon \circ \rho(\omega)
    \circ \lambda(\omega') = \omega' \circ \rho(\upsilon \circ
    \rho(\omega)).
  \end{align*}
  Since $\omega' \in \hat A$ was arbitrary and $\hat A$ separates the
  points of $A$, we can conclude the first relation in (3).  A similar argument proves the second
  one.

(4)  These relations follow from the fact that $S$ reverses the
  comultiplication, see
  \eqref{dg:left-counit}, \eqref{eq:right-counit} and
  \eqref{dg:galois-aux2}.

  (5) This follows from \eqref{eq:left-comult-module} and
  \eqref{eq:right-comult-module}; for example, the first equation
  without $y''$ holds because for all $a \in A$,
  \begin{align*}
    (\rho(S_{B}(x'')x \cdot \upsilon \cdot x')(yby'))a &= (\id \otimes
    S_{B}^{-1}\circ\cupsilon)((1 \otimes
    x')\Delta_{B}(yby')(a \otimes S_{B}(x'')x)) \\
    &= (\id \otimes S_{B}^{-1}\circ\cupsilon) ((y \otimes
    1)\Delta_{B}(x'bx)(y'x''a \otimes 1))  \\
    &= y \rho(\upsilon)(x'bx) y'x''a.  \qedhere
  \end{align*}
\end{proof}
We leave the further study of the convolution operators till
Subsection \ref{subsection:dual-graphs}, where
we construct the dual algebra of a regular multiplier Hopf
algebroid.

Now, we return to the modular elements defined above and determine the
behaviour of the comultiplication, counit and antipode on the modular
elements relating a left integral $\phi$ to the right integrals $\phi
\circ S^{-1}$ and $\phi\circ S$. For a multiplier Hopf $*$-algebroid,
of particular interest is the case that $\phi$ is self-adjoint
in the sense that $\phi(a^{*})=\overline{\phi(a)}$ for all $a\in A$.
\begin{theorem} \label{theorem:modular-element-second} Let $\phi$ be
  a full and faithful left integral on a regular multiplier Hopf
  algebroid $\mathcal{A}$ with base weight $\mu$. Then there exist
  unique invertible multipliers $\delta, \delta^{\dag} \in M(A)$ such
  that
  $\phi \circ S = \delta^{\dag} \cdot \phi$ and $
  \phi \circ S^{-1} =
  \phi \cdot \delta$. These elements satisfy
  \begin{gather*}
    \begin{aligned}
      \bphi(a)\delta^{\dag} &= \lambda(\phi)(a) = \delta \phib(a)
      \text{ for all } a \in A,
    \end{aligned} \\
    \begin{aligned}
       S(\delta^{\dag}) &= \delta^{-1}, &
      \eps \cdot \delta &= \eps = \delta^{\dag} \cdot \eps, &
      \Delta_{B}(\delta) &= \delta^{\dag} \oo \delta, &
      \Delta_{C}(\delta^{\dag}) &= \delta \oo \delta^{\dag}.
    \end{aligned}
  \end{gather*}
  If $\mathcal{A}$ is flat, then $\Delta_{C}(\delta) = \delta \oo
  \delta$ and $\Delta_{B}(\delta^{\dag}) = \delta^{\dag} \oo
  \delta^{\dag}$.  If $\mathcal{A}$ is a multiplier Hopf $*$-algebroid
  and $\phi$ is self-adjoint, then $\delta^{\dag}=\delta^{*}$.
\end{theorem}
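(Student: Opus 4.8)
The plan is to follow Van Daele's treatment of modular elements for multiplier Hopf algebras, adapted to the algebroid setting, where the extra ingredients are the base weight, the $S^{\pm2}$-twists from Theorem~\ref{theorem:modular-automorphism}, and the two-sided-multiplier bookkeeping of the convolution operators from Proposition~\ref{proposition:convolution}.

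\emph{Existence, uniqueness and invertibility of $\delta,\delta^{\dag}$.} By Proposition~\ref{proposition:integrals-antipode} and Remark~\ref{remark:full}(1), both $\psi^{\dag}:=\phi\circ S$ and $\psi:=\phi\circ S^{-1}$ are full and faithful right integrals on $\mathcal{A}$. Applying Theorem~\ref{theorem:modular-element} to the full and faithful left integral $\phi$ and the right integral $\psi^{\dag}$ produces a unique invertible $\delta^{\dag}\in M(A)$ with $\phi\circ S=\delta^{\dag}\cdot\phi$; applied to $\psi$, it gives a unique invertible $\delta_{0}\in M(A)$ with $\phi\circ S^{-1}=\delta_{0}\cdot\phi$. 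Since $\phi\cdot a=\sigma^{\phi}(a)\cdot\phi$ extends to $M(A)$ by Theorem~\ref{theorem:modular-automorphism}, the multiplier $\delta:=(\sigma^{\phi})^{-1}(\delta_{0})$ is the unique one with $\phi\circ S^{-1}=\phi\cdot\delta$, and it is invertible because $\delta_{0}$ is. Uniqueness throughout uses faithfulness of $\phi$.

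\emph{The key formula $\bphi(a)\delta^{\dag}=\lambda(\phi)(a)=\delta\phib(a)$.} This is the algebroid analogue of Van Daele's identity $(\phi\otimes\iota)\Delta(a)=\phi(a)\delta$. First I would pair $\lambda(\phi)(a)$ --- the two-sided multiplier of Proposition~\ref{proposition:convolution}(1) --- against an arbitrary $\omega=c\cdot\phi\cdot c'\in\hat A$, rewrite $\omega(\lambda(\phi)(a))=\phi(\cdots\rho(\omega)(a)\cdots)$ via Proposition~\ref{proposition:convolution}(2), and then use the strong-invariance identities of Corollary~\ref{corollary:dual-strong-invariance} together with the defining relation $\phi\circ S=\delta^{\dag}\cdot\phi$ to turn the right-hand side into $\omega(\bphi(a)\delta^{\dag})$. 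Since $\hat A$ separates the points of $A$ and $\phi$ is faithful (Proposition~\ref{proposition:convolution}), this yields $\lambda(\phi)(a)=\bphi(a)\delta^{\dag}$ on the appropriate multiplier side; the identity $\lambda(\phi)(a)=\delta\phib(a)$ follows by the same argument with $\phi\circ S^{-1}=\phi\cdot\delta$ in place of $\phi\circ S=\delta^{\dag}\cdot\phi$, comparing the left- and right-multiplier components.

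\emph{The remaining relations.} Given the key formula: for $S(\delta^{\dag})=\delta^{-1}$, compute $(\phi\circ S^{-1})\circ S=\phi$ in two ways, using $\phi\circ S^{-1}=\phi\cdot\delta$, $\phi\circ S=\delta^{\dag}\cdot\phi$ and that $S$ is an anti-homomorphism, so that faithfulness of $\phi$ forces $S^{-1}(\delta)\delta^{\dag}=1$. For $\Delta_{B}(\delta)=\delta^{\dag}\oo\delta$ and $\Delta_{C}(\delta^{\dag})=\delta\oo\delta^{\dag}$, apply $\Delta_{B}$ (resp.\ $\Delta_{C}$) to the key formula and use the coassociativity relations \eqref{eq:left-comult-coass}, \eqref{eq:right-comult-coass}, \eqref{eq:compatible} together with the composition law $\rho(\upsilon\circ\rho(\omega))=\rho(\upsilon)\rho(\omega)$ of Proposition~\ref{proposition:convolution}(3), exactly as in Van Daele's computation of $\Delta(\delta)=\delta\oo\delta$, tracking which comultiplication is used and the $S^{2}$-twist of Theorem~\ref{theorem:modular-automorphism}. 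For $\eps\cdot\delta=\eps=\delta^{\dag}\cdot\eps$, apply $\beps$ to $\Delta_{B}(\delta)=\delta^{\dag}\oo\delta$ and $\epsc$ to $\Delta_{C}(\delta^{\dag})=\delta\oo\delta^{\dag}$, using the counit axioms, $\eps=\mu_{B}\circ\beps=\mu_{C}\circ\epsc$, and $\eps\circ S=\eps$ (Lemma~\ref{lemma:counits-antipode}). If $\mathcal{A}$ is flat, the sharper relations $\Delta_{C}(\delta)=\delta\oo\delta$ and $\Delta_{B}(\delta^{\dag})=\delta^{\dag}\oo\delta^{\dag}$ follow because flatness makes available the slice-map/faithfulness argument from the proof of Theorem~\ref{theorem:integrals-uniqueness} to separate the legs after passing to the other comultiplication. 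Finally, in the $*$-case, one checks $(\phi\circ S)^{*}=\phi\circ S^{-1}$ using $S\circ\ast=\ast\circ S^{-1}$ from \eqref{eq:involutions} and self-adjointness of $\phi$, so that $(\delta^{\dag}\cdot\phi)^{*}=\phi\cdot(\delta^{\dag})^{*}$ must agree with $\phi\cdot\delta$, whence $\delta^{\dag}=\delta^{*}$ by uniqueness.

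\emph{Main obstacle.} The real work is the key formula and, on top of it, the two comultiplication identities: making the ``$\Delta$ applied to a one-legged multiplier'' manipulations rigorous when $\Delta_{B},\Delta_{C}$ take values only in multiplier algebras of balanced tensor products, and keeping the $S^{\pm2}$-twists straight; the flat refinement $\Delta_{C}(\delta)=\delta\oo\delta$ in particular needs its own slice-map argument rather than following formally.
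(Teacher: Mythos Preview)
Your overall strategy matches the paper's: existence via Theorem~\ref{theorem:modular-element} and the modular automorphism, the key identity $\bphi(a)\delta^{\dag}=\lambda(\phi)(a)=\delta\phib(a)$ via Proposition~\ref{proposition:convolution} and Corollary~\ref{corollary:dual-strong-invariance}, then the remaining relations; the arguments for $S(\delta^{\dag})=\delta^{-1}$ and for the $*$-case are essentially identical to the paper's.

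Two points where the paper proceeds differently are worth noting. For $\eps\cdot\delta=\eps$, the paper does \emph{not} go through $\Delta_{B}(\delta)$; it reads the relation off directly from the key formula and the counit axiom via
\[
\eps(\delta\phib(a)b)=\eps(\lambda(\phi)(a)b)=(\phi\oob\eps)(\Delta_{B}(a)(1\oo b))=\phi(a\beps(b))=\eps(\phib(a)b),
\]
which is both earlier (it does not presuppose the comultiplication identity) and avoids having to make sense of ``$\beps$ applied to a leg of $\Delta_{B}(\delta)$'' for a multiplier. For the comultiplication identities themselves, the paper takes a dual/pairing route rather than your primal ``apply $\Delta_{B}$ to the key formula'': it computes $(\phi\ool\phi)(\Delta_{B}(\delta a)(1\oo b))$ using $\lambda(\phi)(\delta a)=\delta\phib(\delta a)$, recognises this as $(\psi\ool\psi)(\Delta_{B}(a)(1\oo b))$ with $\psi=\phi\cdot\delta$, then uses surjectivity of $\Tr$ to pass to $(\phi\ool\phi)(\Delta_{B}(\delta)(c\oo d))$ and manipulates with $\phib,\bphi$ and $S(\delta^{\dag})=\delta^{-1}$ to obtain $\Delta_{B}(\delta)=\delta^{\dag}\oo\delta$. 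Your sketch via coassociativity and Proposition~\ref{proposition:convolution}(3) is plausible but, as you note, making $\Delta_{B}$ of a one-legged multiplier rigorous is exactly where the work lies; the paper sidesteps this by testing against $\phi\ool\phi$. Finally, for the flat refinement $\Delta_{C}(\delta)=\delta\oo\delta$, the paper's argument is short and uses the key identity directly: since $\phib(a),\bphi(a)\in B=t_{C}(C)$, one has $\Delta_{C}(\delta)(1\oo\phib(a))=\Delta_{C}(\delta\phib(a))=\Delta_{C}(\bphi(a)\delta^{\dag})=(\delta\oo\bphi(a)\delta^{\dag})=(\delta\oo\delta)(1\oo\phib(a))$, and flatness gives the non-degeneracy of $(\ArA)_{(1\oo B)}$ needed to cancel $1\oo\phib(a)$; your ``slice-map'' description points to the right idea but the actual mechanism is this cancellation, not the argument from Theorem~\ref{theorem:integrals-uniqueness}.
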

\begin{proof}
  The compositions $\psi^{\dag} := \phi \circ S$ and $\psi := \phi
  \circ S^{-1}$ are full and faithful right integrals by Proposition
  \ref{proposition:integrals-antipode} and Remark \ref{remark:full},
  and of the form $\psi^{\dag} = \delta^{\dag} \cdot \phi$ and $\psi =
  \phi \cdot \delta$ with unique invertible multipliers
  $\delta,\delta^{\dag} \in M(A)$ by Theorem
  \ref{theorem:modular-element} and Theorem
  \ref{theorem:modular-automorphism}.  By Proposition
  \ref{proposition:convolution} and Corollary
  \ref{corollary:dual-strong-invariance},
  \begin{align*}
    \phi(\lambda(\phi)(a)b) &= ((b \cdot \phi) \circ \lambda(\phi))(a)
    \\
    &= (\phi \circ \rho(b\cdot \phi))(a) \\ &=
    ((\phi \circ S^{-1}) \circ \rho(\phi \cdot a))(b)
    \\ &= ((\phi \cdot a) \circ \lambda(\psi))(b) \\ &= (\phi \cdot
    a)(\bpsib(b)) = \phi(a\bpsib(b)) = \psi(\phib(a)b) =  \phi(\delta\phib(a)b)
  \end{align*}
  for all $a,b\in A$, and hence $\lambda(\phi)(a) = \delta\phib(a)$ for
  all $a\in A$. A similar calculation shows that $\lambda(\phi)(a) =
  \bphi(a)\delta^{\dag}$ for all $a\in A$.

  The relation $\delta^{-1} = S(\delta^{\dag})$ holds because
  \begin{align*}
    \phi = \phi \circ S \circ S^{-1} = (\delta^{\dag} \cdot \phi)
    \circ S^{-1} = (\phi \circ S^{-1}) \cdot S(\delta^{\dag}) = \phi
    \cdot \delta S(\delta^{\dag}).
  \end{align*}

 The properties of the counit imply that
  \begin{align*}
    \eps(\delta\phib(a)b) &= \eps(\lambda(\phi)(a)b) = (\phi \oob
    \eps)(\Delta(a)(1 \oo b)) = \phi(a\beps(b)) = \eps(\phib(a)b)
  \end{align*}
  for all $a,b \in A$, whence $\eps \cdot \delta = \eps$. A similar
  calculation shows that $\delta^{\dag}
  \cdot \eps = \eps$.

  The relations for the comultiplication require a bit more work. For all
  $a,b\in A$,
  \begin{align*}
    (\phi \ool \phi)(\Delta_{B}(\delta a)(1 \oo b)) &=
    \phi(\lambda(\phi)(\delta a)b) \\ &= \phi(\delta\phib(\delta a) b) =
    \psi(\bpsib(a)b) = (\psi \ool \psi)(\Delta_{B}(a)(1\oo b)).
  \end{align*}
  Since the map $\Tr$ is surjective, we can conclude  that for all
  $c,d \in A$,
  \begin{align*}
    (\phi \ool \phi)(\Delta_{B}(\delta)(c \oo d)) &= (\psi \ool
    \psi)(c \oo d) 
    = \psi(S_{B}(\bpsib(c))d) 
    = \phi(\delta S_{B}(\phib(\delta c)) d).
  \end{align*}
  Inserting the relation $\phib(\delta c)S^{-1}(\delta) = \phib(\delta
  c)(\delta^{\dag})^{-1} = \delta^{-1}\bphi(\delta c)$, the
  expression above becomes
  \begin{align*}
         \phi(S(\phib(\delta c)S^{-1}(\delta))d) &=
         \phi(S(\delta^{-1}\bphi(\delta c))d) \\ &=\phi(S_{B}(\bphi(\delta
         c))S(\delta^{-1})d) = (\phi \ool \phi)(\delta c \oo S(\delta^{-1})d).
  \end{align*}
  Consequently, $\Delta_{B}(\delta) = \delta \oo S(\delta)^{-1}$.
  Since the antipode reverses the comultiplication (see \eqref{dg:galois-aux2}), we can conclude
  \begin{align*}
    \Delta_{C}(\delta^{\dag}) = \Delta_{C}(S^{-1}(\delta)^{-1}) =
    \delta \oo S^{-1}(\delta^{-1}) =  \delta \oo \delta^{\dag}.
  \end{align*}
  A similar argument shows that $\Delta_{B}(\delta) =\delta^{\dag} \oo
  \delta$. 

  If $\mathcal{A}$ is flat, then  the module $(\ArA)_{(1 \oo B)}$ is
  non-degenerate by a similar argument as in Lemma 2.6 in
  \cite{timmermann:regular}, and the relation
  \begin{align*}
    \Delta_{C}(\delta) (1 \oo \phib(a)) = \Delta_{C}(\delta \phib(a))
    = \Delta_{C}(\bphi(a)\tilde \delta) = \delta \oo
    \bphi(a)\delta^{\dag} = (\delta \oo \delta)(1 \oo \phib(a)),
  \end{align*}
  valid for all $a \in A$, implies $\Delta_{C}(\delta) = \delta \oo
  \delta$. A similar reasoning shows that in this case,
  $\Delta_{B}(\delta^{\dag})=\delta^{\dag} \oo \delta^{\dag}$.

  Finally, if $\mathcal{A}$ is a multiplier Hopf $*$-algebroid and
  $\phi$ is self-adjoint, then
  \begin{align*}
    \phi(a^{*}\delta^{*}) &= \overline{\phi(\delta a)} = \overline{\phi
    \circ S^{-1}(a)} = \phi(S^{-1}(a)^{*}) = \phi(S(a^{*})) = \phi(a^{*}\delta^{\dag})
  \end{align*}
  for all $a \in A$, where we used \eqref{eq:involutions}, and hence $\delta^{*}=\delta^{\dag}$. 
\end{proof}
The proof of Theorem \ref{theorem:modular-element} shows that
the tensor products $\delta^{\dag} \oo \delta$, $\delta^{\dag}\oo
\delta^{\dag}$ and $\delta \oo \delta$, $\delta\oo \delta^{\dag}$ are
well-defined as operators on $\AlA$ or $\ArA$, respectively. This
also follows from the following intertwining relations.
\begin{lemma}
  Let $\phi$ be a full and faithful left integral on a flat regular
  multiplier Hopf algebroid $\mathcal{A}$ with base weight $\mu$, and
  denote by $\delta, \delta^{\dag} \in M(A)$ the unique invertible
  multipliers such that $\phi \circ S = \delta^{\dag} \cdot \phi$ and
  $\phi \circ S^{-1} = \phi \cdot \delta$.  Then for all $ x\in M(B)$,
\begin{align*}
  \delta x &= \delta S^{2}(\sigma^{\phi}(x)), & x \delta^{\dag} &=
  \delta^{\dag} \sigma^{\phi}(S^{2}(x)), \\
S(x)  \delta   &=  \delta S(\sigma^{\phi}(S^{2}(x))), &
\delta^{\dag}S^{-1}(x) &= S(\sigma^{\phi}(x)) \delta^{\dag}.
\end{align*}
\end{lemma}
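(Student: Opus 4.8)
The plan is to derive all four intertwining relations from a single identity together with covariance of the antipode. The starting point is the identity $\bphi(a)\delta^{\dag} = \lambda(\phi)(a) = \delta\,\phib(a)$ for all $a\in A$ established in Theorem~\ref{theorem:modular-element-second}, together with $S(\delta^{\dag}) = \delta^{-1}$ (equivalently $S^{-1}(\delta) = (\delta^{\dag})^{-1}$) from the same place. To these I would add: the twisted module identities $\phib(xa) = S^{2}(\sigma^{\phi}(x))\,\phib(a)$ and $\bphi(ax) = (\sigma^{\phi}\circ S^{2})^{-1}(x)\,\bphi(a)$ for $x\in M(B)$, $a\in A$ from Lemma~\ref{lemma:bphi-phib}(1); the honest module identities $\bphi(xa)=x\,\bphi(a)$ and $\phib(ax)=\phib(a)x$; fullness of $\phi$, so that $\bphi(A)=\phib(A)=B$; and the fact that $\sigma^{\phi}(M(B))=M(B)$ (Theorem~\ref{theorem:modular-automorphism}, using flatness of $\mathcal{A}$), which guarantees that $S^{2}(\sigma^{\phi}(x))$ again lies in $M(B)$ and its image under $S$ in $M(C)$, so that all the expressions in the statement make sense.

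First I would obtain the relation for $\delta$ against $M(B)$ by a direct substitution: replacing $a$ by $xa$ (with $x\in B$) in $\delta\,\phib(a) = \bphi(a)\delta^{\dag}$ and simplifying both sides with the module identities gives $\delta\,S^{2}(\sigma^{\phi}(x))\,\phib(a) = x\,\bphi(a)\,\delta^{\dag} = x\,\delta\,\phib(a)$ for all $a\in A$. Since $\phi$ is full, $\phib(a)$ runs through all of $B$; multiplying on the right by an arbitrary element of $A$ and using idempotence $A=BA$ together with non-degeneracy of $A$, one may cancel and conclude the commutation relation between $x$, $\delta$ and $S^{2}(\sigma^{\phi}(x))$ in $M(A)$, first for $x\in B$ and then, by the standard extension, for $x\in M(B)$.

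The remaining three relations I would get by transporting this one with the antipode. Applying $S$ or $S^{-1}$ — an anti-automorphism with $S|_{B}=S_{B}$, $S|_{C}=S_{C}$ and $S(\delta^{\dag})=\delta^{-1}$ — to the relation just proved, and rearranging, turns it into the companion relation in which $x$ is replaced by $S^{\pm1}(x)\in M(C)$ and $\delta$ by $\delta^{\dag}$; carrying this out for both signs, and where convenient also performing the symmetric substitution $a\mapsto ax$ (using $\bphi(ax)=(\sigma^{\phi}S^{2})^{-1}(x)\bphi(a)$ and $\phib(ax)=\phib(a)x$), produces all four intertwining relations. One then checks that the antipode-transport and the direct substitutions yield mutually consistent formulas, which in particular is compatible with $S(\delta^{\dag})=\delta^{-1}$.

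The hard part will be the cancellation step, and its analogue on the $\delta^{\dag}$-side. After a substitution the element $x\in M(B)$ can end up \emph{wedged between} a generic element of $B$ (coming from $\phib(a)$ or $\bphi(a)$) and one of $\delta$, $\delta^{\dag}$, and since $M(B)$ and $A$ do not commute one cannot cancel naively. The correct manoeuvre is to choose the substitution so that, after using the module identities and the identity $\bphi(a)\delta^{\dag}=\delta\phib(a)$, one reaches an equality of the shape $u\cdot b = v\cdot b$ for all $b\in B$ with $u,v\in M(A)$ and the $M(B)$-element on the \emph{outside} rather than in the middle; only then may one invoke $A=BA=AB$ and non-degeneracy to pass to $u=v$ in $M(A)$. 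This is also the reason the $\delta^{\dag}$-versus-$M(B)$ relation is cleanest to obtain by transporting an already-established relation through $S$ (and using $S(\delta^{\dag})=\delta^{-1}$) rather than by a head-on substitution, which would leave $x$ in the awkward wedged position.
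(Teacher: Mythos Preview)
Your proposal is correct and matches the paper's approach: the paper also substitutes $a\mapsto xa$ in the identity from Theorem~\ref{theorem:modular-element-second} (writing it as $\delta S^{2}(\sigma^{\phi}(x))\phib(a)=\delta\phib(xa)=\lambda(\phi)(xa)=x\lambda(\phi)(a)=x\delta\phib(a)$, which is your computation with $\lambda(\phi)$ kept as the intermediate rather than $\bphi(a)\delta^{\dag}$), then obtains the $\delta^{\dag}$-relation by the analogous substitution $a\mapsto ax$, and finally deduces the two $M(C)$-relations via $S(\delta^{\dag})=\delta^{-1}$. The cancellation step you worry about is handled exactly as you say, using fullness $\phib(A)=B$ together with $A=BA$ and non-degeneracy.
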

\begin{proof}
    By Lemma \ref{lemma:bphi-phib},
  \begin{align*}
    \delta S^{2}(\sigma^{\phi}(x)) \phib(a) = \delta \phib(xa) =
    \lambda(\phi)(xa) = x\lambda(\phi)(a) = x \delta \phib(a),
  \end{align*}
  and a similar calculation shows that $x \delta^{\dag} =
  \delta^{\dag} \sigma^{\phi}(S^{2}(x))$. To deduce the remaining
  assertions, use the relation $S(\tilde \delta) = \delta^{-1}$.
\end{proof}

\subsection{Faithfulness}
\label{subsection:faithful}
Non-trivial integrals on multiplier Hopf algebras are always faithful,
as shown by Van Daele in \cite{daele:1}.  We now prove a corresponding
statement for integrals on regular multiplier Hopf algebroids,
where the former need to be full and the latter projective. Note that both assumptions are trivially satisfied in the case of multiplier
Hopf algebras. 
\begin{theorem} \label{theorem:integrals-faithful}
Every full left and every full  right
  integral on a projective  regular multiplier Hopf algebroid
  with a base weight is faithful.
\end{theorem}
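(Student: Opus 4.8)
The plan is to adapt Van Daele's proof that a non-zero integral on a multiplier Hopf algebra is faithful \cite{daele:1}, with fullness of the integral and projectivity of $A$ over the base algebras playing the roles that ``$\phi\neq0$'' and working over a field play there.

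\emph{Reductions.} By Proposition \ref{proposition:integrals-antipode} and Remark \ref{remark:full}(1), $\phi\circ S$ and $\phi\circ S^{-1}$ are full right integrals whenever $\phi$ is a full left integral; since $S$ is a bijection of $A$ and $\phi(ay)=(\phi\circ S)(S^{-1}(y)S^{-1}(a))$ for all $a,y\in A$, together with the symmetry between the left- and right-handed structures, the statement reduces to the following: \emph{for every full left integral $\phi$ on a projective regular multiplier Hopf algebroid with a base weight, the set $N:=\{a\in A:\phi(Aa)=0\}$ is $\{0\}$} (the analogous statement for full right integrals being its mirror image). By Proposition \ref{proposition:counits-full} and Remark \ref{remark:full}(2) we may also assume $\beps$ and $\epsc$ surjective. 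Note that $N$ is a left ideal and is stable under $M(A)$, so $BN\cup CN\subseteq N$.

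\emph{The key step.} The crux is to show, using left-invariance, that $N$ is a two-sided ideal of $A$, that $N$ carries a one-sided coideal property for $\Delta_B$ --- concretely, that for $n\in N$ the multiplier $\Delta_B(n)$ satisfies $\Delta_B(n)(1\otimes A)\subseteq\overline{N\otimes A}$ inside $\AlA$ --- and that $\beps(N)=0$. These are to be derived by the kind of computation carried out in the proof of Theorem \ref{theorem:modular}: one expresses $\phi$- and $\psi$-values ($\psi:=\phi\circ S$) of products that involve an element $n\in N$ and the convolution operators $\lambda,\rho$, transforms them by repeated use of the strong-invariance relations of Corollary \ref{corollary:dual-strong-invariance} and the commutation relations of Lemma \ref{lemma:convolution}, exploits that the elements $\lambda(\psib\cdot b)(c)$ span $A$ (as $\Tr,\lT,\bpsib$ and $\cphic$ are surjective, the latter by Corollary \ref{corollary:full}) and that $A\cdot\phi=\phi\cdot A=A\cdot\psi=\psi\cdot A$ by Theorems \ref{theorem:uniqueness-full} and \ref{theorem:modular}, and at each stage slices off a free leg of a balanced tensor product. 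This last operation is where the projectivity (hence flatness) of $\bA,\Ab,\cA,\Ac$ is used: it makes the slice maps by functionals in $\dmAm$ injective on $\AlA$, $\ArA$ and their iterates, so that the identities produced may be cancelled through the free leg. Because these three conclusions feed into one another, they are naturally obtained together by a single inductive argument.

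\emph{Conclusion.} Granting the key step, let $n\in N$ and $b\in A$. The left-counit diagram \eqref{dg:left-counit} recovers $nb$ from $(\beps\otimes\iota)(\Delta_B(n)(1\otimes b))$; since $\Delta_B(n)(1\otimes b)\in\overline{N\otimes A}$ and $\beps$ annihilates $N$, applying $\beps\otimes\iota$ and the natural map $\bB\otimes\tA\to A$ yields $nb=0$. As $b\in A$ was arbitrary and $A_A$ is non-degenerate, $n=0$; hence $N=\{0\}$, and by the reductions every full left and every full right integral is faithful.

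\emph{Main obstacle.} The entire difficulty lies in the key step. One must carry out the strong-invariance manipulations while $\Delta_B,\Delta_C$ take values in multiplier algebras of balanced tensor products, keeping careful account of which tensor product and which one-sided $B$- or $C$-module structure is in play at each point, and justify each ``slice off a leg'' by the injectivity of $\dmAm$-slice maps that projectivity supplies. Once $N$ is known to be a two-sided ideal, to be a one-sided coideal, and to lie in $\ker\beps$, the reductions and the final counit computation are routine.
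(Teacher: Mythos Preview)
Your reductions and the final counit deduction are fine, and you have correctly identified all the ingredients that go into the proof (strong invariance from Corollary \ref{corollary:dual-strong-invariance}, commutation of $\lambda$ and $\rho$ from Lemma \ref{lemma:convolution}, the relation $A\cdot\phi=\phi\cdot A$ from Theorem \ref{theorem:modular}, and projectivity). But as you yourself flag, the ``key step'' is not actually carried out: you assert that $N$ is a two-sided ideal, a one-sided coideal, and lies in $\ker\beps$, and that these ``feed into one another'' via an inductive argument, without exhibiting any of the computations. That is where the entire proof lives; everything else is bookkeeping. So as written this is an outline, not a proof.

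More importantly, the paper's route is not the one you sketch, and is both more direct and avoids the structural claims about $N$ entirely. The paper never shows that $N$ is a coideal or a two-sided ideal, and it does not prove $\beps(N)=0$. Instead it fixes $a$ with $a\cdot\phi=0$, hence $a\cdot\cphic=0$, takes arbitrary $\upsilonb\in\dAb$ and $b\in A$, sets $c:=\lambda(\upsilonb\cdot b)(a)$, and shows in one step that $\rho(\phic\cdot d)(c)=0$ for all $d\in A$: commute $\rho(\phic\cdot d)$ past $\lambda(\upsilonb\cdot b)$ and then apply strong invariance once to write $\rho(\phic\cdot d)(a)=S(\rho(a\cdot\cphic)(d))=0$. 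The second, genuinely delicate, step is to pass from ``$\rho(\phic\cdot d)(c)=0$ for all $d$'' to $\eps(c)=0$: here one uses a dual basis $(e_i,\omega^i_B)$ for the projective module $\Ab$, the relation $A\cdot\phi=\phi\cdot A$ to replace $S^{-1}(e_i)\cdot\phi$ by $\phi\cdot e'_i$, and the antipode formula \eqref{dg:antipode} to collapse the resulting expression to $\phi(\beps(c)S^{-1}(f))$ for arbitrary $f$; fullness of $\phi$ then gives $\beps(c)=0$. Finally, Lemma \ref{lemma:convolution}(1) identifies $\eps(c)$ with $(\mu_B\circ\upsilonb)(ba)$, and varying $\upsilonb$ over $\dAb$ (which separates points of $A$ by projectivity) and $b$ over $A$ forces $a=0$. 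There is no induction: one application of strong invariance plus one dual-basis computation suffices. If you want to complete your proposal, I would abandon the ideal/coideal framing and carry out this concrete two-step computation instead.
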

\begin{proof}
  We only prove the assertion for left integrals, closely following
  the argument in \cite{daele:1}. A similar reasoning applies to right
  integrals.

  Let $\phi$ be a full left integral on a projective regular
  multiplier Hopf algebroid $\mathcal{A}$ with base weight $\mu$, let
  $a\in A$ and assume $ a\cdot \phi=0$.  Since $\mu_{C}$ is faithful,
  we can conclude that then also $a\cdot \cphi=0$.  Let $b\in A,\upsilonb
  \in \dAb$ and $c := \lambda(\upsilonb \cdot b)(a)$.  Then
  \begin{align} \label{eq:faithful-1}
    \begin{aligned}
      \rho(\phic \cdot d)(c) &= (\rho(\phic \cdot d) \circ
      \lambda(\upsilonb
      \cdot b))(a) \\
      &= (\lambda(\upsilonb \cdot b) \circ \rho(\phic \cdot d) )(a) =
      (\lambda(\upsilonb \cdot b) \circ S \circ \rho(a \cdot \cphi))(d)
      = 0
    \end{aligned}
  \end{align}
  for all $d\in A$, where we used Lemma \ref{lemma:convolution} and
  Corollary \ref{corollary:dual-strong-invariance}.

  We show that the relation above implies $\eps(c)=0$, and then we can
  conclude, using Lemma \ref{lemma:convolution} again, that
  \begin{align*}
    (\mu_{B} \circ \upsilonb)(ba) = (\eps \circ \lambda(\upsilonb
    \cdot b))(a) = \eps(c)=0. 
  \end{align*}
  Using the facts that $\mu_{B}$ is faithful, $b\in A$ is arbitrary
  and $A$ is nondegenerate, and that $\upsilonb \in \dAb$ is arbitrary
  and $\dAb$ separates the points of $A$ because the module $\Ab$ is
  projective, we can deduce that $a=0$.

  So, let us prove that \eqref{eq:faithful-1} implies $\eps(c)=0$.
  Since $\Ab$ is projective, there exist elements $e_{i} \in A$ and
  $\omega^{i}_{B} \in \dAb$ such that $\sum_{i}e_{i}\omega^{i}_{B}(d)$
  for all $d\in A$, the sum always being finite, and since $A \cdot
  \phi= \phi \cdot A$ by Theorem \ref{theorem:modular}, we find
  elements $e'_{i} \in A$ such that $S^{-1}(e_{i}) \cdot \phi = \phi
  \cdot e'_{i}$ for all $i$. Let $f \in A$ and write $(f \oo
  1)\Delta_{C}(c) = \sum_{j} f_{j} \oo c_{j}$ with $f_{j},c_{j} \in
  A$.  We take $d=e'_{i}$ in \eqref{eq:faithful-1}, multiply on the
  left by $f$, apply $\omega^{i}:=\mu_{B} \circ \omega_{B}^{i}$, sum
  over $i$, and find
  \begin{align*}
    0 = \sum_{i} ((\omega^{i} \cdot f) \circ \rho(\phic \cdot
    e'_{i}))(c)  &
    = \sum_{i} ((\phi \cdot e'_{i}) \circ \lambda(\omega_{B}^{i}
    \cdot f))(c) 
\\ &= \sum_{i,j}\phi(c_{j}S_{B}^{-1}(\omega_{B}^{i}(f_{j}))S^{-1}(e_{i})) = \sum_{j} \phi(c_{j}S^{-1}(f_{j})),
  \end{align*}
  where we used Lemma \ref{lemma:convolution} again.  The second
  diagram in \eqref{dg:antipode} shows that
 \begin{align*}
   \sum_{j} c_{j}S^{-1}(f_{j}) = \sum_{j} S^{-1}(f_{j}S(c_{j})) =
   S^{-1}(f S_{B}(\beps(c))) =\beps(c)S^{-1}(f),
 \end{align*}
and hence
\begin{align*}
   0 = \phi(\beps(c)S^{-1}(f)) = \mu_{B}(\beps(c)\bphi(S^{-1}(f))).
 \end{align*}
Since $f\in A$ was arbitrary, $\bphi$ is surjective and $\mu_{B}$ is
faithful, we can conclude $\beps(c)=0$ and hence $\eps (c)=0$.
\end{proof}

 \section{Duality}
\label{section:duality}

In this section, we establish a generalized Pontrjagin duality for regular
multiplier Hopf algebroids with full and faithful integrals.  In
contrast to the duality results on Hopf algebroids found in the
literature \cite{boehm:integrals}, \cite{kadison:inclusions},
\cite{schauenburg}, we need no finiteness or Frobenius condition
assumption. As
outlined in the introduction,  we follow the
approach taken by Van Daele in the case of multiplier Hopf algebras
\cite{daele:1}, but face a number of conceptual and technical
difficulties. 

\subsection{The dual algebra and dual  quantum graphs}
\label{subsection:dual-graphs}

We start with a flat regular multiplier Hopf algebroid with full and
faithful integrals. Let us call a functional $\omega$ on a $*$-algebra
$D$ \emph{positive} if $\omega(d^{*}d) \geq 0$ for all $d\in D$.
\begin{definition}
  A \emph{measured regular multiplier Hopf algebroid} is a flat
  regular multiplier Hopf algebroid $\mathcal{A}$ with a base weight
  $\mu$ and left and right integrals $\phi$ and $\psi$, respectively,
  which are full and faithful. If $\mathcal{A}$ is a multiplier Hopf
  $*$-algebroid and the functionals $\mu_{B},\mu_{C},\phi,\psi$ are
   positive, we call $(\mathcal{A},\mu,\phi,\psi)$ a
  \emph{measured multiplier Hopf $*$-algebroid}.
\end{definition}
Given a measured regular multiplier Hopf algebroid
$(\mathcal{A},\mu,\phi,\psi)$, we have seen that
\begin{align*}
    A\cdot \phi = \phi \cdot A = A \cdot \psi = \psi \cdot A
  \subseteq \dmAm.
\end{align*}
We denote this space by $\hat A$  as before, and equip it with a
convolution product as follows.
\begin{proposition} \label{proposition:dual-algebra} Let
  $(\mathcal{A},\mu,\phi,\psi)$ be a measured regular multiplier Hopf
  algebroid. Then the subspace $\hat A =A\cdot\phi \subseteq \dmAm$ is
  an idempotent, non-degenerate algebra, $A$ is an idempotent,
  non-degenerate and faithful $\hat A$-bimodule and $\dmAm$ is a
  non-degenerate and faithful $\hat A$-bimodule with respect to the
  products given by
  \begin{gather*}
    \omega \omega' := \omega \circ \rho(\omega') = \omega' \circ
    \lambda(\omega) \quad \text{for all } \omega,\omega' \in \hat A,
    \\ \begin{aligned} \omega \ast a &= \rho(\omega)(a), & a \ast
      \omega &= \lambda(\omega)(a) && \text{for all } a\in A, \omega
      \in \hat
      A, \\
      \omega\upsilon &= \upsilon\circ \lambda(\omega), &
      \upsilon\omega &= \upsilon \circ \rho(\omega) && \text{for all }
      \upsilon \in \dmAm, \omega \in \hat A.
    \end{aligned}
  \end{gather*}
\end{proposition}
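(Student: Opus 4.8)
The plan is to establish each of the four assertions by reducing it to the composition calculus for the convolution operators $\lambda$ and $\rho$ developed in Lemma~\ref{lemma:convolution} and Proposition~\ref{proposition:convolution}, together with the identity $A\cdot\phi=\phi\cdot A=A\cdot\psi=\psi\cdot A$ from Theorem~\ref{theorem:modular} and the separation statement of Proposition~\ref{proposition:convolution}.

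First I would check that the operations are well defined and land in the stated spaces. For $\omega\in\hat A$, writing $\omega=c\cdot\phi$ or $\phi\cdot c$ shows that its components $\bomega,\omegab,\comega,\omegac$ are of the localized form to which the observations preceding Lemma~\ref{lemma:convolution} apply, so $\rho(\omega)(a)$ and $\lambda(\omega)(a)$ belong to $A$; since $\hat A\subseteq\dmAm$ separates the points of $A$, Lemma~\ref{lemma:convolution}(4) makes $\lambda(\omega)$ and $\rho(\omega)$ unambiguous, so that $\omega\ast a$ and $a\ast\omega$ are well-defined elements of $A$. The coincidence of the two formulas for $\omega\omega'$, and for $\omega\upsilon$ and $\upsilon\omega$, together with the fact that these compositions again lie in $\dmAm$, is exactly Proposition~\ref{proposition:convolution}(2). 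The closure $\omega\omega'\in\hat A$ (and not merely $\dmAm$) requires more: I would take $\omega=c\cdot\phi$, $\omega'=\phi\cdot d$, expand $\omega\omega'=\omega\circ\rho(\omega')$ using the explicit action of $\rho$ on an $A$-localized integral, and rewrite the result in the form $e\cdot\phi$ using the surjectivity of the canonical maps $\Tr$ and $\lT$ and the faithfulness of $\phi$, in the style of the proofs of Theorems~\ref{theorem:uniqueness-full} and~\ref{theorem:modular}.

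The algebraic axioms then fall out of the composition identities. Associativity of the product on $\hat A$, associativity of the left and right $\hat A$-actions on $A$ and on $\dmAm$, and the commutation of those two actions all follow by applying $\rho$ (resp.\ $\lambda$) to both sides and using $\rho(\upsilon\circ\rho(\omega))=\rho(\upsilon)\rho(\omega)$ and $\lambda(\upsilon\circ\lambda(\omega))=\lambda(\upsilon)\lambda(\omega)$ from Proposition~\ref{proposition:convolution}(3) together with the commutation $\lambda(\omega)\rho(\omega')=\rho(\omega')\lambda(\omega)$ of Lemma~\ref{lemma:convolution}(2); one then cancels $\rho$ or $\lambda$ using that these operators determine their argument, which holds because $\eps$ composed with an $A$-localized convolution operator recovers the underlying functional (Lemma~\ref{lemma:convolution}(1)) and because of the span statement below. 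The compatibility of the module and convolution products, and the identifications $\omega\ast a=\rho(\omega)(a)=\lambda(\omega)(a)$-type comparisons, are routine book-keeping with the same identities.

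It remains to prove idempotency, non-degeneracy and faithfulness, which I expect to be the main obstacle. The crucial point is that the linear span of $\{\rho(\omega)(a):\omega\in\hat A,\ a\in A\}$ and of $\{\lambda(\omega)(a):\omega\in\hat A,\ a\in A\}$ is all of $A$; this is essentially the computation closing the proof of Theorem~\ref{theorem:modular}, where $\{\lambda(\psib\cdot a)(b)\}$ was identified with $A\,S_C^{-1}(\psib(A))=AC=A$, now applied with $\psi\cdot a\in\hat A$ and its antipodal partner $\phi\circ S^{-1}$, and it uses the fullness of the integrals in an essential way. Granting this, $\hat A\ast A=A=A\ast\hat A$, and combining it with faithfulness of $\phi$ and $\psi$ and the relation $A\cdot\phi=\phi\cdot A$ one obtains: non-degeneracy and faithfulness of $A$ and of $\dmAm$ as $\hat A$-bimodules (for instance $\omega\hat A=0$ with $\omega=c\cdot\phi$ forces $\phi(A\,c)=0$, hence $c=0$, hence $\omega=0$), non-degeneracy of $\hat A$ (the same argument restricted to $\hat A\subseteq\dmAm$), and idempotency of $\hat A$ (each element of $\hat A=A\cdot\phi$ is localized by an element of $A=A\ast\hat A$, and the resulting expression is rewritten as a product of two members of $\hat A$, again via faithfulness of $\phi$). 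The genuine difficulty throughout, in contrast to the multiplier Hopf algebra case of \cite{daele:1}, is keeping the two balanced tensor products --- over $B$ and over $C$ --- and the localizations by elements of $A$ under control while carrying out these rewritings.
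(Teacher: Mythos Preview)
Your reduction of well-definedness, associativity, and the bimodule axioms to the calculus of $\lambda$ and $\rho$ via Lemma~\ref{lemma:convolution} and Proposition~\ref{proposition:convolution} matches the paper, as does your span argument for $\hat A\ast A=A=A\ast\hat A$. The paper phrases the latter as idempotency of $A$ as an $\hat A$-bimodule and reads it off from an explicit product formula rather than from the computation closing Theorem~\ref{theorem:modular}, but the content is the same.

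Where your sketch diverges, and where there is a genuine gap, is in closure and idempotency of $\hat A$ itself. The paper does not argue ``in the style of Theorems~\ref{theorem:uniqueness-full} and~\ref{theorem:modular}''; it derives one explicit formula. Writing $\omega=a\cdot\phi$, $\omega'=b\cdot\phi$ and using bijectivity of $\Tl$ to express $a\otimes b=\sum_i\Delta_B(d_i)(e_i\otimes 1)$, one computes
\[
(\omega\omega')(c)=(\phi\ool\phi)(\Delta_B(c)(a\oo b))=\sum_i\phi(cd_i\,\cphic(e_i))
\]
by left-invariance of $\cphic$, so that $\omega\omega'=\big(\sum_i d_i\,\cphic(e_i)\big)\cdot\phi$. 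This single line simultaneously gives closure in $\hat A$ and idempotency of $\hat A$, since as $a,b$ vary the element $\sum_i d_i\,\cphic(e_i)$ ranges over $A\,\cphic(A)=A$ by bijectivity of $\Tl$ and surjectivity of $\cphic$ (Corollary~\ref{corollary:full}).

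Your alternative route to idempotency---write $c\in A=A\ast\hat A$, say $c=\lambda(\omega'')(a)$, and then ``rewrite $c\cdot\phi$ as a product''---does not go through as stated. The identity you would need, $(\lambda(\omega'')(a))\cdot\phi=(a\cdot\phi)\,\omega''$, is false in general: the right-hand side equals $\omega''\circ\lambda(a\cdot\phi)$, which at $d$ gives $\omega''(\lambda(a\cdot\phi)(d))$, not $\phi(d\,\lambda(\omega'')(a))$. Nor does the equality $\hat A\hat A\ast A=\hat A\ast A=A$ of action images force $\hat A\hat A=\hat A$, since a proper subspace can have the same image under the action. So you really do need the explicit product formula (or an equivalent direct computation with one of the canonical maps and invariance of $\cphic$ or $\bpsib$); once you have it, the remaining non-degeneracy and faithfulness claims fall out exactly as you outline.
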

We call $\hat A$ the \emph{dual algebra} of
$(\mathcal{A},\mu,\phi,\psi)$.
\begin{proof}
  We first show that the product defined on $\hat A$ takes values in
  $\hat A$ again. So, let $\omega,\omega' \in \hat A$. Then $\omega
  \circ \rho(\omega')=\omega' \circ \lambda(\omega)$ by Proposition
  \ref{proposition:convolution}. To see that this functional lies in
  $\hat A$, write $\omega=a\cdot \phi$ and $\omega' = b\cdot \psi$
  with $a,b\in A$ and $a \otimes b = \sum_{i} \Delta_{B}(d_{i})(e_{i}
  \otimes 1)$ with $d_{i},e_{i} \in A$. Then
  \begin{align*}
    (\omega \circ \rho(\omega'))(c) &=
    (\phi \ool \phi)(\Delta_{B}(c)(a \oo b)) \\
    &=\sum_{i} (\phi \ool \phi)(\Delta_{B}(cd_{i})(e_{i} \otimes 1))
    \\ &      = \sum_{i} \phi(\cphic(cd_{i})e_{i}) =\sum_{i}\phi(cd_{i}\cphic(e_{i}))
  \end{align*}
  for all $c \in A$,
  that is
  \begin{align} \label{eq:dual-product} \omega \circ \rho(\omega') =
    f\cdot \phi \ \text{ if } \ \omega=a\cdot \phi, \ \omega'=b\cdot
    \phi, \ f= (\cphic \otimes \id)(\Tl^{-1}(a\otimes b)).
  \end{align}
  
  The products defined above turn $\hat A$ into an algebra and $A$ and
  $\dmAm$ into $\hat A$-bimodules because
  $\rho(\omega)\rho(\omega')=\rho(\omega \circ \rho(\omega'))$ and
  $\lambda(\omega')\lambda(\omega) = \lambda(\omega' \circ
  \lambda(\omega))$ by Proposition \ref{proposition:convolution} (3).

  Equation \eqref{eq:dual-product} and bijectivity of the canonical
  maps $\Tl,\Tr$ imply that $\hat A$ is idempotent as an algebra and
  $A$ is idempotent as an $\hat A$-bimodule. These facts and
  non-degeneracy of the pairing $\hat A \times A \to A$, $(\upsilon,a)
  \mapsto \upsilon(a)$ imply that $A$ is non-degenerate and faithful
  as an $\hat A$-bimodule. But $A$ being faithful and idempotent as an
  $\hat A$-bimodule implies that the algebra $\hat A$ is
  non-degenerate, and that  $\dmAm$ is non-degenerate and
  faithful as an $\hat A$-bimodule.
\end{proof}

The algebras  $B,C$ and $\hat A$ can be assembled into left and
right quantum graphs as follows.
\begin{proposition} \label{proposition:dual-quantum-graphs} Let
  $(\mathcal{A},\mu,\phi,\psi)$ be a measured regular multiplier Hopf
  algebroid with dual algebra $\hat A$ and let
  \begin{align*}
    \hat B &:= C,  & \hat C &:= B, & \hat S_{ \hat B} &:= S_{B}^{-1}
    \colon \hat B \to \hat C, & \hat S_{\hat C} &:= S_{C}^{-1} \colon
    \hat C \to \hat B.
  \end{align*}
  Then there exist embeddings $\iota_{\hat B} \colon \hat B \to M(\hat
  A)$ and $\iota_{\hat C} \colon \hat C \to M(\hat A)$ such that
    \begin{align*}
      y \omega &= y \cdot \omega, & \omega y &= S_{B}^{-1}(y) \cdot
      \omega, & x\omega &= \omega \cdot S_{C}^{-1}(x), & \omega x &=
      \omega \cdot x
    \end{align*}
    for all $y\in \hat B$, $x \in \hat C$, $\omega \in \hat A$, and
    the tuples
    \begin{align*}
  \hat{\mathcal{A}}_{\hat B}&:=(\hat A,\hat B,
    \iota_{\hat B},\hat S_{\hat B}) &&\text{and}  &
\hat{\mathcal{A}}_{\hat
      C}&:=(\hat A,\hat C,\iota_{\hat C},\hat S_{\hat C})    
    \end{align*}
    form a left and a right quantum graph, respectively, which are
    flat and compatible. 
\end{proposition}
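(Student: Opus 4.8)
The strategy is to read $\iota_{\hat{B}}$ and $\iota_{\hat{C}}$ directly off the convolution structure of $\hat{A}$ built in Propositions \ref{proposition:dual-algebra} and \ref{proposition:convolution}, and then to verify the quantum-graph axioms by transporting them to $A$ along the four canonical linear bijections $a\mapsto a\cdot\phi$, $a\mapsto\phi\cdot a$, $a\mapsto a\cdot\psi$, $a\mapsto\psi\cdot a$ from $A$ onto $\hat A$; these are bijective because $\phi,\psi$ are faithful and $\hat A=A\cdot\phi=\phi\cdot A=A\cdot\psi=\psi\cdot A$.

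First I would construct the embeddings. Fix $y\in\hat{B}=C$. The assignments $\omega\mapsto y\cdot\omega$ and $\omega\mapsto S_{B}^{-1}(y)\cdot\omega$, using the $\dual{A}$-module structure $(z\cdot\chi)(a)=\chi(az)$, map $\hat A$ into itself, since writing $\omega=a\cdot\psi$ gives $(ya)\cdot\psi$ and $(S_{B}^{-1}(y)a)\cdot\psi$, both in $A\cdot\psi$. That this pair is a multiplier of $\hat A$ — the only non-formal point — follows from Proposition \ref{proposition:convolution}: specialising part (5) to a single base variable yields $\lambda(S_{B}^{-1}(y)\cdot\omega)(a)=\lambda(\omega)(a)\,y$, and combining this with the product formula $\omega\omega'=\omega'\circ\lambda(\omega)$ and $(y\cdot\chi)(z)=\chi(zy)$ gives $(S_{B}^{-1}(y)\cdot\omega)\,\omega'=\omega\,(y\cdot\omega')$ for all $\omega,\omega'\in\hat A$, so the left and right prescriptions fit. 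A symmetric computation with $x\in\hat{C}=B$ and the other half of (5), namely $\rho(\omega'\cdot S_{C}^{-1}(x))(a)=x\cdot\rho(\omega')(a)$, produces $\iota_{\hat{C}}(x)\in M(\hat A)$. These maps are algebra homomorphisms because $y\cdot(y'\cdot\omega)=(yy')\cdot\omega$ and $(\omega\cdot x')\cdot x=\omega\cdot(xx')$, and injective because $\hat A$ separates the points of $A$ (Proposition \ref{proposition:convolution}) while $A$ is faithful over $B$ and over $C$, so $Ay=0$ or $xA=0$ forces $y=0$ or $x=0$. Since the left and right $M(A)$-actions on $\dual{A}$ commute, $\iota_{\hat{B}}(\hat B)$ and $\iota_{\hat{C}}(\hat C)$ commute in $M(\hat A)$; identifying $\hat B,\hat C$ with these images, the maps $\iota_{\hat{C}}\circ\hat{S}_{\hat{B}}$ and $\iota_{\hat{B}}\circ\hat{S}_{\hat{C}}$ are the required base anti-homomorphisms, so condition (2) of a quantum graph holds.

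For the remaining axioms I would transport to $A$. Under the four bijections above, the module structures entering the axioms become, respectively, ${_{\iota_{\hat{B}}}}\hat A\cong\cA$ (via $a\mapsto a\cdot\psi$); the $C$-module $\omega\mapsto\omega\cdot S_{B}^{-1}(y)$ on $\hat A$, isomorphic to $\Ab$ viewed as a left $C$-module along the anti-isomorphism $S_{B}^{-1}$ (via $a\mapsto\psi\cdot a$); $\hat A_{\iota_{\hat{C}}}\cong\Ab$ (via $a\mapsto\phi\cdot a$); and the $B$-module $\omega\mapsto S_{C}^{-1}(x)\cdot\omega$ on $\hat A$, isomorphic to $\cA$ along $S_{C}^{-1}$ (via $a\mapsto a\cdot\psi$). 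Because $\mathcal{A}$ is flat and $A$ is idempotent, hence non-degenerate and faithful, as a module over $B$ and over $C$, each transported module is flat, faithful, non-degenerate and idempotent — none of these properties is affected by restriction of scalars along an algebra (anti-)isomorphism. Together with non-degeneracy of $\hat A$ as an algebra (Proposition \ref{proposition:dual-algebra}), this shows that $\hat{\mathcal{A}}_{\hat{B}}$ is a flat left and $\hat{\mathcal{A}}_{\hat{C}}$ a flat right quantum graph. Compatibility is then formal: $\hat{S}_{\hat{B}}=S_{B}^{-1}$ is a bijection $\hat{B}=C\to\hat{C}=B$, so $t_{\hat{B}}(\hat{B})=\iota_{\hat{C}}(\hat{S}_{\hat{B}}(\hat{B}))=\iota_{\hat{C}}(\hat{C})$ and symmetrically $t_{\hat{C}}(\hat{C})=\iota_{\hat{B}}(\hat{B})$; moreover $\hat{S}_{\hat{C}}\circ\hat{S}_{\hat{B}}=(S_{B}\circ S_{C})^{-1}=(\sigma^{\mu}_{C})^{-1}$ and $\hat{S}_{\hat{B}}\circ\hat{S}_{\hat{C}}=\sigma^{\mu}_{B}$ by Proposition \ref{proposition:counit-kms}. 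The one genuine obstacle is the multiplier property in the middle step: it rests entirely on knowing how $\lambda$ and $\rho$ intertwine with the $B$- and $C$-actions on $\dmAm$, which is precisely the content of Proposition \ref{proposition:convolution}(5); everything else is bookkeeping with the identifications $\hat A\cong A$.
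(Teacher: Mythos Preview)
Your proof is correct and follows essentially the same route as the paper's: both use Proposition~\ref{proposition:convolution}(5) to verify that the prescribed left/right actions of $\hat B$ and $\hat C$ assemble to two-sided multipliers, and both transport the module-theoretic axioms (idempotence, non-degeneracy, faithfulness, flatness) from $A$ to $\hat A$ along the linear bijections $a\mapsto a\cdot\phi$, $a\mapsto\phi\cdot a$, etc. The paper is more terse --- it records only the check for $\iota_{\hat C}$ and the identification $_{\hat B}\hat A\cong {_{C}A}$ via $a\mapsto a\cdot\phi$, leaving the remaining cases to ``similar arguments'' --- while you spell out all four module identifications and the compatibility relations explicitly; your remark about $\hat S_{\hat C}\circ\hat S_{\hat B}$ and $\hat S_{\hat B}\circ\hat S_{\hat C}$ is extra bookkeeping not strictly required for compatibility, but harmless.
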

We call $\hat{\mathcal{A}}_{\hat B}$ and $\hat{\mathcal{A}}_{\hat C}$  the \emph{dual left} and \emph{dual right quantum graph} of
$(\mathcal{A},\mu,\phi,\psi)$, respectively.
\begin{proof}
 For each $x\in B=\hat C$, the formulas above define a multiplier
  $\iota_{\hat C}(x)$ of $\hat A$ because 
  \begin{align*}
    \upsilon ( x\omega) = \upsilon \circ \rho(\omega \cdot S^{-1}(x)) =
    (\upsilon \cdot x) \circ \rho(\omega) = (\upsilon x)
    \omega
  \end{align*}
  for all $\upsilon,\omega \in \hat A$ by Proposition
  \ref{proposition:convolution} (5). The
  assignment $x\mapsto \iota_{\hat C}(x)$ evidently is a
  homomorphism. Similar arguments show that the formulas above define
  a homomorphism $\iota_{\hat C} \colon \hat C\to M(\hat
  A)$. Evidently, $\iota_{\hat B}(\hat B)$ and $\iota_{\hat C}(\hat
  C)$ commute. 

 The module $\hbA$ is idempotent, non-degenerate, faithful and
  flat because the map $A \to \hat A$ given by $a \mapsto a \cdot
  \phi$ is an isomorphism from $\cA$ to $\hbA$ and the module $\cA$
  has the desired properties. Note that faithfulness implies
  injectivity of $\iota_{\hat B}$. Similar arguments apply to the
  modules $\hAb,\hcA,\hAc$ and yield injectivity of $\iota_{\hat C}$.
\end{proof}

In the case of a multiplier Hopf $*$-algebroid, we obtain a natural
involution on the dual algebra $\hat A$ which is admissible with the
dual quantum graphs in the sense explained in Subsection
\ref{subsection:hopf-algebroids}. The proof uses the following easy
observation.
\begin{lemma} \label{lemma:dual-space}
  \begin{enumerate}
  \item Let $(\mathcal{A},\mu,\phi,\psi)$ be a measured regular
    multiplier Hopf algebroid. Then $\upsilon \circ S \in \hat A$ for
    all $\upsilon \in \hat A$.
  \item Let $(\mathcal{A},\mu,\phi,\psi)$ be a
    measured multiplier Hopf $*$-algebroid. Then $\ast \circ \upsilon
    \circ \ast \in \hat A$ for all $\upsilon \in \hat A$.
  \end{enumerate}
\end{lemma}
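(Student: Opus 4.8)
The plan is to deduce both claims from the different descriptions of $\hat A$ together with the algebraic structure of $S$ and $\ast$. The auxiliary fact I will use is that $\hat A = \phi\cdot A = A\cdot\phi = \psi^{\dag}\cdot A = A\cdot\psi^{\dag}$, where $\psi^{\dag}:=\phi\circ S$: indeed, $\psi^{\dag}$ is a right integral adapted to $\mu$ by Proposition \ref{proposition:integrals-antipode}, it is full by Remark \ref{remark:full}(1), and since $\phi$ is full as well, Theorem \ref{theorem:uniqueness-full} yields $\phi\cdot A\subseteq\psi^{\dag}\cdot A$, $\psi^{\dag}\cdot A\subseteq\phi\cdot A$ and the two analogous inclusions on the left, hence the claimed equalities. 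Throughout I use the conventions $(a\cdot\omega)(m)=\omega(ma)$, $(\omega\cdot a)(m)=\omega(am)$ from the preliminaries, and note that $\upsilon\circ S$ and $\ast\circ\upsilon\circ\ast$ a priori lie only in $\dual{A}$, so there really is something to prove.

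For part (1), I would fix $\upsilon\in\hat A$, write $\upsilon=a\cdot\phi$ with $a\in A$, and compute, using that $S$ is an anti-homomorphism,
\begin{align*}
  (\upsilon\circ S)(b) &= \phi\big(S(b)\,a\big) = \phi\big(S(S^{-1}(a)\,b)\big) \\
  &= (\phi\circ S)\big(S^{-1}(a)\,b\big) = \big(\psi^{\dag}\cdot S^{-1}(a)\big)(b)
\end{align*}
for all $b\in A$. Hence $\upsilon\circ S=\psi^{\dag}\cdot S^{-1}(a)\in\psi^{\dag}\cdot A=\hat A$. (Starting instead from $\upsilon=\phi\cdot a'$ one lands in $A\cdot\psi^{\dag}=\hat A$; either variant is a single line.)

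For part (2), recall that in this paper's terminology a positive functional is in particular self-adjoint, so $\phi(c^{*})=\overline{\phi(c)}$ for all $c\in A$. Writing again $\upsilon=a\cdot\phi$, for $b\in A$ one has
\begin{align*}
  (\ast\circ\upsilon\circ\ast)(b) &= \overline{\upsilon(b^{*})} = \overline{\phi(b^{*}a)} \\
  &= \phi\big((b^{*}a)^{*}\big) = \phi(a^{*}b) = (\phi\cdot a^{*})(b),
\end{align*}
so $\ast\circ\upsilon\circ\ast=\phi\cdot a^{*}\in\phi\cdot A=\hat A$.

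The content of the lemma is thin and I do not expect a genuine obstacle; the only thing to be careful about is bookkeeping --- keeping the left/right module-action conventions for $\cdot$ on functionals straight so that the element of $A$ produced acts on the correct side, and invoking Theorem \ref{theorem:uniqueness-full} precisely in the form that identifies $\psi^{\dag}\cdot A$ and $A\cdot\psi^{\dag}$ with $\hat A$. Once those identifications are recorded, both statements reduce to the one-line computations above.
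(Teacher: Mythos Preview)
Your proof is correct and follows essentially the same approach as the paper: for (1) you write $\upsilon=a\cdot\phi$ and identify $\upsilon\circ S=(\phi\circ S)\cdot S^{-1}(a)$, and for (2) you use self-adjointness of $\phi$ to get $\ast\circ\upsilon\circ\ast=\phi\cdot a^{*}$. The paper's proof is terser---it simply asserts these identities and notes that $\phi\circ S$ is right-invariant---but the content is the same, and your explicit invocation of Theorem~\ref{theorem:uniqueness-full} to justify $\psi^{\dag}\cdot A=\hat A$ just makes visible a step the paper leaves implicit.
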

\begin{proof}
 Let $\upsilon \in \hat A$ and write
$\upsilon = a \cdot \phi$ with $a\in A$. In (1), $\upsilon \circ S = (\phi
\circ S) \cdot S^{-1}(a)$ lies in $\hat A$ because $\phi \circ S$ is
right-invariant, and in (2), $\ast \circ \upsilon
  \circ \ast = \phi \cdot a^{*} \in \hat A$.
\end{proof}
\begin{proposition} \label{proposition:dual-involution}
  Let $(\mathcal{A},\mu,\phi,\psi)$ be a measured multiplier Hopf
  $*$-algebroid. Then the dual algebra $\hat A$ is a $*$-algebra with
  respect to the involution given by $\omega \mapsto \omega^{*}:=\ast
  \circ \omega \circ \ast \circ S$, and this involution is admissible
  with respect to the dual quantum graphs $\hat { \mathcal{A}}_{\hat
    B}$ and $\hat{\mathcal{A}}_{\hat C}$.
\end{proposition}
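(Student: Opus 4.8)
The plan is to factor the candidate involution as $\omega \mapsto \omega^{\ast} = T(J(\omega))$, where $J(\chi) := \ast \circ \chi \circ \ast$ (conjugate-linear) and $T(\chi) := \chi \circ S$ (linear), and to analyse $T$ and $J$ separately on $\hat A$. Well-definedness is immediate from Lemma \ref{lemma:dual-space}: part (2) shows $J$ maps $\hat A$ into $\hat A$, and part (1) shows the same for $T$, so $\omega^{\ast} \in \hat A$ for all $\omega \in \hat A$; since $J$ is conjugate-linear and $T$ linear, $\omega \mapsto \omega^{\ast}$ is conjugate-linear. For involutivity I would note $\omega^{\ast}(a) = \overline{\omega(S(a)^{\ast})}$, hence $(\omega^{\ast})^{\ast}(a) = \omega\bigl((\ast \circ S \circ \ast \circ S)(a)\bigr)$, and then use $\ast \circ S \circ \ast \circ S = \iota_{A}$, which follows by a short manipulation from $S \circ \ast \circ S \circ \ast = \iota_{A}$ in \eqref{eq:involutions}.

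For anti-multiplicativity, $(\omega\omega')^{\ast} = T(J(\omega)J(\omega')) = T(J(\omega'))T(J(\omega)) = (\omega')^{\ast}\omega^{\ast}$ once one knows that $T$ is an algebra anti-homomorphism and $J$ an algebra homomorphism of $\hat A$. For $T$ I would compute, using $\omega\omega' = \omega' \circ \lambda(\omega)$ from Proposition \ref{proposition:dual-algebra}, the identity $\lambda(\upsilon) \circ S = S \circ \rho(\upsilon \circ S)$ from Proposition \ref{proposition:convolution}(4), and $\upsilon\zeta = \upsilon \circ \rho(\zeta)$ for $\upsilon,\zeta \in \hat A$,
\[ T(\omega\omega') = (\omega' \circ \lambda(\omega)) \circ S = (\omega' \circ S) \circ \rho(\omega \circ S) = (\omega' \circ S)(\omega \circ S) = T(\omega')\,T(\omega), \]
all terms lying in $\hat A$ by Lemma \ref{lemma:dual-space}(1). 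For $J$ the key is that the convolution product on $\hat A$ admits two descriptions, one through $\lambda$ and $\Delta_{B}$ and one through $\rho$ and $\Delta_{C}$, which agree precisely by the coincidence $\omega' \circ \lambda(\omega) = \omega \circ \rho(\omega')$ underlying Proposition \ref{proposition:dual-algebra}. Since the involution interchanges the two comultiplications, $(\ast \oo \ast) \circ \Delta_{B} \circ \ast = \Delta_{C}$, i.e.\ $\Delta_{B}(a^{\ast}) = (\ast \oo \ast)(\Delta_{C}(a))$, conjugating the $\Delta_{B}$-description of $(\omega\omega')(a^{\ast})$ turns it into the $\Delta_{C}$-description of $(J(\omega)J(\omega'))(a)$; carrying this out gives $J(\omega\omega') = J(\omega)J(\omega')$.

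For admissibility I would check the two conditions of Subsection \ref{subsection:hopf-algebroids} for the dual quantum graphs $\hat{\mathcal A}_{\hat B}$ and $\hat{\mathcal A}_{\hat C}$ of Proposition \ref{proposition:dual-quantum-graphs}. The relations $\hat S_{\hat B} \circ \ast \circ \hat S_{\hat C} \circ \ast = \iota_{\hat C}$ and $\hat S_{\hat C} \circ \ast \circ \hat S_{\hat B} \circ \ast = \iota_{\hat B}$ are a purely formal consequence of $\hat S_{\hat B} = S_{B}^{-1}$, $\hat S_{\hat C} = S_{C}^{-1}$ and the original relations $S_{B} \circ \ast \circ S_{C} \circ \ast = \iota_{C}$, $S_{C} \circ \ast \circ S_{B} \circ \ast = \iota_{B}$ of \eqref{eq:involutions}. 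That $\iota_{\hat B}(\hat B)$ and $\iota_{\hat C}(\hat C)$ are $\ast$-subalgebras of $M(\hat A)$ I would check directly from the action formulas in Proposition \ref{proposition:dual-quantum-graphs}: for $y \in \hat B = C$, by definition of the involution on $M(\hat A)$ one has $\iota_{\hat B}(y)^{\ast}\,\omega^{\ast} = (\omega\,\iota_{\hat B}(y))^{\ast} = (S_{B}^{-1}(y) \cdot \omega)^{\ast}$, and a short computation using $S(m y^{\ast}) = S_{C}(y^{\ast})S(m)$ and $S_{C}(y^{\ast})^{\ast} = S_{B}^{-1}(y)$ shows $(S_{B}^{-1}(y) \cdot \omega)^{\ast} = y^{\ast} \cdot \omega^{\ast}$, hence $\iota_{\hat B}(y)^{\ast} = \iota_{\hat B}(y^{\ast})$; the case of $\iota_{\hat C}$ is symmetric.

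I expect the homomorphism property of $J$ to be the main obstacle: it is exactly here that the hard-won fact — that the convolution product on $\hat A$ computed from $\Delta_{B}$ coincides with the one computed from $\Delta_{C}$ (Proposition \ref{proposition:convolution}) — gets used, in combination with the interchange of the two comultiplications by $\ast$. The remaining ingredients (well-definedness from Lemma \ref{lemma:dual-space}, involutivity from \eqref{eq:involutions}, the anti-homomorphism property of $T$, and admissibility) are formal or routine.
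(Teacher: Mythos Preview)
Your proposal is correct and follows essentially the same route as the paper's proof: both use Lemma~\ref{lemma:dual-space} for well-definedness, \eqref{eq:involutions} for involutivity, and for anti-multiplicativity combine the identity $\rho(\omega)\circ\ast=\ast\circ\rho(\ast\circ\omega\circ\ast)$ (your ``$J$ is a homomorphism'') with $\rho(\upsilon)\circ S=S\circ\lambda(\upsilon\circ S)$ from Proposition~\ref{proposition:convolution}(4) (your ``$T$ is an anti-homomorphism''). The paper does this as one direct computation rather than via the factorisation $\omega^{\ast}=T(J(\omega))$, and handles admissibility by the same computation you give for $\iota_{\hat B}$, leaving the $\hat S_{\hat B},\hat S_{\hat C}$ compatibility implicit.
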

\begin{proof}
  The map $\omega \mapsto \omega^{*}$ sends $\hat A$ to $\hat A$ by
  Lemma \ref{lemma:dual-space} and is involutive because $\ast \circ S
  \circ \ast \circ S = \id_{A}$ by \eqref{eq:involutions}. To see that it reverses the multiplication,
  note first that
  $\rho(\omega) \circ \ast = \ast \circ \rho(\ast \circ \omega
    \circ \ast)$
  because $\Delta_{B}$ and $\Delta_{C}$ are $*$-homomorphisms. Using
  Proposition \ref{proposition:dual-algebra}, we can conclude  \begin{align*}
    (\upsilon \ast \omega)^{*} &= \ast \circ \upsilon \circ \rho(\omega) \circ
    \ast \circ S =  \ast\circ \upsilon \circ \ast \circ S \circ \lambda(\ast
    \circ \omega
    \circ \ast \circ S) = \omega^{*} \ast \upsilon^{*}.
  \end{align*}
   The embedding $\iota_{\hat B}$  is a $*$-homomorphism because for all $y\in \hat B=C$ and
  $\omega \in \hat A$,
  \begin{align*}
    (y\omega)^{*} &= \ast \circ (y \cdot \omega) \circ \ast \circ S =
     S^{-1}(y^{*}) \cdot (\ast \circ \omega \circ \ast\circ S) = \omega^{*} y^{*}.
  \end{align*}
  A similar calculation shows that $\iota_{\hat C}$ is a
  $*$-homomorphism as well.
\end{proof}

Given a measured regular multiplier Hopf algebroid with dual algebra
$\hat A$, let
\begin{align}
  \label{eq:1}
\hat \mu &= (\hat \mu_{\hat B},\hat \mu_{\hat C}), && \text{where} &
\hat \mu_{\hat B}&:=\mu_{C} \in \dual{\hat B}, & \hat
\mu_{\hat C}&:=\mu_{B} \in \dual{\hat C}.
\end{align}
Dual to $\dmAm$, we denote by
  \begin{align} \label{eq:hdmam}
    \hdmAm &:= (\hat\mu_{\hat B})_{*}(_{\hat B}\hat A) \cap
    (\hat\mu_{\hat B})_{*}(\hat A _{\hat B}) \cap(\hat\mu_{\hat
      C})_{*}(_{\hat C}\hat A) \cap(\hat\mu_{C})_{*}(\hat A_{\hat C})
    \subseteq \dual{(\hat A)}
  \end{align}
  the space of all functionals $\omega$ on $\hat A$ that can be
  written in the form
\begin{align*}
  \omega = \hat\mu_{\hat B} \circ {_{\hat B}\omega} = 
\hat\mu_{\hat B} \circ {\omega_{\hat B}} = \hat\mu_{\hat C} \circ {_{\hat C}\omega} = \hat\mu_{\hat C} \circ {\omega_{\hat C}}
\end{align*}
with $_{\hat B}\omega \in {_{\hat B}\hat A}$, $\omega_{\hat B} \in
\hat A _{\hat B}$, ${_{\hat C}\omega}\in {_{\hat C}\hat A} $,
$\omega_{\hat C}\in\hat A_{\hat C}$.  
By Lemma \ref{lemma:extension-multipliers}, functionals in $\hat A$
naturally extend to  $M(A)$. Dually, evaluation on a multiplier  gives
a functional in $\hdmAm$.
\begin{lemma} \label{lemma:dual-evaluation}
  Let $(\mathcal{A},\mu,\phi,\psi)$ be a measured regular multiplier
  Hopf algebroid with dual algebra $\hat A$ and dual base weight
  $\hat\mu$ and let $T \in M(A)$. Then the functional  $\dual{T}\colon
  \hat A \to \C$ given by $\omega \to \omega(T)$ lies in $\hdmAm$, and     for all $a \in A$,
    \begin{align*}
      {_{\hat B}(\dual{T})}(\phi \cdot a) &=  S^{2}(\cphic(aT)), &
      (\dual{T}_{\hat B})(\psi \cdot a) &= S^{-1}(\bpsib(aT)), \\
      {_{\hat C}(\dual{T})}(a \cdot \phi) &= S^{-1}(\cphic(Ta)), &
      (\dual{T}_{\hat C})(a \cdot \psi) &= S^{2}(\bpsib(Ta)).
    \end{align*}
\end{lemma}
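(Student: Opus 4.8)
The plan is to take the four displayed formulas as the \emph{definitions} of the component maps ${_{\hat B}(\dual{T})}$, $\dual{T}_{\hat B}\colon\hat A\to\hat B=C$ and ${_{\hat C}(\dual{T})}$, $\dual{T}_{\hat C}\colon\hat A\to\hat C=B$; this is legitimate because $a\mapsto\phi\cdot a$, $a\mapsto a\cdot\phi$, $a\mapsto\psi\cdot a$ and $a\mapsto a\cdot\psi$ are bijections of $A$ onto $\hat A$ by faithfulness of $\phi$ and $\psi$. One then has to prove that each of these maps is a morphism of the appropriate one-sided $\hat B$- or $\hat C$-module whose composition with $\hat\mu_{\hat B}=\mu_C$ or $\hat\mu_{\hat C}=\mu_B$ is $\dual{T}$; by \eqref{eq:hdmam} this is precisely the assertion $\dual{T}\in\hdmAm$ with the indicated components. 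Using the characterisation \eqref{eq:bomega}--\eqref{eq:omegac} (applied with $\hat A$, $\hat\mu_{\hat B}$, $\hat\mu_{\hat C}$ in place of $A$, $\mu_B$, $\mu_C$) together with faithfulness of $\mu_B$ and $\mu_C$, it is enough to check, for each component, a single scalar identity, for instance $\dual{T}(y\cdot\omega)=\mu_C\bigl(y\cdot{_{\hat B}(\dual{T})}(\omega)\bigr)$ for all $y\in\hat B=C$ and $\omega\in\hat A$, and the three analogues for $\omega\cdot y$, $x\cdot\omega$, $\omega\cdot x$.

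For the left-hand sides, the $\hat B$- and $\hat C$-actions on $\hat A$ are $y\omega=y\cdot\omega$, $\omega y=S_B^{-1}(y)\cdot\omega$, $x\omega=\omega\cdot S_C^{-1}(x)$ and $\omega x=\omega\cdot x$ by Proposition~\ref{proposition:dual-quantum-graphs}; writing $\omega$ in whatever canonical form the relevant component is defined on and moving the base-algebra factor into canonical position by means of $A\cdot\phi=\phi\cdot A=A\cdot\psi=\psi\cdot A$ and the modular automorphisms of $\phi$ and $\psi$, Lemma~\ref{lemma:extension-multipliers} then turns each left-hand side into a single evaluation of $\phi$ or $\psi$, e.g.\ $\dual{T}(y\cdot(\phi\cdot a))=\phi(aTy)$, $\dual{T}((\psi\cdot a)\cdot y)=\psi(aTS_B^{-1}(y))$, $\dual{T}(x\cdot(a\cdot\phi))=\phi(S_C^{-1}(x)Ta)$ and $\dual{T}((a\cdot\psi)\cdot x)=\psi(xTa)$. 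The right-hand sides are reduced to the same expressions by unfolding $\phi=\mu_C\circ\cphic$, $\psi=\mu_B\circ\bpsib$, using that $\cphic$ is a morphism of $C$-bimodules and $\bpsib$ one of $B$-bimodules, the base-weight identities $\mu_C\circ S_B=\mu_B$ and $\mu_B\circ S_C=\mu_C$ together with the anti-multiplicativity of $S_B$, $S_C$, and the relations $\sigma^{\mu}_{B}=S^{-2}|_B$ and $\sigma^{\mu}_{C}=S^{2}|_C$ of Proposition~\ref{proposition:counit-kms} (which allow a base-algebra factor to be shuttled past $T$ inside $\mu_B$ or $\mu_C$, at the cost of an $S^{\pm2}$). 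For example, $\mu_B\bigl(x\cdot{_{\hat C}(\dual{T})}(a\cdot\phi)\bigr)=\mu_B\bigl(xS_B^{-1}(\cphic(Ta))\bigr)=\mu_C\bigl(\cphic(Ta)S_B(x)\bigr)$ while $\phi(S_C^{-1}(x)Ta)=\mu_C\bigl(S_C^{-1}(x)\cphic(Ta)\bigr)=\mu_C\bigl(\cphic(Ta)\,\sigma^{\mu}_{C}(S_C^{-1}(x))\bigr)=\mu_C\bigl(\cphic(Ta)S_B(x)\bigr)$, using $S^{2}|_C=S_B\circ S_C$, so the two sides agree; the remaining three cases are entirely parallel.

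The difficulty is organisational rather than conceptual: one must run four parallel computations, in each choosing the canonical form of $\omega$ that makes the one-sided module property of $\cphic$ or $\bpsib$ apply on the correct side, while keeping track of the numerous occurrences of $S_B^{\pm1}$, $S_C^{\pm1}$ and $S^{\pm2}$, since a single slip propagates through an entire identity. In effect the lemma is the multiplier-algebra shadow of the factorisation properties of $\dmAm$ already established, and it is what will make the evaluation map $T\mapsto\dual{T}$ into the carrier of the duality between the multiplication on $A$ and the comultiplication on $\hat A$.
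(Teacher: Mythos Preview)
Your proposal is correct and follows essentially the same route as the paper's proof: both arguments verify, for each of the four components, the characterising scalar identity from the analogue of \eqref{eq:bomega}--\eqref{eq:omegac} by unwinding the $\hat B$- and $\hat C$-actions from Proposition~\ref{proposition:dual-quantum-graphs}, evaluating via Lemma~\ref{lemma:extension-multipliers}, and then pushing base-algebra factors through $\mu_B$ or $\mu_C$ using $\sigma^{\mu}_{B}=S^{-2}|_B$, $\sigma^{\mu}_{C}=S^{2}|_C$ and $\mu_C\circ S_B=\mu_B$. The paper writes out the $_{\hat B}(\dual{T})$ and $\dual{T}_{\hat B}$ cases explicitly while you work out $_{\hat C}(\dual{T})$, but the computations are interchangeable and the remaining cases are declared ``similar'' in both.
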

\begin{proof}
  The functional $\dual{T}$ lies in $(\hat\mu_{\hat B})_{*}(_{\hat
    B}\dual{\hat A}) \cap (\hat\mu_{\hat B})_{*}(\dual{\hat A}_{\hat
    B})$ and the equations concerning $_{\hat B}(\dual{T})$ and
  $\dual{T}_{\hat B}$ hold because for all $y\in C=\hat B$,
  \begin{align*}
    \dual{T}(y(\phi \cdot a))&= \dual{T}(y\cdot \phi \cdot a) =
    \phi(aTy) =\mu_{C}(\cphic(aT)y) = 
    \mu_{C}(yS^{2}(\cphic(aT)))
\end{align*}
and likewise
\begin{align*}
    \dual{T}((\psi \cdot a)y) =  \dual{T}(S^{-1}(y) \cdot \psi  \cdot
    a)  &=
 \mu_{B}(\bpsib(aT)S^{-1}(y))
\\ &    = \mu_{B}(S(y)\bpsib(aT)) = \mu_{C}(S^{-1}(\bpsib(aT))y).
  \end{align*}
  Similar calculations show that $\dual{T}$ lies in $(\hat\mu_{\hat
    C})_{*}(_{\hat C}\dual{\hat A}) \cap (\hat\mu_{\hat
    C})_{*}(\dual{\hat A}_{\hat C})$ and that the equations concerning
  $_{\hat C}(\dual{T})$ and $\dual{T}_{\hat C}$ hold.
\end{proof}

\subsection{The dual regular multiplier Hopf algebroid}
\label{subsection:dual-pairs}

We now equip the dual left and right quantum graphs
$\hat{\mathcal{A}}_{\hat B}$ and $\hat{\mathcal{A}}_{\hat C}$ of a
measured regular multiplier Hopf algebroid
$(\mathcal{A},\mu,\phi,\psi)$ with a left and a right comultiplication
so that we obtain a flat regular multiplier Hopf algebroid
$\hat{\mathcal{A}}$.  Rather than defining these comultiplications
directly, we construct the associated canonical maps.  We start with
the canonical maps of $\mathcal{A}$, which induce dual maps
\begin{align} \label{eq:dual-canonical-1}
  \dual{(\Tr)} &\colon \dual{(\AlA)} \to \dual{(\AbA)}, &
  \dual{(\lT)} &\colon \dual{(\ArA)}\to \dual{(\AcA)}, \\ \label{eq:dual-canonical-2}
  \dual{(\Tl\Sigma)} &\colon \dual{(\AlA)} \to
  \dual{(\AcA)}, &
  \dual{(\rT\Sigma)} &\colon \dual{(\ArA)} \to
  \dual{(\AbA)}.
\end{align}
Next, we  identifiy the relative tensor products
\begin{align*}
  \hAcA, && \hAbA, && \hAlA = {_{\hat B}\hat A} \oo {_{\hat S_{\hat B}(\hat
      B)}\hat A}, &&
  \hArA = \hat A_{\hat S_{\hat C}(\hat C)} \oo \hat A_{\hat C}
\end{align*}
with certain subspaces of the domains and ranges of the maps above as
follows. As explained in Subsection \ref{subsection:uniqueness}, we
can form for all $\upsilon,\omega \in \hat A\subseteq \dmAm$ certain
tensor products
 \begin{align} \label{eq:dual-products}
\upsilon \ool \omega &\in \dual{(\AlA)}, &
   \upsilon \oor \omega &\in \dual{(\ArA)}, &
   \upsilon \ooc \omega &\in \dual{(\AcA)}, &
   \upsilon \oob \omega &\in \dual{(\AbA)}.
 \end{align}
 Using the embedding $A \to \hdmAm$ defined in
Lemma \ref{lemma:dual-evaluation}, we can also
 form for all $a,b\in A$ tensor products
 \begin{align*}
   \dual{a} \oohc \dual{b} &\in \dual{(\hAcA)}, &
   \dual{a} \oohb \dual{b} &\in \dual{(\hAbA)}, &
   \dual{a} \oohl \dual{b} &\in \dual{(\hAlA)}, &
   \dual{a} \oohr \dual{b} &\in \dual{(\hArA)}.   
 \end{align*}
 \begin{lemma} \label{lemma:dual-pairings}
   For all $a,b\in A$ and $\upsilon,\omega \in \hat A$, the following
   relations hold:
   \begin{align*}
     (\upsilon \ool \omega)(a \otimes b) &= (\dual{a} \oohc
     \dual{b})(\upsilon \oo \omega), &
     (\upsilon \oor \omega)(a \otimes b) &= (\dual{a} \oohb
     \dual{b})(\upsilon \oo \omega),  \\
     (\upsilon \ooc \omega)(a \otimes b) &= (\dual{a} \oohl
     \dual{b})(\upsilon \oo \omega), &
     (\upsilon \oob \omega)(a \otimes b) &= (\dual{a} \oohr
     \dual{b})(\upsilon \oo \omega).     
   \end{align*}
 \end{lemma}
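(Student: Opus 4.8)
The plan is to prove the four identities by one and the same direct computation: one unwinds the left-hand side via the explicit description of the relative tensor products $\ool,\oor,\ooc,\oob$ on $\dmAm$ from Subsection~\ref{subsection:uniqueness}, unwinds the right-hand side via the analogous construction for the dual algebra $\hat A$ (with dual base weight $\hat\mu$) together with Lemma~\ref{lemma:dual-evaluation}, and checks that the two resulting scalars coincide. I would spell out the first identity $(\upsilon \ool \omega)(a \otimes b) = (\dual{a} \oohc \dual{b})(\upsilon \oo \omega)$ in detail; the remaining three are obtained by the obvious substitutions.

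Fix $\upsilon,\omega \in \hat A$. On the right-hand side, $\dual{a} \oohc \dual{b}$ is by construction the relative tensor product $\hat\mu_{\hat C} \circ \big((\dual{a})_{\hat C} \otimes {}_{\hat C}(\dual{b})\big)$ over $\hat C = B$, with $\hat\mu_{\hat C} = \mu_B$. Since Lemma~\ref{lemma:dual-evaluation} computes $(\dual{a})_{\hat C}$ on functionals of the form $p \cdot \psi$ and ${}_{\hat C}(\dual{b})$ on functionals of the form $q \cdot \phi$, I would first write $\upsilon = p \cdot \psi$ and $\omega = q \cdot \phi$ with $p,q \in A$, which is possible because $\hat A = A \cdot \psi = A \cdot \phi$; Lemma~\ref{lemma:dual-evaluation} then gives $(\dual{a})_{\hat C}(\upsilon) = S^{2}(\bpsib(ap))$ and ${}_{\hat C}(\dual{b})(\omega) = S^{-1}(\cphic(bq))$, so the right-hand side becomes $\mu_B\big(S^{2}(\bpsib(ap))\,S^{-1}(\cphic(bq))\big)$. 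On the left-hand side, the description $\upsilon \ool \omega = \mu_B \circ (\bupsilon \otimes S_B^{-1}\circ\comega)$ together with $\psi = \mu_B \circ \bpsib$, $\phi = \mu_C \circ \cphic$ and the one-sided $B$- resp.\ $C$-linearity of $\bpsib$ and $\cphic$ shows that $\bupsilon(x) = \bpsib(xp)$ and $\comega(x) = \cphic(xq)$ for all $x \in A$, so the left-hand side becomes $\mu_B\big(\bpsib(ap)\,S_B^{-1}(\cphic(bq))\big)$ (up to the order of the two factors in $B$, which is fixed by the balancing convention for $\sA \oo \tA$).

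Equality of the two expressions should then follow from the base-weight relations $\mu_B \circ S_C = \mu_C$ and $\mu_C \circ S_B = \mu_B$ together with Proposition~\ref{proposition:counit-kms}, which identifies $\sigma^{\mu}_{B} = (S_B \circ S_C)^{-1}$ as the modular automorphism of $\mu_B$ and hence gives that $S^{2}$ restricts to $(\sigma^{\mu}_{B})^{-1}$ on $B$: the extra power of the antipode occurring on the right-hand side is absorbed by pushing it through $\mu_B$ with the modular-automorphism identity $\mu_B(x x') = \mu_B(x'\,\sigma^{\mu}_{B}(x))$, which turns $\mu_B(S^{2}(z)\,w)$ into $\mu_B(w z)$ for $z,w \in B$. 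The remaining identities $(\upsilon \oor \omega)(a \otimes b) = (\dual{a} \oohb \dual{b})(\upsilon \oo \omega)$, $(\upsilon \ooc \omega)(a \otimes b) = (\dual{a} \oohl \dual{b})(\upsilon \oo \omega)$ and $(\upsilon \oob \omega)(a \otimes b) = (\dual{a} \oohr \dual{b})(\upsilon \oo \omega)$ are handled verbatim, using the representations $\psi \cdot q$, $\phi \cdot q$, etc., the corresponding lines of Lemma~\ref{lemma:dual-evaluation}, and the corresponding entries in the list of relative tensor products in Subsection~\ref{subsection:uniqueness}.

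I expect the main obstacle to be purely one of bookkeeping: one must keep careful track of which of the two anti-isomorphic algebras $B = \hat C$ and $C = \hat B$ each factor lives in, of the left/right module and balancing conventions for $\sA \oo \tA$, $\Ac \oo \cA$ and their analogues for $\hat A$, and of the various powers of the antipode that occur (on the base these are $S^{\pm 1}$ and $S^{\pm 2} = (\sigma^{\mu}_{B})^{\mp 1}$ or $\sigma^{\mu}_{C}$), so that both sides collapse to literally the same scalar. No genuinely new idea beyond Lemma~\ref{lemma:dual-evaluation} and the base-weight compatibilities is required, but it helps to reduce each side to a single expression of the form $\mu_B(\,\cdot\,)$ or $\mu_C(\,\cdot\,)$ before comparing.
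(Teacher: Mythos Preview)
Your approach is correct and essentially identical to the paper's. The only cosmetic difference is that the paper avoids choosing representations $\upsilon = p\cdot\psi$, $\omega = q\cdot\phi$ and works directly with $\bupsilon(a)$ and $\comega(b)$; since $\bupsilon(a)=\bpsib(ap)$ and $\comega(b)=\cphic(bq)$ under your choices, this amounts to the same computation. One small correction to your bookkeeping: the left-hand side unwinds as $\mu_B\big(S^{-1}(\comega(b))\,\bupsilon(a)\big)$, i.e.\ in the opposite order from what you wrote; with this order, the modular identity $\mu_B(S^{2}(z)w)=\mu_B(wz)$ (which you correctly identified) gives exactly $\mu_B\big(S^{2}(\bupsilon(a))\,S^{-1}(\comega(b))\big)$, matching the right-hand side on the nose.
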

 \begin{proof}
   We only  prove the first equation. By Lemma
   \ref{lemma:dual-evaluation},
   \begin{align*}
     (\upsilon \ool \omega)(a \oo b) =
     \mu_{B}(S^{-1}(\comega(b))\bupsilon(a)) 
     &=\mu_{B}(S^{2}(\bupsilon(a))S^{-1}(\comega(b))) \\
     &= \hat\mu_{\hat C}(\dual{a}_{\hat C}(\upsilon){_{\hat
         C}\dual{b}}(\omega)) = (\dual{a} \oohc \dual{b})(\upsilon \oo
     \omega). \qedhere
\end{align*}
 \end{proof}
\begin{lemma} \label{lemma:dual-embeddings}
There exist well-defined embeddings
    \begin{align*}
      \hAcA &\hookrightarrow \dual{(\AlA)}, & \hAbA &\hookrightarrow
      \dual{(\ArA)}, & \hAlA &\hookrightarrow \dual{(\AcA)}, &\hArA
      &\hookrightarrow \dual{(\AbA)}
    \end{align*}
    that map each elementary tensor $\upsilon \oo \omega$ to the
    respective products in \eqref{eq:dual-products}.
\end{lemma}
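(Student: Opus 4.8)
The plan is to establish each of the four claimed embeddings separately, since they are structurally parallel; I will focus on the first, $\hAcA \hookrightarrow \dual{(\AlA)}$, sending $\upsilon \oo \omega \mapsto \upsilon \ool \omega$. First I would observe that the assignment $\upsilon \times \omega \mapsto \upsilon \ool \omega$ is bilinear in $\upsilon$ and $\omega$, so by the universal property of the (unbalanced) tensor product it factors through $\hat A \oo \hat A$; the content is that it descends to the balanced tensor product $\hAcA = {_{\hat C}\hat A} \oohc \hat A_{\hat C}$ and is injective there. Descending to the balanced tensor product means checking that $(\upsilon x) \ool \omega = \upsilon \ool (x\omega)$ for all $x\in \hat C = B$; this follows by unwinding the definition of $\ool$ in \eqref{eq:base-oo-b} together with the module structure on $\hat A$ recorded in Proposition \ref{proposition:dual-quantum-graphs}, namely $x\omega = \omega \cdot S_C^{-1}(x)$ and $\upsilon x = \upsilon \cdot x$, and the bimodule relations for $\bupsilon,\comega$ (the identities in Lemma \ref{lemma:base}~(1), which track how $\sigma^\mu_B$, $\sigma^\mu_C$ enter).

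For injectivity, the key tool is Lemma \ref{lemma:dual-pairings}, which identifies $(\upsilon \ool \omega)(a\oo b)$ with $(\dual a \oohc \dual b)(\upsilon \oo \omega)$. Concretely, I would argue as follows: suppose an element $t\in \hAcA$ maps to zero, i.e.\ the corresponding functional on $\AlA$ vanishes. Write $t = \sum_i \upsilon_i \oo \omega_i$ (balanced tensor). By Lemma \ref{lemma:dual-pairings}, for every $a,b\in A$ we have $\sum_i (\dual a \oohc \dual b)(\upsilon_i \oo \omega_i) = 0$. Now I invoke that $\hat A$ separates the points of $A$ (Proposition \ref{proposition:convolution}), so the functionals $\{\dual a : a\in A\}$ separate points of $\hat A$; more precisely, the map $A \to \hdmAm \subseteq \dual{(\hat A)}$ is such that $\dual a \oohc \dual b$ ranges over enough functionals on $\hAcA$ to separate its points — here I would use flatness of $\mathcal A$ (hence of the dual quantum graphs, Proposition \ref{proposition:dual-quantum-graphs}) to guarantee that $\hcA \oohc \hAc$ has its points separated by slice maps of the form $\dual a \oohc \dual b$, exactly as $\AlA$ has its points separated by $d\cdot\cphic$-type maps in the proof of Theorem \ref{theorem:integrals-uniqueness}. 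Thus $t=0$, giving injectivity.

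The remaining three embeddings are handled by the same two-step argument (well-definedness on the balanced tensor product via the module relations in Proposition \ref{proposition:dual-quantum-graphs}, injectivity via the appropriate line of Lemma \ref{lemma:dual-pairings} together with point-separation), using in each case the matching identification: $\upsilon\oor\omega \leftrightarrow \dual a \oohb \dual b$, $\upsilon\ooc\omega \leftrightarrow \dual a \oohl \dual b$, and $\upsilon\oob\omega \leftrightarrow \dual a \oohr \dual b$. I expect the main obstacle to be the injectivity half, specifically verifying that the slice maps $\dual a \oohc \dual b$ (and their analogues) genuinely separate the points of the relative tensor products $\hAcA$, etc.: this requires knowing not merely that $\hat A$ separates points of $A$, but that the balanced tensor product of $\hat A$ with itself over $\hat B$ or $\hat C$ is well-behaved, which is where flatness of the dual quantum graphs is essential and where one must be careful that passing to the balanced tensor product does not collapse the pairing. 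The well-definedness half, by contrast, should reduce to a bookkeeping check with the relations $x\omega = \omega\cdot S_C^{-1}(x)$, $\omega x = \omega\cdot x$, $y\omega = y\cdot\omega$, $\omega y = S_B^{-1}(y)\cdot\omega$ of Proposition \ref{proposition:dual-quantum-graphs} against the defining formulas \eqref{eq:base-oo-b}--\eqref{eq:base-oo-c}.
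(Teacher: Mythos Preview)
Your proposal is correct and matches the paper's approach for injectivity: both use Lemma~\ref{lemma:dual-pairings} to rewrite $(\upsilon \ool \omega)(a\oo b)$ as $(\dual{a}\oohc\dual{b})(\upsilon\oo\omega)$, then invoke flatness of $\hAc$ together with the fact that the functionals $\dual{a}$ and $_{\hat C}\dual{b}$ separate points of $\hat A$ to conclude that the pairings $\dual{a}\oohc\dual{b}$ separate points of $\hAcA$.

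The one place where you work harder than necessary is well-definedness. You propose to verify the balance relation $(\upsilon x)\ool\omega = \upsilon\ool(x\omega)$ by hand using Proposition~\ref{proposition:dual-quantum-graphs} and Lemma~\ref{lemma:base}(1); this works, but the paper observes that Lemma~\ref{lemma:dual-pairings} already delivers well-definedness for free: since $\dual{a}\oohc\dual{b}$ is by construction a functional on the \emph{balanced} tensor product $\hAcA$, the identity $(\upsilon\ool\omega)(a\oo b)=(\dual{a}\oohc\dual{b})(\upsilon\oo\omega)$ shows that $\upsilon\ool\omega$ depends only on the class of $\upsilon\oo\omega$ in $\hAcA$, with no explicit computation needed. (A minor notational slip: $\hAcA$ is $\hAc \oo {_{\hat C}\hat A}$, not ${_{\hat C}\hat A}\oo\hat A_{\hat C}$ as you wrote, though the balance relation you stated is the correct one.)
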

\begin{proof}
  Lemma \ref{lemma:dual-pairings} implies that these maps are
  well-defined. We show that the first map is injective, and similar
  arguments apply to the other maps.  For all $a,b\in A$ and
  $\upsilon,\omega \in \hat A$,
  \begin{align*}
    (\upsilon \ool \omega)(a \oo b) = (\dual{a} \oohc
    \dual{b})(\upsilon \oo \omega) = \dual{a}((\id \oo {_{\hat
        C}\dual{b}})(\upsilon \oo \omega)).
  \end{align*}
Since  maps of the form
  $\dual{a}$ and $_{\hat C}\dual{b}$ with $a,b\in A$ separate the
  points of $\hat A$ and the module $\hAc$ is flat,  products of the
  form $\dual{a} \oohc \dual{b}$ separate the points of
  $\hAcA$. Therefore, the map 
  $\hAcA \hookrightarrow \dual{(\AlA)}$ is injective. 
\end{proof}
We henceforth identify the respective domains and ranges of the
embeddings above without further notice.
\begin{lemma} \label{lemma:dual-bijections}
  The maps $\dual{(\Tr)}$, $\dual{(\lT)}$, $\dual{(\Tl\Sigma)}$ and
  $\dual{(\rT\Sigma)}$ induce bijections
  \begin{align*}
    \hlT  &\colon \hAcA \to \hArA, & {\hTr} &\colon \hAbA \to \hAlA,  \\
    {\hTl\Sigma} &\colon\hAcA \to \hAlA, & \hrT\Sigma &\colon \hAbA
    \to \hArA.
  \end{align*}
\end{lemma}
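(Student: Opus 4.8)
The plan is to deduce the four statements from bijectivity of the canonical maps of $\mathcal{A}$ together with the convolution machinery of Section~\ref{section:integration}. First note that the canonical maps $\Tr,\lT,\Tl,\rT$ of $\mathcal{A}$ are bijective, so $\Tr$, $\lT$, $\Tl\Sigma$ and $\rT\Sigma$ are bijective; hence their transposes in \eqref{eq:dual-canonical-1} and \eqref{eq:dual-canonical-2} are bijections between the full dual spaces. Thus it suffices to show that, under the identifications of Lemma~\ref{lemma:dual-embeddings}, $\dual{(\Tr)}$ maps $\hAcA$ onto $\hArA$, $\dual{(\lT)}$ maps $\hAbA$ onto $\hAlA$, $\dual{(\Tl\Sigma)}$ maps $\hAcA$ onto $\hAlA$, and $\dual{(\rT\Sigma)}$ maps $\hAbA$ onto $\hArA$; the induced maps are then automatically linear bijections, and they agree with $\hlT,\hTr,\hTl\Sigma,\hrT\Sigma$ by definition of those symbols. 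By the $\hat B\leftrightarrow\hat C$ symmetry built into Proposition~\ref{proposition:dual-quantum-graphs} and the left--right symmetry provided by the antipode (via \eqref{dg:galois-inverse} and Proposition~\ref{proposition:integrals-antipode}, Lemma~\ref{lemma:base}), it is enough to treat one of the four cases in detail, say $\dual{(\Tr)}$, and to indicate that the others are obtained by the same computation with $\phic,\psic$ in place of $\cphi,\bpsi$ and the corresponding Galois diagram.

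For the inclusion $\dual{(\Tr)}(\hAcA)\subseteq\hArA$, I would compute $\dual{(\Tr)}(\upsilon\ool\omega)=(\upsilon\ool\omega)\circ\Tr$ on elementary products $\upsilon,\omega\in\hat A$. Using the form $\upsilon\ool\omega=\omega\circ(\bupsilon\oo\id)$ from \eqref{eq:base-oo-b} and $\Tr(a\oo b)=\Delta_{B}(a)(1\oo b)$, this functional sends $a\oo b$ to $\omega(\lambda(\bupsilon)(a)\,b)=\omega(\lambda(\upsilon)(a)\,b)$, where $\lambda(\upsilon)(a)\in M(A)$ is the two-sided convolution multiplier of Proposition~\ref{proposition:convolution}(1); equivalently $\lambda(\upsilon)(a)=a\ast\upsilon$ in the notation of Proposition~\ref{proposition:dual-algebra}. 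Writing $\omega=d\cdot\phi$ and $\upsilon=\psi\cdot c$ with $c,d\in A$ and $\psi=\phi\circ S$ a full and faithful right integral, and choosing a factorisation of $c$ through $\rT$ or $\lT$ exactly as in the trick leading to \eqref{eq:dual-product}, I expect to rewrite $\dual{(\Tr)}(\upsilon\ool\omega)$ as a finite sum of products $\upsilon'\oob\omega'$ with $\upsilon',\omega'\in\hat A$ — that is, as an element of the image of $\hArA$ in $\dual{(\AbA)}$. The manipulations here transport the comultiplications across one another and across the integrals using the coassociativity and compatibility identities \eqref{eq:left-comult-coass}, \eqref{eq:right-comult-coass}, \eqref{eq:compatible} and the invariance relations \eqref{eq:integrals-oo-left}, \eqref{eq:integrals-oo-right}, and close up using Proposition~\ref{proposition:convolution}; fullness and faithfulness of $\phi$ and flatness of $\bA,\Ab,\cA,\Ac$ are used to pass freely between slicing by elements of $\hat A$ and by convolution operators.

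For the reverse inclusion, I would run the same argument on the inverse. By \eqref{dg:galois-inverse}, $\Tr^{-1}=\dual{}$-compatibly expressible through $\rT$ and the antipode $S$, and since $S$ permutes the relative tensor products and interchanges full and faithful left and right integrals, the symmetric computation gives $\dual{(\Tr^{-1})}(\hArA)\subseteq\hAcA$. Combined with the previous paragraph and bijectivity of $\dual{(\Tr)}$ on the full duals, this shows $\dual{(\Tr)}$ restricts to a bijection $\hAcA\to\hArA$, namely $\hlT$. The maps $\dual{(\lT)}$, $\dual{(\Tl\Sigma)}$ and $\dual{(\rT\Sigma)}$ are handled in exactly the same way, using the invariance of $\phic$ and $\psic$ and the diagrams in \eqref{dg:galois-inverse}, so all four bijections in the statement follow.

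The main obstacle is the inclusion of the preceding paragraph's first half: showing that the transpose genuinely lands in the span of elementary $\oob$-products balanced over $\hat C=B$, rather than merely in the full dual $\dual{(\AbA)}$. Because $\Delta_{B}$ and $\Delta_{C}$ take values only in multiplier algebras of balanced tensor products of $A$, the rewriting of $\dual{(\Tr)}(\upsilon\ool\omega)$ as a sum of products of elements of $\hat A$ requires careful bookkeeping of which multiplier products and relative tensor products are legitimately defined at each step, and this well-definedness check — together with the repeated use of flatness and of Lemma~\ref{lemma:dual-pairings} to recognise the resulting functional as an element of $\hArA$ — is the technical heart of the proof; the bijectivity on the full duals and the various symmetry reductions are routine.
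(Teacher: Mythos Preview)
Your overall strategy is sound and close to the paper's, but the paper carries out the argument more economically in one respect that is worth noting.

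You propose to prove two inclusions separately: $\dual{(\Tr)}(\hAcA)\subseteq\hArA$ by a direct computation, and the reverse by expressing $\Tr^{-1}$ through $\rT$ and the antipode via \eqref{dg:galois-inverse}. The paper instead proves \emph{equality of the two subspaces of $\dual{(\AbA)}$ in one stroke}. Writing $\upsilon=\psi\cdot c$ and $\omega=\psi\cdot d$ (both in terms of the right integral), one computes
\[
(\upsilon\ool\omega)(\Tr(a\oo b))=\psi(d\,\lambda(\psi\cdot c)(a)\,b),
\qquad
(\upsilon\oob\omega)(a\oo b)=\psi(d\,\bpsib(ca)\,b).
\]
Since $\psi$ is faithful, equality of the two spans in $\dual{(\AbA)}$ reduces to equality of the spans in $\End(A)$ of the two families of operators $a\mapsto d\,\lambda(\psi\cdot c)(a)$ and $a\mapsto d'\bpsib(c'a)$. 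The first family is rewritten as $a\mapsto(S_C^{-1}\circ\bpsib\oo\id)((c\oo d)\Delta_C(a))$, and then bijectivity of $\rT$ replaces $(c\oo d)$ by $(1\oo d')\Delta_C(c')$, after which right-invariance of $\bpsib$ with respect to $\Delta_C$ collapses the expression to $d'\bpsib(c'a)$. This gives both inclusions at once and avoids the detour through $\Tr^{-1}$ and the antipode.

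Your route would work, but the reverse inclusion via the antipode requires you to track carefully how $\id\oo S$ carries $\hArA\subseteq\dual{(\AbA)}$ into a space on which you already know the action of $\dual{(\rT)}$; this is not circular, but it is more bookkeeping than the paper needs. The paper's single reduction to operator spans is what makes the ``technical heart'' you identified essentially a one-line application of bijectivity of $\rT$ and invariance of $\bpsib$.
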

\begin{proof}
  Again, we only treat the first map, and similar arguments apply to
  the others. We only need to show that $\dual{(\Tr)}(\hAcA) = \hArA$
  or, equivalently, that the two subspaces of $\dual{(\AbA)}$ spanned
  by functionals of the form $(\upsilon \ool \omega) \circ \Tr$ or
  $\upsilon \oob \omega$, respectively, where $\upsilon,\omega \in
  \hat A$, coincide.  So, let $\upsilon,\omega \in \hat A$, $a,b\in A$
  and write $\upsilon=\psi\cdot c$, $\omega= \psi \cdot d$ with $c,d\in A$. Then
  \begin{align*}
    (\upsilon \ool \omega)(\Tr(a \oo b)) &=     (\upsilon \ool \omega)(\Delta_{B}(a)(1 \oo b))) = \psi(d\lambda(\psi \cdot c)(a)b), \\
    (\upsilon \oob \omega)(a \oo b) &= \omega(\bpsib(ca)b) =
    \psi(d\bpsib(ca)b).
  \end{align*}
  Thus, it suffices to show that the two subspaces of $\End(A)$
  spanned by maps of the form $a \mapsto d\lambda(\psi \cdot c)(a)$ or
  $a \mapsto d\bpsib(ca)$, where $c,d\in A$, coincide.  But
  \begin{align*}
    d\lambda(\psi \cdot c)(a) &= (S_{C}^{-1}\circ \bpsib \oo \id)((c
    \oo d)\Delta_{C}(a)),
  \end{align*}
and using bijectivity of $\rT$ and right-invariance of $\bpsib$, we
can conclude  that maps of the form $a\mapsto d\lambda(\psi \cdot
c)(a)$, where $c,d\in A$, and maps of the form
\begin{align*}
  a \mapsto (S_{C}^{-1}\circ \bpsib \oo \id)((1 \oo
    d')\Delta_{C}(c'a)) = d'\bpsib(c'a),
\end{align*}
where  $c',d'\in A$, span the same subspace.
\end{proof}

To prove that the bijections $(\hTl,\hTr)$ and $(\hlT,\hrT)$ obtained
above form multiplicative pairs, we will use the functional on $\hat
A$ given by evaluation at $1_{M(A)}$, see Lemma \ref{lemma:dual-evaluation}, which later will turn out to be
the counit of the dual multiplier Hopf algebroid $\hat{\mathcal{A}}$.
 \begin{lemma} \label{lemma:dual-counit} Let $\heps:=\dual{1_{M(A)}}
   \in \hdmAm$. Then for all $a\in A, y\in C,x\in B,\omega \in \hat
   A$, \begin{align*} \hbeps(a\cdot \phi) &= \cphic(a), & \hepsb(a
     \cdot \psi) &= S_{B}(\bpsib(a)), & \hbeps(\hat S_{\hat B}(y)\omega)
     &= \hbeps(\omega)y, \\ \hceps(\phi \cdot a) &= S_{C}(\cphic(a)), &
     \hepsc(\psi \cdot a) &= \bpsib(a), & \hepsc(\omega \hat S_{\hat
       C}(x)) &= x\hepsc(\omega),
\end{align*}
  and the following
diagrams commute, where $\hat m$ denotes the multiplication maps:
  \begin{align*}
    \xymatrix@R=18pt{
\hArA \ar[rd]_{\hepsb \otimes \id} &
      \hAbA \ar[l]_{\hrT\Sigma} \ar[r]^{\hTr} \ar[d]_{\hat m} & \hAlA
      \ar[ld]^{\hat S_{\hat C}^{-1} \circ \hbeps \otimes
        \id}  &
\hArA \ar[rd]_{\id \otimes \hepsc} &
      \hAcA \ar[l]_{\hlT} \ar[r]^{\hTl \Sigma} \ar[d]_{\hat m} & \hAlA \ar[ld]^{\id \otimes
\hat S_{\hat B}^{-1} \circ      \hceps} &
      \\
      & \hat A , & & & \hat A,
    } 
  \end{align*}
\end{lemma}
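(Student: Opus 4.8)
The plan is to obtain all six scalar identities directly from Lemma~\ref{lemma:dual-evaluation} applied to $T=1_{M(A)}$, combined with the identification $\hat A=A\cdot\phi=\phi\cdot A=A\cdot\psi=\psi\cdot A$ from \eqref{eq:hata} and the module structures on $\hat A$ recorded in Proposition~\ref{proposition:dual-quantum-graphs}; the two displayed diagrams are then verified by representing elements of the balanced tensor products as sums of elementary tensors with both factors in $\hat A$ and reducing each composite to an identity in $A$.

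First, $\heps=\dual{1_{M(A)}}\in\hdmAm$ is exactly Lemma~\ref{lemma:dual-evaluation} with $T=1_{M(A)}$. To compute the component maps I would test $\heps$ against the base algebras via the embeddings $\iota_{\hat B},\iota_{\hat C}$ of Proposition~\ref{proposition:dual-quantum-graphs}. For example, writing $\omega=a\cdot\phi$ with $a\in A$, for $y\in\hat B=C$ one has $y\omega=(ya)\cdot\phi$, hence $\heps(y\omega)=\phi(ya)=\mu_C(y\cphic(a))=\hmb(y\cphic(a))$, and faithfulness of $\mu_C$ forces $\hbeps(a\cdot\phi)=\cphic(a)$. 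Likewise, with $\omega=\phi\cdot a$ and $x\in\hat C=B$ one has $x\omega=\phi\cdot(aS_C^{-1}(x))$, so $\heps(x\omega)=\phi(aS_C^{-1}(x))=\mu_C(\cphic(a)S_C^{-1}(x))=\mu_B(xS_C(\cphic(a)))$, giving $\hceps(\phi\cdot a)=S_C(\cphic(a))$; and the representations $\omega=a\cdot\psi$, $\omega=\psi\cdot a$ together with the defining relations \eqref{eq:bomega}, \eqref{eq:omegab}, \eqref{eq:comega}, \eqref{eq:omegac} for $\psi$ yield $\hepsb(a\cdot\psi)=S_B(\bpsib(a))$ and $\hepsc(\psi\cdot a)=\bpsib(a)$. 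The module-map relations $\hbeps(\hat S_{\hat B}(y)\omega)=\hbeps(\omega)y$ and $\hepsc(\omega\hat S_{\hat C}(x))=x\hepsc(\omega)$ then follow by unwinding the actions $\omega y=\hat S_{\hat B}(y)\cdot\omega$ and $\omega x=\omega\cdot x$ of Proposition~\ref{proposition:dual-quantum-graphs} and invoking the bimodule property of $\cphic$ and $\bpsib$ (Proposition~\ref{proposition:invariant-elements-bimodule}).

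For the diagrams, by Lemma~\ref{lemma:dual-embeddings} it is enough to test commutativity on elementary tensors $\upsilon\otimes\omega$ with $\upsilon,\omega\in\hat A$, and by Lemma~\ref{lemma:dual-bijections} the maps $\hTr,\hrT\Sigma$ (resp.\ $\hTl\Sigma,\hlT$) are the restrictions of $\dual{(\Tr)},\dual{(\rT\Sigma)}$ (resp.\ $\dual{(\Tl\Sigma)},\dual{(\lT)}$). For the first diagram I would fix $a\in A$, write $\upsilon=\psi\cdot c$ and $\omega=\psi\cdot d$ as in the proof of Lemma~\ref{lemma:dual-bijections}, and evaluate the three composites $\hAbA\to\hat A$ on $a$: the middle one is $\hat m(\upsilon\otimes\omega)(a)=(\upsilon\circ\rho(\omega))(a)$, the one through $\hrT\Sigma$ becomes $(\upsilon\ool\omega)(\rT\Sigma(\cdot))$ followed by the $\hepsb$-action, and the one through $\hTr$ becomes $(\upsilon\ool\omega)(\Tr(\cdot))$ followed by the $\hat S_{\hat C}^{-1}\circ\hbeps$-action. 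Unwinding these with the formulas just established, the definitions of the convolution operators $\lambda,\rho$, right-invariance of $\psi$, and the counit and antipode diagrams \eqref{dg:left-counit}, \eqref{eq:right-counit}, \eqref{dg:antipode} for $\mathcal A$, all three reduce to the same expression. The second diagram is symmetric, using the representatives $\upsilon=c\cdot\phi$, $\omega=d\cdot\phi$ and the relations for $\Tl$, $\lT$ and the left counit.

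The only real difficulty is bookkeeping. One must keep track of the four module structures on $\hat A$ over $\hat B=C$ and $\hat C=B$ --- all of them $\dmAm$-bimodule actions twisted by $S_B^{\pm1}$ or $S_C^{\pm1}$ --- of the powers $S^{\pm2}$ entering through Lemma~\ref{lemma:dual-evaluation} and Proposition~\ref{proposition:counit-kms}, and of the fact that a single element of $\hat A$ must be rewritten repeatedly among the forms $a\cdot\phi,\phi\cdot a,a\cdot\psi,\psi\cdot a$, each time through a different element of $A$ obtained via Theorem~\ref{theorem:modular} and Theorem~\ref{theorem:modular-element}. Once a consistent set of conventions is fixed, each individual identity is a short calculation.
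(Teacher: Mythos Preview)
Your approach to the six displayed identities is essentially the paper's: test $\heps$ against the $\hat B$- and $\hat C$-actions of Proposition~\ref{proposition:dual-quantum-graphs}, use faithfulness of $\mu_B,\mu_C$, and read off $\hbeps,\hepsb,\hceps,\hepsc$; the module-map relations then follow from the bimodule property of $\cphic$ and $\bpsib$ together with $\sigma^{\phi}|_C=S^2$.

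For the diagrams your plan is correct but the paper takes a slightly different, shorter route that avoids the bookkeeping you anticipate. Instead of choosing representatives $\upsilon=\psi\cdot c$, $\omega=\psi\cdot d$ and unwinding the counit and antipode diagrams of $\mathcal A$, the paper exploits the tensor-product pairings of Lemma~\ref{lemma:dual-pairings}: since $\heps=\dual{1_{M(A)}}$, applying $\id\otimes\hepsc$ and evaluating at $a$ is the same as pairing against $\dual{a}\oohr\heps$. For instance,
\[
((\id\oo\hepsc)\hlT(\upsilon\oo\omega))(a)
=(\dual{a}\oohr\heps)\bigl((\upsilon\ool\omega)\circ\Tr\bigr)
=\heps\bigl(\omega\cdot\lambda(\upsilon)(a)\bigr)
=\omega(\lambda(\upsilon)(a))
=(\upsilon\omega)(a),
\]
and the other triangles follow by symmetry. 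This sidesteps having to pass back and forth between the four presentations $a\cdot\phi,\phi\cdot a,a\cdot\psi,\psi\cdot a$ and keeps the argument to three lines. Your explicit-representative approach would also go through, but the duality-pairing trick is what makes the paper's proof so brief.
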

\begin{proof}
We first prove the formulas involving
  $\hbeps$ and $\hepsb$, which follow from the fact that
  \begin{align*}
    \hat \eps(y(a \cdot \phi)) &= \hat\eps(ya\cdot\phi) = \phi(ya)=
    \mc(y\cphic(a)), \\
    \hat\eps((a\cdot \psi)y) &= \hat\eps(S^{-1}(y)a\cdot \psi) =
    \mu_{B}(S^{-1}(y)\bpsib(a)) = \mu_{C}(S(\bpsib(a))y), \\
    \hbeps(\hat S(y)(a \cdot \phi)) &= \hbeps(a \cdot \phi \cdot
    S^{-2}(y)) = \hbeps(ay\cdot \phi) = \cphic(a)y
  \end{align*}
for all $x\in B$, $y\in C$, $a \in A$. The formulas involving $\hceps$
and $\hepsc$ follow similarly.

Next, we show that $(\id \oo \hepsc) \circ \hlT =\hat m$;
commutativity of the other triangles follows similarly. Let
$\upsilon,\omega \in \hat A$ and $a \in A$.  Then
  \begin{align*}
    ((\id \oo \hepsc)(\hlT(\upsilon \oo \omega)))(a) &=
    (\dual{a} \oohr \heps)((\upsilon \ool \omega) \circ \Tr) 
\\ & =\heps((\upsilon \ool \omega)(\Tr(a \oo -)))\\
&= \heps(\omega\cdot \lambda(\upsilon)(a))
=\omega(\lambda(\upsilon)(a)) = (\upsilon \omega)(a). \qedhere
  \end{align*}
\end{proof}
We can now prove the main result of this subsection.
\begin{theorem} \label{theorem:dual-hopf-algebroid} Let
  $(\mathcal{A},\mu,\phi,\psi)$ be a measured regular multiplier Hopf
  algebroid. Then there exist a unique left and a unique right
  comultiplication $\hat \Delta_{\hat B}$ and $\hat\Delta_{\hat C}$
  for the dual quantum graphs $\hat{\mathcal{A}}_{\hat B}$ and
  $\hat{\mathcal{A}}_{\hat C}$, respectively, such that the associated
  canonical maps $(\hTl,\hTr)$ and $(\hlT,\hrT)$ make the following diagrams commute:
  \begin{align*}
    \xymatrix@C=18pt@R=8pt{
      \hAcA  \ar[r]^{\hTl \Sigma} \ar@{^(->}[d] & \hAlA
      \ar@{^(->}[d]  &
      \hAbA \ar[r]^{\hTr} \ar@{^(->}[d] & \hAlA  \ar@{^(->}[d] 
      \\
      \dual{(\AlA)} \ar[r]_{\dual{(\Tl\Sigma)}} & \dual{(\AcA)} &
      \dual{(\ArA)} \ar[r]_{\dual{(\lT)}} & \dual{(\AcA)}  \\
      \hAcA \ar[r]^{\hlT} \ar@{^(->}[d] & \hArA \ar@{^(->}[d] &
      \hAbA \ar[r]^{\hrT\Sigma} \ar@{^(->}[d] & \hArA \ar@{^(->}[d]
      \\
      \dual{(\AlA)} \ar[r]_{\dual{(\Tr)}} & \dual{(\AbA)} &
      \dual{(\ArA)} \ar[r]_{\dual{(\rT\Sigma)}} & \dual{(\AbA)} 
    } 
  \end{align*}
  The pair $\hat{\mathcal{A}}=((\hat{\mathcal{A}}_{\hat B},\hat
  \Delta_{\hat B}),(\hat{\mathcal{A}}_{\hat C},\hat\Delta_{\hat C}))$
  is a flat, regular multiplier Hopf algebroid with antipode
  $\hat S$ and left and right counits $\hbeps$ and $\hepsc$ given by
  \begin{align*}
    \hat S(\omega) &= \omega \circ S, & \hbeps(a \cdot \phi) &= \cphic(a),
    & \hepsc(\psi \cdot a) &= \bpsib(a) &&\text{for all } \omega \in
    \hat A, a\in A.
  \end{align*}
  The pair $\hat\mu=(\hat\mu_{\hat B},\hat\mu_{\hat
    C}):=(\mu_{C},\mu_{B})$ is a base weight for $\hat{\mathcal{A}}$. Finally,
  if $(\mathcal{A},\mu,\phi,\psi)$ is a measured multiplier Hopf
  $*$-algebroid, then $\hat{\mathcal{A}}$ is a multiplier Hopf
  $*$-algebroid with respect to the involution $\omega \mapsto \ast
  \circ \omega \circ \ast \circ S$ on $\hat A$.
\end{theorem}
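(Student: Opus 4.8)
The plan is to assemble the dual object from the pieces already prepared and then verify the axioms of a regular multiplier Hopf algebroid by transporting everything through the duality between $A$ and $\hat A$. First I would invoke Lemma \ref{lemma:dual-bijections} to obtain the four bijections $\hTl\Sigma,\hTr,\hlT,\hrT\Sigma$ between the relative tensor products of $\hat A$, together with the identifications set up in Lemmas \ref{lemma:dual-pairings} and \ref{lemma:dual-embeddings}. By the characterisation of left and right comultiplications via canonical maps recalled in Section \ref{section:multiplier-bialgebroids} (the converse statements after \eqref{dg:left-galois-2} and \eqref{dg:right-galois-definition}), it suffices to check that $(\hTl,\hTr)$ satisfies the analogue of \eqref{eq:left-galois-module} and makes the diagrams \eqref{dg:left-galois-1}, \eqref{dg:left-galois-2} commute, and similarly for $(\hlT,\hrT)$; these module relations translate, via Lemma \ref{lemma:dual-pairings}, into the module relations \eqref{eq:left-galois-module}, \eqref{eq:right-galois-module} for the canonical maps of $\mathcal{A}$ together with the bimodule structure of $B,C$ on $\hat A$ described in Proposition \ref{proposition:dual-quantum-graphs}, and the commuting diagrams dualise the corresponding diagrams for $\mathcal{A}$. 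The compatibility condition \eqref{eq:compatible} between $\hat\Delta_{\hat B}$ and $\hat\Delta_{\hat C}$ dualises \eqref{eq:compatible} for $\mathcal{A}$ in the same way. This yields that $\hat{\mathcal{A}}$ is a multiplier bialgebroid; since all four canonical maps are bijective by Lemma \ref{lemma:dual-bijections}, it is a regular multiplier Hopf algebroid by the main theorem of \cite{timmermann:regular}.

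Next I would identify the antipode, counits and base weight. Lemma \ref{lemma:dual-space}(1) shows that $\omega \mapsto \omega \circ S$ maps $\hat A$ to itself, and it is clearly an anti-homomorphism for the convolution product because $S$ reverses the comultiplications \eqref{dg:galois-aux2}; one checks condition (1) of Subsection \ref{subsection:hopf-algebroids} using the explicit formulas for $\iota_{\hat B},\iota_{\hat C}$ in Proposition \ref{proposition:dual-quantum-graphs} and the relation $\hat S_{\hat B}=S_B^{-1}$, $\hat S_{\hat C}=S_C^{-1}$. That $\hbeps$ and $\hepsc$ defined by $\hbeps(a\cdot\phi)=\cphic(a)$, $\hepsc(\psi\cdot a)=\bpsib(a)$ are the left and right counits, and that the antipode diagrams \eqref{dg:antipode} commute, follows from Lemma \ref{lemma:dual-counit}: the counit diagrams there are exactly the defining diagrams \eqref{dg:left-counit}, \eqref{eq:right-counit} for $\hat{\mathcal{A}}$ after the identifications, and the antipode identity $\hat S(\omega)=\omega\circ S$ combined with the explicit action of $\iota_{\hat B},\iota_{\hat C}$ gives the two diagrams in \eqref{dg:antipode}; uniqueness of the counits is automatic. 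For the base weight, $\hat\mu_{\hat B}=\mu_C$ and $\hat\mu_{\hat C}=\mu_B$ are faithful on $\hat B=C$ and $\hat C=B$, satisfy $\hat\mu_{\hat B}\circ\hat S_{\hat C}=\mu_C\circ S_C^{-1}=\mu_B=\hat\mu_{\hat C}$ and symmetrically, and the crucial condition $\hat\mu_{\hat B}\circ\hbeps=\hat\mu_{\hat C}\circ\hepsc$ reads, on $a\cdot\phi$, $\mu_C(\cphic(a))=\mu_B(S_B(\bpsib(a)))$; I would deduce this by relating $a\cdot\phi$ and the corresponding $a'\cdot\psi$ through $\psi=\phi\circ S^{-1}$ and Theorem \ref{theorem:modular-element-second}, using $\phi=\mu_C\circ\cphic$, $\psi=\mu_B\circ\bpsib$ and $\mu_C\circ S_B=\mu_B$.

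For flatness of $\hat{\mathcal{A}}$, note that the isomorphism $a\mapsto a\cdot\phi$ transports $\cA,\Ac$ to ${_{\hat B}\hat A},\hat A_{\hat C}$ and $a\mapsto a\cdot\psi$ transports $\bA,\Ab$ to the remaining two modules (up to precomposition with the anti-isomorphisms $S_B,S_C$), and $\mathcal{A}$ is flat by hypothesis; this is already recorded in the proof of Proposition \ref{proposition:dual-quantum-graphs}. Finally, in the $*$-algebroid case, Lemma \ref{lemma:dual-space}(2) shows $\ast\circ\upsilon\circ\ast\in\hat A$, so $\omega\mapsto\omega^*=\ast\circ\omega\circ\ast\circ S$ makes sense; Proposition \ref{proposition:dual-involution} already establishes that it is an admissible involution on $\hat A$ with respect to the dual quantum graphs, so it remains only to verify $(\ast\oo\ast)\circ\hat\Delta_{\hat B}\circ\ast=\hat\Delta_{\hat C}$, which one checks on canonical maps: the involution exchanges $\hTr$ with $\hlT$ (up to the flip and the antipode), mirroring the relation $(\ast\oo\ast)\circ\Delta_B\circ\ast=\Delta_C$ for $\mathcal{A}$, and the induced relation \eqref{eq:involutions} then holds for $\hat{\mathcal{A}}$ automatically.

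The main obstacle I expect is the bookkeeping around the antipode twists in the base-weight identity $\hat\mu_{\hat B}\circ\hbeps=\hat\mu_{\hat C}\circ\hepsc$ and, more generally, keeping the two identifications of $\hAlA$ and $\hArA$ in Lemma \ref{lemma:dual-embeddings} consistent across all four canonical-map diagrams; every transported relation carries factors of $S,S^{-1},S_B^{\pm1},S_C^{\pm1}$ coming from Lemma \ref{lemma:dual-evaluation}, and one must check that they cancel so that the dualised diagrams are literally the ones required. The computations are routine but error-prone, so I would organise them by first fixing, once and for all, the four dictionary formulas of Lemma \ref{lemma:dual-evaluation} and Lemma \ref{lemma:dual-counit}, and then reading each axiom of $\hat{\mathcal{A}}$ off the corresponding axiom of $\mathcal{A}$ mechanically.
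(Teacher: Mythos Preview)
Your outline matches the paper's strategy, but one step is more subtle than you indicate and the one you flag as the main obstacle is in fact trivial. When you write that ``the commuting diagrams dualise the corresponding diagrams for $\mathcal{A}$,'' recall that $\hTr$ is the transpose of $\lT$ while $\hTl\Sigma$ is the transpose of $\Tl\Sigma$: the diagrams \eqref{dg:left-galois-1}, \eqref{dg:left-galois-2} for $\hat{\mathcal{A}}$ therefore dualise \emph{mixed} diagrams in $\mathcal{A}$ that combine left and right canonical maps, not \eqref{dg:left-galois-1}, \eqref{dg:left-galois-2} for $\mathcal{A}$ themselves. The paper handles this by first verifying directly one such mixed square (both compositions send $a\oo b\oo c$ to $(\Sigma\Delta_B(a)\oo 1)(1\oo b\oo 1)(1\oo\Delta_C(c))$) and, for \eqref{dg:left-galois-2}, transposing the pentagon for $\lT$ from Proposition~5.11 of \cite{timmermann:regular}; then it uses the counit identities of Lemma~\ref{lemma:dual-counit} to extract from these transposed diagrams precisely the triangle and the pentagon required for $(\hTl,\hTr)$ to be a left multiplicative pair. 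So Lemma~\ref{lemma:dual-counit} is not merely used after the fact to name the counits---it is the mechanism that converts transposed mixed diagrams into the multiplicative-pair axioms, and you should expect to invoke it at that stage.

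Conversely, the base-weight identity you worry about is immediate: since $\heps=\dual{1_{M(A)}}$ is constructed in Lemma~\ref{lemma:dual-counit} as an element of $\hdmAm$, the factorisations $\hat\mu_{\hat B}\circ\hbeps=\heps=\hat\mu_{\hat C}\circ\hepsc$ hold by the definition of that space, and no modular bookkeeping is needed (your mismatch arose from evaluating both counits on the same presentation $a\cdot\phi$ rather than recognising them as two factorisations of one functional). For the antipode the paper takes a shortcut: once $\hat{\mathcal{A}}$ is known to be regular, its antipode is unique, and transposing one of the squares in \eqref{dg:galois-aux} identifies it as $\omega\mapsto\omega\circ S$; your plan to verify conditions (1) and (2) directly also works but is longer.
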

In the situation above, we call $\hat{\mathcal{A}}$ the \emph{dual regular
multiplier Hopf algebroid} or \emph{dual multiplier Hopf $*$-algebroid} of $(\mathcal{A},\mu,\phi,\psi)$.
\begin{proof}
  We first show that $(\hTl,\hTr)$ make the diagrams
  \eqref{dg:left-galois-1} and \eqref{dg:left-galois-2} commute.  Consider
  the following diagram:
  \begin{align*}
    \xymatrix@R=15pt@C=35pt{ \sbA \oo {_{(B \otimes 1)}(\ArA)} \ar@{<-}[r]^{\id \oo \lT}
      \ar@{<-}[d]_(0.55){\Sigma\Tl\Sigma \oo
        \id} & \sbA \oo {_{(B \otimes 1)}(\AcA)} \ar@{<-}[d]^(0.55){\Sigma\Tl\Sigma \oo \id}     \\
      \AcArA \ar@{<-}[r]_{\id \oo \lT} & \AcAcA. }
  \end{align*}
  It commutes because  both compositions are given by
  \begin{align*}
    a \otimes b\otimes c \mapsto (\Sigma(\Delta_{B}(a)) \otimes 1)(1
    \otimes b\otimes 1)(1 \otimes \Delta_{C}(c)).
  \end{align*}
  Taking transposes, we find that the square in the following diagram
  commutes:
  \begin{gather*}
    \xymatrix@R=10pt{ \hat A \oohC \hat A \oohb \hat A \ar[rr]^{\id
        \oo \hTr} \ar[dd]_{\hTl \oo \id} & & \hat A \oohC \hat A \oohl
      \hat A \ar[dd]^{\hTl \oo \id}
      \ar[ld]_{\hat m\Sigma \otimes \id} \\
      & \hAlA& \\
      \hAlA \oohb \hat A \ar[rr]_{\id \oo \hTr} \ar[ru]^{\id \otimes
        \hat m} && \hat A \oohl \hat A \oohl \hat A. \ar[lu]^(0.55){\id
        \otimes \hbeps \otimes \id}}
  \end{gather*}
  By Lemma \ref{lemma:dual-counit}, the left triangle and the lower
  triangle commute as well. Hence, the upper left triangle commutes as well.

  Taking the transpose of the pentagon diagram for $\lT$ given in
  Proposition 5.11 of \cite{timmermann:regular}, we find that the
  outer rectangle in the diagram below commutes:
  \begin{align*}
    \xymatrix@C=10pt@R=15pt{
      \hAbA \oohb \hat A \ar[rd]_{\hat m \otimes \id} \ar[dd]_(0.6){(\hTr)_{23}} \ar[r]^{(\hTr)_{12}} &
      \hAlA \oohb \hat A \ar[rr]^{(\hTr)_{23}} \ar[d]^{\hbeps \otimes
        \id \otimes \id} && \hAlA \oohl \hat A
      \ar[ld]_(0.6){\hbeps \otimes \id \otimes \id} \\  
      &  \hAbA \ar[r]_{\hTr}&  \hAlA & 
      \\
      \hat A_{\hat B} \oo  {_{(1 \otimes \hat B)}(\hAlA)} \ar[rrr]_{(\hTr)_{13}} && &
      {_{(\hat B\otimes 1)}(\hAbA)} \oo {_{\hat S(\hat B)}\hat A}
      \ar[uu]_(0.4){(\hTr)_{12}}   \ar[lu]^(0.55){\hat m \otimes \id}  }     
  \end{align*}
  The triangles commute by Lemma \ref{lemma:dual-counit} again, and
  hence the lower left cell commutes.

  Therefore, the diagrams (3.2) and (3.3) in Definition 3.1 in
  \cite{timmermann:regular} commute. The verification of the
  intertwining relations (3.1) in this definition is
  straightforward. Summarizing, we find that $(\hTl,\hTr)$ is a left
  multiplicative pair. By Proposition 3.3 in
  \cite{timmermann:regular}, there exists a unique associated left
  comultiplication $\hat\Delta_{\hat B}$ for $\hat{\mathcal{A}}_{\hat
    B}$.

  Similar arguments show that $(\hlT,\hrT)$ is a right multiplicative
  pair, and Proposition 5.8 in \cite{timmermann:regular} then yields
  a unique associated right comultiplication $\hat \Delta_{\hat C}$
  for $\hat{\mathcal{A}}_{\hat C}$.

  We claim that $(\hlT,\hrT)$ is compatible with $(\hTl,\hTr)$ in the
  sense of Definition 6.4 in \cite{timmermann:regular}.  Since
  $(\Tl,\Tr)$ and $(\lT,\rT)$ are compatible, the following diagrams
  commute:
    \begin{align*}
      \xymatrix@R=15pt{ \AcAbA \ar[r]^{\id \oo \Tr} \ar[d]_{\lT \oo
          \id} & \AcAlA \ar[d]^{\lT \oo \id}
&&        \ACABA \ar[r]^{\id \oo \rT} \ar[d]_{\Tl \oo \id} & \ar[d]^{\Tl
          \oo \id} \ACArA
        \\
        \ArAbA \ar[r]_{\id \oo \Tr} & \ArAlA,  && 
        \AlABA \ar[r]_{\id \oo \rT} & \AlArA. }
\end{align*}
Taking the transposes, one obtains the corresponding diagrams for
$(\hTl,\hTr)$ and $(\hTl,\hTr)$.

Therefore, the pair $\hat{\mathcal{A}}$ is a multiplier
bialgebroid. It is a regular multiplier Hopf algebroid because the maps
$\hTl,\hTr,\hlT,\hrT$ are bijective by Lemma \ref{lemma:dual-bijections}.

The formulas for the counits follow immediately from Lemma
\ref{lemma:dual-counit}. In particular, these formulas show that the
counits are surjective.

Let us show that the transpose of the antipode $S$ of $\mathcal{A}$
coincides with the antipode $\hat S$ of $\mathcal{\hat A}$.  By
\eqref{dg:galois-aux}, the
 following diagrams commute:
  \begin{align*}
    \xymatrix@R=18pt{
      \AbA \ar[r]^{\rT\Sigma} \ar[d]_{\Tr} & \ArA  & 
&
      \hAbA \ar[r]^{\hrT\Sigma} \ar[d]_{\hTr} & \hArA\\
      \AlA  \ar[r]^{S\otimes \id}& \AcA \ar[u]_{\lT}& &
      \hAlA \ar[r]^{\hat S\otimes \id}& \hAcA \ar[u]_{\hlT} }
  \end{align*}
 Taking the transpose of the first diagram gives the second
  diagram  with $\hat S \otimes \id$ replaced by 
  $\dual{S}\otimes \id$, whence $\hat S=\dual{S}$.

  The pair $\hat \mu$ is a base weight for $\hat{\mathcal{A}}$ because
  $\hat\mu_{\hat B} \circ {_{\hat B}\eps} = \eps = \hat\mu_{\hat C}
  \circ \eps_{\hat C}$ by Lemma \ref{lemma:dual-counit}.

  Finally, assume that $(\mathcal{A},\mu,\phi,\psi)$ is a measured
  multiplier Hopf $*$-algebroid. To prove that $\hat{\mathcal{A}}$ is
  a multiplier Hopf $*$-algebroid, it suffices to show that $(\ast \oo
  \ast) \circ \hTr \circ (\ast \oo \ast) = \hrT$. Inserting the
  definition of the involution and of the maps $\hTr$ and $\hrT$, we find
  \begin{align*}
    ((\ast \oo \ast)\circ \hTr \circ (\ast \oo \ast))(\upsilon \oohB
    \omega) &= (\upsilon \oohB \omega) \circ (* \oo *) \circ (S \oo
    S) \circ \lT
    \circ (S^{-1} \oo S^{-1}) \circ (* \oo *) \\
    &=(\upsilon \oohB \omega) \circ (* \oo *) \circ \Sigma \circ \rT
      \circ  \Sigma  \circ  \circ (* \oo *)  \\
      &= (\upsilon \oohB \omega) \circ \Sigma \circ \Tr \circ \Sigma
      \\
      &=  \hrT  (\upsilon \oohB \omega)
  \end{align*}
  for all $\upsilon,\omega \in \hat A$, where we used
  \eqref{dg:galois-aux2} 
  and Lemma 7.2 of \cite{timmermann:regular}.
\end{proof}
\subsection{The full duality and biduality}
\label{subsection:dual-integrals}

To conclude the main results of this article, the duality and
biduality of measured regular multiplier Hopf algebroids, we need to
construct invariant weights on the dual. This can again be done in a
similar way as in the case of multiplier Hopf algebras, see \cite{daele:1}.

\begin{theorem} \label{theorem:dual-measured} Let
  $(\mathcal{A},\mu,\phi,\psi)$ be a measured regular multiplier Hopf
  algebroid.  Then the dual regular multiplier Hopf algebroid
  $\hat{\mathcal{A}}$ constructed in Theorem
  \ref{theorem:dual-hopf-algebroid} and the functionals
  \begin{align} \label{eq:dual-measured-functionals}
    \hat \mu_{\hat B}&=\mu_{C}, & \hat \mu_{\hat C} &=\mu_{B}, &
    \hat \phi &\colon \hat A \to \C, \ \psi \cdot a \mapsto \eps(a),
& \hat
    \psi &\colon \hat A \to \C, \ a\cdot \phi \mapsto \eps(a),
  \end{align}
  form a measured regular multiplier Hopf algebroid again. Moreover,
  \begin{align} \label{eq:dual-left-integral} \hat \phi((\psi \cdot
    S(a))\omega) &= \omega(a), & \sigma^{\hat\phi}(\psi \cdot a) &=
    (\psi \circ S) \cdot S^{-2}(a), & \sigma^{\hat\phi}(y) &=
    (\sigma^{\psi})^{-1}(y), \\ \label{eq:dual-right-integral}
    \hat\psi (\omega (S(b) \cdot \phi)) &=
    \omega(b),   &
    \sigma^{\hat\psi}(a\cdot \phi)&= S^{2}(a) \cdot (\phi \circ S), &
    \sigma^{\hat\psi}(x) &= (\sigma^{\phi})^{-1}(x)
  \end{align}
  for all $a,b\in A$, $x\in B$, $y\in C$ and $\omega \in \hat
  A$. Finally, if   $(\mathcal{A},\mu,\phi,\psi)$ is a measured multiplier Hopf
  $*$-algebroid, then $(\hat{\mathcal{A}},\hat\mu,\hat\phi,\hat\psi)$
  is a  measured multiplier Hopf
  $*$-algebroid and
  \begin{align*}
    \hat \psi((a\cdot \phi)^{*}(a \cdot \phi)) &= \phi(a^{*}a)
    &&\text{and} &
    \hat\phi((\psi \cdot a)(\psi \cdot a)^{*}) &= \psi(aa^{*}) &&
    \text{for all } a\in A.
  \end{align*} 
\end{theorem}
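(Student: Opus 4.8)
The plan is to follow Van Daele's construction of the dual integrals \cite{daele:1}, carrying the extra module-theoretic bookkeeping forced by the base algebras. First I would dispose of well-definedness: since $\psi$ and $\phi$ are faithful, the maps $a\mapsto\psi\cdot a$ and $a\mapsto a\cdot\phi$ from $A$ onto $\hat A$ are injective, so $\psi\cdot a\mapsto\eps(a)$ and $a\cdot\phi\mapsto\eps(a)$ do define linear functionals $\hat\phi,\hat\psi$ on $\hat A$. Next I would check that $\hat\phi\in\hdmAm$ (and symmetrically $\hat\psi\in\hdmAm$) by writing down its four component maps $\hat A\to\hat B=C$ and $\hat A\to\hat C=B$. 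Using the $\hat B$- and $\hat C$-module structure of $\hat A$ from Proposition \ref{proposition:dual-quantum-graphs}, the counit factorisations of Example \ref{example:base-counit}, and the defining relation $\hat\phi(\psi\cdot a)=\eps(a)$, these components come out in terms of $\bpsib$, $\cphic$, the counit maps and the anti-isomorphisms $S_{B}^{\pm1},S_{C}^{\pm1}$; the verifications are routine once one invokes $\mu_{B}\circ S_{C}=\mu_{C}$, $\mu_{C}\circ S_{B}=\mu_{B}$ and faithfulness of $\mu_{B},\mu_{C}$.

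The heart of the proof, and the step I expect to be the main obstacle, is to show that $\hat\phi$ is left-invariant with respect to $\hat\Delta_{\hat B}$ and that $\hat\psi$ is right-invariant with respect to $\hat\Delta_{\hat C}$. I would do this by establishing the intertwining identities
\[\hat\phi\bigl((\psi\cdot S(a))\,\omega\bigr)=\omega(a),\qquad \hat\psi\bigl(\omega\,(S(b)\cdot\phi)\bigr)=\omega(b)\]
for all $a,b\in A$ and $\omega\in\hat A$; invariance is then read off directly, because by Proposition \ref{proposition:dual-algebra} the products on the left are $\omega\circ\lambda(\psi\cdot S(a))$ and $\omega\circ\rho(S(b)\cdot\phi)$, and $\lambda,\rho$ of these right- and left-invariant functionals are precisely the operators entering Definition \ref{definition:invariant-elements}. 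To prove the identities I would unfold the convolution operators, use that $\bpsi$ and $\phic$ are right- and left-invariant (so the antipode diagrams \eqref{dg:antipode} and Corollary \ref{corollary:dual-strong-invariance} apply), and collapse the remaining expression with the counit relation $\eps\circ m_{B}=\eps\oob\eps$ of Example \ref{example:base-counit-2}. Equivalently --- and this is the conceptually cleanest route but the one with the heaviest bookkeeping --- one dualises: by Theorem \ref{theorem:dual-hopf-algebroid} the canonical maps $\hTl,\hTr,\hlT,\hrT$ are transposes of $\Tl,\Tr,\lT,\rT$, and by Lemma \ref{lemma:dual-pairings} the pairing of $\upsilon\ool\omega$ with $a\oo b$ is an $\hat A$-pairing, so the invariance diagram for $\hat\phi$ reduces to the counit property of $\eps$ together with the right-invariance of $\psi$. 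Tracking the balanced tensor products, the module twists by $S_{B},S_{C}$, and the passage between $\dA$-multipliers and the convolution operators is where the work lies.

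Granting invariance, fullness of $\hat\phi$ and $\hat\psi$ follows from the component computations of the first step, since the component maps are $\cphic$ (resp. $\bpsib$) post-composed with the isomorphisms $S_{B}^{\pm1},S_{C}^{\pm1}$ and the isomorphism $\cA\cong\hbA$, hence surjective by Corollary \ref{corollary:full}. For faithfulness I would exploit that, by Proposition \ref{proposition:invariant-elements-hopf}, $\hat\phi$ is invariant with respect to \emph{both} comultiplications and also satisfies the strong-invariance diagram; combining these yields evaluation formulas expressing both $\omega\mapsto\omega\cdot\hat\phi$ and $\omega\mapsto\hat\phi\cdot\omega$ through point evaluation on $A$. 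Since $\hat A$ separates the points of $A$ (Proposition \ref{proposition:convolution}) and $A$ obviously separates the points of $\hat A\subseteq\dA$, both maps are injective, so $\hat\phi$ --- and likewise $\hat\psi$ --- is faithful; hence $(\hat{\mathcal{A}},\hat\mu,\hat\phi,\hat\psi)$ is a measured regular multiplier Hopf algebroid. The formulas \eqref{eq:dual-left-integral}--\eqref{eq:dual-right-integral} then come out by substitution: the evaluation formulas $\hat\phi((\psi\cdot S(a))\omega)=\omega(a)$ and $\hat\psi(\omega(S(b)\cdot\phi))=\omega(b)$ are already in hand; the modular-automorphism formulas on $\hat A$ follow from the characterisation $\hat\phi\cdot\omega=\sigma^{\hat\phi}(\omega)\cdot\hat\phi$ by inserting $\omega=\psi\cdot a$ and comparing, and the base-algebra formulas $\sigma^{\hat\phi}(y)=(\sigma^{\psi})^{-1}(y)$, $\sigma^{\hat\psi}(x)=(\sigma^{\phi})^{-1}(x)$ follow by combining these with Theorem \ref{theorem:modular-automorphism} (which already pins down $\sigma^{\hat\phi}$ on $\hat C$ and $\sigma^{\hat\psi}$ on $\hat B$) and the identities $\sigma^{\mu}_{B}=S^{-2}$, $\sigma^{\mu}_{C}=S^{2}$.

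Finally, in the $*$-case Theorem \ref{theorem:dual-hopf-algebroid} already provides the admissible involution $\omega\mapsto\ast\circ\omega\circ\ast\circ S$ on $\hat A$, so it only remains to see that $(\hat{\mathcal{A}},\hat\mu,\hat\phi,\hat\psi)$ is \emph{measured} as a $*$-algebroid, i.e.\ that $\hat\mu_{\hat B}=\mu_{C}$, $\hat\mu_{\hat C}=\mu_{B}$, $\hat\phi$ and $\hat\psi$ are positive. Positivity of the first two is the positivity of $\mu_{C},\mu_{B}$. For $\hat\phi,\hat\psi$ it suffices to prove the two displayed identities $\hat\psi\bigl((a\cdot\phi)^{*}(a\cdot\phi)\bigr)=\phi(a^{*}a)$ and $\hat\phi\bigl((\psi\cdot a)(\psi\cdot a)^{*}\bigr)=\psi(aa^{*})$, since $\{a\cdot\phi:a\in A\}$ and $\{\psi\cdot a:a\in A\}$ exhaust $\hat A$. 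For the first I would compute $(a\cdot\phi)^{*}$ using self-adjointness of $\phi$ and the relations \eqref{eq:involutions} (obtaining $(a\cdot\phi)^{*}=(\phi\circ S)\cdot S^{-1}(a^{*})$), expand $(a\cdot\phi)^{*}(a\cdot\phi)$ into the form $f\cdot\phi$ via \eqref{eq:dual-product}, and then apply $\hat\psi$; simplifying with left-invariance of $\cphic$ and the counit property yields $\phi(a^{*}a)$. The second identity is obtained symmetrically, or from the first by applying $\hat S$. This closes the proof.
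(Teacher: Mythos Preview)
Your outline is largely on target and aligns with the paper on well-definedness, the $\hdmAm$ components, faithfulness, the modular-automorphism formulas, and the $*$-case. But your primary route to invariance does not go through.

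You propose to deduce left-invariance of $\hat\phi$ and right-invariance of $\hat\psi$ from the evaluation identities $\hat\phi\bigl((\psi\cdot S(a))\omega\bigr)=\omega(a)$ and $\hat\psi\bigl(\omega(S(b)\cdot\phi)\bigr)=\omega(b)$. These identities are statements about the \emph{product} on $\hat A$ --- which dualises $\Delta_B,\Delta_C$ on $A$ --- whereas invariance of $\hat\phi,\hat\psi$ is a statement about $\hat\Delta_{\hat B},\hat\Delta_{\hat C}$ --- which dualise the \emph{multiplication} on $A$. The operators $\lambda(\psi\cdot S(a))$ and $\rho(S(b)\cdot\phi)$ act on $A$ via $\Delta_B,\Delta_C$, not via $\hat\Delta$; moreover $\psi\cdot S(a)$ and $S(b)\cdot\phi$ are not themselves invariant, so the phrase ``$\lambda,\rho$ of these right- and left-invariant functionals are precisely the operators entering Definition~\ref{definition:invariant-elements}'' conflates invariance on $A$ with invariance on $\hat A$. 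I do not see a non-circular passage from the evaluation identities to invariance without essentially carrying out your alternative dualisation argument.

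The paper in fact takes your alternative route, but in the opposite order: it first proves right-invariance of $\hat\psi$ by computing $(\hat\psi\oohl\dual{c})(\hTr(\upsilon\oo\omega))$ directly, using that $\hTr$ transposes $\lT$ and collapsing via the counit identity~\eqref{eq:base-counit-mult} together with the component formulas of Lemma~\ref{lemma:dual-evaluation}. Only afterwards does it establish $\hat\psi((a\cdot\phi)(b\cdot\phi))=\phi(S^{-1}(b)a)$ from the product formula~\eqref{eq:dual-product} and the relation $\Tl^{-1}=(S^{-1}\oo\id)\lT(S\oo\id)$, and then deduce faithfulness and $\sigma^{\hat\psi}$ from that. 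One small correction on fullness: the relevant components of $\hat\psi$ are $\hcpsi,\hpsic\colon\hat A\to\hat C=B$, which in the paper's computation come out in terms of the counits $\epsc,\epsb$ (and the map $\theta$ of Lemma~\ref{lemma:bphi-phib}), not $\cphic$; their surjectivity rests on Proposition~\ref{proposition:counits-full} together with fullness of $\phi$, rather than on Corollary~\ref{corollary:full}.
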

In the situation above, we call
$(\hat{\mathcal{A}},\hat\mu,\hat\phi,\hat\psi)$ the \emph{dual} of
$(\mathcal{A},\mu,\phi,\psi)$.
\begin{proof} 
  To prove that $(\hat{\mathcal{A}},\hat\mu,\hat\phi,\hat\psi)$ is a
  measured regular multiplier Hopf algebroid, we only need to show
  that $\hat \phi$ and $\hat \psi$ are full and faithful left and
  right integrals on $\hat{\mathcal{A}}$ adapted to $\hat\mu$,
  respectively. We focus on $\hat \psi$; the arguments concerning
  $\hat \phi$ are similar.

  We first show that the functional $\hat \psi$ belongs to $\hdmAm$
  and that
  \begin{align} \label{eq:dual-right-integral-fac}
    \begin{gathered}
      \hbpsi(a\cdot \phi)=\ceps(a)=\hpsib(a\cdot \phi), \\ \hcpsi(a
      \cdot \phi) =S^{-1}(\epsc(a)), \quad \hpsic(a\cdot \phi)=
      S^{2}(\theta(\epsb(a)))
    \end{gathered}
  \end{align}
  for all $a\in A$, where $\theta$ is given by $\phib(a) \mapsto
  \bphi(a)$; see Lemma \ref{lemma:bphi-phib}.  Let $y,y'\in C$ and
  $x\in B$. Then
  \begin{align*}
    \hat\psi(y (a\cdot \phi) y') = \hat \psi(yS^{-1}(y')a \cdot \phi)
        &= \eps(yS^{-1}(y')a) \\
    &= \mb(S^{-1}(y')\beps(a)S^{-1}(y)) 
    = \mc(yS(\beps(a))y'),
  \end{align*}
  and $S\circ \beps=\ceps$ by Example
  \ref{example:base-counit}. Furthermore, 
  \begin{align*}
  \hat\psi(x (a \cdot \phi)) = \hat\psi(a \cdot \phi \cdot S^{-1}(x)) 
&= \hat\psi(aS(x) \cdot \phi) \\
&= \eps(aS(x))  = \mc(\epsc(a)S(x)) = \mb(xS^{-1}(\epsc(a))),
\end{align*}
where we used the relation $\sigma^{\phi}(S^{-1}(x))=S(x)$ (see
Theorem \ref{theorem:modular-automorphism}), and
\begin{align*}
  \hat\psi((a\cdot \phi)x) = \hat\psi(a \cdot \phi \cdot x)
  &= \hat\psi(a\sigma^{\phi}_{B}(x) \cdot \phi) \\
  &= \eps(a\sigma^{\phi}_{B}(x)) = \mb(\epsb(a)\sigma^{\phi}_{B}(x)) =
  \mb(S^{2}(\theta(\epsb(a))x)
\end{align*}
by Lemma \ref{lemma:bphi-phib}.  Since $a\in A$ was arbitrary, these
calculations show that $\hat\psi $ lies in $\hdmAm$ and that the
relations \eqref{eq:dual-right-integral-fac} hold. Using  Proposition
\ref{proposition:counits-full} (1) and the fact that $\phi$ is full, 
we see that $\epsb$ and $\epsc$ and hence also $\hcpsi$ and $\hpsic$ are surjective.

We next show that $\hbpsib$ is right-invariant.  Let
  $\upsilon=a\cdot \phi$ and $\omega = b\cdot \psi$. Then 
  \begin{align*}
    (\hat \psi \oohl \dual{c})(\hTr(\upsilon \oo \omega)) &=
    \hat \psi((\upsilon \oor \omega)(\lT(- \oo c))) \\
    &= \hat \psi(\rho(\omega)(c) \cdot \upsilon) \\
    &= \eps(\rho(\omega)(c)a) \\
    &= \mu_{C}(\epsc(\rho(\omega)(c))\ceps(a)) \\
    &= \mu_{C}(\omega_{C}(c)\ceps(a))  \\
    &= \mu_{C}(\ceps(a)S^{2}(\omega_{C}(c)))
    =(\hat\psi \oohb
    \dual{c})(\upsilon \oo \omega),
  \end{align*}
  where we used \ref{eq:base-counit-mult}, Lemma
  \ref{lemma:dual-evaluation}, Proposition
  \ref{proposition:convolution} (5) and 
  \eqref{eq:dual-right-integral-fac}. Since $\hat{\mathcal{A}}$ is flat
  and elements of the form $\dual{c}$ with $c\in A$ separate the
  points of $\hat A$, we can conclude that $\hat \psi$ is
  right-invariant.

Let us prove the first equation in  \eqref{eq:dual-right-integral}.  By
  \eqref{eq:dual-product}, $(a\cdot \phi) (b\cdot \phi) = c\cdot
  \phi$, where $c = (\cphic \oo \id)(\Tl^{-1}(a \oo b))$. Using the
  relation $\Tl^{-1} = (S^{-1} \oo \id)\lT(S\oo \id)$, see
  \eqref{dg:galois-inverse}, and the counit property, we find
\begin{align*}   
  \begin{aligned}
    \hat \psi((a \cdot \phi)(b\cdot \phi)) &= (\phi \ooC
    \eps)(\Tl^{-1}(a \oo b)) \\
    &= (\phi \circ S^{-1} \oor \eps)(\lT(S(a) \oo b)) =
    \phi(S^{-1}(S(a)b)) = \phi(S^{-1}(b)a).
  \end{aligned}
\end{align*}
Since $\phi$ is faithful, we can conclude that $\hat\psi$ is faithful
as well.  Moreover, with  $\omega := S^{-2}(b) \cdot (\phi \circ
    S)$,  we can conclude that
  \begin{align*}
    \hat \psi((a\cdot \phi)(b\cdot \phi)) &= \phi(S^{-1}(b)a) = (\phi
    \cdot S^{-1}(b)) (a) = \omega(S(a)) = \hat\psi(\omega(a \cdot
    \phi))
  \end{align*}
  and hence $(\sigma^{\hat\psi})^{-1}(b\cdot \phi) = S^{-2}(b) \cdot
  (\phi \circ S) = S^{-2}(b)\delta^{\dag} \cdot \phi$ in the notation
  of Theorem \ref{theorem:modular-element-second}. The formula for
  $\sigma^{\hat\psi}$ in \eqref{eq:dual-right-integral} follows.

  The relation $\sigma^{\hat\psi}(x) = (\sigma^{\phi})^{-1}(x)$ holds
  for all $x\in B$ because for all $a,b\in A$,
  \begin{align*}
    \hat \psi(x(a \cdot \phi)(b\cdot \phi)) &= \hat\psi((a \cdot \phi
    \cdot S^{-1}(x))(b \cdot \phi)) \\ &= \phi(S^{-1}(x)S^{-1}(b)a) \\
    & = \hat\psi((a\cdot \phi)(bx\cdot \phi)) = \hat\psi((a\cdot
    \phi)(b\cdot \phi)(\sigma^{\phi}_{B})^{-1}(x)). \qedhere
  \end{align*}  

  Finally, assume that $(\mathcal{A},\mu,\phi,\psi)$ is a measured
  multiplier Hopf $*$-algebroid. Then $\hat\mu_{\hat B}=\mu_{C}$ and
  $\hat\mu_{\hat C}=\mu_{B}$ are positive by assumption, and
  \begin{align*}
    \hat\psi((a\cdot \phi)^{*}(a\cdot \phi)) &= (a \cdot
    \phi)^{*}(S^{-1}(a)) =  \phi(S(S^{-1}(a))^{*}a)^{*} = \phi(a^{*}a)
  \end{align*}
  for all $a\in A$ by \eqref{eq:dual-right-integral}. A similar
  calculation shows that $ \hat\phi((\psi \cdot a)(\psi \cdot a)^{*})
  = \psi(aa^{*})$, and consequently, $\hat\psi$ and $\hat \phi$ are
  positive.\end{proof}


\begin{theorem} \label{theorem:biduality} Let
  $(\mathcal{A},\mu,\phi,\psi)$ be a measured regular multiplier Hopf
  algebroid or a measured multiplier Hopf $*$-algebroid and let
  $(\hat{\mathcal{A}},\hat{\mu},\hat{\phi},\hat{\psi})$ be its
  dual. Then the map $A \to \dual{(\hat A)}$ given by $a\mapsto
  \dual{a}$ is an isomorphism from $(\mathcal{A},\mu,\phi,\psi)$ to
  the bi-dual
  $(\hhat{\mathcal{A}},\hhat{\mu},\hhat{\phi},\hhat{\psi})$.
\end{theorem}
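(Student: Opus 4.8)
The plan is to construct the claimed isomorphism as the evaluation map $\Phi\colon A\to\dual{(\hat A)}$, $\Phi(a)=\dual a$, and verify it stepwise: first identify $A$ with $\hhat{A}$ as a vector space; then match the two pairs of base algebras and all the relative tensor products; then deduce that the canonical maps — hence the comultiplications and the whole Hopf‑algebroid structure — correspond; and finally check that the base weight, counit, antipode and integrals are preserved. For the vector‑space identification, $\dual a$ lies in $\hdmAm$ by Lemma \ref{lemma:dual-evaluation}, and $\Phi$ is injective because $\hat A$ separates the points of $A$ (Proposition \ref{proposition:convolution}). For surjectivity onto $\hhat A$, rewrite \eqref{eq:dual-right-integral} as $\dual b(\omega)=\omega(b)=\hat\psi\bigl(\omega\,(S(b)\cdot\phi)\bigr)$, so that $\dual b=(S(b)\cdot\phi)\cdot\hat\psi$; since $S(b)\cdot\phi$ runs through all of $\hat A$ as $b$ runs through $A$, this gives $\Phi(A)=\hat A\cdot\hat\psi=\hhat A$, and dually $\dual b=\hat\phi\cdot(\psi\cdot S(b))$ by \eqref{eq:dual-left-integral}. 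Thus $\Phi$ is a linear bijection of $A$ onto the dual algebra $\hhat A$.

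By construction $\hhat B=\hat C=B$ and $\hhat C=\hat B=C$ with the same anti‑isomorphisms, so the base algebras of $\hhat{\mathcal A}$ are literally those of $\mathcal A$; that $\Phi$ intertwines the $B$- and $C$-module structures is a direct calculation applying the formulas of Proposition \ref{proposition:dual-quantum-graphs} and Lemma \ref{lemma:dual-evaluation} twice. Now the canonical maps of $\hhat{\mathcal A}$ are, by Theorem \ref{theorem:dual-hopf-algebroid} applied to $\hat{\mathcal A}$, the restrictions of $\dual{(\hTl\Sigma)}$, $\dual{(\hlT)}$, $\dual{(\hTr)}$ and $\dual{(\hrT\Sigma)}$, while $\hTl\Sigma$, $\hlT$, $\hTr$ and $\hrT\Sigma$ are themselves restrictions of $\dual{(\Tl\Sigma)}$, $\dual{(\Tr)}$, $\dual{(\lT)}$ and $\dual{(\rT\Sigma)}$; hence every canonical map of $\hhat{\mathcal A}$ is a restriction of a double transpose. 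Using the pairings of Lemma \ref{lemma:dual-pairings} and the embeddings of Lemmas \ref{lemma:dual-embeddings} and \ref{lemma:dual-bijections}, applied to $\hat{\mathcal A}$, one checks that under $\Phi$ the relative tensor products of $\hhat A$ serving as domains and codomains of these maps are identified with the corresponding relative tensor products of $A$ (the domains and codomains of $\Tl,\Tr,\lT,\rT$), compatibly with the canonical maps into the double dual spaces. Since a double transpose restricts to the original map along the canonical map into the double dual, it follows that $(\hhat{T}_{\lambda},\hhat{T}_{\rho})$ and $({_{\lambda}\hhat{T}},{_{\rho}\hhat{T}})$ correspond under $\Phi$ to $(\Tl,\Tr)$ and $(\lT,\rT)$. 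By the uniqueness of the comultiplication attached to a multiplicative pair (Propositions 3.3 and 5.8 of \cite{timmermann:regular}), $\Phi$ intertwines $\Delta_B$ with $\hhat{\Delta}_{\hhat B}$ and $\Delta_C$ with $\hhat{\Delta}_{\hhat C}$, so $\Phi$ is an isomorphism of multiplier bialgebroids, and therefore of regular multiplier Hopf algebroids. The antipode is immediate: $\hhat{S}(\Phi(b))(\omega)=\Phi(b)(\hat S(\omega))=(\omega\circ S)(b)=\omega(S(b))=\Phi(S(b))(\omega)$.

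It remains to match the base weight, counit and integrals. One has $\hhat{\mu}=(\hat\mu_{\hat C},\hat\mu_{\hat B})=(\mu_B,\mu_C)=\mu$ by definition, so nothing beyond the previous step is needed. For the counit, $\Phi(b)$ extends to $M(\hat A)$ by Lemma \ref{lemma:extension-multipliers} applied to $\hat{\mathcal A}$, and $\hhat{\eps}(\Phi(b))=\dual b(1_{M(\hat A)})=\hat\psi(S(b)\cdot\phi)=\eps(S(b))=\eps(b)$ using $\eps\circ S=\eps$ (Lemma \ref{lemma:counits-antipode}). For the integrals, one inserts the expressions $\Phi(b)=(S(b)\cdot\phi)\cdot\hat\psi=\hat\phi\cdot(\psi\cdot S(b))$ from the first step into the defining formulas \eqref{eq:dual-measured-functionals} of $\hhat{\phi}$ and $\hhat{\psi}$ and simplifies using Lemma \ref{lemma:dual-counit}, the modular‑automorphism relations in \eqref{eq:dual-left-integral}--\eqref{eq:dual-right-integral}, and evaluation at the unit, obtaining $\hhat{\phi}\circ\Phi=\phi$ and $\hhat{\psi}\circ\Phi=\psi$; alternatively, $\hhat{\phi}\circ\Phi$ and $\hhat{\psi}\circ\Phi$ are full and faithful left and right integrals for $(\mathcal A,\mu)$ by transport of structure, hence equal $x\cdot\phi$ and $\psi\cdot y$ with invertible $x\in M(B)$, $y\in M(C)$ by Theorem \ref{theorem:integrals-uniqueness}, and the counit relation just proved forces $x=1=y$. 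In the $*$-case, the identity $S\circ\ast\circ S\circ\ast=\id_A$ of \eqref{eq:involutions} gives $\Phi(b^{*})=\Phi(b)^{*}$ for the dual involution of Proposition \ref{proposition:dual-involution}, and positivity of $\hhat{\mu},\hhat{\phi},\hhat{\psi}$ is inherited.

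The delicate part is the identification of the eight relative tensor products of $A$ with the corresponding ones of $\hhat A$ inside the double dual spaces: although the content ``transpose twice is the identity'' is already built into the way the dual was constructed, one must verify carefully, via Lemmas \ref{lemma:dual-pairings}--\ref{lemma:dual-bijections} one level up, that $\Phi\otimes\Phi$ lands exactly in the subspaces serving as domains and codomains of the dual canonical maps and is compatible with all the embeddings involved. Everything else reduces to bookkeeping or short direct calculations.
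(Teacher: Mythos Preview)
Your proposal is correct and follows essentially the same route as the paper: the identification $\dual{b}=(S(b)\cdot\phi)\cdot\hat\psi$, the $B$- and $C$-linearity check, the canonical maps via the double-transpose relation encoded in Lemma \ref{lemma:dual-pairings}, the integral computation via $(\sigma^{\hat\psi})^{-1}$, and the $*$-case via \eqref{eq:involutions} are exactly the paper's steps. One caveat: your \emph{alternative} argument for the integrals (``the counit relation just proved forces $x=1=y$'') is not justified as stated --- there is no obvious reason why $\hhat\eps\circ\Phi=\eps$ should pin down the scalar $x\in M(B)$ in $\hhat\phi\circ\Phi=x\cdot\phi$ --- but your primary direct calculation is the one the paper uses and suffices.
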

\begin{proof}
  We first show that the map above identifies the left and right
  quantum graphs $(B,A,\iota_{B},S_{B})$ and $(C,A,\iota_{C},S_{C})$
  with the left and right quantum graphs
  $(\hhat{B},\hhat{A},\iota_{\hhat{B}},\hhat{S}_{\hhat{B}})$ and
  $(\hhat{C},\hhat{A},\iota_{\hhat{C}},\hhat{S}_{\hhat{C}})$,
  respectively.  By definition, $ \hhat{B}= \hat{C} = B$ and
  $\hhat{C}=\hat{B}= C$. Equation
  \eqref{eq:dual-right-integral} implies that for all $b\in
  A$ and $\upsilon \in \hat A$,
  \begin{align*}
    ((S(b) \cdot \phi) \cdot \hat \psi)(\upsilon) =
    \hat\psi(\upsilon(S(b) \cdot \phi)) = \upsilon(b) = \dual{b}(\upsilon).
  \end{align*}
  Since $\hhat{A} = \hat{A} \cdot \hat\psi$, we can conclude that the
  map $b \mapsto \dual{b} = (S(b) \cdot \phi) \cdot \hat \psi$ is a
  linear isomorphism from $A$ to $\hhat{A}$.  This isomorphism is
  $B$-linear because for all $x\in B$,
  \begin{align*}
    x((S(b) \cdot \phi) \cdot \hat \psi) &= x(S(b) \cdot \phi) \cdot
    \hat \psi  \\ &= (S(b) \cdot \phi \cdot S^{-1}(x)) \cdot \hat \psi =
    (S(b)S(x) \cdot \phi) \cdot \psi = S(xb) \cdot \psi
\end{align*}
and
\begin{align*}
    ((S(b) \cdot \phi) \cdot \hat\psi)x &= \hat S^{-1}(x)(S(b) \cdot
    \phi) \cdot \hat \psi = (S(x)S(b) \cdot \phi) \cdot \hat \psi =
    S(bx) \cdot \hat \phi.
  \end{align*}
  Similar calculations show that the isomorphism $b\mapsto \dual{b}$
  is $C$-linear.  By definition, $\hhat{S}_{\hhat{B}} =
  \hat{S}_{\hat{B}}^{-1} = S_{B}$, $ \hhat{S}_{\hhat{C}} =
  \hat{S}_{\hat{C}}^{-1} = S_{C}$, and
  the base weight $\hhat{\mu}$ is equal to $(\hat\mu_{\hat
    C},\hat\mu_{\hat B})=(\mu_{B},\mu_{C})$. 

  Let us next consider the canonical maps of $\hhat{\mathcal{A}}$.  By
  construction and Lemma \ref{lemma:dual-pairings}, the map
  $\hhat{\Tr}$ satisfies
  \begin{align*}
    (\hhat{\Tr}(\dual{a} \oo \dual{b}))(\upsilon \oo \omega) =
    (\dual{a} \oohr \dual{b})(\hlT(\upsilon \oo \omega)) =
    (\upsilon \ool \omega)(\Tr(a \oo b))
  \end{align*}
  for all $a,b\in A$ and $\upsilon,\omega \in \hat{A}$. Therefore, the
  isomorphism $a\mapsto \dual{a}$ identifies $\hhat{\Tr}$ with $\Tr$,
  and similar arguments show that it identifies the maps $\hhat{\Tl},
  {_{\lambda}\hhat{T}},{_{\rho}\hhat{T}}$ with $\Tl,\lT,\rT$, respectively. 

 The left integral $\hhat{\phi}$ is given by
  \begin{align*}
    \hhat{\phi}(\dual{a}) = \hhat{\phi}((S(a) \cdot \phi)\cdot \hat
    \psi) &= \hat\eps((\sigma^{\hat\psi})^{-1}(S(a) \cdot \phi)) \\ &=
    \hat \eps(S^{-1}(a) \cdot (\phi \circ S)) =
    (\phi \circ S)(S^{-1}(a)) =\phi(a)
  \end{align*}
  for all $a\in A$, where we used the formula for $\sigma^{\hat\psi}$
obtained in the proof of Theorem \ref{theorem:dual-measured}. A similar argument shows
  that $\hhat{\psi}(\dual{a})=\psi(a)$.

  Finally, assume that $(\mathcal{A},\mu,\phi,\psi)$ is a measured
  multiplier Hopf $*$-algebroid and let $b\in A$, $\upsilon \in \hat
  A$.  Then $(\dual{b})^{*}(\upsilon) = \dual{b}(\hat
  S(\upsilon)^{*})^{*}$ and $\hat S(\upsilon)^{*} = \ast \circ
  (\upsilon \circ S) \circ \ast \circ S = \ast \circ \upsilon \circ
  \ast$ because $S \ast \circ \ast S = \ast$ by
  \eqref{eq:involutions}, whence $(\dual{b})^{*}(\upsilon) =
  \upsilon(b^{*})$. Therefore, the map $A \to \hat{\hat{A}}$ given by
  $b\mapsto \dual{b}$ is a $*$-isomorphism.
\end{proof}

\subsection{The duality on the level of multipliers}
\label{subsection:multipliers}
Let $(\mathcal{A},\mu,\phi,\psi)$ be a measured regular multiplier
Hopf algebroid as before.  Then the canonical maps of
$\hat{\mathcal{A}}$ dualise the canonical maps of $\mathcal{A}$, and
intuitively, the multiplication and the comultiplication of $\hat
{\mathcal{A}}$ dualise the comultiplication and the multiplication of
$\mathcal{A}$, respectively. We now make this idea precise and study
the duality on the level of multiplier algebras.  

First, recall that the multiplier algebra $M(A)$ embeds into $\hdmAm$
by Lemma \ref{lemma:dual-evaluation}. By biduality, $M(\hat A)$ embeds
into $\dmAm$. In the case of a multiplier Hopf algebra $(H,\Delta)$,
Kustermans showed in \cite{kustermans:analytic-2} that $M(\hat H)$
identifies with the subspace of all functionals $\upsilon \in
\dual{H}$ satisfying $(\id \oo \upsilon)(\Delta(a))\in H$ and $
(\upsilon \oo \id)(\Delta(a)) \in H$ for all $a\in H$.
We extend this result to the present context,  the following result being
the first step.
\begin{lemma} \label{lemma:dual-multiplier-characterization}
  Let $\upsilon \in \dmAm$. 
  \begin{enumerate}
  \item $\upsilon (a \cdot \phi) =  (\lambda(\upsilon \circ
    S^{-1})(a)) \cdot \phi=S(\rho(\upsilon)(S^{-1}(a))) \cdot \phi$ for all $a\in A$.
  \item  $\rho(\upsilon)(a)\in A$ and $\lambda(\upsilon)(a) \in A$ for
    all $a \in A$ if and only if $\upsilon \omega\in \hat A$ and $\omega \upsilon \in \hat A$ for all $\omega
    \in \hat A$.
  \end{enumerate}
\end{lemma}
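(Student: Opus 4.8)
The two parts are linked by (1), which I would prove first; (2) then becomes almost formal. Throughout, recall from Proposition~\ref{proposition:dual-algebra} that the $\hat{A}$-bimodule structure on $\dmAm$ is $\upsilon\omega=\upsilon\circ\rho(\omega)$ and $\omega\upsilon=\upsilon\circ\lambda(\omega)$, that by Proposition~\ref{proposition:convolution}(2) these equal $\omega\circ\lambda(\upsilon)$ and $\omega\circ\rho(\upsilon)$ respectively, and that $\rho(\upsilon)(a),\lambda(\upsilon)(a)\in M(A)$ by Proposition~\ref{proposition:convolution}(1), so that the condition in (2) is meaningful. For (1) I would first note that the two claimed right-hand sides coincide: Proposition~\ref{proposition:convolution}(4) gives $\lambda(\upsilon)\circ S=S\circ\rho(\upsilon\circ S)$, and replacing $\upsilon$ by $\upsilon\circ S^{-1}$ yields $\lambda(\upsilon\circ S^{-1})(a)=S(\rho(\upsilon)(S^{-1}(a)))$. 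So it suffices to prove $\upsilon(a\cdot\phi)=\lambda(\upsilon\circ S^{-1})(a)\cdot\phi$. Using Proposition~\ref{proposition:dual-algebra} and Proposition~\ref{proposition:convolution}(2), $\upsilon(a\cdot\phi)=\upsilon\circ\rho(a\cdot\phi)=(a\cdot\phi)\circ\lambda(\upsilon)$; evaluating on $b\in A$ and extending $\phi$ to $M(A)$ by Lemma~\ref{lemma:extension-multipliers}, the left-hand side at $b$ is $\phi(\lambda(\upsilon)(b)\,a)$, while the claimed right-hand side at $b$ is $\phi(b\,S(\rho(\upsilon)(S^{-1}(a))))$. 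Thus (1) reduces to the identity $\phi(\lambda(\upsilon)(b)\,a)=\phi\bigl(b\,S(\rho(\upsilon)(S^{-1}(a)))\bigr)$ for all $a,b\in A$.

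To establish this scalar identity I would push the antipode across: using the strong invariance recorded in Corollary~\ref{corollary:dual-strong-invariance} (which relates $\rho$ applied to $a\cdot\cphi$ to its antipodal twist) together with Proposition~\ref{proposition:convolution}(2) and (4), rewrite both sides as $\upsilon\bigl(\rho(a\cdot\phi)(b)\bigr)$. I expect this to be the main obstacle: the bookkeeping of the left/right and $B$/$C$ conventions for $\lambda,\rho$ and for the factorizations ${}_{B}\omega,\omega_{B},{}_{C}\omega,\omega_{C}$ is what makes it delicate, and the Sweedler form of strong invariance from the Remark after Proposition~\ref{proposition:invariant-elements-hopf} is the computational core. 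Everything else in the proof is formal.

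For the backward direction of (2), suppose $\upsilon\omega\in\hat{A}$ and $\omega\upsilon\in\hat{A}$ for every $\omega\in\hat{A}$. Since $\hat{A}=A\cdot\phi$, part (1) gives $\upsilon(a\cdot\phi)=S(\rho(\upsilon)(S^{-1}(a)))\cdot\phi$, which by hypothesis lies in $A\cdot\phi$; as $\phi$ is faithful and $A$ is non-degenerate, $T\cdot\phi\in A\cdot\phi$ with $T\in M(A)$ forces $T\in A$, hence $\rho(\upsilon)(S^{-1}(a))\in A$, and letting $a$ run over $A$ gives $\rho(\upsilon)(a)\in A$ for all $a$. For $\lambda(\upsilon)$ I would use that the hypothesis is stable under $\upsilon\mapsto\upsilon\circ S$: the antipode of $\hat{\mathcal{A}}$ is $\hat{S}(\xi)=\xi\circ S$ (Theorem~\ref{theorem:dual-hopf-algebroid}), a short computation from Proposition~\ref{proposition:convolution}(4) shows $\hat{S}(\omega\upsilon)=\hat{S}(\upsilon)\hat{S}(\omega)$ and $\hat{S}(\upsilon\omega)=\hat{S}(\omega)\hat{S}(\upsilon)$ on $\dmAm$, and $\hat{S}$ is a bijection of $\hat{A}$ by Lemma~\ref{lemma:dual-space}(1); hence applying $\hat{S}$ to the hypothesis for $\upsilon$ yields the same hypothesis for $\upsilon\circ S$. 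Applying the conclusion just obtained to $\upsilon\circ S$ gives $\rho(\upsilon\circ S)(a)\in A$, and since $\rho(\upsilon\circ S)=S^{-1}\circ\lambda(\upsilon)\circ S$ by Proposition~\ref{proposition:convolution}(4), this gives $\lambda(\upsilon)(a)\in A$ for all $a$.

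For the forward direction of (2), suppose $\rho(\upsilon)(a),\lambda(\upsilon)(a)\in A$ for all $a$; by Proposition~\ref{proposition:convolution}(4) this condition is again stable under $\upsilon\mapsto\upsilon\circ S$. For $\omega=a\cdot\phi\in\hat{A}$, part (1) gives $\upsilon\omega=S(\rho(\upsilon)(S^{-1}(a)))\cdot\phi\in S(A)\cdot\phi=A\cdot\phi=\hat{A}$. For the left action, $\hat{S}(\omega\upsilon)=\hat{S}(\upsilon)\hat{S}(\omega)=(\upsilon\circ S)(\omega\circ S)$; since $\upsilon\circ S$ still satisfies the condition, the case just treated shows $(\upsilon\circ S)\mu\in\hat{A}$ for every $\mu\in\hat{A}$, in particular for $\mu=\omega\circ S$, so $\hat{S}(\omega\upsilon)\in\hat{A}$ and therefore $\omega\upsilon\in\hat{A}$. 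Together with the backward direction this completes (2).
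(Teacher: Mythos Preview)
Your proposal is correct and uses essentially the same ingredients as the paper's proof: for (1), the identity is obtained from the strong invariance relation of Corollary~\ref{corollary:dual-strong-invariance} combined with the swap formula of Proposition~\ref{proposition:convolution}(2) and the antipode intertwining of (4), and your reduction to the scalar identity $\phi(\lambda(\upsilon)(b)a)=\phi(bS(\rho(\upsilon)(S^{-1}(a))))$ unwinds to exactly the paper's chain $\upsilon(\rho(a\cdot\phi)(b))=\upsilon(S^{-1}(\rho(\phi\cdot b)(a)))=(\phi\cdot b)(\lambda(\upsilon\circ S^{-1})(a))$. The only difference is organisational: the paper applies strong invariance first and swaps once, whereas you swap first and then need strong invariance to close the loop, so what you flag as the ``main obstacle'' is in fact a two-line computation once you commit to it. For (2) the paper is terse (``a similar argument completes the proof''), and your antipode trick via $\hat S$ to transfer from $\rho(\upsilon)$ to $\lambda(\upsilon)$ is a clean and valid way to fill this in; the alternative the paper presumably has in mind is a parallel formula for $(\psi\cdot a)\upsilon$ in terms of $\lambda$, but your route avoids redoing the computation.
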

\begin{proof}
  (1) Let $a,b \in A$.  By Corollary
  \ref{corollary:dual-strong-invariance} and Proposition
  \ref{proposition:convolution} (4),
  \begin{align*}
    (\upsilon (a \cdot \phi))(b) = \upsilon(\rho(a \cdot \phi)(b)) &=
    \upsilon(S^{-1}(\rho(\phi \cdot b)(a)))  \\ &= (
    \phi \cdot b)(\lambda(\upsilon \circ S^{-1})(a))
\\ & = (\lambda(\upsilon \circ S^{-1})(a) \cdot \phi)(b)
= (S(\rho(\upsilon)(S^{-1}(a))) \cdot \phi)(b).
  \end{align*}

  (2) Assertion (1) implies that $\upsilon (a \cdot \phi)$
  belongs to $\hat A$ if and only if $\rho(\upsilon)(S^{-1}(a))$ lies in
  $A$. A similar argument completes the proof.
\end{proof}
The next result will not be used but may prove useful elsewhere.
\begin{lemma}
  Let $\upsilon \in \dmAm$ such that $\rho(\upsilon)(a) \in A$ and
  $\lambda(\upsilon)(a) \in A$ for all $a\in A$. Then $\eps \circ
  \rho(\upsilon) =\upsilon = \eps \circ \lambda(\upsilon)$.
\end{lemma}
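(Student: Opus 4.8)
The plan is to show that $\eps$ is a two-sided unit for the convolution operators $\rho$ and $\lambda$, bootstrapping from the already-known case of elements of $\hat A$. I would prove $\eps \circ \rho(\upsilon) = \upsilon$; the identity $\eps \circ \lambda(\upsilon) = \upsilon$ follows from the same argument with $\lambda$ in place of $\rho$ throughout. The hypothesis that $\rho(\upsilon)(a),\lambda(\upsilon)(a) \in A$ for all $a$ is used in two ways: it makes $\eps \circ \rho(\upsilon)$ and $\eps \circ \lambda(\upsilon)$ well-defined functionals on $A$, and, via Lemma \ref{lemma:dual-multiplier-characterization}(2), it guarantees that $\upsilon\omega$ and $\omega\upsilon$ lie in $\hat A$ for every $\omega \in \hat A$.

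First I would record that $\eps \circ \rho(\omega) = \omega$ and $\eps \circ \lambda(\omega) = \omega$ for every $\omega \in \hat A$: writing $\omega = a\cdot\phi$ with $a\in A$, the associated maps are $\comega\colon b \mapsto \cphic(ba)$ and $\bomega\colon b\mapsto \bphi(ba)$, which lie in $A\cdot\dcA$ and $A\cdot\dbA$ respectively, so Lemma \ref{lemma:convolution}(1) applies and yields the claim (with $\rho(\comega)=\rho(\omega)$, $\lambda(\bomega)=\lambda(\omega)$ by Proposition \ref{proposition:convolution}(1)). Now set $\upsilon' := \eps \circ \rho(\upsilon)$, a functional on $A$. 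For $\omega \in \hat A$ we have $\upsilon\omega = \upsilon \circ \rho(\omega) \in \hat A$ by Proposition \ref{proposition:dual-algebra} and Lemma \ref{lemma:dual-multiplier-characterization}(2), and Proposition \ref{proposition:convolution}(3) gives $\rho(\upsilon\omega) = \rho(\upsilon\circ\rho(\omega)) = \rho(\upsilon)\rho(\omega)$ as maps $A\to A$. Hence $\upsilon\omega = \eps \circ \rho(\upsilon\omega) = \upsilon' \circ \rho(\omega)$, and comparing with $\upsilon\omega = \upsilon \circ \rho(\omega)$ we get $\upsilon' \circ \rho(\omega) = \upsilon \circ \rho(\omega)$ for all $\omega \in \hat A$. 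Since $A$ is idempotent as an $\hat A$-bimodule (Proposition \ref{proposition:dual-algebra}), the elements $\rho(\omega)(b) = \omega \ast b$ with $\omega \in \hat A$, $b\in A$, span $A$, so $\upsilon' = \upsilon$. The $\lambda$-case is identical, using $\omega\upsilon = \upsilon\circ\lambda(\omega)$, $\lambda(\upsilon\circ\lambda(\omega)) = \lambda(\upsilon)\lambda(\omega)$, and $A = \lspan\{\lambda(\omega)(b) : \omega\in\hat A, b\in A\} = \lspan\{b\ast\omega\}$.

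The argument is essentially formal once these ingredients are in place, so there is no serious obstacle; the only points needing care are the bookkeeping in the first step — checking that the component maps of an $\omega\in\hat A$ really are honest cut-downs of $\bphi$ and $\cphic$, so that Lemma \ref{lemma:convolution}(1) applies verbatim — and verifying at each stage that the compositions of $\rho(\upsilon)$ or $\lambda(\upsilon)$ with operators taking values in $A$ are legitimate, which is precisely what the standing hypothesis $\rho(\upsilon)(A),\lambda(\upsilon)(A)\subseteq A$ supplies.
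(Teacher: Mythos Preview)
Your proof is correct but takes a genuinely different route from the paper's. The paper gives a short direct computation using the counit axioms: it writes $\eps(\rho(\upsilon)(\epsc(b)a))$, uses Proposition \ref{proposition:convolution}(5) to pull $\epsc(b)$ outside $\rho(\upsilon)$, then expands via \eqref{eq:base-counit-mult} and the right counit diagram \eqref{eq:right-counit} to obtain $\upsilon(\epsc(b)a)$; since $\epsc(A)A=A$, the claim follows. Your argument instead bootstraps from the already-established identity $\eps\circ\rho(\omega)=\omega$ for $\omega\in\hat A$ (Lemma \ref{lemma:convolution}(1)) by invoking the module structure: you use Lemma \ref{lemma:dual-multiplier-characterization}(2) to see $\upsilon\omega\in\hat A$, Proposition \ref{proposition:convolution}(3) to factor $\rho(\upsilon\omega)=\rho(\upsilon)\rho(\omega)$, and idempotence of $A$ as an $\hat A$-bimodule to separate points. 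The paper's route is shorter and more self-contained, touching only the counit axioms; yours is more structural, making explicit how the lemma sits inside the multiplier framework of \S\ref{subsection:multipliers}, at the cost of invoking more of the machinery developed for duality.
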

\begin{proof}
  We only prove the first equation.  Equation
  \eqref{eq:base-counit-mult}, the counit properties \eqref{eq:right-counit} and Proposition
  \ref{proposition:convolution} (5) imply
\begin{align*}
  \eps(\rho(\upsilon)(\epsc(b)a)) &=
  \eps(\epsc(b)\rho(\upsilon)(a)) \\ 
&= \eps(b\rho(\upsilon)(a)) \\
&= (\eps \oo \upsilon)((b \oo 1)\Delta_{C}(a)) \\
&= \upsilon((S_{B}^{-1}\circ \epsc \oo \id)((b \oo 1)\Delta_{C}(a)))
= \upsilon(\epsc(b)a)
\end{align*}
for all $a,b\in A$. Since $\epsc(A)A=A$, we can conclude $\eps \circ
\rho(\upsilon) = \upsilon$.
\end{proof}
We can now identify the multiplier algebra $M(\hat A)$  with a
subspace of $\dmAm$ as follows.
\begin{theorem} \label{theorem:dual-multipliers} Let
  $(\mathcal{A},\mu,\phi,\psi)$ be a measured regular multiplier Hopf
  algebroid with dual algebra $\hat A$. Denote by $\tilde A \subseteq
  \dmAm$ the subspace of all $\upsilon$ satisfying that
  $\rho(\upsilon)(a) \in A$ and $\lambda(\upsilon)(a) \in A$ for all $a \in
  A$. Then each $\upsilon \in \tilde A$ defines a multiplier of $\hat
  A$ such that $\upsilon \omega = \upsilon \circ \rho(\omega)$ and
  $\omega \upsilon = \upsilon \circ \lambda(\omega)$ for all $\omega
  \in \hat A$, and the resulting map $\tilde A \to
  M(\hat A)$ is a linear isomorphism. With respect to this
  isomorphism, $\eps$ corresponds to the unit $1_{M(\hat A)}$, and the
  elements $x\in B=\hat C$ and $y\in C=\hat B$ correspond to the
  functionals $\eps\cdot x$ and $y\cdot \eps$, respectively.
\end{theorem}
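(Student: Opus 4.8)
The plan is to realize the claimed isomorphism by the map $\Phi\colon\tilde A\to M(\hat A)$ that sends $\upsilon$ to the pair of operators on $\hat A$ given by $L_{\upsilon}(\omega):=\upsilon\omega=\upsilon\circ\rho(\omega)$ and $R_{\upsilon}(\omega):=\omega\upsilon=\upsilon\circ\lambda(\omega)$, the products being taken in the $\hat A$-bimodule $\dmAm$ of Proposition \ref{proposition:dual-algebra}. First I would check $\Phi$ is well defined: by Lemma \ref{lemma:dual-multiplier-characterization}~(2), membership of $\upsilon$ in $\tilde A$ is exactly the requirement that $L_{\upsilon},R_{\upsilon}$ take values in $\hat A$, and the bimodule identities of Proposition \ref{proposition:dual-algebra} give $L_{\upsilon}(\omega\omega')=L_{\upsilon}(\omega)\omega'$, $R_{\upsilon}(\omega'\omega)=\omega'R_{\upsilon}(\omega)$ and $R_{\upsilon}(\omega)\omega'=\omega L_{\upsilon}(\omega')$, so that $\Phi(\upsilon)=(L_{\upsilon},R_{\upsilon})\in M(\hat A)$, with the stated product formulas holding by definition. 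The map is clearly linear, and it is injective because $\dmAm$ is non-degenerate as an $\hat A$-bimodule: $\Phi(\upsilon)=0$ forces $\omega\upsilon=0$ for all $\omega\in\hat A$, hence $\upsilon=0$.

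For surjectivity, the main step, I would use biduality. By Theorem \ref{theorem:dual-measured} the dual $(\hat{\mathcal A},\hat\mu,\hat\phi,\hat\psi)$ is again a measured regular multiplier Hopf algebroid, so Lemma \ref{lemma:extension-multipliers} applied to $\hat{\mathcal A}$ extends every element of $\hhat A$ canonically to a functional on $M(\hat A)$. Given $\hat T\in M(\hat A)$ set $\upsilon(a):=\dual a(\hat T)$, where $a\mapsto\dual a$ is the biduality isomorphism of Theorem \ref{theorem:biduality}; by Lemma \ref{lemma:dual-evaluation} applied to $\hat{\mathcal A}$, together with $\hhat A=A$ and $\hhat\mu=\mu$, one gets $\upsilon\in\dmAm$. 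It remains to prove $\Phi(\upsilon)=\hat T$, which by Lemma \ref{lemma:dual-multiplier-characterization}~(2) then also places $\upsilon$ in $\tilde A$. The crux is the identity
\[ \dual{\rho(\omega)(a)}(\hat T)=(\hat T\omega)(a)\qquad(a\in A,\ \omega\in\hat A). \]
For $\hat T=\hat\nu\in\hat A$ this is $\hat\nu(\rho(\omega)(a))=(\hat\nu\omega)(a)$, true by Proposition \ref{proposition:dual-algebra}. For general $\hat T$ I would write $\dual a=\hat c_{0}\cdot\hat\phi$ and $\dual{\rho(\omega)(a)}=\hat c\cdot\hat\phi$ (possible since $\hhat A=\hat A\cdot\hat\phi$), so that their extensions to $M(\hat A)$ are $\hat T\mapsto\hat\phi(\hat T\hat c_{0})$ and $\hat T\mapsto\hat\phi(\hat T\hat c)$ by Lemma \ref{lemma:extension-multipliers}; evaluating both representations on $\hat A$ gives $\hat\phi(\hat\nu\hat c)=\hat\nu(\rho(\omega)(a))=(\hat\nu\omega)(a)=\hat\phi((\hat\nu\omega)\hat c_{0})=\hat\phi(\hat\nu(\omega\hat c_{0}))$ for all $\hat\nu\in\hat A$, whence $\hat c=\omega\hat c_{0}$ because $\hat\phi$ is faithful; then $\dual{\rho(\omega)(a)}(\hat T)=\hat\phi(\hat T\hat c)=\hat\phi((\hat T\omega)\hat c_{0})=\dual a(\hat T\omega)=(\hat T\omega)(a)$ (the last step using $\hat T\omega\in\hat A$). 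This yields $\upsilon\circ\rho(\omega)=\hat T\omega$, and symmetrically $\upsilon\circ\lambda(\omega)=\omega\hat T$, so $\Phi(\upsilon)=\hat T$. Hence $\Phi$ is a bijection with inverse $\hat T\mapsto(a\mapsto\dual a(\hat T))$.

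It remains to identify the distinguished elements. Since $\Phi$ is bijective and a multiplier of $\hat A$ is determined by its left multiplication operator, it is enough to compare left actions on $\hat A$; membership of $\eps$, $\eps\cdot x$ ($x\in B=\hat C$) and $y\cdot\eps$ ($y\in C=\hat B$) in $\tilde A$ then follows from surjectivity of $\Phi$ and non-degeneracy of $\dmAm$ (those three functionals lie in $\dmAm$ by Example \ref{example:base-counit} and Lemma \ref{lemma:base}~(1)). For $\omega\in\hat A$ one has $\eps\omega=\eps\circ\rho(\omega)=\omega$ by Lemma \ref{lemma:convolution}~(1) (the associated map ${_C\omega}$ lying in $A\cdot\dcA$), giving $\Phi(\eps)=1_{M(\hat A)}$. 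For $x\in B$, using $x\,\rho(\omega)(a)=\rho(\omega\cdot S_{C}^{-1}(x))(a)$ from Proposition \ref{proposition:convolution}~(5) and again $\eps\circ\rho(\nu)=\nu$ for $\nu\in\hat A$, one gets $(\eps\cdot x)\omega=\eps\circ\rho(\omega\cdot S_{C}^{-1}(x))=\omega\cdot S_{C}^{-1}(x)=\iota_{\hat C}(x)\omega$ by Proposition \ref{proposition:dual-quantum-graphs}; likewise $(y\cdot\eps)\omega=y\cdot\omega=\iota_{\hat B}(y)\omega$, using $\rho(\omega)(a)\,y=\rho(\omega)(ay)$ from Proposition \ref{proposition:convolution}~(5). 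Thus $\Phi(\eps\cdot x)=\iota_{\hat C}(x)$ and $\Phi(y\cdot\eps)=\iota_{\hat B}(y)$.

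The main obstacle is the surjectivity step, and specifically the identity $\dual{\rho(\omega)(a)}(\hat T)=(\hat T\omega)(a)$: two functionals on $M(\hat A)$ that agree on $\hat A$ need not coincide, so the argument must genuinely exploit the uniqueness of the canonical extension supplied by Lemma \ref{lemma:extension-multipliers} for $\hat{\mathcal A}$ together with faithfulness of $\hat\phi$, as indicated above. Everything else reduces to the bimodule formalism of Proposition \ref{proposition:dual-algebra}, the convolution identities of Proposition \ref{proposition:convolution}, the counit relations \eqref{eq:convolution-counit}, and the already-established biduality.
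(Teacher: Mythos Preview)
Your proof is correct and follows essentially the same strategy as the paper: the inverse map sends $\hat T\in M(\hat A)$ to the functional $a\mapsto\dual a(\hat T)$, and one verifies this lands in $\tilde A$ with $\Phi(\upsilon)=\hat T$. The difference is one of framing. The paper writes the candidate inverse concretely as $\upsilon(a)=\hat\psi(T(S(a)\cdot\phi))$ and checks $\upsilon\in\dmAm$ by a direct four-line computation with the module structures; you instead recognise this formula as evaluation via biduality (since $\dual a=(S(a)\cdot\phi)\cdot\hat\psi$) and obtain $\upsilon\in\dmAm$ in one stroke from Lemma~\ref{lemma:dual-evaluation} applied to $\hat{\mathcal A}$ together with $\hhat\mu=\mu$. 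For the key identity $(\hat T\omega)(a)=\upsilon(\rho(\omega)(a))$, the paper appeals to Lemma~\ref{lemma:dual-multiplier-characterization}~(1), which gives $\omega(S(a)\cdot\phi)=S(\rho(\omega)(a))\cdot\phi$ directly, whereas you re-derive the equivalent relation $\hat c=\omega\hat c_0$ through faithfulness of $\hat\phi$. Both routes are short; yours is slightly more conceptual in that it reuses the biduality machinery already built, while the paper's is more self-contained and avoids passing through $\hhat A$. Your treatment of the distinguished elements via left actions and Proposition~\ref{proposition:convolution}~(5) is likewise equivalent to the paper's direct computation with the defining formula for $\upsilon$.
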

\begin{proof}
  The map $\tilde A \to M(\hat A)$ is well-defined and injective by
  Lemma \ref{lemma:dual-multiplier-characterization} and Proposition
  \ref{proposition:dual-algebra}. We only need to show that this map
  is surjective. So, let $T \in M(\hat A)$ and define $\upsilon \in
  \dual{A}$ by
  \begin{align*}
   \upsilon(a) :=  \hat \psi(T(S(a) \cdot \phi)),
  \end{align*}
  where $\hat \psi$ denotes the dual right integral defined in Theorem
  \ref{theorem:dual-measured}.  We claim that $\upsilon$ belongs to
  $\dmAm$. Indeed, using Proposition
  \ref{proposition:dual-quantum-graphs} and the relation $y \cdot \phi
  = \phi \cdot S^{-2}(y)$ from Theorem
  \ref{theorem:modular-automorphism}, we find that for all $x\in B$,
  \begin{align*}
    \upsilon(xa) = \hat\psi(T(S(a)S(x) \cdot \phi)) &= \hat\psi(T(S(a)
    \cdot \phi \cdot S^{-1}(x))) \\ & = \hat\psi(Tx(S(a) \cdot \phi)) =
     \mu_{B}(x {_{\hat
      C}\hat\psi}((S(a) \cdot \phi)\sigma^{\hat\psi}(T))), \\
  \upsilon(ax) =\hat\psi(T(S(x)S(a) \cdot \phi)) &=
  \hat\psi(TS(x)(S(a) \cdot \phi)) \\ &= \mu_{C}(S(x){_{\hat
      B}\hat\psi}((S(a) \cdot \phi)\sigma^{\hat\psi}(T))) 
\\ &= \mu_{B}(S^{-1}({_{\hat
      B}\hat\psi}((S(a) \cdot \phi)\sigma^{\hat\psi}(T))) x).
  \end{align*}
 Similar calculations show that $\upsilon(ya)$ and $\upsilon(ay)$
  can be written in the form $\mu_{C}(y\cupsilon(a))$ and
  $\upsilon(ay)=\mu_{C}(\upsilonc(a)y)$ with suitable maps
  $\cupsilon,\upsilonc$.  

We finally show that $T=\upsilon$.  Let $a\in A$. Then equation
  \eqref{eq:dual-right-integral} and an application of  Lemma
  \ref{lemma:dual-multiplier-characterization} (1)  to
  $\omega$  show that
  \begin{align*}
    (T\omega)(a) &= \hat\psi(T\omega(S(a) \cdot \phi)) =
    \psi\hat(T(S(\rho(\omega)(a)) \cdot \phi)) =
    \upsilon(\rho(\omega)(a)) = (\upsilon\omega)(a). 
\end{align*}

In the special case where $T=x \in B=\hat C$ or $T=y\in C=\hat C$,
the assertion follows easily from the fact that $\eps$, regarded as an
element of $M(\hat A)$, acts as the identity. Alternatively, in the
first case, the
corresponding functional $\upsilon$ is given in by
\begin{align*}
  \upsilon(a) = \hat \psi(x(S(a)\cdot \phi)) = \hat\psi((S(a) \cdot
  \phi \cdot S^{-1}(x))) = \eps(S(a)S(x)) = \eps(xa),
\end{align*}
 where we used Example \ref{example:base-counit}, and a similar
 calculation applies to the second case.
\end{proof}

The dual algebra $\hat A = A\cdot \phi = \phi \cdot A$ evidently is a
bimodule over $M(A)$, and $\hat{\hat A}$ is a bimodule over $M(\hat
A)$.  The corresponding $M(\hat A)$-bimodule structure on
$A$, which is obtained via the identification $A \to \hat{\hat A}$,
$a\mapsto \dual{a}$, in Lemma \ref{lemma:dual-evaluation}, takes the
following form.
\begin{lemma} \label{lemma:dual-bimodule-explicit}
Let $a \in A$ and $\upsilon \in \tilde A \cong M(\hat A)$. Then
  $\upsilon \cdot \dual{a} = \dual{(\rho(\upsilon)(a))}$ and $\dual{a}
  \cdot \upsilon = \dual{(\lambda(\upsilon)(a))}$.
\end{lemma}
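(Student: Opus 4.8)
The plan is to reduce both identities to Proposition~\ref{proposition:convolution}(2) by unwinding the two module structures involved. Recall first that $\dual{(\hat A)}$, and in particular its subspace $\hhat A$, carries the standard $M(\hat A)$-bimodule structure, so that $(T \cdot \chi)(\omega) = \chi(\omega T)$ and $(\chi \cdot T)(\omega) = \chi(T\omega)$ for $T \in M(\hat A)$, $\chi \in \dual{(\hat A)}$ and $\omega \in \hat A$. Recall also that, under the identification $\tilde A \cong M(\hat A)$ of Theorem~\ref{theorem:dual-multipliers}, an element $\upsilon \in \tilde A$ acts on $\hat A$ by $\upsilon\omega = \upsilon \circ \rho(\omega)$ and $\omega\upsilon = \upsilon \circ \lambda(\omega)$, and that $\rho(\upsilon)(a) \in A$ and $\lambda(\upsilon)(a) \in A$ by the very definition of $\tilde A$, so that $\dual{(\rho(\upsilon)(a))}$ and $\dual{(\lambda(\upsilon)(a))}$ are legitimate elements of $\hhat A$ via the embedding $a \mapsto \dual a$ of Lemma~\ref{lemma:dual-evaluation}.

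Given these preliminaries, I would simply evaluate both sides of each identity on an arbitrary $\omega \in \hat A$. For the first identity,
\[
(\upsilon \cdot \dual a)(\omega) = \dual a(\omega\upsilon) = (\omega\upsilon)(a) = (\upsilon \circ \lambda(\omega))(a) = \upsilon(\lambda(\omega)(a)),
\]
whereas $\dual{(\rho(\upsilon)(a))}(\omega) = \omega(\rho(\upsilon)(a))$; the two agree because $\upsilon \circ \lambda(\omega) = \omega \circ \rho(\upsilon)$ by Proposition~\ref{proposition:convolution}(2). Since $\omega$ was arbitrary, this gives $\upsilon \cdot \dual a = \dual{(\rho(\upsilon)(a))}$. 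Symmetrically, for the second identity,
\[
(\dual a \cdot \upsilon)(\omega) = \dual a(\upsilon\omega) = (\upsilon\omega)(a) = (\upsilon \circ \rho(\omega))(a) = \upsilon(\rho(\omega)(a)),
\]
while $\dual{(\lambda(\upsilon)(a))}(\omega) = \omega(\lambda(\upsilon)(a))$; these agree because $\upsilon \circ \rho(\omega) = \omega \circ \lambda(\upsilon)$, again by Proposition~\ref{proposition:convolution}(2). Hence $\dual a \cdot \upsilon = \dual{(\lambda(\upsilon)(a))}$.

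The only point requiring care is the bookkeeping with conventions: one must keep track of the left/right reversal in the bimodule structure on a dual space, $(a \cdot \omega \cdot b)(c) = \omega(bca)$, and of the correspondence $\omega\upsilon = \upsilon \circ \lambda(\omega)$, $\upsilon\omega = \upsilon \circ \rho(\omega)$ from Theorem~\ref{theorem:dual-multipliers}. Beyond matching these correctly there is no genuine obstacle; both assertions then become immediate instances of Proposition~\ref{proposition:convolution}(2).
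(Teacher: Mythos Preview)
Your proof is correct and follows essentially the same approach as the paper's own proof: evaluate at an arbitrary $\omega \in \hat A$, unwind the bimodule action on the dual, and invoke the identity $\upsilon \circ \lambda(\omega) = \omega \circ \rho(\upsilon)$ from Proposition~\ref{proposition:convolution}(2). The paper is slightly terser---it writes $(\omega\upsilon)(a) = (\omega \circ \rho(\upsilon))(a)$ directly and only treats the first equation---but the substance is identical.
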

\begin{proof}
  We only prove the first equation. For all $\omega \in \hat A$,
  \begin{align*}
    (\upsilon \cdot \dual{a})(\omega) &= \dual{a}(\omega\upsilon) =
    (\omega\upsilon)(a) = (\omega \circ \rho(\upsilon))(a) =
    \dual{(\rho(\upsilon)(a))}(\omega). \qedhere
  \end{align*}
\end{proof}

The multiplication on $\hat A$ dualizes the comultiplication on $A$,
and the same is true when we pass to $M(A)$. To make this assertion
precise, we first extend the comultiplications of $\mathcal{A}$ to
multipliers.

Let us write $(\AlA)_{(A\oo A)}$ and $_{(A \oo A)}(\ArA)$ when we
regard $\AlA$ and $\ArA$ as a right or as a left module over $A
\otimes A$ via right or left multiplication, respectively.  
\begin{lemma} \label{lemma:comult-extension}
  The maps $\Delta_{B}$ and $\Delta_{C}$ extend uniquely to
  homomorphisms
  \begin{align*}
    \Delta_{B} &\colon L(A) \to \End((\AlA)_{(A \oo A)}), & \Delta_{C}
    &\colon R(A) \to \End({_{(A \oo A)}(\ArA)})
  \end{align*}
  such that the following diagrams commute for each $a \in L(A)$ and
  $b \in R(A)$:
  \begin{align*}
    \xymatrix@R=5pt{ \ACA \ar[r]^{\Tl} \ar[d]_{\id \otimes a} & \AlA
      \ar[d]^{\Delta_{B}(a)} & \AbA \ar[l]_{\Tr} \ar[d]^{a\oo\id} &&
      \AcA \ar[r]^{\lT} \ar[d]_{\id \otimes b} & \ArA
      \ar[d]^{\Delta_{C}(b)} & \ABA \ar[l]_{\rT} \ar[d]^{b\oo \id} \\
      \ACA \ar[r]_{\Tl} & \AlA & \AbA \ar[l]^{\Tr} && \AcA
      \ar[r]_{\lT} & \ArA & \ABA \ar[l]^{\rT} }
  \end{align*}
\end{lemma}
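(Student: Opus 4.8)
The plan is to treat $\Delta_B$; the statement for $\Delta_C$ is the exact mirror image, obtained by interchanging left and right multiplication throughout (or, if preferred, by transporting through the antipode via \eqref{dg:galois-inverse}). The ingredients are: bijectivity of the canonical maps $\Tl,\Tr$ of a regular multiplier Hopf algebroid, the module relations \eqref{eq:left-galois-module}, and the elementary observation that an $a\in L(A)=\End(A_A)$ commutes with every right multiplication on $A$ and hence, after extension, with right multiplication by $B\subseteq M(A)$.

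Uniqueness is forced: if the right-hand square of the lemma commutes for an extension, then $\Delta_B(a)\circ\Tr=\Tr\circ(a\otimes\iota)$, and bijectivity of $\Tr$ pins down $\Delta_B(a)=\Tr\circ(a\otimes\iota)\circ\Tr^{-1}$. I would take this as the definition. It is legitimate: $a\otimes\iota$ is a well-defined endomorphism of $\AbA$ because the first leg is a right $B$-module and $a$ commutes with the right $B$-action, so the $B$-balancing is preserved; for the same reason $a\otimes\iota$, and hence $\Delta_B(a)$, respects the pertinent right $A\otimes A$-module structures, so $\Delta_B(a)\in\End((\AlA)_{A\otimes A})$. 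From $(aa')\otimes\iota=(a\otimes\iota)(a'\otimes\iota)$ and $\iota_A\otimes\iota=\iota$ it follows that $\Delta_B\colon L(A)\to\End((\AlA)_{A\otimes A})$ is a unital homomorphism, and the right-hand square commutes by construction.

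That the new $\Delta_B$ extends the given comultiplication follows from the chain, valid for $a,b,c\in A$,
\[
  \Delta_B(a)\bigl(\Tr(c\otimes b)\bigr)=\Tr(ac\otimes b)=\Delta_B(ac)(1\otimes b)=\Delta_B(a)\bigl(\Delta_B(c)(1\otimes b)\bigr),
\]
whose first term uses the new $\Delta_B(a)$ and whose last uses the original one (multiplicative on $A$); since the $\Tr(c\otimes b)$ span $\AlA$, the two agree. Now for general $a\in L(A)$ and $b,c\in A$, using $ab\in A$ and multiplicativity,
\[
  \Delta_B(a)\bigl(\Tl(c\otimes b)\bigr)=\Delta_B(a)\bigl(\Delta_B(b)(c\otimes 1)\bigr)=\Delta_B(ab)(c\otimes 1)=\Tl(c\otimes ab).
\]
In particular, if $\sum_i c_i\otimes b_i=0$ in $\ACA$, then $\sum_i\Delta_B(b_i)(c_i\otimes 1)=0$, and applying the linear map $\Delta_B(a)$ gives $\sum_i\Tl(c_i\otimes ab_i)=0$, whence $\sum_i c_i\otimes ab_i=0$ by injectivity of $\Tl$. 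Hence $c\otimes b\mapsto c\otimes ab$ is a well-defined endomorphism $\iota\otimes a$ of $\ACA$, and the last display is exactly the commutativity of the left-hand square.

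The step that needs care — and that dictates the order above — is that ``$\iota\otimes a$ on $\ACA$'' is not manifestly well-defined when $a$ is merely a left multiplier (the relevant leg of $\ACA$ carries a left $C$-action, with which $a$ need not commute), whereas ``$a\otimes\iota$ on $\AbA$'' is. The remedy is to define $\Delta_B(a)$ through the canonical map on the ``good'' side, $\Tr$, and only afterwards extract well-definedness on the $\Tl$-side from injectivity of $\Tl$ and the homomorphism property. In the mirror argument for $\Delta_C$ one sets $\Delta_C(b):=\rT\circ(b\otimes\iota)\circ\rT^{-1}$ for $b\in R(A)=\End({}_AA)^{\mathrm{op}}$ — here the good side is $\rT$, since $b$ commutes with left multiplications so that $b\otimes\iota$ is well-defined on $\ABA$ — and then derives the $\lT$-square just as above.
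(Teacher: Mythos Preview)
Your argument is correct and considerably more detailed than the paper's own proof, which reads simply ``Straightforward.'' You correctly identify the one genuinely non-obvious point: for $a\in L(A)$, the map $\iota\otimes a$ is not \emph{a priori} well-defined on $\ACA$ (the second leg carries a left $C$-action, with which a left multiplier need not commute), whereas $a\otimes\iota$ on $\AbA$ is; hence the asymmetry in defining $\Delta_B(a)$ through $\Tr$ first and only then deriving the $\Tl$-square.

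One small reordering is in order. Your claim that ``for the same reason $a\otimes\iota$, and hence $\Delta_B(a)$, respects the pertinent right $A\otimes A$-module structures'' is not quite justified at that point: $\AbA$ carries no natural right $A\otimes A$-module structure compatible with $\Tr$ on the first leg, so the conjugation argument alone does not give $(A\otimes A)$-linearity. The clean way is to postpone this check until both squares are in hand. From the $\Tr$-square one gets $\Delta_B(a)(\xi(1\otimes f))=\Delta_B(a)(\xi)(1\otimes f)$, since $\Tr(c\otimes d)(1\otimes f)=\Tr(c\otimes df)$; from the $\Tl$-square one gets $\Delta_B(a)(\xi(e\otimes 1))=\Delta_B(a)(\xi)(e\otimes 1)$, since $\Tl(g\otimes c)(e\otimes 1)=\Tl(ge\otimes c)$. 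Together these yield $\Delta_B(a)\in\End((\AlA)_{(A\otimes A)})$. With this adjustment the proof is complete.
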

\begin{proof}
 Straightforward.
\end{proof}
Note that the algebras $\End((\AlA)_{(A \oo A)})$ and $\End({_{(A \oo
    A)}(\ArA)})$ naturally embed into $L((\AlA)_{(A \oo A)})$ and
$R({_{(A \oo A)}(\ArA)})$, respectively.
\begin{lemma}  \label{lemma:dual-embeddings-mult}
  There exist embeddings
  \begin{align*}
L((\AlA)_{(A \oo A)}) &\to    \dual{(\hAcA)}, &
R({_{(A\oo A)}(\ArA)}) &\to \dual{(\hAbA)}
  \end{align*}
given by
  \begin{align*}
e((a
    \cdot \phi) \oo (b \cdot \phi))&:= (\phi \ool
   \phi)(e(a \oo
    b)),    &  f((\phi
    \cdot a) \oo (\phi \cdot b))&:= (\phi \oor
    \phi)((a \oo
    b)f)
  \end{align*}
for all $a,b\in A$, $e \in L((\AlA)_{A\oo A})$, $f
  \in R({_{(A \oo A)}(\ArA)})$. 
\end{lemma}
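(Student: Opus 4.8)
The plan is to show that the two formulas define functionals on $\hAcA$ and $\hAbA$ that are compatible with the balanced tensor product relations, and then that these functionals separate points suitably, so the induced maps are well-defined and injective. First I would treat the map $e \mapsto e(\cdot \oo \cdot)$ on $\dual{(\hAcA)}$; the map for $f$ is entirely parallel. Recall that $\hAcA = \hcA \oohc \hAc$, where by Proposition \ref{proposition:dual-quantum-graphs} the module structures are $x \cdot \omega = \omega \cdot S_C^{-1}(x)$ and $\omega \cdot x = \omega \cdot x$ for $x \in \hat C = B$; under the isomorphism $a \cdot \phi \mapsto a$ from $\cA$ to $\hbA$ (and similarly on the right), the relevant balancing is over $B$. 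So I need to check that for $a,b \in A$ and $x \in B$, the value $(\phi \ool \phi)(e((xa)\cdot\phi \oo b\cdot\phi))$ matches $(\phi\ool\phi)(e(a\cdot\phi \oo (x'b)\cdot\phi))$ for the appropriate $x'$, using the intertwining property \eqref{eq:left-galois-module} of the maps $\Tl, \Tr$ that underlie $\Delta_B$, together with the module-map properties of $\ltintb$ from Proposition \ref{proposition:invariant-elements-bimodule} and the factorisation formulas \eqref{eq:bomega}--\eqref{eq:omegac}. The point is that $e$ commutes with right multiplication by $A \oo A$ on $(\AlA)_{(A\oo A)}$, so writing $a \oo b$ through $\Tl^{-1}$ or $\Tr^{-1}$ and pushing the $B$-action through gives the compatibility, because $\phi \ool \phi$ is already known to interact well with these balanced tensor products (cf.\ the computations in Proposition \ref{proposition:dual-algebra} and Lemma \ref{lemma:dual-bijections}).

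Second, for well-definedness I also need that the assignment does not depend on the representation of an element of $\hAcA$ as a sum of elementary tensors $\dual{a'} \oohc \dual{b'}$ — but that is precisely what Lemma \ref{lemma:dual-pairings} and Lemma \ref{lemma:dual-embeddings} already supply: products of the form $\dual{a} \oohc \dual{b}$ span $\hAcA$, and the pairing with $\upsilon \ool \omega$ identifies this space inside $\dual{(\AlA)}$. So the formula $e((a\cdot\phi) \oo (b\cdot\phi)) := (\phi \ool \phi)(e(a\oo b))$ should be read as: evaluate $e$ on the image of $a \oo b \in \AlA$ and pair with $\phi \ool \phi \in \dual{(\AlA)}$; since $e$ is $A\oo A$-linear and $\hAcA$ is identified with a subspace of $\dual{(\AlA)}$ via elementary tensors, the resulting map on $\hAcA$ is determined and linear.

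Third, for injectivity of $L((\AlA)_{(A\oo A)}) \to \dual{(\hAcA)}$, suppose $e(a\oo b)$ pairs to zero against $\phi \ool \phi$ for all $a,b \in A$. Since $\phi$ is faithful and full, and $\bA$ is flat, slice maps of the form $\id \oo S_B^{-1}\circ(d\cdot\cphic)$ separate the points of $\AlA$ (this is exactly the separation argument used in the proof of Theorem \ref{theorem:integrals-uniqueness}); combined with the factorisations \eqref{eq:base-oo-b}, functionals of the form $(c\cdot\phi) \ool (d\cdot\phi)$ separate the points of $(\AlA)_{(A\oo A)}$ — here flatness is used to pass from $A\oo A$ down to the $B$-balanced picture. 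Hence $e(a\oo b) = 0$ for all $a,b$, and since $A\oo A$ acts non-degenerately on $(\AlA)_{(A\oo A)}$ (again by flatness, cf.\ Lemma 2.6 in \cite{timmermann:regular}), $e = 0$. The analogous argument, using right-multiplier convolution operators and $\phi \oor \phi$, handles the map on $R({_{(A\oo A)}(\ArA)}) \to \dual{(\hAbA)}$.

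The main obstacle I expect is bookkeeping the module structures correctly through the dualisation: the dual quantum graph swaps the roles of $B$ and $C$ and replaces $S_B, S_C$ by their inverses (Proposition \ref{proposition:dual-quantum-graphs}), and the isomorphisms $\cA \cong \hbA$, $\Ac \cong \hAb$ etc.\ twist the actions by the modular data from Lemma \ref{lemma:dual-evaluation}. Keeping track of which side each $\phi$-translate sits on, and which antipode power appears in the relevant factorisation, is where the care is needed; the underlying algebra — that $e$ is $A\oo A$-linear, that $\phi \ool \phi$ respects the balancing, and that faithfulness plus flatness give enough separation — is routine once the identifications are pinned down.
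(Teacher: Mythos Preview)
Your proposal is correct and essentially parallels the paper's proof for well-definedness, which likewise defers to the computations behind Lemma~\ref{lemma:dual-pairings}. For injectivity there is a mild but genuine difference in packaging: the paper, having already established biduality in Theorem~\ref{theorem:biduality}, simply invokes the \emph{dual version} of Lemma~\ref{lemma:dual-embeddings} --- that lemma applied to $(\hat{\mathcal{A}},\hat\mu,\hat\phi,\hat\psi)$ --- to conclude that functionals of the form $\upsilon\ool\omega$ with $\upsilon,\omega\in\hat A$ separate the points of $\AlA$. Combined with the identity $e(a\cdot\upsilon\oo b\cdot\omega)=(\upsilon\ool\omega)(e(a\oo b))$, which is immediate from right $A\oo A$-linearity of $e$, this gives $e(a\oo b)=0$ for all $a,b$, whence $e=0$. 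Your direct argument via faithfulness of $\phi$ and flatness accomplishes the same separation without routing through biduality; it is essentially the proof of Lemma~\ref{lemma:dual-embeddings} unwound on the other side of the pairing. That buys you logical independence from Theorem~\ref{theorem:biduality}, at the cost of redoing a computation the paper can quote. One notational slip: in your second paragraph you refer to ``elementary tensors $\dual{a'}\oohc\dual{b'}$'' as elements of $\hAcA$, but those are functionals \emph{on} $\hAcA$; the elements you mean are $(a'\cdot\phi)\oohc(b'\cdot\phi)$.
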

\begin{proof}
  Similar arguments as in the case of Lemma \ref{lemma:dual-pairings}
  show that the maps above are well-defined.  Assume that $e \in
  L((\AlA)_{(A \oo A)})$ and $e(\upsilon \oo \omega)=0$ for all
  $\upsilon,\omega \in \hat A$. Then $e(a\cdot \upsilon \oo b\cdot
  \omega) = (\upsilon \ool \omega)(e(a\oo b)) = 0$ for all
  $\upsilon,\omega\in \hat A$ and $a,b\in A$, and by the dual version
  of Lemma \ref{lemma:dual-embeddings}, we can conclude that $e(a \oo
  b) = 0$ for all $a,b\in A$, whence $e=0$.  A similar argument shows
  that the second map is injective.
\end{proof}

We now can evaluate the images $\Delta_{B}(c)$ and $\Delta_{C}(c)$ of
a multiplier $c\in M(A)$ using the embeddings above, and make precise
the idea that the comultiplication dualises the multiplication.
\begin{proposition} \label{proposition:mult-comult}
  Let $(\mathcal{A},\mu,\phi,\psi)$ be a measured regular multiplier
  Hopf algebroid with dual algebra $\hat A$. Then
  for all $\upsilon,\omega \in
\hat A$ and $c \in M(A)$,
  \begin{align*}
    (\Delta_{B}(c))(\upsilon \oo \omega) = (\upsilon \omega)(c) =
    (\Delta_{C}(c))(\upsilon \oo \omega).
  \end{align*}
\end{proposition}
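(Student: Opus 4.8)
The plan is to reduce each of the two identities to a computation already performed in the proof of Proposition~\ref{proposition:dual-algebra}, and to observe that this computation survives the replacement of an element of $A$ by a multiplier $c\in M(A)$ by virtue of Lemma~\ref{lemma:comult-extension} and Lemma~\ref{lemma:extension-multipliers}. The two assertions are genuinely two separate statements --- $\Delta_B(c)$ is viewed as a functional on $\hAcA$ and $\Delta_C(c)$ as a functional on $\hAbA$ via the embeddings $e$ and $f$ of Lemma~\ref{lemma:dual-embeddings-mult} --- but both will be identified with the single number $(\upsilon\omega)(c)$, so they are handled the same way; I spell out the first.

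Write $\upsilon = a\cdot\phi$ and $\omega = b\cdot\phi$ with $a,b\in A$, which is possible since $\hat A = A\cdot\phi$, and choose $d_i,e_i\in A$ with $a\oo b = \sum_i\Delta_B(d_i)(e_i\oo 1) = \Tl\bigl(\sum_i e_i\oo d_i\bigr)$, using bijectivity of $\Tl$. By the definition of the embedding $e$ in Lemma~\ref{lemma:dual-embeddings-mult},
\begin{align*}
  (\Delta_B(c))(\upsilon\oo\omega) = (\phi\ool\phi)(\Delta_B(c)(a\oo b)).
\end{align*}
Now the chain of equalities in the proof of Proposition~\ref{proposition:dual-algebra} that evaluates $(\phi\ool\phi)(\Delta_B(c)(a\oo b))$ remains valid verbatim for an arbitrary $c\in M(A)$: by Lemma~\ref{lemma:comult-extension} we have $\Delta_B(c)(\Delta_B(d_i)(e_i\oo 1)) = \Delta_B(cd_i)(e_i\oo 1)$ with $cd_i\in A$, and the remaining steps --- which use $\phi\ool\phi = \phi\circ(\id\oo S_B^{-1}\circ\cphic)$, left-invariance of $\phi$, and the defining relations \eqref{eq:comega}, \eqref{eq:omegac} of the maps associated to $\phi$ (note $\cphi = \phic = \cphic$ since $\phi$ is a left integral) --- involve only the element $cd_i\in A$ and the algebra structure of $A$. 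This yields $(\phi\ool\phi)(\Delta_B(c)(a\oo b)) = \phi(cf)$, where $f\in A$ is the element with $\upsilon\omega = f\cdot\phi$ isolated in \eqref{eq:dual-product}. Finally, $\phi(cf) = (f\cdot\phi)(c) = (\upsilon\omega)(c)$ by Lemma~\ref{lemma:extension-multipliers}, which is the first claimed identity.

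For the second identity I would argue symmetrically, writing $\upsilon = \phi\cdot a$, $\omega = \phi\cdot b$ with $a,b\in A$, identifying $(\Delta_C(c))(\upsilon\oo\omega)$ with $(\phi\oor\phi)((a\oo b)\Delta_C(c))$ through the second embedding of Lemma~\ref{lemma:dual-embeddings-mult}, using the extension of $\Delta_C$ to $R(A)$ from Lemma~\ref{lemma:comult-extension}, and running the right-handed counterpart of the computation above --- now with $\lT$ in place of $\Tl$, the left-invariance of $\phi$ with respect to $\Delta_C$, and the relation $\upsilon\omega = \omega\circ\lambda(\upsilon)$ --- to arrive at $(\Delta_C(c))(\upsilon\oo\omega) = \phi(gc) = (\phi\cdot g)(c) = (\upsilon\omega)(c)$, where $\phi\cdot g = \upsilon\omega$. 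I expect no genuine difficulty beyond bookkeeping: the whole point is that every manipulation which is obvious for $c\in A$ has already been made available for $c\in M(A)$ by Lemmas~\ref{lemma:comult-extension} and \ref{lemma:extension-multipliers}, so these must simply be invoked at the right places, and on the $C$-side one must keep track of the left/right module conventions for $\Delta_C$, $\lT$ and $\rT$ with the care used throughout Section~\ref{section:multiplier-bialgebroids}.
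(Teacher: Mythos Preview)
Your proposal is correct and follows essentially the same route as the paper: write $\upsilon=a\cdot\phi$, $\omega=b\cdot\phi$, decompose $a\oo b=\sum_i\Delta_B(d_i)(e_i\oo 1)$ via bijectivity of $\Tl$, use the multiplicativity $\Delta_B(c)\Delta_B(d_i)=\Delta_B(cd_i)$ from Lemma~\ref{lemma:comult-extension}, and then invoke left-invariance of $\cphic$ and \eqref{eq:dual-product} exactly as in the proof of Proposition~\ref{proposition:dual-algebra}. The paper likewise proves only the $\Delta_B$-identity explicitly and leaves the $\Delta_C$-side to symmetry, so your treatment of the second equality matches as well.
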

\begin{proof}
  Write $\upsilon =a \cdot \phi$, $\omega = b\cdot \psi$ and $a \oo b
  = \sum_{i} \Delta_{B}(d_{i})(e_{i} \oo 1)$ with $a,b,d_{i},e_{i} \in
  A$. Then $\upsilon \omega = \sum_{i} d_{i}\cphic(e_{i}) \cdot \phi$
  by \eqref{eq:dual-product} and
\begin{align*}
(\Delta_{B}(c)) (\upsilon \oo \omega) &=(\phi \ool
  \phi)(\Delta_{B}(c)(a \oo b)) \\ & = \sum_{i} (\phi \ool
  \phi)(\Delta_{B}(cd_{i})(e_{i} \oo 1))
  \\ &= \phi(\cphic(cd_{i})e_{i}) = \phi(cd_{i}\cphic(e_{i})) =
  (\upsilon\omega)(c). \qedhere
\end{align*}
\end{proof}

By construction, the canonical maps of $\hat{\mathcal{A}}$ dualize the
canonical maps of $\mathcal{A}$.  We next show that the same is true
on the level of multipliers. To make this assertion precise, we first
 extend the canonical maps
$\Tl,\Tr,\lT,\rT$ to maps
\begin{align*}
\Tl &\colon L(A) \oomC L(A) \to L((\AlA)_{(A\oo A)}), & 
\Tr &\colon L(A) \oomb L(A) \to L((\AlA)_{(A\oo A)}), \\
\lT &\colon R(A) \oomc R(A) \to R({_{(A \oo A)}(\ArA)}), &
\rT &\colon R(A) \oomB R(A) \to R({_{(A \oo A)}(\ArA)})
\end{align*}
using  formulas
\eqref{eq:left-galois-definition} and
\eqref{eq:right-galois-definition} and Lemma \ref{lemma:comult-extension}.
Next, we  identify $M(\hat A)$ with a subspace of
$\dmAm$  as in Theorem \ref{theorem:dual-multipliers} and obtain maps
\begin{align*}
  M(\hat A) \oomhb M(\hat A) &\to \dual{(\ArA)}, \quad \upsilon \oo
  \omega \mapsto \upsilon \oor \omega, &
  \\
  M(\hat A) \oomhc M(\hat A) &\to \dual{(\AlA)}, \quad \upsilon \oo
  \omega \mapsto \upsilon \ool \omega;
\end{align*}
 see also \eqref{eq:base-oo-b} and \eqref{eq:base-oo-c}.
\begin{proposition} \label{proposition:dual-galois-multipliers}
  Let $(\mathcal{A},\mu,\phi,\psi)$ be a measured regular multiplier
  Hopf algebroid. Then the following diagrams commute:
  \begin{align*}
    \xymatrix@R=15pt@C=15pt{
      M(\hat A) \oomhb M(\hat A) \ar[r]^{\hTr} \ar[d] & L((\hAlA)_{\hat A\oo
        \hat A}) \ar@{^(->}[d] &     M(\hat A) \oomhc M(\hat A) \ar[r]^{\hlT} \ar[d] & R(_{(\hat A\oo
        \hat A)}(\hArA)) \ar@{^(->}[d] \\
    \dual{(\ArA)} \ar[r]_{\dual{(\lT)}} & \dual{(\AcA)} & 
      \dual{(\AlA)} \ar[r]_{\dual{(\Tr)}} & \dual{(\AbA)} 
    } 
  \end{align*}
\end{proposition}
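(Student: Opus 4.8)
The plan is to verify the two squares separately; the second is obtained from the first by the symmetry $S$, using \eqref{dg:galois-inverse} and \eqref{dg:galois-aux2} exactly as in the proof of Theorem~\ref{theorem:dual-hopf-algebroid}, so I describe only the first. Throughout I identify $M(\hat A)$ with the subspace $\tilde A \subseteq \dmAm$ of Theorem~\ref{theorem:dual-multipliers}, so that an $\upsilon \in M(\hat A)$ is a functional on $A$ with $\rho(\upsilon)(A),\lambda(\upsilon)(A) \subseteq A$ whose action on $\hat A$ is $\upsilon\omega = \upsilon\circ\rho(\omega)$, $\omega\upsilon = \upsilon\circ\lambda(\omega)$.

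First I would reduce to a scalar identity. The right-hand vertical map $L((\hAlA)_{\hat A \oo \hat A}) \hookrightarrow \dual{(\AcA)}$ is the embedding of Lemma~\ref{lemma:dual-embeddings-mult} applied to $\hat{\mathcal A}$ and transported through the biduality isomorphism $A \xrightarrow{\sim} \hhat A$ of Theorem~\ref{theorem:biduality}; as in Lemma~\ref{lemma:dual-embeddings-mult} it is injective because $\hbA$ and $\hAb$ are flat. Since $\AcA$ is spanned by elementary tensors, it suffices to show that for all $\upsilon,\omega \in M(\hat A)$ and all $a,b \in A$,
\begin{equation*}
  \big(\dual{(\lT)}(\upsilon \oor \omega)\big)(a \otimes b) \;=\; \big(e(\hTr(\upsilon \otimes \omega))\big)(a \otimes b),
\end{equation*}
where $e$ is that embedding. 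Unwinding the left-hand side by the definition of $\dual{(\lT)}$ and of the relative tensor product $\oor$ (Subsection~\ref{subsection:uniqueness}), together with the formula for $\rho$, one finds it equals $\upsilon\big(a\,\rho(\omega)(b)\big)$, which is meaningful because $\rho(\omega)(b) \in A$.

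The crux is that the right-hand side has the same shape. Writing $\dual a = \hat a_{0}\cdot\hat\phi$ and $\dual b = \hat b_{0}\cdot\hat\phi$ with $\hat a_{0},\hat b_{0} \in \hat A$, the recipe of Lemma~\ref{lemma:dual-embeddings-mult} and the definition of $\hTr(\upsilon\otimes\omega)$ as $\hat\Delta_{\hat B}(\upsilon)(1\otimes\omega)$, with $\hat\Delta_{\hat B}$ extended to multipliers via Lemma~\ref{lemma:comult-extension} applied to $\hat{\mathcal A}$, give
\begin{equation*}
  \big(e(\hTr(\upsilon\otimes\omega))\big)(a\otimes b) = (\hat\phi \ool \hat\phi)\big(\hat\Delta_{\hat B}(\upsilon)(\hat a_{0}\otimes\omega\hat b_{0})\big),
\end{equation*}
and, running the computation behind \eqref{eq:dual-product} for $\hat{\mathcal A}$ (with the element $\omega\hat b_{0} \in \hat A$ in the second slot), this equals $\upsilon(w)$ for a single element $w = w(\omega,a,b) \in A$. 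Hence, for fixed $\omega,a,b$, both sides are the functional $\upsilon \mapsto \upsilon(z)$ on $M(\hat A)$, with $z = a\rho(\omega)(b)$ on the left and $z = w$ on the right. Since $\hat A$ separates the points of $A$ (Proposition~\ref{proposition:convolution}) and the two functionals agree on $\hat A \subseteq M(\hat A)$ --- this being exactly the commutativity of the plain square from Theorem~\ref{theorem:dual-hopf-algebroid} --- we get $w = a\rho(\omega)(b)$ and therefore equality for all $\upsilon \in M(\hat A)$. The main obstacle is the bookkeeping in this crux: extending $\hTr,\hlT$ and the comultiplications to the multiplier-level balanced tensor products with the correct module structures, checking the flatness-based non-degeneracy that makes evaluation on elementary tensors legitimate, and carrying the biduality identification through Lemma~\ref{lemma:dual-embeddings-mult} so that the right-hand side is recognised as evaluation of $\upsilon$ at a point of $A$; no new conceptual ingredient beyond Theorem~\ref{theorem:dual-hopf-algebroid} is needed.
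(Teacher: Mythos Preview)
Your reduction step has a genuine gap. You restrict $\upsilon$ to $\hat A$ and then invoke Theorem~\ref{theorem:dual-hopf-algebroid}, but that theorem only treats the case where \emph{both} $\upsilon$ and $\omega$ lie in $\hat A$. In your argument $\omega$ remains in $M(\hat A)$ throughout, so the ``plain square'' of Theorem~\ref{theorem:dual-hopf-algebroid} does not apply: its domain is $\hAbA$, not $\hat A \oomhb M(\hat A)$. One could patch this by performing a second reduction on $\omega$ --- for fixed $\upsilon\in\hat A$ the left side is $\omega(\lambda(\upsilon\cdot a)(b))$ by Proposition~\ref{proposition:convolution}~(2) with the roles of $\upsilon,\omega$ reversed, so it too is evaluation at a point of $A$ --- and then Theorem~\ref{theorem:dual-hopf-algebroid} does apply. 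But you did not carry this out.

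More importantly, the detour is unnecessary. The computation you already sketched for the right side (Proposition~\ref{proposition:mult-comult} for $\hat{\mathcal A}$ via biduality) does not merely give $\upsilon(w)$ for \emph{some} $w$: it gives $w=ab'$ with $\dual{b'}=\omega\hat b_{0}\cdot\hat\phi=\omega\cdot\dual{b}$. Lemma~\ref{lemma:dual-bimodule-explicit} then reads off $b'=\rho(\omega)(b)$ directly, so $w=a\rho(\omega)(b)$ and the proof is finished in one line. This is exactly the paper's route: it cites Proposition~\ref{proposition:mult-comult} and Lemma~\ref{lemma:dual-bimodule-explicit} to compute
\[
(\hat\Delta_{\hat B}(\upsilon)(1\oo\omega))(\dual{a}\oo\dual{b})
=(\hat\Delta_{\hat B}(\upsilon))(\dual{a}\oo\omega\cdot\dual{b})
=\upsilon(a\rho(\omega)(b))
=(\upsilon\oor\omega)(\lT(a\oo b))
\]
without ever reducing to the $\hat A$-level square.
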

\begin{proof}
  Let $\upsilon,\omega \in M(\hat A)$ and $a,b\in A$. Then by
  definition, Proposition \ref{proposition:mult-comult} and Lemma \ref{lemma:dual-bimodule-explicit},
  \begin{gather} \label{eq:dual-galois-multipliers}
    \begin{aligned}
      (\hat\Delta_{\hat B}(\upsilon)(1 \oo \omega))(\dual{a} \oo
      \dual{b}) &=
      (\hat \Delta_{\hat B}(\upsilon))(\dual{a} \oo \omega \cdot \dual{b}) \\
      &= \upsilon(a\rho(\omega)(b)) = (\upsilon \ool \omega)((a \oo
      1)\Delta_{C}(b)).
    \end{aligned}
  \end{gather}
  Therefore, the first diagram commutes. A similar argument applies to
  the second diagram.
\end{proof}

\section{Examples}
\label{section:examples}

We now consider two examples of regular multiplier Hopf algebroids
presented in \cite{timmermann:regular}, the function algebra of an
\'etale, locally compact, Hausdorff groupoid and a two-sided crossed
product which appeared already in \cite{connes:rankin}. Given a
quasi-invariant measure on the unit space of the groupoid or invariant
functionals for the actions underlying the crossed product, we
construct base weights and integrals for these examples and obtain
measured regular multiplier Hopf algebroids. Finally, we give a
detailed description of the respective dual objects, which in the
groupoid case turns out to be the groupoid algebra of compactly
supported functions equipped with the convolution product.

\subsection{\'Etale groupoids}


We assume basic terminology for topological groupoids, see
 \cite{paterson} or \cite{renault},
and recall that a locally compact, Hausdorff groupoid is
\emph{\'etale} if its source and the target maps locally are 
homeomorphisms.   Given a locally
compact, Hausdorff space $X$, we denote by $C(X)$ the $*$-algebra of
all continuous functions on $X$ and by $C_{c}(X) \subseteq C(X)$ the
subalgebra of all functions with compact support, and identify $C(X)$
with the multiplier algebra $M(C_{c}(X))$ in the natural way. Given a
continuous map $f\colon X\to Y$ of locally compact, Hausdorff spaces,
we denote by $f^{*} \colon C(Y) \to C(X)$ the pull-back.
Given spaces $X,Y,Z$ and maps $f\colon X\to Z$, $g\colon Y \to Z$, we
denote by $X {_{f}\times_{g}} Y:=\{ (x,y) \in X\times Y: f(x)=g(y)\}$
the pull-back.
\begin{proposition} \label{proposition:groupoid-hopf} Let $G$ be an
  \'etale, locally compact, second countable, Hausdorff groupoid with
  unit space $X$, target and source maps $t,s \colon G \to X$,
  multiplication map $m \colon \GstG \to G$ and inversion map $j\colon
  G\to G$, and let
    \begin{align*}
      A&:=C_{c}(G), & B&:=s^{*}(C_{c}(X)), & C &:= t^{*}(C_{c}(X)), &
      S_{B} &:= j^{*}|_{B}, & S_{C} &:= j^{*}|_{C}.
    \end{align*}
    Then $\mathcal{A}_{B}:=(B,A,\id,S_{B})$ and
    $\mathcal{A}_{C}:=(C,A,\id,S_{C})$ are compatible left and right
    quantum graphs.  The algebras $\Lreg(\AltkA)$ and $\Rreg(\ArtkA)$
    can be identified with a subalgebra of $C(\GstG)$ that contains
    the image of the map 
    \begin{align*}
      \Delta_{B}:=\Delta_{C}:=m^{*} \colon
    C_{c}(G) \to C(\GstG),
    \end{align*}
    and
    $\mathcal{A}:=((\mathcal{A}_{B},\Delta_{B}),(\mathcal{A}_{C},\Delta_{C}))$
    is a projective multiplier Hopf $*$-algebroid. Its antipode and
    left and right counit are given by
    \begin{align*}
      S(f) &= j^{*}(f), &   \beps(f) &= s^{*}(f|_{X}), & \epsc(f) &=
      t^{*}(f|_{X}) &&\text{for all } f\in C_{c}(G).
    \end{align*}  
\end{proposition}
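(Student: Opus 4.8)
The plan is to translate every clause of the definition of a projective multiplier Hopf $*$-algebroid into an assertion about $G$ and check it one by one; the multiplier bialgebroid part is essentially the example treated in \cite{timmermann:regular}, so one may either cite it or argue as follows. First I would check that $\mathcal{A}_B=(B,A,\id,S_B)$ and $\mathcal{A}_C=(C,A,\id,S_C)$ are compatible left and right quantum graphs: $A=C_c(G)$ has local units subordinate to any compact set, so axiom (4a) holds and $A_A$, ${_{A}A}$ are non-degenerate; $\id\colon B\to M(A)$ is a homomorphism and $S_B=j^*|_B$ an anti-homomorphism (the distinction is vacuous, as $B$ and $C$ are commutative) whose images $B$ and $C$ commute; and $\sA,\tA,\As,\At$ are faithful, non-degenerate and idempotent by routine partition-of-unity arguments in $C_c(G)$. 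Compatibility is the pair of identities $j^*(s^*(C_c(X)))=t^*(C_c(X))$ and $j^*(t^*(C_c(X)))=s^*(C_c(X))$, which hold because $s\circ j=t$ and $t\circ j=s$.

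Next I would pass to the function-theoretic picture. Evaluation induces maps $\AlA\to C(\GstG)$ and $\ArA\to C(\GstG)$ sending $a\otimes b$ to $(\gamma_1,\gamma_2)\mapsto a(\gamma_1)b(\gamma_2)$; these are well defined (on composable pairs $s(\gamma_1)=t(\gamma_2)$), injective, and have image exactly $C_c(\GstG)$, the surjectivity being the one point where I genuinely use that $G$ is \'etale: covering $G$ by open bisections, on each of which $s$ and $t$ restrict to homeomorphisms, and using a subordinate partition of unity, one factors any element of $C_c(\GstG)$ as a finite sum of elementary tensors. Identifying $\AlA\cong C_c(\GstG)\cong\ArA$ this way, $\Lreg(\AltkA)$ and $\Rreg(\ArtkA)$ become the subalgebra of $C(\GstG)=M(C_c(\GstG))$ of those $h$ for which $(\gamma_1,\gamma_2)\mapsto h(\gamma_1,\gamma_2)a(\gamma_1)$ and $(\gamma_1,\gamma_2)\mapsto h(\gamma_1,\gamma_2)b(\gamma_2)$ lie in $C_c(\GstG)$ for all $a,b\in C_c(G)$. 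For $h=m^*(f)$ with $f\in C_c(G)$ the required compact support follows by writing $\gamma_2=\gamma_1^{-1}(\gamma_1\gamma_2)$, respectively $\gamma_1=(\gamma_1\gamma_2)\gamma_2^{-1}$, so the supports lie in the compact image under multiplication of a closed subset of $\supp a\times\supp f$, respectively of $\supp f\times\supp b$; hence $\Delta_B:=\Delta_C:=m^*$ has the asserted range. The bimodule conditions \eqref{eq:left-comult-module}, \eqref{eq:right-comult-module}, the coassociativity conditions \eqref{eq:left-comult-coass}, \eqref{eq:right-comult-coass}, and the compatibility \eqref{eq:compatible} then follow at once from $s(\gamma_1\gamma_2)=s(\gamma_2)$, $t(\gamma_1\gamma_2)=t(\gamma_1)$ and associativity of the multiplication, so $\mathcal{A}$ is a multiplier bialgebroid.

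To obtain a regular multiplier Hopf algebroid I would check that $\Tl,\Tr,\lT,\rT$ are bijective: in the picture above $\Tr(a\otimes b)$ is $(\gamma_1,\gamma_2)\mapsto a(\gamma_1\gamma_2)b(\gamma_2)$, the pull-back of $a\otimes b$ along the homeomorphism $\GstG\to\GssG$, $(\gamma_1,\gamma_2)\mapsto(\gamma_1\gamma_2,\gamma_2)$ with inverse $(\alpha,\beta)\mapsto(\alpha\beta^{-1},\beta)$, and the analogous changes of variables dispose of $\Tl,\lT,\rT$. By the main result of \cite{timmermann:regular} recalled in Subsection \ref{subsection:hopf-algebroids}, $\mathcal{A}$ is then a regular multiplier Hopf algebroid with unique antipode $S$ and counits $\beps,\epsc$, so it remains to verify that $f\mapsto j^*(f)$, $f\mapsto s^*(f|_X)$ and $f\mapsto t^*(f|_X)$ satisfy the antipode axioms (1)--(2) and the counit diagrams \eqref{dg:left-counit}, \eqref{eq:right-counit}; this is mechanical, the counit relations reducing to $s(\gamma_1\gamma_2)=s(\gamma_2)$, $t(\gamma_1\gamma_2)=t(\gamma_1)$ and the antipode relations to $j$ being an involutive anti-automorphism with $s\circ j=t$. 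For the $*$-structure, take $f^*:=\bar f$; then $B,C$ are $*$-subalgebras, $S_B\circ\ast\circ S_C\circ\ast=\id_C$ and $S_C\circ\ast\circ S_B\circ\ast=\id_B$ because $j^2=\id_{G}$ and conjugation is involutive, and $(\ast\otimes\ast)\circ\Delta_B\circ\ast=\Delta_C$ because both sides equal $m^*$, which commutes with conjugation.

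Finally, for projectivity it suffices (by the antipode, which interchanges the four modules) to show $\bA$ is projective. Choosing a locally finite cover $\{U_i\}$ of $G$ by relatively compact open bisections, a subordinate partition of unity $\{\chi_i\}$ with $\chi_i\ge 0$ and $\sum_i\chi_i=1$, and putting $\psi_i:=\sqrt{\chi_i}\in C_c(U_i)$, the $C_c(X)$-linear maps $f\mapsto\bigl((\psi_i f)\circ(s|_{U_i})^{-1}\bigr)_i$ and $(g_i)_i\mapsto\sum_i\psi_i\cdot\bigl(g_i\circ(s|_{U_i})\bigr)$ exhibit $\bA$ as a retract of the free $C_c(X)$-module $\bigoplus_i C_c(X)$, using $\sum_i\psi_i^2=1$ and that $s$ is a homeomorphism on each bisection; hence $\bA$, and likewise $\Ab,\cA,\Ac$, is projective. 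I expect the function-theoretic step in the second paragraph — pinning down $\AlA$, $\ArA$, $\Lreg$ and $\Rreg$ as the correct subspaces of $C(\GstG)$, in particular the surjectivity onto $C_c(\GstG)$ — together with this projectivity argument to be the main obstacles, and precisely the places where the \'etale hypothesis is indispensable; the remaining coassociativity, counit, antipode and $*$-compatibility checks are routine translations of the groupoid axioms.
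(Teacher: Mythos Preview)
Your proposal is correct and follows essentially the same line as the paper: the paper simply cites Section~8 of \cite{timmermann:regular} for everything except projectivity, and then proves projectivity by exhibiting $C_c(G)$ as a retract of a free $C_c(X)$-module via a locally finite partition of unity subordinate to bisections, exactly as you do. The only cosmetic difference is that the paper uses the pair $(\chi_i,\theta_i)$ with $\theta_i\equiv 1$ on $\supp\chi_i$ for the two halves of the retraction, whereas you use $\psi_i=\sqrt{\chi_i}$ on both sides; both are standard and equivalent.
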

\begin{proof}
  This is a summary of Section 8 in \cite{timmermann:regular}, apart
  from the projectivity assertion. We show that $C_{c}(G)$ is projective
  as a module over $s^{*}(C_{c}(X))$.

  Using the assumptions on $G$, we find a locally finite partition of
  unity $(\chi_{i})_{i}$ on $G$ and a family of functions
  $(\theta_{i})_{i}$ in $C_{c}(G)$ such that for each $i$, the
  function $\theta_{i}$ equals $1$ on $\supp \chi_{i}$ and the map $s$
  restricts to a local homeomorphism $s_{i}$ on some neighbourhood of
  $\supp \theta_{i}$. Then the maps
  \begin{align*}
    j\colon C_{c}(G) \to \prod_{i} C_{c}(X), \ f \mapsto
    ((s^{-1}_{i})^{*}(\chi_{i} f))_{i}
  \end{align*}
  and
  \begin{align*}
    p \colon \prod_{i} C_{c}(X) \to C_{c}(G), \ (g_{i})_{i}
    \mapsto \sum_{i} \theta_{i}s_{i}^{*}(g_{i})
  \end{align*}
  satisfy $p \circ j = \id_{C_{c}(G)}$ and are morphisms of $C_{c}(X)
  \cong s^{*}(C_{c}(X))$-modules. Therefore, $C_{c}(G)$ embeds as a
  direct summand into a free $C_{c}(X)$-module.
\end{proof}

Let $G$ be an \'etale, locally compact, second countable, Hausdorff
groupoid as above.  Then a Radon measure $\mu_{X}$ on $X$ is
\emph{quasi-invariant} if the Radon measures $\nu$ and
$\nu^{-1}$ on $X$ defined by
\begin{align} \label{eq:groupoid-phi}
  \phi(f) &= \int_{G} f \intd\nu := \int_{X} \sum_{\gamma \in
    t^{-1}(x)} f(\gamma) \intd\mu_{X}(x),  \\ \label{eq:groupoid-psi}
  \psi(f) &= \int_{G} f \intd\nu^{-1} := \int_{X} \sum_{\gamma \in
    s^{-1}(x)} f(\gamma) \intd\mu_{X}(x)
\end{align}
are equivalent. We call $\mu_{X}$ \emph{continuously quasi-invariant}
if the Radon-Nikodym derivative $\delta:=\intd\nu^{-1}/\intd\nu$ and
its inverse are continuous. Given a function $f\in C(G)$ such that $\supp f
\cap X$ is compact, we define
\begin{align} \label{eq:groupoid-eps}
  \tilde \eps(f) &:= \int_{X} f(x) \intd\mu_{X}(x).
\end{align}
\begin{theorem} \label{theorem:groupoid-measured} Let $G$ be an
  \'etale, locally compact, second countable, Hausdorff groupoid with
  unit space $X$, target and source maps $t,s \colon G \to X$,
  multiplication $m \colon \GstG \to G$, inversion $j\colon G\to G$,
  and let $\mu_{X}$ be a Radon measure on $X$ which is continuously
  quasi-invariant and has full support.  Define 
  $\phi,\psi,\tilde\eps$ as in \eqref{eq:groupoid-phi},
  \eqref{eq:groupoid-psi} and \eqref{eq:groupoid-eps}, respectively. Then the multiplier Hopf
  $*$-algebroid $\mathcal{A}$ in Proposition
  \ref{proposition:groupoid-hopf} and the functionals $\mu_{B}:=\tilde
  \eps|_{B}$, $\mu_{C}:=\tilde \eps|_{C}$, $\phi$ and $\psi$ form a
  measured multiplier Hopf $*$-algebroid.
\end{theorem}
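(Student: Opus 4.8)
The plan is to verify directly that the tuple $(\mathcal{A},\mu_{B},\mu_{C},\phi,\psi)$ satisfies all the defining conditions of a measured multiplier Hopf $*$-algebroid (Definition in Subsection \ref{subsection:dual-graphs}): namely that $\mu=(\mu_{B},\mu_{C})$ is a base weight, that $\phi$ and $\psi$ are left and right integrals adapted to $\mu$, that they are full and faithful, and that all the relevant functionals are positive. Throughout I will use the explicit description of everything in terms of $C_{c}(X)$-valued sums along the fibres of $s$ and $t$.

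First I would check that $\mu=(\mu_{B},\mu_{C})$ is a base weight. Since $\mu_{X}$ has full support, $\tilde\eps|_{C_{c}(X)}(g)=\int_{X} g\intd\mu_{X}$ is a faithful functional on $C_{c}(X)$, hence $\mu_{B}$ and $\mu_{C}$ are faithful on $B=s^{*}(C_{c}(X))$ and $C=t^{*}(C_{c}(X))$, respectively, and positive. The relations $\mu_{B}\circ S_{C}=\mu_{C}$ and $\mu_{C}\circ S_{B}=\mu_{B}$ hold because $S_{B}=j^{*}|_{B}$, $S_{C}=j^{*}|_{C}$ restrict on $X$ to the identity ($j$ fixes units), so $S_{B}(s^{*}(g))=t^{*}(g)$ and both have integral $\int_{X}g\intd\mu_{X}$. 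The key equation $\mu_{B}\circ\beps=\mu_{C}\circ\epsc$ follows from the formulas $\beps(f)=s^{*}(f|_{X})$, $\epsc(f)=t^{*}(f|_{X})$ in Proposition \ref{proposition:groupoid-hopf}: both composites send $f\in C_{c}(G)$ to $\int_{X}f(x)\intd\mu_{X}(x)=\tilde\eps(f)$, so $\eps:=\tilde\eps|_{A}$ is the induced counit functional.

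Next I would show $\phi$ is a left integral adapted to $\mu$, and $\psi$ a right integral. For adaptedness I must exhibit the associated maps ${}_{B}\phi,\phi_{B}\colon A\to B$ and ${}_{C}\phi,\phi_{C}\colon A\to C$; the natural candidate for $\phi_{C}$ is the fibrewise integral $\cphic(f)(x)=\sum_{\gamma\in t^{-1}(x)}f(\gamma)$, viewed in $C_{c}(X)\cong C$, and one checks $\mu_{C}\circ\cphic=\phi$ and the $C$-bilinearity/invariance; similarly $\bpsib(f)(x)=\sum_{\gamma\in s^{-1}(x)}f(\gamma)$ gives $\psi=\mu_{B}\circ\bpsib$. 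Left-invariance of $\phi$ with respect to $\Delta_{C}=m^{*}$, i.e. $(\id\otimes\phi_{C})((a\otimes1)\Delta_{C}(b))=a\phi_{C}(b)$, becomes the pointwise identity $\sum_{\gamma'\in t^{-1}(t(\gamma))}a(\gamma)b(\gamma^{-1}\gamma')\cdot[\text{pull-back}]=a(\gamma)\sum_{\delta\in t^{-1}(s(\gamma))}b(\delta)$, which is the familiar change-of-variables $\delta=\gamma^{-1}\gamma'$ on the fibre; this is exactly the computation that makes $\phi$ a Haar system integral, and right-invariance of $\psi$ is the mirror statement. Fullness of $\phi$ means $\bphi$ and $\phi_{B}$ are surjective onto $B$; since $\phi\cdot A=\psi\cdot A$ (via the continuity of $\delta$), and e.g. $\phi_{B}(f)$ can be taken to be $s^{*}$ of the $s$-fibrewise sum which already ranges over all of $C_{c}(X)$ because $s$ is an open surjection, surjectivity follows. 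Faithfulness of $\phi$ on $A=C_{c}(G)$ is automatic from faithfulness of $\mu_{X}$ and positivity of the fibre sums: if $\phi(Bf B)=0$ then $\sum_{\gamma\in t^{-1}(x)}|f(\gamma)|^{2}\cdot(\ldots)=0$ forces $f=0$.

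The main obstacle will be the bookkeeping around the modular element and the two-sided relation \eqref{eq:intro-ha}: I must make sure the given $\mu_{X}$ being \emph{continuously} quasi-invariant (so that $\delta=\intd\nu^{-1}/\intd\nu\in C(G)=M(A)$ and $\delta^{-1}$ is continuous) is precisely what upgrades $\phi$ and $\psi$ to satisfy $\psi=\phi\cdot\delta$ with $\delta$ a genuine (invertible) multiplier, which is what Theorem \ref{theorem:modular-element} needs and what guarantees $A\cdot\phi=\phi\cdot A=A\cdot\psi=\psi\cdot A$. Concretely I would verify $\psi(f)=\int_{G}f\,\delta^{-1}\intd\nu$ isn't quite it — rather $\psi(f)=\int_{G}f(\gamma)\delta(\gamma)^{?}\intd\nu(\gamma)$ by the cocycle/Radon–Nikodym identity, and then $\psi=\phi\cdot\delta$ on the nose. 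Once this is in place, Theorems \ref{theorem:modular}, \ref{theorem:modular-automorphism} and \ref{theorem:integrals-faithful} (using projectivity from Proposition \ref{proposition:groupoid-hopf}) give everything else for free, and positivity of $\phi,\psi$ is clear since $\phi(f^{*}f)=\int_{X}\sum_{\gamma\in t^{-1}(x)}|f(\gamma)|^{2}\intd\mu_{X}(x)\geq0$. I would therefore structure the proof as: (1) base weight; (2) adaptedness and the explicit fibre-integral maps; (3) left/right invariance via fibrewise change of variables; (4) fullness from openness of $s,t$; (5) faithfulness and positivity from faithfulness of $\mu_{X}$; (6) invoke Proposition \ref{proposition:groupoid-hopf} for flatness/projectivity to conclude.
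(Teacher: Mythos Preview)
Your proposal is correct and follows essentially the same route as the paper: verify the base weight axioms directly from the counit formulas, exhibit the fibrewise sums $\cphic(f)(\gamma')=\sum_{t(\gamma)=t(\gamma')}f(\gamma)$ and $\bpsib(f)(\gamma')=\sum_{s(\gamma)=s(\gamma')}f(\gamma)$ as the invariant factorizations, and use the relation $\psi=\delta\cdot\phi$ with $\delta=\intd\nu^{-1}/\intd\nu\in C(G)=M(A)$ (invertible precisely by continuous quasi-invariance) to obtain the missing $B$-sided factorizations of $\phi$ and $C$-sided ones of $\psi$, thereby placing both in $\dmAm$. Your one slip is identifying $\phi_{B}$ with the $s$-fibrewise sum --- that sum is $\bpsib$, and $\phi_{B}(f)=\bpsib(f\delta^{-1})$ --- but you already flag this as bookkeeping, and once straightened out, fullness and faithfulness follow exactly as you indicate.
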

\begin{proof}
  The functionals $\mu_{B}, \mu_{C}, \phi$ and $\psi$ are evidently
  positive, and faithful because $\mu_{X}$ has full support by
  assumption.  By construction, $\mu_{B} \circ S_{C} = \mu_{C}$,
  $\mu_{C} \circ S_{B} = \mu_{B}$, $\mu_{B} \circ \beps = \mu_{C}
  \circ \ceps$. One easily verifies that
  $\phi$ and $\psi$ can be written in the form $\phi = \mu_{C} \circ
  \cphic$ and $\psi = \mu_{B}\circ \bpsib$, where
  \begin{align*}
    (\cphic(f))(\gamma') &= \sum_{\substack{\gamma \in G\\ t(\gamma')=t(\gamma)}}
    f(\gamma), & (\bpsib(f))(\gamma') &= \sum_{\substack{\gamma \in G\\
      s(\gamma')=s(\gamma)}} f(\gamma),
  \end{align*}
  These relations and the fact that $\psi = \delta \cdot \phi$, where
  $\delta=\intd\nu^{-1}/\intd \nu$, imply that $\phi$ and $\psi$ lie
  in $\dmAm$.  Straightforward calculations show that $\cphic$ and
  $\bpsib$ are left- or right-invariant, respectively.
\end{proof}

Let us describe the dual measured Hopf $*$-algebroid.  Recall that the space
$C_{c}(G)$ is a $*$-algebra with respect to the convolution product
and involution given by
\begin{align*}
  (f \ast g)(\gamma) &=\sum_{\gamma'\gamma''=\gamma}
  f(\gamma')g(\gamma''), & f^{*}(\gamma) = \overline{f(\gamma^{-1})}
  \delta(\gamma)
\end{align*}
for all $f,g\in C_{c}(G)$ and $\gamma \in G$. We denote this
$*$-algebra by $\C_{c}(G)$. Note that we can regard $C_{c}(X)$ as a
subspace of $C_{c}(G)$ because $X \subseteq G$ is closed and open, $G$
being Hausdorff and \'etale, and that then $C_{c}(X)$ is a subalgebra
of $\C_{c}(G)$.

Observe furthermore that the subspace $G^{[2]}:=(\GssG) \cap (\GttG)$
of $G\times G$ is a groupoid with unit space $X$, target and source
maps $t\circ \pi_{1} = t\circ \pi_{2}$ and $s\circ \pi_{1} = s\circ
\pi_{2}$, respectively, where $\pi_{i} \colon G^{[2]} \to G$ denotes
the canonical projection, and component-wise multiplication.
\begin{theorem} \label{theorem:groupoid-dual} Let $G$ be an \'etale,
  locally compact, second countable, Hausdorff groupoid with a
  continuously quasi-invariant Radon measure $\mu_{X}$ of full
  support, and denote by $(\mathcal{A},\mu,\phi,\psi)$ the associated
  measured multiplier Hopf $*$-algebroid defined in Theorem
  \ref{theorem:groupoid-measured}.  Then the dual $*$-algebra $\hat A$
  can be identified with $\C_{c}(G)$, the dual left and right quantum
  graphs $(\hat B,\hat A,\iota_{\hat B},\hat S_{\hat B})$ and $(\hat
  C,\hat A,\iota_{\hat C},\hat S_{\hat C})$ coincide and are given by
  $\hat B=\hat C=C_{c}(X) \hookrightarrow \C_{c}(G)$ and $\hat S_{\hat
    B} = \hat S_{\hat C}=\id$, and the dual comultiplications $\hat
  \Delta_{\hat B}$ and $\hat\Delta_{\hat C}$ take values in the
  algebras $\hAltkA$ and $\hArtkA$, which can be identified with the
  space $\C_{c}(G^{[2]})$. These dual comultiplications and the dual
  counit, antipode, integrals and their modular automorphism are given
  by
  \begin{gather*}
    (\hat \Delta_{\hat B}(f))(\gamma,\gamma') = (\hat \Delta_{\hat
      C}(f))(\gamma,\gamma') = \delta_{\gamma,\gamma'} f(\gamma), \\
  \begin{aligned}
    \hat \eps(f) &= \phi(f), & \hat
    S(f)(\gamma)&= f(\gamma^{-1})\delta(\gamma), &
 & \hat \phi(f) = \hat \psi(f) =\eps(f), &
 (\sigma^{\hat \phi}(f))(\gamma) &= f(\gamma)\delta^{-1}(\gamma)
  \end{aligned}
\end{gather*}
for all $f\in C_{c}(G)$ and $\gamma,\gamma' \in G$, where
$\delta^{-1}=\intd\nu/\intd\nu^{-1}$.
\end{theorem}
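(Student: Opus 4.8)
The plan is to transport every structure along the linear identification $C_{c}(G)\xrightarrow{\ \sim\ }\hat A$, $a\mapsto a\cdot\phi$, and then read off the dual data from the general formulas of Theorems \ref{theorem:dual-hopf-algebroid} and \ref{theorem:dual-measured}. First I would check that $a\mapsto a\cdot\phi$ is a linear bijection onto $\hat A$: since $(a\cdot\phi)(f)=\phi(fa)=\int_{G}fa\intd\nu$ and $\mu_{X}$ has full support, the Radon measure $\nu$ from \eqref{eq:groupoid-phi} has full support, so the map is injective, and by definition its image is $\hat A$. Under this identification I note for later that $\psi\cdot a$ corresponds to $a\delta$ (because $\psi=\delta\cdot\phi$ by Theorem \ref{theorem:groupoid-measured}), that $\phi\cdot a$ corresponds to $a$, and that $\delta|_{X}=1$ since $\delta$ is a multiplicative cocycle on $G$.

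Next I would compute the product. Using $\Delta_{B}=\Delta_{C}=m^{*}$ and the identification $\AlA\cong C_{c}(\GstG)$ from Section~8 of \cite{timmermann:regular}, the canonical map $\Tl$ is pull-back along $(\gamma,\gamma')\mapsto(\gamma,\gamma\gamma')$, hence $\Tl^{-1}$ is pull-back along $(\gamma,\eta)\mapsto(\gamma,\gamma^{-1}\eta)$; feeding this into formula \eqref{eq:dual-product} and applying the explicit left-invariant map $\cphic$ of Theorem \ref{theorem:groupoid-measured} shows that $(a\cdot\phi)(b\cdot\phi)$ corresponds to $\gamma\mapsto\sum_{\gamma'\gamma''=\gamma}a(\gamma')b(\gamma'')$, i.e.\ to $a\ast b$. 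Thus $\hat A\cong\C_{c}(G)$ as algebras. Since $X\subseteq G$ is clopen, $C_{c}(X)$ sits inside $\C_{c}(G)$, and both $B=s^{*}(C_{c}(X))$ and $C=t^{*}(C_{c}(X))$ get carried to this subalgebra; as $\hat S_{\hat B}=S_{B}^{-1}$, $\hat S_{\hat C}=S_{C}^{-1}$ restrict to $j^{*}$, which fixes $X$ pointwise, one gets $\hat S_{\hat B}=\hat S_{\hat C}=\id$, and $\hat\mu_{\hat B}=\mu_{C}$, $\hat\mu_{\hat C}=\mu_{B}$ are both $\int_{X}(-)\intd\mu_{X}$.

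The counit, antipode, integrals and modular automorphism are then purely formal, exploiting $S=j^{*}$ (so $S^{2}=\id$) and $\psi\circ S=\phi$ (change of variable $\gamma\mapsto\gamma^{-1}$, which turns $\nu^{-1}$ into $\nu$). Indeed: the dual counit $\hbeps(a\cdot\phi)=\cphic(a)$ of Theorem \ref{theorem:dual-hopf-algebroid} composes with $\hat\mu_{\hat B}=\mu_{C}$ to give $\hat\eps=\phi$; the dual antipode $\hat S(\omega)=\omega\circ S$, evaluated on $a\cdot\phi$ and rewritten via $\gamma\mapsto\gamma^{-1}$, becomes $\gamma\mapsto a(\gamma^{-1})\delta(\gamma)$; the dual integrals $\hat\psi(a\cdot\phi)=\eps(a)$ and $\hat\phi(\psi\cdot a)=\eps(a)$ of Theorem \ref{theorem:dual-measured} both read $\hat\phi=\hat\psi=\eps$ once one uses $\psi\cdot a\leftrightarrow a\delta$ and $\delta|_{X}=1$; and $\sigma^{\hat\phi}(\psi\cdot a)=(\psi\circ S)\cdot S^{-2}(a)=\phi\cdot a$ translates, via $\psi\cdot a\leftrightarrow a\delta$ and $\phi\cdot a\leftrightarrow a$, into multiplication by $\delta^{-1}$. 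In the $*$-case, the dual involution $\omega\mapsto\ast\circ\omega\circ\ast\circ S$ unwinds, by the same change of variable together with self-adjointness of $\phi$, to $f\mapsto(\gamma\mapsto\overline{f(\gamma^{-1})}\delta(\gamma))$, which is exactly the convolution-algebra involution.

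For the comultiplication I would apply Proposition \ref{proposition:mult-comult} to the measured regular multiplier Hopf algebroid $\hat{\mathcal A}$ itself: for $T\in M(\hat A)$ and $\upsilon,\omega\in\hhat A$ one has $(\hat\Delta_{\hat B}(T))(\upsilon\oo\omega)=(\upsilon\omega)(T)=(\hat\Delta_{\hat C}(T))(\upsilon\oo\omega)$. By biduality (Theorem \ref{theorem:biduality}) $\hhat A=A=C_{c}(G)$ with its \emph{pointwise} product, so for $T=h\in C_{c}(G)\subseteq M(\hat A)$ (Theorem \ref{theorem:dual-multipliers}) and $\upsilon\leftrightarrow a$, $\omega\leftrightarrow b$ this equals $\int_{G}ab\,h\intd\nu$, while the left-hand side, evaluated through the pairing formulas of Lemma \ref{lemma:dual-pairings} and the identification $\hAltkA\cong C_{c}(G^{[2]})$ from \cite{timmermann:regular}, is the pairing of $\hat\Delta_{\hat B}(h)$ with the ``diagonal''; comparison forces $(\hat\Delta_{\hat B}(h))(\gamma,\gamma')=\delta_{\gamma,\gamma'}h(\gamma)$, and the identical argument gives $\hat\Delta_{\hat C}$. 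One still has to check that this function genuinely lies in $C_{c}(G^{[2]})$: its support is contained in $\{(\gamma,\gamma):\gamma\in\supp h\}$, which is compact, and because $G$ is étale the diagonal is open in $G^{[2]}=(\GssG)\cap(\GttG)$ (if $s(\gamma')=s(\gamma)$ for $\gamma'$ near $\gamma$ then $\gamma'=\gamma$), so the function is locally the restriction of $h$ and hence continuous; thus $\hat\Delta_{\hat B},\hat\Delta_{\hat C}$ land in $\hAltkA$, $\hArtkA$ and not merely in their multiplier algebras. The main obstacle I anticipate is bookkeeping rather than conceptual: keeping the chain of identifications $\hat A\cong C_{c}(G)$, $\AlA\cong C_{c}(\GstG)$, $\hAltkA\cong C_{c}(G^{[2]})$ mutually consistent together with the balanced-tensor-product module structures, and correctly tracking the Radon--Nikodym derivative $\delta$, which enters the product (via $\cphic$), the involution, the antipode, the modular automorphism and the relation $\psi=\delta\cdot\phi$ — a single inversion error there would corrupt all of the final formulas; the étale hypothesis is used only in the last continuity check.
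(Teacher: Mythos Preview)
Your proposal is essentially correct and largely parallels the paper's proof: both transport everything along $a\mapsto a\cdot\phi$, derive the convolution product from \eqref{eq:dual-product}, identify $\hat B$ and $\hat C$ with $C_{c}(X)$ via left/right convolution, and read off the counit, antipode, integrals and modular automorphism from the general formulas in Theorems \ref{theorem:dual-hopf-algebroid} and \ref{theorem:dual-measured} (the paper additionally verifies $\sigma^{\hat\phi}$ by a direct computation of $\hat\phi(f\ast g)$, but your route via \eqref{eq:dual-left-integral} together with $\psi\circ S=\phi$ and $S^{2}=\id$ is equally valid).

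The one genuine divergence is the dual comultiplication. The paper computes $\hat\Delta_{\hat B}(\hat f)(1\otimes\hat h)$ directly from the defining relation with the transpose of $\lT$, choosing an auxiliary $h\in C_{c}(X)$ with $fs^{*}(h)=f$, and obtains $\int_{G}fgg'\intd\nu$ by an explicit sum over fibres. You instead apply Proposition \ref{proposition:mult-comult} to $\hat{\mathcal A}$ and invoke biduality to get $(\hat\Delta_{\hat B}(h))(\dual a\oo\dual b)=\int_{G}abh\intd\nu$. This is conceptually sound, but the phrase ``comparison forces'' hides real work: the left-hand side in Proposition \ref{proposition:mult-comult} is defined via the embedding of Lemma \ref{lemma:dual-embeddings-mult}, i.e.\ as $(\hat\phi\oohl\hat\phi)(\hat\Delta_{\hat B}(h)(\alpha\oo\beta))$ with $\dual a=\alpha\cdot\hat\phi$, not as pointwise evaluation on $G^{[2]}$; extracting the formula $(\gamma,\gamma')\mapsto\delta_{\gamma,\gamma'}h(\gamma)$ still requires you to identify this pairing concretely (trace through $\hat\phi=\eps$ and the identification $\hat A\oohl\hat A\cong C_{c}(\GttG)$) and argue non-degeneracy. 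The paper's direct route avoids this extra layer. A minor point: the identification $\hAltkA\cong C_{c}(G^{[2]})$ is not in \cite{timmermann:regular} (which treats $\mathcal A$, not $\hat{\mathcal A}$); it has to be established here, and the paper does so in passing by first identifying $\hat A\oohl\hat A$ and $\hat A\oohr\hat A$ with $C_{c}(\GttG)$ and $C_{c}(\GssG)$. Your closing remark that the diagonal is open in $G^{[2]}$ by \'etaleness, so that $\hat\Delta_{\hat B}(h)$ genuinely lies in $C_{c}(G^{[2]})$ rather than merely in its multipliers, is a welcome clarification the paper leaves implicit.
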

\begin{proof}
By definition,  $\hat A \subseteq
  \dual{C_{c}(G)}$  is the space of all functionals of the form
  \begin{align*}
    \hat f:= f\cdot \phi \colon g \mapsto \phi(gf) = \int_{G} gf \intd
    \nu,
  \end{align*}
  where $f\in C_{c}(G)$. By \eqref{eq:dual-product}, the
  multiplication on $\hat A$ is given by 
  \begin{align*} 
    \hat f \ast \hat g = \hat h \quad \text{with} \quad h(\gamma) &= ((\cphic \oo \id)(\Tl^{-1}(f
    \otimes
    g)))(\gamma) \\
    &= \sum_{\substack{\gamma' \in G\\ t(\gamma')=t(\gamma)}}
    f(\gamma')g(\gamma'{}^{-1}\gamma) 
    = \sum_{\gamma'\gamma''=\gamma} f(\gamma')g(\gamma'')
  \end{align*}
  for all $f,g\in C_{c}(G)$. The involution on $\hat A$ is defined by
  \begin{align*}
    (\hat f)^{*}(g) &= \left(\int_{G} fS(g)^{*} \intd \nu\right)^{*} =
    \int_{G} \overline{f(\gamma)} g(\gamma^{-1}) \intd\nu(\gamma) =
    \int_{G} \overline{f(\gamma^{-1})}g(\gamma) \delta(\gamma)
    \intd\nu(\gamma).
  \end{align*}

  The embedding $\hat B = C \to M(\hat A)$ is given by
  \begin{align*}
    t^{*}(h) \hat f &= t^{*}(h)f\cdot \phi = \widehat{t^{*}(h)f} =
    \hat h \ast \hat f, &
    \hat f t^{*}(h) &= S^{-1}(t^{*}(h)) f\cdot \phi =
    \widehat{s^{*}(h)f} = \hat f \ast \hat h
   \end{align*}
   for all $f\in C_{c}(G)$ and  $h\in C_{c}(X)$. We can therefore
   identify $\hat B$ with $\widehat{C_{c}(X)}$. A similar caclulation
   shows that the same is true for $\hat C$ and that then $\hat
   S_{\hat B}$ and $\hat S_{\hat C}$ reduce to the identity.

   There exist natural isomorphisms $\hat A \oohl \hat A \cong
   C_{c}(\GttG)$ and $\hat A \oohr \hat A \cong C_{c}(\GssG)$, which
   map a tensor $\hat f \otimes \hat g$ to the function
   $(\gamma,\gamma') \mapsto f(\gamma) g(\gamma')$, and these maps
   induce isomorphisms $\hAltkA \cong \C_{c}(G^{[2]}) \cong \hArtkA$.
   To compute the dual comultiplication, let $f\in C_{c}(G)$ and
   choose $h \in C_{c}(X)$ such that $f s^{*}(h) = f$. Then for all
   $g,g' \in C_{c}(G)$,
   \begin{align*}
     (\hat \Delta_{\hat B}(\hat f)(1 \otimes \hat h))(g \otimes g') &=
     (\hat f \oor \hat h)((g \otimes 1)\Delta_{C}(g')) \\ &= \int_{G}
     \sum_{\substack{\gamma' \in G\\ r(\gamma')=s(\gamma)}}
     f(\gamma)h(\gamma')    g(\gamma)g'(\gamma\gamma') \intd\nu(\gamma) \\
     &= \int_{G} f(\gamma)g(\gamma)g'(\gamma) \intd \nu(\gamma).
   \end{align*}
   The formula for $\hat\Delta_{\hat B}$ follows, and a similar
   calculation applies to $\hat \Delta_{\hat C}$.

Let us next prove the formula for the dual antipode $\hat S$. For all
$f,g\in C_{c}(G)$,
\begin{align*}
  (\hat S(\hat f))(g) = \phi(fS(g)) &= \int_{G} f(\gamma)g(\gamma^{-1})
  \intd\nu(\gamma) \\ &= \int_{G} f(\gamma^{-1})g(\gamma)
  \delta(\gamma)\intd \nu (\gamma) = \widehat{(j(f)\delta)}(g).
\end{align*}
The formulas for $\hat \eps,\hat\phi,\hat\psi$ follow more or less
directly from the definitions. The formula for $\sigma^{\hat\psi}$ can
be deduced from Theorem \ref{theorem:dual-measured}. More directly,
the definitions imply that
\begin{align*}
  \hat\phi(f\ast g) &= \int_{X}\sum_{\gamma\gamma'=x}
  f(\gamma')g(\gamma'') \intd\mu_{X}(x) \\
  &= \int_{G} f(\gamma^{-1}) g(\gamma) \intd\nu^{-1}(\gamma) \\
  &= \int_{G} g(\gamma) (f\delta^{-1})(\gamma^{-1}) \intd\nu(\gamma)
  \\
  &=\int_{X}\sum_{\gamma\gamma'=x}
  g(\gamma)(f\delta^{-1})(\gamma^{-1}) \intd\mu_{X}(x) = \hat\phi(g
  \ast (f\delta^{-1}))
\end{align*}
 for all $f,g\in \C_{c}(G)$, where $f\delta^{-1}$ denotes the
 pointwise product.
\end{proof}

\subsection{A two-sided crossed product}
\label{subsection:two-sided}

As a second example, we consider a two-sided crossed product that
arises as a two-sided crossed product $C \rtimes H \ltimes B$ for
compatible actions of a multiplier Hopf algebra $H$ on two
anti-isomorphic algebras $B,C$. This example appeared already in the
work of Connes and Moscovici on transverse geometry in
\cite{connes:rankin} and was put into the framework of multiplier Hopf
algebroids in \cite{timmermann:regular}. We show that invariant
functionals on $B$ and $C$ and integrals on $H$ yield integrals on $C
\rtimes H \ltimes B$, and describe the dual multiplier Hopf algebroid.

Recall from \cite{daele:actions} that a \emph{right action} of a
multiplier Hopf algebra $\mathcal{H}=(H,\Delta_{H})$ on an algebra $B$
is a non-degenerate right $H$-module structure on $B$, written $x
\actb h$, such that $(xx') \actb h = \sum (x \actb h_{(1)})(x' \actb
h_{(2)})$ for all $h\in H$ and $x,x'\in B$, where the right hand side
makes sense due to the assumption that $B \actb H = B$.  Left actions
of $\mathcal{H}$ on an algebra $C$ are defined accordingly and written
in the form $ h \actc y$, where $y\in C$ and $h\in H$.

We define a \emph{coupled pair of algebras}, briefly denoted $(S_{B}
\colon B \rightleftarrows C : S_{C})$, to be two non-degenerate,
idempotent algebras $B$ and $C$ with anti-isomorphisms $S_{B} \colon
B\to C$ and $S_{C} \colon C\to B$.  An \emph{action} of a multiplier
Hopf algebra $\mathcal{H}$ as above on a coupled pair of
algebras $(S_{B} \colon B \rightleftarrows C : S_{C})$ is a right
action of $\mathcal{H}$ on $B$ and a left action of $\mathcal{H}$ on
$C$ such that
\begin{align} \label{eq:coupled-action}
  S_{B}(x \actb h) &= S_{H}(h) \actc S_{B}(x), &
  S_{C}(h \actc y) &= S_{C}(y) \actb S_{H}(h)
\end{align}
for all $x\in B$, $y\in C$ and $h\in H$.
Given such an action,  the vector space $C \otimes H \otimes B$
becomes a non-degenerate, idempotent algebra with respect to the
multiplication given by
\begin{align*}
  (y \otimes h \otimes x)(y' \otimes h' \otimes x') &= \sum y(h_{(1)}
  \actc y') \otimes h_{(2)}h'_{(1)} \otimes (x \actb
  h'_{(2)})x' 
\end{align*}
for all $x,x'\in B$, $y,y'\in C$, $h,h'\in H$. We denote this algebra by $C
\rtimes H \ltimes B$ and call it the \emph{crossed product} associated
to the action. One easily verifies that this crossed product can be
characterized as  the
universal algebra $A$ with non-degenerate embeddings $B,C,H
\hookrightarrow M(A)$ such that $A=CHB \subseteq M(A)$ and
\begin{align*}
  xy &= yx, & xh &= \sum h_{(1)}(x \actb h_{(2)}), & hy &= \sum (h_{(1)}
  \actc y)h_{(2)}      
\end{align*}
for all $x\in B$, $y\in C$, $h \in H$. Note that the last two relations imply 
\begin{align} \label{eq:reverse-crossed}
  hx &= \sum (x \actb S_{H}^{-1}(h_{(2)})) h_{(1)}, & yh &=
\sum h_{(2)}(S_{H}^{-1}(h_{(1)}) \actc y).
\end{align}
 \begin{proposition} \label{proposition:coupled-hopf}
   Let $(S_{B} \colon B \rightleftarrows C \colon S_{C})$ be a coupled
   pair of algebras with an action of a regular multiplier Hopf algebra
   $\mathcal{H}=(H,\Delta_{H})$, and let $A=C \rtimes H \ltimes
   B$. Then $\mathcal{A}_{B}:=(B,A,\iota_{B},S_{B})$ and
   $\mathcal{A}_{C}:=(C,A,\iota_{C},S_{C})$ are compatible left and
   right quantum graphs, there exist a left and a right
   comultiplication $\Delta_{B}$ for $\mathcal{A}_{B}$ and
   $\Delta_{C}$ for $\mathcal{A}_{C}$ such that
   \begin{align*}
     \Delta_{B}(yhx)(a \otimes b) &= \sum yh_{(1)}a \otimes h_{(2)}xb,
     & (a \otimes b) \Delta_{C}(yhx) &= \sum ayh_{(1)} \otimes
     bh_{(2)}x
   \end{align*}
   for all $a,b\in A$, $x\in B$, $y\in C$, $h\in H$, and
   $\mathcal{A}=((\mathcal{A}_{B},\Delta_{B}),(\mathcal{A}_{C},\Delta_{C}))$
   is a projective regular multiplier Hopf algebroid with antipode and
   counits  given by
    \begin{gather} \label{eq:coupled-counits}
      \beps(yhx) = (x \actb S_{H}^{-1}(h))S_{B}^{-1}(y), \quad
      \epsc(yhx) = S_{C}^{-1}(x)(S_{H}^{-1}(h) \actc y), \\ \label{eq:coupled-antipode}
      S(yhx) =  S_{B}(x)S_{H}(h) S_{C}(y) 
    \end{gather}
    for all $x\in B$, $y\in C$, $h\in H$.
  \end{proposition}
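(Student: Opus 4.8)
The plan is as follows. The structural assertions of the proposition — that $\mathcal{A}_{B}$ and $\mathcal{A}_{C}$ are compatible left and right quantum graphs, that the displayed formulas do define a left comultiplication $\Delta_{B}$ and a right comultiplication $\Delta_{C}$, and that $\mathcal{A}$ is a regular multiplier Hopf algebroid whose antipode and counits are given by \eqref{eq:coupled-counits} and \eqref{eq:coupled-antipode} — are exactly what was worked out for this two-sided crossed product in \cite{timmermann:regular}, so for all of this I would simply refer to that paper. The genuinely new point is that $\mathcal{A}$ is \emph{projective}, and that is where I would do the work.

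For projectivity I would argue directly, using the two normal forms of the crossed product; it suffices to treat the four modules $\bA$, $\Ab$, $\cA$, $\Ac$, and all four are equally easy (by the antipode, $\Ab$ and $\bA$ would already be enough). By the very definition of $C \rtimes H \ltimes B$, multiplication in $M(A)$ restricts to a linear isomorphism $C \oo H \oo B \to A$, $y \oo h \oo x \mapsto yhx$; under it, right multiplication by an element of $B$ acts on the third tensor factor and left multiplication by an element of $C$ on the first. Hence $\Ab \cong C \oo H \oo \Bb$ and $\cA \cong \cC \oo H \oo B$, so $\Ab$ is a direct sum of copies of $\Bb$ and $\cA$ a direct sum of copies of $\cC$; both are free, in particular projective. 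For $\bA$ and $\Ac$ I would pass to the \emph{reversed} normal form: using the reordering relations \eqref{eq:reverse-crossed} — this is precisely where regularity of $\mathcal{H}$, i.e.\ invertibility of $S_{H}$, is used — one rewrites an arbitrary element of $A$ uniquely in the order $B\,H\,C$, obtaining a linear isomorphism $B \oo H \oo C \to A$, $x \oo h \oo y \mapsto xhy$, under which left multiplication by $B$ acts on the first tensor factor and right multiplication by $C$ on the last. Thus $\bA \cong \bB \oo H \oo C$ and $\Ac \cong B \oo H \oo \Cc$ are again free, hence projective. (This is marginally sharper than the groupoid case, where one only obtained a direct summand of a free module.)

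The main obstacle is the reversed normal form $B \oo H \oo C \cong A$: one has to verify that iterating \eqref{eq:reverse-crossed} really does allow every element of $A$ to be written, and written uniquely, in that order. I expect this to be already available from the analysis of the crossed product in \cite{timmermann:regular} — something of this sort is needed there to check the flatness condition in the definition of a quantum graph — and, failing that, it can be obtained by writing down an explicit inverse of $x \oo h \oo y \mapsto xhy$ directly from \eqref{eq:reverse-crossed} together with the Hopf algebra axioms. The remaining step, that a module of the form $D \oo V$ with $D$ acting by multiplication on the $D$-factor is free and hence projective in the conventions used here, is routine and is the same mechanism already invoked in the proof of Proposition \ref{proposition:groupoid-hopf}.
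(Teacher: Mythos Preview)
Your proposal is correct and follows essentially the same approach as the paper: refer the structural assertions to \cite{timmermann:regular}, and for projectivity observe that $A$ is free as a $B$- or $C$-module. The paper's proof is terser --- it simply asserts freeness in one line --- whereas you spell out the two normal forms $C\otimes H\otimes B$ and $B\otimes H\otimes C$; the only minor point you gloss over is that the counit formulas \eqref{eq:coupled-counits} in the $yhx$ ordering are not quite what appears in \cite{timmermann:regular} (there they are given on elements $xyh$ and $hxy$), so a one-line application of \eqref{eq:reverse-crossed} is needed to rewrite them.
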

  \begin{proof}
    Most parts of the statement summarise a construction in Section 8
    of \cite{timmermann:regular} up to a switch between left and right
    actions.  The formulas for the left and right counit follow from
    the relations $\beps(xyh) = xS_{B}^{-1}(y)\eps_{H}(h)$ and
    $\epsc(hxy) = S_{C}^{-1}(x)y\eps_{H}(h)$, respectively, which are
    contained in \cite{timmermann:regular}, and
    \eqref{eq:reverse-crossed}.  Note that the algebra $A$ is free as
    a $B$- or $C$-module and hence projective. 
  \end{proof}

We define  a \emph{base weight} for  a coupled pair of algebras $(S_{B}
  \colon B \rightleftarrows C \colon S_{C})$ to be a pair of faithful
  functionals $\mu=(\mu_{B},\mu_{C})$, where $\mu_{B} \in \dB$ and
  $\mu_{C} \in \dC$, such that $\mu_{C} \circ S_{B} =\mu_{B}$,
  $\mu_{B} \circ S_{C} =\mu_{C}$ and
  $\sigma^{\mu}_{B}:=S_{B}^{-1}\circ S_{C}^{-1}$ and
  $\sigma^{\mu}_{C}:=S_{B}\circ S_{C}$ are modular automorphisms for
  $\mu_{B}$ and $\mu_{C}$, respectively.  We call such a base weight
  \emph{invariant} with respect to an action of a multiplier
  Hopf algebra $\mathcal{H}=(H,\Delta_{H})$ if
  \begin{align} \label{eq:coupled-invariance}
    \mu_{B}(h \actc x) &= \eps_{H}(h)\mu_{B}(x), &
    \mu_{C}(y \actb h) &= \mu_{C}(y) \eps_{H}(y)
  \end{align}
for all $x\in B$, $y\in C$, $h\in H$. Note that this relation implies
  \begin{align} \label{eq:coupled-invariance-adjoint}
    \begin{aligned}
      \mu_{C}((h \actc y)y') &= \sum \mu_{C}((h_{(1)}\actc
      y)(h_{(2)}S_{H}(h_{(3)}) \actc y')) \\
      &= \sum \mu_{C}(h_{(1)} \actc(y(S_{H}(h_{(2)}) \actc y'))) =
      \mu_{C}(y(S_{H}(h) \actc y'))
    \end{aligned}
  \end{align} for all $y,y'\in C$, and
  likewise $\mu_{B}(x(x' \actb h)) = \mu_{B}((x \actb S_{H}(h))x')$
  for all $x,x' \in B$.

  \begin{theorem} \label{theorem:coupled-measured} Let $(S_{B} \colon
    B \rightleftarrows C \colon S_{C})$ be a coupled pair of algebras
    with an action of a regular multiplier Hopf algebra
    $\mathcal{H}=(H,\Delta_{H})$ and an invariant base weight
    $\mu=(\mu_{B},\mu_{C})$, and let $\phi_{H}$ be a left and
    $\psi_{H}$ a right integral on $\mathcal{H}$.  Then the regular
    multiplier Hopf algebroid $\mathcal{A}$ defined in Proposition
    \ref{proposition:coupled-hopf} and the functionals
    $\mu_{B},\mu_{C}$ and
    \begin{align*}
      \phi \colon yhx &\mapsto\mu_{C}(y) \phi_{H}(h)\mu_{B}(x), &
      \psi \colon yhx &\mapsto\mu_{C}(y) \psi_{H}(h)\mu_{B}(x)
    \end{align*}
    form a measured regular multiplier Hopf algebroid.  Denote by
    $\sigma^{\phi}_{H},\sigma^{\psi}_{H}$ and $\delta_{H}$ the modular
    automorphisms of $\phi_{H}$ and $\psi_{H}$ and the modular element
    of $\mathcal{H}$, respectively. Then the modular automorphisms of
    $\phi$ and $\psi$ are given by
    \begin{align*}
      \sigma^{\phi}(yhx) &=
      \sigma^{\mu}_{C}(y)\sigma^{\phi}_{H}(h)(\sigma^{\mu}_{B}(x)
      \actb \delta_{H}), & \sigma^{\psi}(yhx) &= (\delta_{H} \actc
      \sigma^{\mu}_{C}(y)) \sigma^{\psi}_{H}(h) \sigma^{\mu}_{B}(x),
    \end{align*}
    and $\psi=\delta\cdot \phi$, where $\delta\in M(H)$ denotes the
    unique multiple of $\delta_{H}$ satisfying $\delta \cdot
    \phi_{H}=\psi_{H}$.
\end{theorem}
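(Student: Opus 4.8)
The plan is to verify, in turn, that $(\mu_{B},\mu_{C})$ is a base weight for $\mathcal{A}$, that $\phi$ is a full and faithful left integral and $\psi$ a full and faithful right integral adapted to $\mu$, and then to read off the modular data from Theorems \ref{theorem:modular-automorphism}, \ref{theorem:modular-element} and \ref{theorem:modular-element-second}. Throughout I would use that $\mathcal{A}$ is projective, hence flat, by Proposition \ref{proposition:coupled-hopf}, and that the underlying vector space of $A=C\rtimes H\ltimes B$ is $C\otimes H\otimes B$, so that $\phi=\mu_{C}\otimes\phi_{H}\otimes\mu_{B}$ and $\psi=\mu_{C}\otimes\psi_{H}\otimes\mu_{B}$ are well-defined functionals on $A$. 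For the base weight, the relations $\mu_{B}\circ S_{C}=\mu_{C}$, $\mu_{C}\circ S_{B}=\mu_{B}$ and the fact that $\sigma^{\mu}_{B}=S_{B}^{-1}\circ S_{C}^{-1}$ and $\sigma^{\mu}_{C}=S_{B}\circ S_{C}$ are modular automorphisms for $\mu_{B},\mu_{C}$ are built into the hypothesis that $\mu$ is an invariant base weight for the coupled pair, so the only new point is the identity $\mu_{B}\circ\beps=\mu_{C}\circ\epsc$. Evaluating both sides on $yhx$ by \eqref{eq:coupled-counits}, rewriting $S_{B}(x\actb S_{H}^{-1}(h))=h\actc S_{B}(x)$ via \eqref{eq:coupled-action}, and using the modular automorphism of $\mu_{C}$ together with \eqref{eq:coupled-invariance-adjoint}, both $\mu_{B}(\beps(yhx))$ and $\mu_{C}(\epsc(yhx))$ reduce to $\mu_{C}(y(h\actc S_{B}(x)))$.

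Next I would exhibit the four component maps witnessing $\phi\in\dmAm$: directly $\phib(yhx)=\mu_{C}(y)\phi_{H}(h)\,x$ and $\cphi(yhx)=\phi_{H}(h)\mu_{B}(x)\,y$, while $\bphi$ and $\phic$ are obtained by moving a factor of $B$ or $C$ past $h$ through the crossed product relations \eqref{eq:reverse-crossed} and invoking the invariance \eqref{eq:coupled-invariance}--\eqref{eq:coupled-invariance-adjoint}; faithfulness of $\mu_{B},\mu_{C}$ makes these maps well-defined. Left-invariance of $\phic$, equivalently of $\cphi$ by Proposition \ref{proposition:invariant-elements-hopf}, follows on inserting the description of $\Delta_{C}$ from Proposition \ref{proposition:coupled-hopf} and the formula for $\phic$: the required relation collapses to the left-invariance of $\phi_{H}$ on $\mathcal{H}$ together with non-degeneracy of the actions. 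For fullness, $\phib$ is visibly surjective onto $B$ (rescale $y,h$ so that $\mu_{C}(y)\phi_{H}(h)=1$), and $\bphi$ is surjective because $(\phi_{H}\otimes\id)(\Delta_{H}(h))=\phi_{H}(h)\delta_{H}$, $\phi_{H}$ is full, $\delta_{H}$ is invertible and $B\actb H=B$. Since $\mathcal{A}$ is projective, Theorem \ref{theorem:integrals-faithful} then gives faithfulness of $\phi$ at no extra cost. A symmetric argument with $\psi_{H}$ shows that $\psi$ is a full and faithful right integral adapted to $\mu$; hence $(\mathcal{A},\mu,\phi,\psi)$ is a measured regular multiplier Hopf algebroid.

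It then remains to identify the modular data. By Theorem \ref{theorem:modular-automorphism}, $\sigma^{\phi}$ is an algebra automorphism of $A$ with $\sigma^{\phi}(y)=\sigma^{\mu}_{C}(y)$ for $y\in C$. Computing $\phi(hb)$ and $\phi(b\,\sigma^{\phi}_{H}(h))$ for $b=y'k'x'$, using \eqref{eq:reverse-crossed} and the invariance \eqref{eq:coupled-invariance}, both expressions reduce to $\mu_{C}(y')\phi_{H}(hk')\mu_{B}(x')$, so $\sigma^{\phi}(h)=\sigma^{\phi}_{H}(h)$ for $h\in H$. Computing $\phi(xb)$ similarly, using $\mu_{B}((x\actb g)x')=\mu_{B}(x(x'\actb S_{H}^{-1}(g)))$, the identity $(\phi_{H}\otimes\id)(\Delta_{H}(k'))=\phi_{H}(k')\delta_{H}$, and $\sigma^{\mu}_{B}(x\actb\delta_{H})=\sigma^{\mu}_{B}(x)\actb\delta_{H}$ (a consequence of \eqref{eq:coupled-action} and $S_{H}(\delta_{H})=\delta_{H}^{-1}$), one gets $\sigma^{\phi}(x)=\sigma^{\mu}_{B}(x)\actb\delta_{H}$ for $x\in B$. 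Since $\sigma^{\phi}$ preserves $C$, $H$ and $B$, multiplicativity then yields the stated formula $\sigma^{\phi}(yhx)=\sigma^{\mu}_{C}(y)\sigma^{\phi}_{H}(h)(\sigma^{\mu}_{B}(x)\actb\delta_{H})$.

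Finally, write $\psi_{H}=\delta\cdot\phi_{H}$ with $\delta\in M(H)$ the stated multiple of $\delta_{H}$; using $\Delta_{H}(\delta)=\delta\otimes\delta$, $\eps_{H}(\delta)=1$ and the relation $x\delta=\delta(x\actb\delta)$ in $M(A)$, one checks $\phi(a\delta)=\psi(a)$ for all $a\in A$, so $\psi=\delta\cdot\phi$ with $\delta$ regarded in $M(H)\subseteq M(A)$; this is the multiplier of Theorem \ref{theorem:modular-element}, invertible because $\psi$ is full and faithful. From $\psi=\delta\cdot\phi$ one gets $\sigma^{\psi}=\operatorname{Ad}_{\delta}\circ\sigma^{\phi}$, and conjugating the formula for $\sigma^{\phi}$ by $\delta$ --- using $\delta z\delta^{-1}=\delta\actc z$ for $z\in C$, $\delta x\delta^{-1}=x\actb\delta^{-1}$ for $x\in B$, and $\delta_{H}\,\sigma^{\phi}_{H}(h)\,\delta_{H}^{-1}=\sigma^{\psi}_{H}(h)$ --- produces the stated formula $\sigma^{\psi}(yhx)=(\delta_{H}\actc\sigma^{\mu}_{C}(y))\sigma^{\psi}_{H}(h)\sigma^{\mu}_{B}(x)$, the scalars arising from $\delta$ being a multiple of $\delta_{H}$ cancelling out. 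I expect the main obstacle to be not any single deep point but the sheer amount of bookkeeping in the invariance check and the modular computation: one must thread the crossed product relations \eqref{eq:reverse-crossed}, the compatibility \eqref{eq:coupled-action}, the invariance \eqref{eq:coupled-invariance}, and the standard relations for $\phi_{H},\psi_{H},\delta_{H}$ from \cite{daele:1} without sign or side errors.
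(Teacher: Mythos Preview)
Your proposal is correct and follows essentially the same route as the paper: verify the base weight identity $\mu_{B}\circ\beps=\mu_{C}\circ\epsc$ via \eqref{eq:coupled-counits}, \eqref{eq:coupled-action} and \eqref{eq:coupled-invariance-adjoint}; exhibit the four factorisations of $\phi$, check left-invariance of $\cphic$ using left-invariance of $\phi_{H}$, deduce fullness from invertibility of $\delta_{H}$, and get faithfulness for free from projectivity via Theorem~\ref{theorem:integrals-faithful}; then compute $\sigma^{\phi}$ on $B$, $C$, $H$ separately and assemble by multiplicativity.

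The one place where you deviate slightly from the paper is the formula for $\sigma^{\psi}$: the paper simply says ``similar arguments'' and redoes the direct computation with $\psi_{H}$ in place of $\phi_{H}$, whereas you derive it from $\sigma^{\psi}=\mathrm{Ad}_{\delta}\circ\sigma^{\phi}$ and the conjugation rules $\delta y\delta^{-1}=\delta\actc y$, $\delta x\delta^{-1}=x\actb\delta^{-1}$, $\delta_{H}\sigma^{\phi}_{H}(h)\delta_{H}^{-1}=\sigma^{\psi}_{H}(h)$. Your route is a bit more conceptual and avoids repeating the invariance bookkeeping, at the cost of having to track the scalar relating $\delta$ to $\delta_{H}$ through the conjugation (which, as you note, cancels since it appears once on the $C$-side and once inverted on the $B$-side). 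Both approaches are fine.
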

\begin{proof}
  Throughout this proof, $x,x',y,y'$ and $h,h'$ will always denote
  arbitrary elements of $B,C$ or $H$, respectively.

  To see that $\mu$ is a base weight for $\mathcal{A}$, we use
  equations \eqref{eq:coupled-counits}, \eqref{eq:coupled-action},
  \eqref{eq:coupled-invariance-adjoint} and the modular automorphism
  for $\mu_{C}$, and find
  \begin{align} \label{eq:coupled-counit-functional}
    \begin{aligned}
      \mu_{B}(\beps(yhx)) &= \mu_{B}((x \actb
      S_{H}^{-1}(h))S_{B}^{-1}(y)) \\
      &=\mu_{C}(y(h \actc S_{B}(x))) \\ &= \mu_{C}((S_{H}^{-1}(h)
      \actc y)S_{B}(x)) \\ & = \mu_{C}(S_{C}^{-1}(x)(S_{H}^{-1}(h) \actc
      y)) = \mu_{C}(\epsc(yhx)).
    \end{aligned}
  \end{align}

Next, we show that the formula for $\phi$ defines a full and faithful
left integral.  First, we check that $\phi$ belongs to $\dmAm$.  By
\cite{daele:1}, $\phi_{H}$ has a modular automorphism
$\sigma^{\phi}_{H}$ and there exists an invertible multiplier
$\delta_{H} \in M(H)$ such that $\sum \phi_{H}(h_{(1)})h_{(2)} =
\delta_{H}\phi_{H}(h)$ for all $h \in H$. Using this relation, left
invariance of $\phi_{H}$ and \eqref{eq:coupled-invariance}, we find that
  \begin{align}
    \phi(x'yhx) &= \sum \phi(yh_{(1)}(x' \actb h_{(2)})x) \nonumber \\ &=
\sum    \mu_{C}(y)\phi_{H}(h_{(1)}) \mu_{B}((x' \actb h_{(2)})x) =
    \mu_{C}(y)\phi_{H}(h)\mu_{B}((x'\actb \delta_{H})x), \label{eq:coupled-phi-b} \\
    \phi(yhxy') &= \sum \phi(y(h_{(1)} \actc y')h_{(2)}x) \nonumber \\
    &= \sum
    \mu_{C}(y(h_{(1)}\actc y'))\phi_{H}(h_{(2)})\mu_{B}(x) =
    \mu_{C}(yy')\phi_{H}(h)\mu_{B}(x). \nonumber
  \end{align}
  These relations imply
  that $\phi \in \dmAm$ and
  \begin{gather}
    \cphi(yhx) = y\phi_{H}(h)\mu_{B}(x) = \phic(yhx),  \label{eq:coupled-cphic}\\
    \phib(x'yh) = \mu_{C}(y)\phi_{H}(h)(x' \actb \delta_{H}), \quad
    \bphi(yhx) = \mu_{C}(y)\phi_{H}(h)(x \actb
    \delta_{H}^{-1}), \label{eq:coupled-bphib}
  \end{gather}
where we used the relation
  $\delta_{H}^{-1}=S_{H}(\delta_{H})$ and the analogue of
  \eqref{eq:coupled-invariance-adjoint} for $\mu_{B}$. Since $\delta_{H}$ is invertible,
  $\phib$ and $\bphi$  are surjective. 
  
  The map $\cphic$ is left-invariant because   for all $a\in A$,
  \begin{align*}
    (\id \otimes \cphic)((a \otimes 1)\Delta_{C}(yhx)) &= \sum (\id
    \otimes \cphic)(ayh_{(1)} \otimes h_{(2)}x) \\ &=\sum ay h_{(1)}
    \phi_{H}(h_{(2)})\mu_{B}(x) \\ &= \sum a y\phi_{H}(h)\mu_{B}(x) =
    a\cphic(yhx).
  \end{align*}

  Thus, $\phi$ is a full left integral. By Theorem
  \ref{theorem:integrals-faithful}, it is  faithful.  

  Let us prove the formula for the modular automorphism
  $\sigma^{\phi}$.  By Theorem \ref{theorem:modular-automorphism},
  $\sigma^{\phi}(y) = \sigma^{\mu}_{C}(y)$ for all $y\in C$. Equation
  \eqref{eq:coupled-phi-b} shows that
  \begin{align*}
    \phi(yhx\sigma^{\mu}_{B}(x' \actb \delta_{H})) &=
    \mu_{C}\phi_{H}(h)\mu_{B}(x\sigma^{\mu}_{B}(x' \actb \delta_{H}))
    \\
    &= \mu_{C}\phi_{H}(h)\mu_{B}((x' \actb \delta_{H})x) = \phi(x'yhx),
  \end{align*}
  and hence $\sigma^{\phi}(x')=\sigma^{\mu}_{B}(x' \actb
  \delta_{H})$. Next,
  \begin{align*}
    \phi(h'yhx) &= \sum \phi((h'_{(1)} \actc y)h'_{(2)}hx) =
    \mu_{C}(y)\phi_{H}(h'h)\mu_{B}(x),
  \end{align*}
  and a similar calculation shows that 
  \begin{align*} 
    \phi(yhx\sigma^{\phi}_{H}(h')) &=
    \mu_{C}(y)\phi_{H}(h\sigma^{\phi}_{H}(h')) \mu_{B}(x) = 
    \mu_{C}(y)\phi_{H}(h'h) \mu_{B}(x).
  \end{align*}
  Therefore, $\sigma^{\phi}(h')=\sigma^{\phi}_{H}(h')$.  Replacing
  $\sigma^{\phi}_{H}(h')$ by $\delta \in M(H)$ above, we find that the
  relation $\delta \cdot \phi_{H} = \psi_{H}$ implies $\delta \cdot
  \phi = \psi$.

  Similar arguments show that $\psi$ is a full and faithful right
  integral and that its modular automorphism has the claimed form,
  where one uses the relations $\sum h_{(1)}\psi_{H}(h_{(2)}) =
  \delta_{H}^{-1}\psi_{H}(h)$ and
  \begin{align*}
    \psi(yhxy') &= \sum \psi(y(h_{(1)} \actc y')h_{(2)}x) \\ &= \sum
    \mu_{C}(y(h_{(1)}\actc y'))\psi_{H}(h_{(2)})\mu_{B}(x) =
    \mu_{C}(y(\delta_{H}^{-1} \actc y'))\psi_{H}(h)\mu_{B}(x).
    \qedhere
  \end{align*}
\end{proof}

We finally describe the dual measured regular multiplier Hopf
algebroid.  In the situation of Theorem
\ref{theorem:coupled-measured}, denote by $K^{\mu}_{B}
\subseteq \End(B)$ the subspace spanned by all linear maps of the form
$|x\rangle\langle x'| \colon x'' \mapsto x\mu_{B}(x'x'')$, where
$x,x'\in B$, and denote by $\hat{\mathcal{H}}=(\hat H,\hat\Delta_{\hat
  H})$ the dual of $\mathcal{H}=(H,\Delta_{H})$.
\begin{theorem} \label{theorem:coupled-dual}
Let $(S_{B} \colon
    B \rightleftarrows C \colon S_{C})$ be a coupled pair of algebras
    with an action of a regular multiplier Hopf algebra
    $\mathcal{H}=(H,\Delta_{H})$ and an invariant base weight
    $\mu=(\mu_{B},\mu_{C})$, and let $\phi_{H}$ be a left and
    $\psi_{H}$ a right integral on $\mathcal{H}$.    
    Then there exists an isomorphism
    \begin{align*}
      \hat A &\to K^{\mu}_{B} \otimes \hat H, \quad h S_{B}(x) \cdot
      \phi \cdot x' \mapsto |x\rangle\langle x'| \otimes h \cdot
      \phi_{H}.
    \end{align*}
    With respect to this isomorphism, the embeddings $\hat B,\hat C \to M(\hat
    A)$ are given by
    \begin{align}
      x'' ( |x\rangle\langle x'| \oo h\cdot \phi_{H}) &=
      |x''x\rangle\langle x'| \oo h\cdot \phi_{H}, \\
      S_{C}^{-1}(x'')(|x\rangle\langle x'| \oo h\cdot \phi_{H}) &=
      \sum |x \sigma^{\mu}_{B}(x'' \actb h_{(1)})\rangle\langle x'|
      \oo h_{(2)}\cdot \phi_{H} \label{eq:coupled-c}
  \end{align}
  for all $x,x',x''\in B$, $h \in H$. The comultiplications, counit,
  antipode and
  integrals on $(\hat{\mathcal{A}},\hat\mu,\hat\phi,\hat\psi)$ are
  given by
  \begin{gather*}
    \hat\Delta_{\hat B}(a)(1_{M(\hat A)} \oo
    (|x''\rangle\langle x'''| \oo \omega')) = \sum (|x\rangle\langle x'|
    \oo \omega_{(1)}) S(x'') \oo (|1\rangle\langle x'''| \oo
    \omega_{(2)}\omega'), \\
    \begin{aligned}
      \heps(a) &= \mu_{B}(x) \mu_{B}(x')\hat\eps_{\hat H}(\omega), &
      \hat S(a) &= S^{-1}_{C}(x')( |1\rangle\langle 1| \oo \hat
      S_{\hat H}(\omega)) S^{-1}_{C}(x), 
 \\
      \hat \phi(a) &= \mu_{B}(xx') \hat\phi_{\hat H}(\omega), & \hat
      \psi(a) &= \mu_{B}((x \actb \delta_{H}^{-1})x') \hat\psi_{\hat
        H}(\omega)
    \end{aligned}
    \end{gather*} 
    for $a=|x\rangle\langle x'| \oo \omega \in K^{\mu}_{B} \oo \hat
    H$, where $\hat\eps_{\hat H}$, $\hat S_{\hat H}$, $\hat \phi_{\hat
      H}$ and $\hat \psi_{\hat H}$ denote the counit, antipode and the
    left and right integrals on $\hat{\mathcal{H}}$, respectively, and
    where the formula for the comultiplication and antipode should be interpreted using
    \eqref{eq:coupled-c}.
\end{theorem}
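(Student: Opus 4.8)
The plan is to make everything explicit using the vector-space identification $A = C \otimes H \otimes B$ built into the definition of the crossed product, the factorisation of $\phi$ as $yhx \mapsto \mu_C(y)\phi_H(h)\mu_B(x)$, and Van Daele's description $\hat H = H \cdot \phi_H$ of the dual of a multiplier Hopf algebra, and then to read off the structure of $\hat{\mathcal{A}}$ from the abstract formulas in Theorems \ref{theorem:dual-hopf-algebroid}, \ref{theorem:dual-measured}, \ref{theorem:biduality} and \ref{theorem:dual-multipliers}. Throughout, the computations rest on the crossed-product relations and their reversed forms \eqref{eq:reverse-crossed}, the compatibilities \eqref{eq:coupled-action} between $S_B,S_C$ and the actions, the invariance \eqref{eq:coupled-invariance} together with \eqref{eq:coupled-invariance-adjoint}, and the identities $S_C^{-1} = S_B \circ \sigma^\mu_B$ and $S_B(x\actb h) = S_H(h)\actc S_B(x)$.

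\textbf{Step 1: the linear isomorphism $\hat A \cong K^\mu_B \otimes \hat H$.} First I would check that $hS_B(x)\cdot \phi \cdot x'$ lies in $\hat A$ for all $h\in H$, $x,x'\in B$, using that $\hat A$ is stable under the $M(A)$-bimodule action (established right after Proposition \ref{proposition:dual-algebra}). Next, evaluating $hS_B(x)\cdot\phi\cdot x'$ against a general element $yh'x''\in A = CHB$ and bringing the product $x'\cdot(yh'x'')\cdot hS_B(x)$ into $CHB$-normal form via the crossed-product relations, then applying $\phi = \mu_C\otimes\phi_H\otimes\mu_B$, one sees that $hS_B(x)\cdot\phi\cdot x'$ depends only on the rank-one operator $|x\rangle\langle x'|\in K^\mu_B$ and on $h\cdot\phi_H\in\hat H$; this uses faithfulness of $\mu_B,\mu_C$ and of $\phi_H$. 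Hence $|x\rangle\langle x'|\otimes(h\cdot\phi_H)\mapsto hS_B(x)\cdot\phi\cdot x'$ is a well-defined linear map $K^\mu_B\otimes\hat H\to\hat A$. Surjectivity follows by writing an arbitrary $a\in A$ as $\sum_i y_ih_ix_i$, replacing each $y_i\in C$ by $S_B(\tilde x_i)$, and rewriting $a\cdot\phi$ in the spanning form with the help of \eqref{eq:reverse-crossed}; injectivity then follows from injectivity of $A\to\hat A$, $a\mapsto a\cdot\phi$ (faithfulness of $\phi$, see Theorem \ref{theorem:integrals-faithful}) together with $A \cong C\otimes H\otimes B \cong K^\mu_B\otimes\hat H$. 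Transporting the product of $\hat A$ through this isomorphism, one checks — by computing $\omega\omega'$ via \eqref{eq:dual-product} and the identity $\Tl^{-1} = (S^{-1}\otimes\id)\lT(S\otimes\id)$ of \eqref{dg:galois-inverse} — that it takes the asserted form (and, in the $*$-case, that $\omega\mapsto\ast\circ\omega\circ\ast\circ S$ becomes the expected involution).

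\textbf{Step 2: embeddings, counit, antipode and integrals.} These are obtained by specialising the general theory. The formulas for $\iota_{\hat C}$ and $\iota_{\hat B}$ are the relations $y\omega = y\cdot\omega$ ($y\in\hat B = C$) and $x\omega = \omega\cdot S_C^{-1}(x)$, $\omega x = \omega\cdot x$ ($x\in\hat C = B$) from Proposition \ref{proposition:dual-quantum-graphs}; translating $(x''hS_B(x))\cdot\phi\cdot x'$ and $(S_C^{-1}(x'')hS_B(x))\cdot\phi\cdot x'$ into $K^\mu_B\otimes\hat H$-coordinates, using $S_C^{-1} = S_B\circ\sigma^\mu_B$, anti-multiplicativity of $S_B$, and $S_H^{-1}(h)\actc S_C^{-1}(x'') = S_C^{-1}(x''\actb h)$, yields the two displayed formulas. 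The counit and antipode come from Theorem \ref{theorem:dual-hopf-algebroid} ($\hat S(\omega) = \omega\circ S$, $\hbeps(a\cdot\phi) = \cphic(a)$, $\hepsc(\psi\cdot a)=\bpsib(a)$) together with Proposition \ref{proposition:coupled-hopf}, \eqref{eq:coupled-cphic} and \eqref{eq:coupled-counit-functional}; the integrals $\hat\phi,\hat\psi$ and their modular automorphisms come from Theorem \ref{theorem:dual-measured} ($\hat\psi(a\cdot\phi)=\eps(a)$, $\hat\phi(\psi\cdot a)=\eps(a)$, and \eqref{eq:dual-right-integral}), using $\psi=\delta\cdot\phi$ and the formulas \eqref{eq:coupled-bphib}; the ``$\hat H$-parts'' $\hat\eps_{\hat H},\hat S_{\hat H},\hat\phi_{\hat H},\hat\psi_{\hat H}$ appear precisely because $\hat H = H\cdot\phi_H$ and because of Van Daele's formulas for the dual integrals of $\mathcal{H}$ \cite{daele:1}.

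\textbf{Step 3: the dual comultiplications --- the main obstacle.} This is where the bulk of the work lies. By Theorem \ref{theorem:dual-hopf-algebroid} the canonical maps of $\hat{\mathcal{A}}$ dualise those of $\mathcal{A}$; combining this with Lemma \ref{lemma:comult-extension} and Proposition \ref{proposition:mult-comult} (cf.\ also Lemma \ref{lemma:dual-bimodule-explicit}), one gets that $\hat\Delta_{\hat B}(\omega)(1\otimes\omega')$ is determined by $(\hat\Delta_{\hat B}(\omega)(1\otimes\omega'))(\dual a\otimes\dual b) = (\omega\ool\omega')((a\otimes 1)\Delta_C(b))$ for $a,b\in A$. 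Inserting $\omega = hS_B(x)\cdot\phi\cdot x'$, the element $\omega'=|x''\rangle\langle x'''|\otimes\omega'$, the explicit $\Delta_C$ of Proposition \ref{proposition:coupled-hopf}, the product \eqref{eq:dual-product} on $\hat A$, and the comultiplication $\hat\Delta_{\hat H}$ of $\hat{\mathcal{H}}$ (dual to the multiplication of $H$), one arrives after reorganisation at the stated formula: the coproduct of $H$ splits $\omega$ as $\omega_{(1)}\otimes\omega_{(2)}$, while the factor $S(x'')$ records the passage of $x''$ through the coproduct via $hx'' = \sum(x''\actb S_H^{-1}(h_{(2)}))h_{(1)}$. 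One also needs to identify the balanced tensor products $\hAltkA$ and $\hArtkA$ with the corresponding spaces over $K^\mu_B\otimes\hat H$; this follows the pattern of Lemmas \ref{lemma:dual-pairings}--\ref{lemma:dual-embeddings}. The $*$-compatibility reduces, as in the proof of Theorem \ref{theorem:dual-hopf-algebroid}, to the identity $(\ast\otimes\ast)\circ\hTr\circ(\ast\otimes\ast)=\hrT$, which holds automatically. I expect the most error-prone point to be keeping the several Sweedler legs of $\Delta_H$ straight and correctly tracking the left/right placement of the $B$- and $C$-factors through the anti-multiplicative $S_B$; everything else is bookkeeping.
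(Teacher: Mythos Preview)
Your proposal is correct and follows essentially the same route as the paper. The one notable presentational difference is in Step~1: the paper establishes the isomorphism $\hat A\cong K^\mu_B\otimes\hat H$ not by evaluating the functional $hS_B(x)\cdot\phi\cdot x'$ directly, but by computing the convolution operator $\rho(hS_B(x)\cdot\phi\cdot x')\in\End(A)$ via \eqref{eq:coupled-cphic}, defining a concrete embedding $\pi\colon K^\mu_B\otimes\hat H\hookrightarrow\End(A)$ by the same formula, and taking $\pi^{-1}\circ\rho$; this makes well-definedness and injectivity automatic and avoids the slightly delicate passage through $a\mapsto a\cdot\phi$ and the modular automorphism that your argument implicitly invokes. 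For the comultiplication the paper proceeds exactly as you outline, using \eqref{eq:dual-galois-multipliers} and reducing to the identity $\rho(\upsilon')(b)h=\sum_i {_C\upsilon'''_i}(b)\,x''g_i$, whose verification is the Sweedler bookkeeping you anticipate.
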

\begin{proof}
  Throughout the proof, $x,x',x''$, $y,y',y''$ and $h,h',h''$ denote
  arbitrary elements of $B,C$ and $H$, respectively.

  We first show that $\hat A$ is isomorphic to $K^{\mu}_{B} \oo \hat
  H$. By definition
  \begin{gather} \label{eq:coupled-rho}
    \begin{aligned}
      \rho(hS_{B}(x) \cdot \phi \cdot x')(x''h''y'') &= \sum (\id
      \otimes S_{B}^{-1}\circ \cphic)(h''_{(1)}y'' \otimes
      x'x''h''_{(2)}h S_{B}(x)) \\ &= \sum \mu_{B}(x'x'') x
      \phi_{H}(h''_{(2)}h)h''_{(1)} y''.
    \end{aligned}
  \end{gather}
  On the other hand, since $A \cong B\oo  H\oo C$ via
  $x''h''y'' \equiv x'' \oo h'' \oo y''$ as a vector space, we can  define
  an embedding $\pi \colon K^{\mu}_{B} \oo \hat H \to \End(A)$ via the
  formula
  \begin{align*}
    \pi(|x\rangle\langle x'| \oo h \cdot\phi_{H}) (x''h'' y'') :=
    \sum x
    \mu_{B}(x' x'') \phi_{H}(h''_{(2)}h)h''_{(1)}  y''.
  \end{align*}
  The composition $\pi^{-1} \circ \rho$ is the desired
  isomorphism $\hat A \to K_{B} \oo \hat H$. 

  With respect to this isomorphism,
  \begin{align*}
x''    ( |x\rangle\langle x'|  \oo  h\cdot \phi_{H})  &= hS_{B}(x)
    \cdot \phi \cdot x' S_{C}^{-1}(x'') \\ &=hS_{B}(x)S_{B}(x'') \cdot \phi \cdot x'x'' =
    |x''x\rangle\langle x'|  \oo    h\cdot \phi_{H}
  \end{align*}
  and
  \begin{align*}
    S_{C}^{-1}(x'')(|x\rangle\langle x'| \oo h\cdot \phi_{H}) &=
    S_{C}^{-1}(x'')hS_{B}(x) \cdot \phi \cdot x' \\ & = \sum
    h_{(2)}(S^{-1}_{H}(h_{(1)}) \actc
    S_{C}^{-1}(x''))S_{B}(x) \cdot \phi \cdot x' \\
    &= \sum h_{(2)} S_{B}(x\sigma^{\mu}_{B}(x'' \actb h_{(1)})) \cdot
    \phi \cdot x' \\ & =   \sum |x \sigma^{\mu}_{B}(x'' \actb
    h_{(1)})\rangle\langle x'| \oo h_{(2)}\cdot \phi_{H}.
  \end{align*}

  Let us  compute the dual counit $\hat\eps$ and the dual integrals
  $\hat\psi,\hat \phi$. First,
  \begin{align*}
         \heps(|x\rangle\langle x'| \otimes  h \cdot \phi) &=
         \heps(hS_{B}(x) \cdot \phi \cdot x') =
         \phi(x'hS_{B}(x))  =\mu_{B}(x')\phi_{H}(h)\mu_{B}(x).
  \end{align*}
  The definition of $\hat \psi$ and relation
  \eqref{eq:coupled-counit-functional} imply
  \begin{align*}
    \hat \psi(|x\rangle\langle x'| \otimes h\cdot \phi_{H}) &= \hat \psi(h
    S_{B}(x) \cdot \phi \cdot x') \\
    &=  \eps(hS_{B}(x)\sigma^{\phi}(x')) \\&=
    \mu_{C}(((S_{H}^{-1}(h_{(2)})h_{(1)}) \actc
    S_{B}(x))S_{B}(\sigma^{\phi}(x'))) =
    \mu_{B}(\sigma^{\phi}(x')x)\eps_{H}(h).
  \end{align*}
  We insert the formula for $\sigma^{\phi}(x')$, use
  \eqref{eq:coupled-invariance-adjoint}, and find
  \begin{align*}
    \hat \psi(|x\rangle\langle x'| \otimes h\cdot \phi_{H}) = 
    \mu_{B}((\sigma^{\mu}_{B}(x') \actb \delta_{H})x) =\mu_{B}((x \actb
    \delta_{H}^{-1})x') \hat\psi_{H}(h \cdot \phi_{H}).
  \end{align*}
  The definition of $\hat \phi$ and the relation $\psi=\delta\cdot
  \phi$ imply
  \begin{align*}
    \hat \phi(|x\rangle\langle x'| \oo \psi_{H} \cdot h) &= \hat
    \phi(|x\rangle\langle x'| \oo \delta\sigma^{\phi}(h) \cdot
    \phi_{H}) \\ &=
    \hat \phi(\delta\sigma^{\phi}(h) S_{B}(x) \cdot \phi \cdot x')  
    = \hat \phi(\psi \cdot hS_{C}^{-1}(x)x') 
    = \eps(hS_{C}^{-1}(x)x').
  \end{align*}
Using \eqref{eq:coupled-counit-functional} similarly as above, we find
  \begin{align*}
        \hat \phi(|x\rangle\langle x'| \oo \psi_{H} \cdot h) &=
        \mu_{B}(x' S_{B}^{-1}S_{C}^{-1}(x)) \eps_{H}(h) = \mu_{B}(xx')
        \hat\phi_{\hat H}(\psi_{H} \cdot h).
  \end{align*}

  The verification of the formula for the antipode is straightforward.

  Let us finally prove the formula for the left comultiplication $\hat \Delta_{\hat
    B}$.  Consider elements
  \begin{align*}
    \upsilon &= hS(x)\cdot \phi \cdot x' = |x\rangle\langle x'| \oo
    h\cdot \phi_{H}, & \upsilon' &= h'S(x'') \cdot \phi \cdot x''' =
    |x''\rangle\langle x'''| \oo h' \cdot \phi_{H}
  \end{align*}
  of $\hat A$. Write $\omega:= h\cdot \phi_{H}$ and $\omega':=h' \cdot
  \phi_{H}$,  choose $g_{i},g'_{i} \in H$ such that
  \begin{align*}
    \sum \omega_{(1)} \oo \omega_{(2)}\omega' &= \sum_{i} g_{i} \cdot
    \phi_{H} \oo g'_{i} \cdot \phi_{H}
  \end{align*}
and  write 
  \begin{align*}
    \upsilon''_{i} &:= (x''g_{i}S(x) \cdot \phi \cdot x') = (g_{i}S(x)
    \cdot \phi \cdot x')S(x'') = (|x\rangle\langle x'| \oo g_{i} \cdot
    \phi_{H})S(x''),  \\ \upsilon'''_{i} &:= g'_{i} \cdot \phi \cdot
    x''' = |1\rangle\langle x'''|\oo g'_{i} \cdot \phi_{H}.
  \end{align*}
We have to show that
\begin{align*}
  \hat \Delta_{\hat B}(\upsilon)(1 \oo \upsilon') &= \sum_{i} \upsilon''_{i} \oo \upsilon'''_{i},
\end{align*}
and do so by evaluating both sides on elements $a\oo b \in A\ooc A$
using dual versions of the embeddings in Lemma
\ref{lemma:dual-embeddings-mult}.  Equation
\eqref{eq:dual-galois-multipliers} implies
  \begin{align*}
    (\hat \Delta_{\hat B}(\upsilon)(1 \oo \upsilon'))(\dual{a} \oo
    \dual{b}) &= \upsilon(a\rho(\upsilon')(b)) =
    \phi(x'a\rho(\upsilon')(b)hS(x)).
  \end{align*}
  On the other side,
  \begin{align*}
    \sum_{i} (\upsilon''_{i} \ooc \upsilon'''_{i})(a \oo b) &=
    \sum_{i} \upsilon''_{i}(a \cupsilon'''_{i}(b)) = \sum_{i} \phi(x'a
    \cupsilon'''_{i}(b)x''g_{i}S(x)).
  \end{align*}
  Therefore, we only need to show that $\rho(\upsilon')(b)h = \sum_{i
  }\cupsilon'''_{i}(b)x''g_{i}$.  Take $b$ of the form $b=\tilde x
  \tilde y  \tilde h$. Using \eqref{eq:coupled-rho} and
  \eqref{eq:reverse-crossed}, we find that
  \begin{align} \label{eq:coupled-comult-1}
    \rho(\upsilon')(b)h &= \rho(h'S_{B}(x'') \cdot \phi \cdot
    x''')(\tilde x  \tilde y \tilde h)h =\sum \mu_{B}(x'''\tilde x) \phi_{H}(\tilde h_{(2)}h') x'' \tilde
    y \tilde h_{(1)} h.
  \end{align}
On the other side,
\begin{align} \label{eq:coupled-comult-2}
  \sum_{i} \cupsilon'''_{i}(b)x''g_{i} &= \sum_{i} \cphic(x'''\tilde x
  \tilde y \tilde h g'_{i}) x'' g_{i} = \sum_{i} \mu_{B}(x'''\tilde x)
  \phi_{H}(\tilde hg'_{i})\tilde y x'' g_{i}.
\end{align}
To see that \eqref{eq:coupled-comult-1} and
\eqref{eq:coupled-comult-2} coincide,
we have to show that
\begin{align*}
  \sum \phi_{H}(\tilde h_{(2)}h') \tilde h_{(1)}h = \sum_{i}
  \phi_{H}(\tilde hg'_{i}) g_{i}.
\end{align*}
We multiply on the left by some arbitrary $h'' \in H$, apply
$\phi_{H}$, and get
\begin{align*}
  \sum \phi_{H}(\tilde h_{(2)}h') \phi_{H}(h''\tilde h_{(1)}h) &=
\omega(h''\tilde h_{(1)})  \omega'(\tilde h_{(2)}) \\ &= \sum
\omega_{(1)}(h'') (\omega_{(2)}\omega')(\tilde h) = \sum
\phi_{H}(h''g_{i}) \phi_{H}(\tilde hg'_{i}).
\end{align*}
Since $\phi_{H}$ is faithful and $h'' \in H$ was arbitrary, the
desired equality follows.
\end{proof}

\appendix

\section{Factorisable functionals on bimodules}
\label{section:factorisation}

In this appendix, we put the definition of the space $\dmAm$
associated to a regular multiplier Hopf algebroid $\mathcal{A}$ with
base weight $\mu$, and the construction of the relative tensor
products for functionals in $\dmAm$ given at the beginning of
Subsection \ref{subsection:uniqueness} into a bicategorical perspective.

We denote by $\BimodKMS$ the bicategory whose
\begin{itemize}
\item $0$-cells are algebras equipped with faithful KMS-functionals,
\item $1$-cells are bimodules between bialgebras, with the usual tensor product
  as composition,
\item $2$-cells are morphisms of bimodules, with the usual composition and
  tensor product.
\end{itemize}

Then there exists a lax bifunctor $D\colon \BimodKMS \to \BimodKMS^{\co}$  as
follows. 
\begin{enumerate}
\item On $0$-cells, the functor $D$ is given by $(A,\mu) \mapsto (A^{\op},\mu^{\op})$.
\item Given $0$-cells $(A,\mu)$, $(B,\nu)$ and an $A$-$B$-bimodule $M$, consider
  the space
  \begin{align*}
    D(M) &:= \{ \omega \in \dual{M} \mid \exists {_{A}\omega} \in \dual{_{A}M},
    \omega_{B} \in \dual{M_{B}} : \mu \circ {_{A}\omega} = \omega = \nu \circ
    \omega_{B} \}.
  \end{align*}
  This space becomes a $A^{\op}$-$B^{\op}$-bimodule if we define
  \begin{align*}
a^{\op}     \omega  &:= \omega(\sigma_{\mu}^{-1}(a) -), &  \omega b^{\op}  &:=
    \omega(-\sigma_{\nu}(b)).
  \end{align*}
  Indeed, one easily verifiies that $a\omega, \omega b \in D(M)$ and that
  \begin{align*}
    {_{A}(a^{\op}\omega )}(m) &= {_{A}\omega}(m)a, & (a^{\op}\omega )_{B}(m) &=
    \omega_{B}(\sigma_{\mu}^{-1}(a)m),  \\
    {_{A}(\omega b^{\op} )}(m) &= {_{A}\omega}(m\sigma_{\nu}(b)), & (\omega b^{\op} )_{B}(m) &=
    b\omega_{B}(m)
  \end{align*}
  for all $\omega \in D(M)$, $a\in A$, $b\in B$.
\item Given $0$-cells $(A,\mu)$, $(B,\nu)$ and a morphism of $A$-$B$-bimodules $T
  \colon M \to N$, there exists a morphism of $A^{\op}$-$B^{\op}$-bimodules $D(T)
  \colon D(N) \to D(M)$, $\omega \mapsto \omega \circ T$.
\item Given a $0$-cell $(A,\mu)$, there exists a morphism of $A$-bimodules
  ${_{A}A_{A}} \to D(_{A^{\op}}A^{\op}_{A^{\op}})$ given by $a \mapsto \mu^{\op}(a^{\op}-)$.
\item Given $0$-cells $(A,\mu)$, $(B,\nu)$, $(C,\phi)$, an $A$-$B$-bimodule $M$ and a
  $B$-$C$-bimodule $N$, there exists a morphism of $A^{\op}$-$C^{\op}$-bimodules
  $D(M) \otimes D(N) \to D(M \otimes N)$ such that
  \begin{align*}
    D(\omega \otimes \upsilon) &= \omega \circ (\id \otimes
    {_{B}\upsilon}) = \nu \circ (\omega_{B} \otimes
    {_{B\upsilon}}) = \upsilon \circ (\omega_{B} \otimes
    \id)
  \end{align*}
  and
  \begin{align*} {_{A}(D(\omega \otimes \upsilon))} &=
    {_{A}\omega} \circ (\id \otimes {_{B}\upsilon}), &
    (D(\omega \otimes \upsilon))_{C} &= \upsilon_{C} \circ
    (\omega_{B} \otimes \id)
  \end{align*}
  for all $\omega \in D(M)$, $\upsilon \in D(N)$.
\end{enumerate}
Moreover, there exists a lax transformation $\sigma \colon \id_{\BimodKMS} \to D^{2}$
as follows.
\begin{enumerate}\setcounter{enumi}{5}
\item For each $0$-cell $(A,\mu)$,  we have $D^{2}(A,\mu) = (A,\mu)$ so that we can
  take  $\sigma_{(A,\mu)} = 1_{(A,\mu)} = {_{A}A_{A}}$.
\item Given $0$-cells $(A,\mu)$, $(B,\nu)$ and an $A$-$B$-bimodule $M$, there exists
  a morphism of $A$-bimodules $\sigma_{M}\colon M\to D^{2}(M)$ given by
  $(\sigma_{M}(m))(\omega):=\omega(m)$, and 
  \begin{align*}
    {_{A}\sigma_{M}(m)}(\omega) &= {_{A}\omega}(m), &
    \sigma_{M}(m)_{B}(\omega) &= \omega_{B}(m).
  \end{align*}
\end{enumerate}

\subsection*{Acknowledgements} The author would like to thank Alfons
Van Daele for inspiring and fruitful discussions.

    \bibliographystyle{abbrv}
\def\cprime{$'$}



\end{document}